\makeatletter \@addtoreset{equation}{section} \makeatother
\renewcommand\thetable{\thesection.\@arabic\c@table}
\theoremstyle{plain}
\newtheorem{maintheorem}{Theorem}
\newtheorem{theorem}{Theorem}[section]
\newtheorem{lemma}{Lemma}[section]
\newtheorem{definition}{Definition}[section]
\newtheorem{Thm}{Theorem}[section]
\newtheorem{Lem}[Thm]{Lemma}
\newtheorem{Prop}[Thm]{Proposition}
\newtheorem{Cor}[Thm]{Corollary}
\theoremstyle{remark}
\newtheorem{Def}[Thm] {Definition}
\newtheorem{Rem}[Thm] {Remark}
\newtheorem{Exa}[Thm] {Example}
\newtheorem{Ques}[Thm] {Question}
\long\def\begcom#1\endcom{}
\newcommand{\length}{\operatorname{\length}}
\def\length{\operatorname{length}}
\def\ln{\operatorname{ln}}
\newcommand{\bl} {\begin{lemma}}
\newcommand{\el} {\end{lemma}}
\newcommand{\bt} {\begin{theorem}}
\newcommand{\et} {\end{theorem}}
\newcommand{\bp}{\begin{proof}}
\newcommand{\ep}{\end{proof}}
\newcommand  {\ee} {\end{equation}}
\newcommand  {\beq} {\begin{eqnarray*}}
\newcommand  {\eeq} {\end{eqnarray*}}
\newcommand  {\bd} {\begin{definition}}
\newcommand  {\ed} {\end{definition}}
\newcommand{\cM}{\mathcal{M}}
\def\ep{\noindent{\hfill $\Box$}}
\begin{document}

\title{Strongly distributional chaos of irregular orbits that are not uniformly hyperbolic}

\author{Xiaobo Hou and Xueting Tian}
\address{Xiaobo Hou, School of Mathematical Sciences,  Fudan University\\Shanghai 200433, People's Republic of China}
\email{20110180003@fudan.edu.cn}

\address{Xueting Tian, School of Mathematical Sciences,  Fudan University\\Shanghai 200433, People's Republic of China}
\email{xuetingtian@fudan.edu.cn}


\begin{abstract}
In this article we prove that for a diffeomorphism on a compact Riemannian manifold, if there is a nontrival homoclinic class that is not uniformly hyperbolic or the diffeomorphism is a $C^{1+\alpha}$ and there is a hyperbolic ergodic measure whose support is not uniformly hyperbolic, then we find a type of strongly distributional chaos which is stronger than usual distributional chaos and Li-Yorke chaos in the set of irregular orbits that are not uniformly hyperbolic. Meanwhile, we
prove that various fractal sets are strongly distributional chaotic, such as irregular sets,  level sets, several recurrent level sets of points with different recurrent frequency, and some intersections of these fractal sets. In the process of proof, we give an abstract general mechanism to study strongly distributional chaos provided that the system has a sequence of nondecreasing invariant compact subsets such that every subsystem has exponential specification property, or has exponential shadowing property and transitivity. 
The advantage of this abstract framework is that it is not only applicable in systems with specification property including transitive Anosov diffeomorphisms, mixing expanding maps, mixing subshifts of finite type and mixing sofic subshifts but also applicable in systems without specification property including $\beta$-shifts, Katok map, generic systems in the space of robustly transitive diffeomorphisms and generic volume-preserving diffeomorphisms.
\end{abstract}

\thanks
{X. Tian is the corresponding author.}
\keywords{(Distributional) chaos, uniformly hyperbolic systems, nonuniformly hyperbolic systems, symbolic dynamics, ergodic average, recurrence and transitivity}
\subjclass[2020] {  37D45; 37D20;  37D25;  37B10.   }
\maketitle
\section{Introduction}
Given a dynamical system, ergodic average of a continuous function typically exists from Birkhoff's ergodic theorem. But this does not mean that the set of "non-typical'' points, where the ergodic average does not exist, is empty. In fact, for many systems the set of "non-typical'' points was shown to be large and have strong dynamical complexity in sense of Hausdorff dimension, Lebesgue positive measure, topological entropy, topological pressure, distributional chaos and residual property etc.
To make this more precise let us introduce the following terminology.
Let $M$ be a compact Riemannian manifold and $f:M \rightarrow M$ be a continuous map. For a continuous function $\varphi$ on $M$, define the $\varphi$-irregular set as
\begin{equation*}
	I_{\varphi}(f) := \left\{x\in M: \lim_{n\to\infty}\frac1n\sum_{i=0}^{n-1}\varphi(f^i(x)) \,\, \text{ diverges }\right\}.
\end{equation*}
Denote the irregular set (also called points with historic behavior, 
see \cite{Ruelle,Takens}), the union of $I_{\varphi}(f)$ over all continuous functions of $\varphi$ by $IR(f).$  From Birkhoff's ergodic theorem, the irregular set is not detectable from the point of view of any invariant measure.
Pesin and Pitskel \cite{Pesin-Pitskel1984} are the first to notice the phenomenon of the irregular set carrying full topological entropy in the case of the full shift on two symbols.   There are lots of advanced results to show that the irregular points can carry full entropy in symbolic systems, hyperbolic systems, non-uniformly expanding or hyperbolic systems,  and systems with specification-like or shadowing-like properties, for example, see \cite{Barreira-Schmeling2000, Pesin1997, CKS, Thompson2008, DOT, LLST, TianVarandas}. Chen and Tian \cite{CT} show that, for dynamical systems with speciﬁcation properties, there exist distributional chaos in irregular sets. For topological pressure case see \cite{Thompson2008}, for Lebesgue positive measure see \cite{Takens, KS}, and for residual property see \cite{DTY2015,LW2014}.

In uniformly hyperbolic systems,  irregular set has strong dynamical complexity as shown by the existing results. In dynamics beyond uniform hyperbolicity, using Katok’s approximation of hyperbolic measures by horseshoes, Dong and Tian \cite{DT} obtain that the topological entropy of irregular set is bounded from below by metric entropies of ergodic hyperbolic measures.
\begin{Thm}\cite[Corollary B]{DT}\label{dongtian}
	Let $M$ be a compact Riemannian manifold  of dimension at least $2$ and $f$ be a $C^{1+\alpha}$ diffeomorphism.   Then one has 
	\begin{equation*}
		\begin{split}
			h_{top}(f,IR(f))&\geq\sup\{h_{top}(f,\Lambda): \Lambda\text{ is a transitive locally maximal hyperbolic set}\}\\
			&=\sup\{h_\mu(f): \mu\text{ is a hyperbolic ergodic measure}\}.
		\end{split}
	\end{equation*}
\end{Thm}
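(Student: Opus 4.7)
My plan is to handle the equality and the inequality separately. For the equality $\sup\{h_{top}(f,\Lambda)\} = \sup\{h_\mu(f)\}$, I would invoke Katok's horseshoe approximation theorem for $C^{1+\alpha}$ diffeomorphisms: every ergodic hyperbolic measure $\mu$ with $h_\mu(f)>0$ can be approximated from below in entropy by transitive locally maximal hyperbolic sets (horseshoes), which gives the bound $\sup\{h_\mu\} \leq \sup\{h_{top}(f,\Lambda)\}$. The reverse inequality follows from the variational principle applied to each $\Lambda$: since $\Lambda$ is uniformly hyperbolic, every ergodic measure supported in $\Lambda$ is automatically hyperbolic, so
\[
h_{top}(f,\Lambda) = \sup\{h_\nu(f) : \nu \text{ ergodic with } \operatorname{supp}\nu \subseteq \Lambda\} \leq \sup\{h_\mu(f) : \mu \text{ hyperbolic ergodic}\}.
\]

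For the inequality $h_{top}(f,IR(f))\geq \sup\{h_{top}(f,\Lambda)\}$, I would fix a transitive locally maximal hyperbolic set $\Lambda$; the case $h_{top}(f,\Lambda)=0$ is trivial, so assume $h_{top}(f,\Lambda)>0$. Then $f|_\Lambda$ supports at least two distinct ergodic invariant measures $\mu_1,\mu_2$, and by Hahn--Banach there exists a continuous $\varphi: M \to \R$ with $\int\varphi\, d\mu_1 \neq \int\varphi\, d\mu_2$ (or I can choose $\varphi$ on $\Lambda$ and extend by Tietze). Since $\Lambda$ is a transitive basic set, $f|_\Lambda$ has Bowen's specification property. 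Then the classical Pesin--Pitskel / Barreira--Schmeling / Thompson result on irregular sets in specification systems gives $h_{top}(f|_\Lambda, I_\varphi(f|_\Lambda)) = h_{top}(f,\Lambda)$. Since $I_\varphi(f|_\Lambda) \subseteq I_\varphi(f) \subseteq IR(f)$ and topological entropy is monotone on subsets, $h_{top}(f,IR(f)) \geq h_{top}(f,\Lambda)$. Taking the supremum over $\Lambda$ finishes the argument.

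The main obstacle is the Katok approximation step, which crucially uses the $C^{1+\alpha}$ hypothesis: the horseshoes with near-maximal entropy are built via Pesin theory (hyperbolic times, the Oseledets splitting, Lyapunov charts, and a Markov-type coding on a positive measure set of a Pesin block). By contrast, the specification-plus-full-entropy-of-the-irregular-set step is by now standard once a basic set with positive entropy is in hand; it is worth remarking that the inclusion $I_\varphi(f|_\Lambda)\subseteq IR(f)$ is well-defined since $IR(f)$ is taken with respect to continuous functions on $M$, so the Tietze extension of any $\varphi$ separating $\mu_1,\mu_2$ is enough.
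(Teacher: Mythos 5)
Your proposal is correct and follows essentially the same route as the argument sketched in the paper for \cite[Corollary B]{DT}: Katok's horseshoe approximation (\cite[Theorem S.5.9]{KatHas}) for the equality of the two suprema, full entropy of the irregular set on a hyperbolic basic set via specification, and monotonicity of topological entropy to pass to $IR(f)$. The only point worth flagging is that a merely transitive locally maximal hyperbolic set has Bowen's specification property only after passing, via the spectral decomposition, to a power $f^m$ on a mixing component; this is a standard reduction (irregular points of $f^m$ for $\sum_{i=0}^{m-1}\varphi\circ f^i$ are irregular for $f$, and $h_{top}(f^m,Z)=m\,h_{top}(f,Z)$) and the paper makes the same elision.
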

In the proof of Theorem \ref{dongtian}, the authors firstly prove that for any transitive Anosov system $(M,f)$, $h_{top}(f,IR(f))=h_{top}(f).$ This implies 
\begin{equation*}
	\begin{split}
		&\sup\{h_{top}(f|_{\Lambda},IR(f|_{\Lambda})): \Lambda\text{ is a transitive locally maximal hyperbolic set}\}\\
		=&\sup\{h_{top}(f,\Lambda): \Lambda\text{ is a transitive locally maximal hyperbolic set}\}.
	\end{split}
\end{equation*}
From \cite[Theorem S.5.9]{KatHas}, the metric entropy of a hyperbolic ergodic measure can be approximated by the topological entropies of transitive locally maximal hyperbolic sets. Thus 
\begin{equation*}
	\begin{split}
		&\sup\{h_{top}(f,\Lambda): \Lambda\text{ is a transitive locally maximal hyperbolic set}\}\\
		=&\sup\{h_\mu(f): \mu\text{ is a hyperbolic ergodic measure}\}.
	\end{split}
\end{equation*}
Then they obtain Theorem \ref{dongtian} by $$h_{top}(f,IR(f))\geq \sup\{h_{top}(f|_{\Lambda},IR(f|_{\Lambda})): \Lambda\text{ is a transitive locally maximal hyperbolic set}\}.$$ So, in fact they obtain that the topological entropy of irregular points which are uniformly hyperbolic is bounded from below by metric entropy of ergodic hyperbolic measures, where a point $x\in M$ is said to be uniformly hyperbolic if $\overline{\{f^n(x)\}_{n=0}^\infty}$ is a uniformly hyperbolic set, otherwise, we say $x$ is not uniformly hyperbolic. But from Theorem \ref{dongtian} we can not obtain whether the topological entropy of irregular points which are not uniformly hyperbolic is positive, and even can not obtain whether the set of irregular points which are not uniformly hyperbolic is non-empty.

Recently, Hou, Lin and Tian in \cite{HLT} show that for every non-trival homoclinic class $H(p)$, there is a residual invariant subset $Y\subset H(p)$ such that each $x\in Y$ is a irregular point and  the orbit of $x$ is dense in $H(p).$ Note that every transitive point is not uniformly hyperbolic if $H(p)$ is not uniformly hyperbolic. Then we can see that the set of irregular points which are not uniformly hyperbolic is residual for every non-trival homoclinic class which is not uniformly hyperbolic. From the existence of irregular points which are not uniformly hyperbolic, a natural question arises: do irregular points which are not uniformly hyperbolic have strong dynamical complexity in sense of Hausdorff dimension, Lebesgue positive measure, topological entropy, topological pressure, and distributional chaos etc?
In this paper, we will prove that the irregular points that are not uniformly hyperbolic have strong dynamical complexity in the sense of distributional chaos. In another forthcoming paper, we will consider the topological entropy of the irregular points that are not uniformly hyperbolic.

We mainly consider two typical systems beyond uniform hyperbolicity: homoclinic classes and hyperbolic ergodic measures. 
On one hand, it is known that nontrival homoclinic class exists widely in dynamics beyond uniform hyperbolcity. For example, any non-trivial isolated transitive set of $C^{1}$ generic diffeomorphism is a non-trivial homoclinic class \cite{BD1999},  every transitive set of $C^{1}$ generic diffeomorphism containing a hyperbolic periodic orbit $p$ is contained in the homoclinic class of $p$ \cite{Arna2001}.
One the other hand, every smooth compact Riemannian manifold of dimension at least 2 admits a hyperbolic ergodic measure \cite{DolPes2002}.
Let $f$ be a $C^{1}$ diffeomorphisms on $M,$ we recall that an ergodic $f$-invariant Borel probability measure is said to be hyperbolic if it has positive and negative but no zero Lyapunov exponents, the homoclinic class of a hyperbolic saddle $p$, denoted by $H(p),$ is the closure of the set of hyperbolic saddles $q$ homoclinically related to $p$ (the stable manifold of the orbit of $q$ transversely meets the unstable one of the orbit of $p$ and vice versa).  

Before proceeding, we introduce a new kind of chaos, strongly distributional chaos, which is stronger than usual distributional chaos and Li-Yorke chaos (we will recall the definitions of usual distributional chaos and Li-Yorke chaos in section \ref{section-preliminaries}).
For any positive integer $n$, points $x,y \in M$ and $t \in \mathbb{R}$ let
\begin{equation*}
	\Phi _{xy}^{(n)}(t,f)=\frac{1}{n}|\{0\leq i \leq n-1:d(f^{i}(x),f^{i}(y))<t\}|,
\end{equation*}
where $|A|$ denotes the cardinality of the set $A$. Let us denote by $\Phi _{xy}$ the following function:
\begin{equation*}
	\Phi _{xy}(t,f)=\liminf_{n \to \infty}\Phi _{xy}^{(n)}(t,f).
\end{equation*}
Define $\mathcal{A}=\{\alpha(\cdot):\alpha\ \text{is a nondecreasing map form}\  \mathbb{N}\ \text{to}\ [0,+\infty),\ \lim\limits_{n\to \infty}\alpha(n)=+\infty\ \text{and}\  \lim\limits_{n\to\infty}\frac{\alpha(n)}{n}=0\}$.
For any positive integer $n$, points $x,y \in M$, $t \in \mathbb{R}$ and $\alpha\in\mathcal{A},$ let
\begin{equation*}
	\Phi _{xy}^{(n)}(t,f,\alpha)=\frac{1}{n}|\{1\leq i \leq n:\sum_{j=0}^{i-1}d(f^{j}(x),f^{j}(y))<\alpha(i)t\}|.
\end{equation*}
Let us denote by $\Phi _{xy}^{*}(t,f,\alpha)$ the following functions:
\begin{equation*}
	\Phi _{xy}^{*}(t,f,\alpha)=\limsup_{n \to \infty}\Phi _{xy}^{(n)}(t,f,\alpha).
\end{equation*}
A pair $x,y\in X$ is $\alpha$-DC1-scrambled if the following two conditions hold:
\begin{equation*}
	\Phi _{xy}(t_{0},f)=0\ \mathrm{for}\ \mathrm{some}\ t_{0}>0\ \mathrm{and}
\end{equation*}
\begin{equation*}
	\Phi _{xy}^{*}(t,f,\alpha)=1\ \mathrm{for}\ \mathrm{all}\ t>0.
\end{equation*}
A set $S$ is called a $\alpha$-DC1-scrambled set if any pair of distinct points in $S$ is $\alpha$-DC1-scrambled.
A subset $Y\subset M$ is said to be strongly distributional chaotic if it has an uncountable $\alpha$-DC1-scrambled set for any $\alpha\in\mathcal{A}$. 

For any $x \in M$, the orbit of $x$ is $\{f^n(x)\}_{n=0}^\infty$,   denoted by $orb(x,f)$. The $\omega$-limit set of $x$ is the set of all { accumulation} points of $orb(x,f)$,   denoted by ${ \omega(f,x)}$. 
A point $x \in M$ is recurrent, if $x \in { \omega(f,x)}$. We denote the sets of all recurrent points and transitive points by $Rec$ and $Trans$ respectively.
Given $x\in M$, denote $V_{f}(x)$ the set of all accumulation points of the empirical measures
$$
\mathcal{E}_n (x):=\frac1{n}\sum_{i=0}^{n-1}\delta_{f^{i} (x)},
$$
where $\delta_x$ is the Dirac measure concentrating on $x$. For a  probability measure $\mu,$  we denote it's support by $$S_\mu:=\{x\in M:\mu(U)>0\ \text{for any neighborhood}\ U\ \text{of}\ x\}.$$   We say an invariant measure $\mu$ is uniformly hyperbolic if $S_{\mu}$ is a uniformly hyperbolic set, otherwise, we say $\mu$ is not uniformly hyperbolic.  Denote $QR(f)=M\setminus IR(f).$ Let $\operatorname{Diff}^{1}(M)$ be the space of $C^{1}$ diffeomorphisms on $M.$ Now we state our first result as follows.
\begin{maintheorem}\label{maintheorem-1}
	Let $f \in \operatorname{Diff}^{1}(M).$ If there is a nontrival homoclinic class $H(p)$ that is not uniformly hyperbolic or  $f$ is a $C^{1+\alpha}$ diffeomorphism preserving a hyperbolic ergodic measure $\mu$ whose support is not uniformly hyperbolic, then the following four sets are all strongly distributional chaotic:
	\begin{description}
		\item[(a)] $IR(f)\cap Rec\cap \{x\in M:x\text{ is not uniformly hyperbolic, } \text{each }\mu\in V_f(x) \text{ is uniformly hyperbolic}\},$
		\item[(b)] $IR(f)\cap Rec\cap \{x\in M: x\text{ is not uniformly hyperbolic, }\exists\ \mu_1,\mu_{2}\in V_f(x)\text{ s.t. } \mu_1\text{ is uniformly h-}\\ \text{yperbolic, } \mu_2\text{ is not uniformly hyperbolic} \},$
		\item[(c)] $QR(f)\cap Rec \cap \{x\in M: x\text{ is not uniformly hyperbolic, }V_f(x) \text{ consists of one uniformly hyperbo-}\\ \text{lic measure}\}.$
	\end{description}
    In particular, if further $H(p)=M$ or $S_{\mu}=M,$ then $Rec$ can be replaced by $Trans$ in above two sets. 
\end{maintheorem}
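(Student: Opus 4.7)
The plan is to reduce both hypotheses to a common setting---a nondecreasing sequence of compact invariant subsets each with exponential specification---and then to invoke the abstract mechanism announced in the introduction. First I would extract, from either hypothesis, a chain $\Lambda_{1}\subset\Lambda_{2}\subset\cdots$ of transitive locally maximal hyperbolic sets together with a ``witness'' point $z$ which is not uniformly hyperbolic. In the homoclinic class case, Smale's theorem applied to transverse intersections of the invariant manifolds of saddles homoclinically related to $p$ yields such a chain with $H(p)=\overline{\bigcup_{n}\Lambda_{n}}$; since $H(p)$ itself is not uniformly hyperbolic, I can pick $z\in H(p)\setminus\bigcup_{n}\Lambda_{n}$, indeed even a transitive point of $H(p)$ via the residual transitivity of homoclinic classes. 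In the $C^{1+\alpha}$ ergodic case, Katok's approximation theorem supplies horseshoes $\Lambda_{n}$ with $h_{top}(f|_{\Lambda_{n}})\to h_{\mu}(f)$ and invariant measures converging weak-$*$ to $\mu$, while the non-uniform hyperbolicity of $S_{\mu}$ supplies a point $z\in S_{\mu}$ whose orbit closure is not uniformly hyperbolic.

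Second, each transitive locally maximal hyperbolic set $\Lambda_{n}$ is, after passing to a sufficiently high power of $f$, conjugate to a mixing shift of finite type and therefore has the exponential specification property. Hence the closed invariant set $X:=\overline{\bigcup_{n}\Lambda_{n}\cup\orb(z)}$ satisfies the structural hypothesis called for in the abstract framework: a nondecreasing sequence of invariant compact subsets each with exponential specification. Applying the forthcoming general mechanism to $(X,f)$ produces uncountable $\alpha$-DC1-scrambled sets simultaneously for every $\alpha\in\mathcal{A}$, lying inside $X$.

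Third, to land inside the three intersections (a), (b), (c), I would build the scrambled sets by an explicit concatenation of orbit segments. Blocks of length $N_{n}$ shadow orbits in $\Lambda_{n}$ chosen so that their finite-time Birkhoff averages approach (a) one of two distinct uniformly hyperbolic ergodic measures alternately, (b) a target that additionally picks up positive mass on a neighborhood of $z$, or (c) a single fixed uniformly hyperbolic ergodic measure $\mu^{*}\in\mathcal{M}(\Lambda_{n_{0}})$. Between blocks, very sparse visits to $z$ at times $t_{k}$ with $t_{k+1}/t_{k}\to\infty$ are inserted; these have density zero but force $z\in\omega(f,x)$, so $x$ is not uniformly hyperbolic. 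Recurrence comes from shadowing periodic points inside $\Lambda_{n}$; when $H(p)=M$ or $S_{\mu}=M$, replacing those periodic orbits by $\varepsilon_{n}$-dense orbit segments yields transitivity of $x$. An uncountable branching choice at every stage (between a ``close'' and a ``far'' orbit segment, relative to a reference element) yields both $\Phi_{xy}(t_{0},f)=0$ and $\Phi_{xy}^{*}(t,f,\alpha)=1$ for any prescribed $\alpha$, via exponential shadowing.

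The main obstacle is case (c): requiring $V_{f}(x)$ to be a single uniformly hyperbolic measure while simultaneously keeping $z\in\omega(f,x)$ and the pair $(x,y)$ being $\alpha$-DC1-scrambled for every $\alpha\in\mathcal{A}$ is a delicate balance. Convergence of the empirical measures to $\mu^{*}$ forces the $z$-visits and the ``separation'' blocks each to have density zero along every subsequence, whereas $\alpha$-DC1-scrambling forces the partial sums $\sum_{j=0}^{i-1}d(f^{j}x,f^{j}y)<\alpha(i)t$ on a frequency-one set of times $i$. Reconciling the two requires choosing block lengths $N_{n}$ growing doubly exponentially, so that the contribution of each separation/far block to the empirical measure is $o(1)$ while still dominating any sublinear $\alpha$; exponential (not merely tempered) specification is exactly what makes the shadowing errors within each block summable and hence negligible against every $\alpha\in\mathcal{A}$. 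The remaining ingredients---irregularity in (a) via oscillation of the target between measures with distinct integrals of a fixed continuous function, and mixed-type $V_{f}(x)$ in (b) via density-positive visits to a neighborhood of $z$---are comparatively routine once this balance in (c) has been engineered.
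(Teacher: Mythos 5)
Your overall architecture (nested horseshoes, exponential specification on each, an abstract saturated-set mechanism, concatenated orbit segments) matches the paper's, but the execution of case (c) rests on a misconception that would break the proof. You assert that convergence of the empirical measures of $x$ to a single measure $\mu^{*}$ forces the ``separation'' blocks to have density zero along every subsequence, and you propose doubly exponential block lengths to make them so. But DC1-scrambling requires $\Phi_{xy}(t_{0},f)=\liminf_{n}\frac1n|\{i<n:d(f^{i}x,f^{i}y)<t_{0}\}|=0$, i.e.\ the times at which $x$ and $y$ are $t_{0}$-separated must have proportion tending to $1$ along some subsequence of prefixes; if the separation blocks have density zero in every prefix, then $\Phi_{xy}(t_{0},f)=1$ and the pair is not even DC1-scrambled. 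The tension you perceive is illusory: genericity for $\mu^{*}$ constrains each orbit's empirical measure individually, not the joint orbit of the pair. The paper resolves (c) by choosing $\mu^{*}$ (or its ergodic pieces $\mu_{1},\mu_{2}$) so that each $G_{\mu_{i}}$ contains a \emph{distal pair}; specification then produces two points $x_{1},x_{2}$ whose empirical measures both converge to $\mu^{*}$ yet which stay $\zeta$-separated on a set of times of density at least $1-\delta$ (Lemma \ref{lemma-mu1}, from [CT]). Guaranteeing such distal pairs requires ergodic measures with minimal, non-periodic support densely among the horseshoe measures (Lemmas \ref{generic distal} and \ref{BF}); none of this machinery appears in your proposal, and without it the ``far orbit segment with the correct Birkhoff average'' you invoke does not exist.

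Three secondary gaps. First, in the hyperbolic-measure case Katok's theorem does not directly give a \emph{nested} sequence of horseshoes; the paper sidesteps this by noting that $S_{\mu}$ lies in a nontrivial homoclinic class and running the homoclinic-class construction there, so that nestedness comes from taking locally maximal hyperbolic sets containing homoclinically related periodic points. Second, each $\Lambda_{n}$ is only transitive, not mixing, so specification is not immediately available; one must pass to a power of $f$ via regular periodic decompositions and, crucially, make these decompositions compatible across all $n$ simultaneously (Lemma \ref{AY} and the pigeonhole step in the proof of Theorem \ref{maintheorem-3}), which your ``pass to a sufficiently high power'' glosses over. Third, for (b) you need a limit measure in $V_{f}(x)$ whose \emph{support} is not uniformly hyperbolic; ``positive mass on a neighborhood of $z$'' does not deliver this, whereas the paper constructs an invariant measure with full support in $H(p)$ (Proposition \ref{BG}) and places it in the connected target set $K$.
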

The particular case of Theorem \ref{maintheorem-1} holds for many transitive systems (for which the whole space is a homoclinic class) including \\ 
(i) the nonuniformly hyperbolic diffeomorphisms constructed by Katok \cite{Katok-ex}. For arbitrary compact connected two-dimensional manifold $M$, A. Katok proved that there exists a $C^\infty$ diﬀeomorphism $f$ such that the Riemannian volume $m$ is an $f$-invariant ergodic hyperbolic measure. From \cite{Katok} (or Theorem S.5.3 on Page 694 of book \cite{KatHas}) we know that the support of any ergodic and non-atomic hyperbolic measure of a $C^{1+\alpha}$ diffeomorphism is contained in a non-trivial homoclinic class, then there is a hyperbolic periodic point $p$ such that $M=S_m=H(p).$
\\(ii) generic systems in the space of robustly transitive diffeomorphisms $\operatorname{Diff}^{1}_{RT}(M).$ By the robustly transitive partially hyperbolic diffeomorphisms constructed by Ma\~{n}\'{e} \cite{Mane-ex} and the robustly transitive nonpartially hyperbolic diffeomorphisms constructed by Bonatti and Viana \cite{BV-ex}, we know that $\operatorname{Diff}^{1}_{RT}(M)$ is a non-empty open set in $\operatorname{Diff}^{1}(M).$ Since any non-trivial isolated transitive set of $C^{1}$ generic diffeomorphism is a non-trivial homoclinic class \cite{BD1999},  we have that $$\{f\in \operatorname{Diff}^{1}_{RT}(M): \text{ there is a hyperbolic periodic point }  p \text{ such that } M=H(p) \}$$ is generic in  $\operatorname{Diff}^{1}_{RT}(M).$\\
(iii) generic volume-preserving diffeomorphisms. Let $M$ be a compact connected Riemannian manifold. Bonatti and Crovisier proved in \cite[Theorem 1.3]{BC2004} that there exists a residual $C^1$-subset $\mathcal{R}$ of the volume-preserving $C^1$ diffeomorphisms on $M$ such that if $f\in\mathcal{R}$ then $f$ is a transitive diffeomorphism. Moreover, by its proof on page 79 and page 87 of \cite{BC2004}, there is a hyperbolic periodic point $p$ such that $M=H(p).$
\begin{Rem}
	We can also prove that $\{x\in M: x\text{ is uniformly hyperbolic }\}\cap IR(f)\cap Rec$ and $\{x\in M: x\text{ is uniformly hyperbolic }\}\cap QR(f)\cap Rec$ are strongly distributional chaotic (see Theorem \ref{prop-1}). The statement of Theorem \ref{maintheorem-1} here is to emphasize that we find chaotic phenomenon in points that are not uniformly hyperbolic.
	Using the result of \cite{CT}, we can only obtain that $\{x\in M: x\text{ is uniformly hyperbolic }\}\cap IR(f)\cap Rec$ and $\{x\in M: x\text{ is uniformly hyperbolic }\}\cap QR(f)\cap Rec$ are distributional chaotic of type 1 which is weaker than strongly distributional chaotic (see Proposition \ref{prop-A}).
\end{Rem}

We will also prove that various fractal sets are also strongly distributional chaotic, such as irregular sets,  level sets, several recurrent level sets of points with different recurrent frequency, and some intersections of these fractal sets. Now we recall the definitions of various invariant fractal sets.
\begin{itemize}
	\item[$\bullet$] A level set is a natural concept to slice points with convergent Birkhoff's average operated by continuous function, regarded as the multifractal decomposition \cite{Clim,FH}. Let $\varphi:X\rightarrow \mathbb{R}$  be a continuous function. Denote
	$$L_\varphi=\left[\inf_{\mu\in \mathcal M_{f}(X)}\int\varphi d\mu,  \,  \sup_{\mu\in \mathcal M_{f}(X)}\int\varphi d\mu\right]$$
	and
	$$\mathrm{Int}(L_\varphi)=\left(\inf_{\mu\in \mathcal M_{f}(X)}\int\varphi d\mu,  \,  \sup_{\mu\in \mathcal M_{f}(X)}\int\varphi d\mu\right).$$ For any $a\in  L_\varphi,$ define the level set as
	\begin{equation*}
		R_{\varphi}(a) := \left\{x\in X: \lim_{n\to\infty}\frac1n\sum_{i=0}^{n-1}\varphi(f^ix)=a\right\}.
	\end{equation*}
	Denote $R_\varphi=\bigcup_{a\in L_\varphi}R_{\varphi}(a)$, called the regular points for $\varphi$. Note that if $I_{\varphi}(f)\neq\emptyset$,  then  $\mathrm{Int}(L_\varphi)\neq \emptyset$.
	More generally, for any  $a,b \in \mathrm{Int}(L_\varphi)$ with $a\leq b,$ we denote
	$$I_{\varphi}[a,b]:=\{x\in M:\liminf_{n\to\infty}\frac1n\sum_{i=0}^{n-1}\varphi(f^ix)=a \text{ and } \limsup_{n\to\infty}\frac1n\sum_{i=0}^{n-1}\varphi(f^ix)=b\}.$$
	Then we have $I_{\varphi}[a,b]\subseteq I_{\varphi}(f)$ when $a<b,$ and $I_{\varphi}[a,b]= R_a(\varphi)$ when $a=b.$ 
	\item Next we consider more refinements
	of recurrent points according to the ‘recurrent frequency’.
	Let $S \subseteq \mathbb{N}$, we denote
	$$\overline{d}(S):=\limsup_{n\to\infty}\frac{|S\cap\{0,1,\cdots,n-1\}|}{n},\ \ \underline{d}(S):=\liminf_{n\to\infty}\frac{|S\cap\{0,1,\cdots,n-1\}|}{n},$$
	$$B^*(S):=\limsup_{|I|\to\infty}\frac{|S\cap I|}{|I|},\ \ B_*(S):=\liminf_{|I|\to\infty}\frac{|S\cap I|}{|I|},$$
	where $|A|$ denotes the cardinality of the set $A$ and  $I \subset \mathbb{N}$ is taken from finite continuous integer intervals. They are called the upper density, the lower density, 
	Banach upper density  and  Banach lower density of $S$ respectively. Let $U,V \subseteq X$ be two nonempty open sets and $x \in X$. Define sets of visiting time
	$$N(U,V):=\{ n \ge 1:U \cap f^{-n}(V) \not= \emptyset\}$$ 
	and
	$$N(x,U):=\{n \ge 1:f^n(x) \in U\}.$$
	A point $x\in X$ is called Banach upper recurrent, if for any $\varepsilon>0$, $N(x,B(x,\varepsilon))$ has positive Banach upper density where $B(x,\varepsilon)$ denotes the ball centered at $x$ with radius $\varepsilon$. Similarly, one can define the Banach lower recurrent, upper recurrent, and lower recurrent.
	Let $Rec_{Ban}^{up},$ $Rec_{Ban}^{low}$ denote the set of all Banach upper recurrent points and Banach lower recurrent points, and let $Rec^{up},$ $Rec^{low}$ denote the set of upper recurrent points and lower recurrent points respectively (called quasi-weakly almost periodic and weakly almost periodic \cite{HYZ,ZH,T16}). Note that  $AP$ coincides with the set of all Banach lower recurrent points
	and $$Rec_{Ban}^{low}\subseteq  Rec^{low} \subseteq  Rec^{up} \subseteq  Rec_{Ban}^{up} \subseteq Rec.$$
	Many people pay attention to these recurrent points and measure them \cite{HYZ,ZH}. In \cite{T16,HTW} the authors considered various recurrence and showed many different recurrent levels carry strong dynamical complexity from the perspective of topological entropy. 
\end{itemize}

Now we state our second result as follows.
\begin{maintheorem}\label{maintheorem-1'}
	Let $f \in \operatorname{Diff}^{1}(M).$ If there is a nontrival homoclinic class $H(p)$ that is not uniformly hyperbolic or  $f$ is a $C^{1+\alpha}$ diffeomorphism preserving a hyperbolic ergodic measure $\mu$ whose support is not uniformly hyperbolic,  then there exist a non-empty compact convex subset $K$ in the space of $f$-invariant probability measures and an open and dense subset $\mathcal{R}$ in the space of continuous functions such that for any $\varphi\in\mathcal{R},$  $\inf_{\mu\in K}\int \varphi d\mu<\sup_{\mu\in K}\int \varphi d\mu,$ and for any $\inf_{\mu\in K}\int \varphi d\mu<a\leq b<\sup_{\mu\in K}\int \varphi d\mu,$ the following four sets are all strongly distributional chaotic:
	\begin{description}
		\item[(a)] $(Rec_{Ban}^{up}\setminus Rec^{up})\cap I_{\varphi}[a,b]\cap \{x\in M:x\text{ is not uniformly hyperbolic, } \text{each }\mu\in V_f(x) \text{ is uniformly}\\ \text{hyperbolic}\},$
		\item[(b)] $(Rec^{up}\setminus Rec^{low})\cap I_{\varphi}[a,b]\cap \{x\in M: x\text{ is not uniformly hyperbolic, }\text{each }\mu\in V_f(x) \text{ is uniformly}\\ \text{hyperbolic}\},$
		\item[(c)] $(Rec^{up}\setminus Rec^{low})\cap I_{\varphi}[a,b]\cap \{x\in M: x\text{ is not uniformly hyperbolic, }\exists\ \mu_1,\mu_{2}\in V_f(x)\text{ s.t. } \mu_1\text{ is }\\\text{uniformly hyperbolic, } \mu_2\text{ is not uniformly hyperbolic} \},$
		\item[(d)] $(Rec_{Ban}^{up}\setminus Rec^{up})\cap QR(f) \cap \{x\in M: x\text{ is not uniformly hyperbolic, }V_f(x) \text{ consists of one uni-}\\ \text{formly hyperbolic measure}\}.$
	\end{description}
	In particular, if further $H(p)=M$ or $S_{\mu}=M,$ then the points in above four sets can be transitive points. 
\end{maintheorem}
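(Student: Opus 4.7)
The plan is to reduce Theorem B to the abstract strongly distributional chaos mechanism announced in the introduction, which manufactures uncountable $\alpha$-DC1-scrambled sets from a nondecreasing sequence of invariant compact subsystems each enjoying exponential specification. In the $C^{1+\alpha}$/hyperbolic ergodic measure case, Katok's horseshoe approximation (\cite[Theorem S.5.3]{KatHas}) furnishes a nested sequence $\Lambda_1 \subset \Lambda_2 \subset \cdots$ of transitive locally maximal hyperbolic sets in $S_\mu$ whose invariant measures accumulate weakly-$*$ on $\mu$; in the $C^1$ homoclinic-class case the analogous approximation inside $H(p)$ is the one already invoked in the Hou-Lin-Tian discussion above. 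Each $\Lambda_n$ is a horseshoe and so enjoys exponential specification. Define
\[
K \;:=\; \overline{\bigcup_{n \ge 1}\mathcal{M}_f(\Lambda_n)}
\]
(closure in the weak-$*$ topology), a compact convex subset of $\mathcal{M}_f(M)$ containing both uniformly hyperbolic ergodic measures (on each $\Lambda_n$) and at least one non-uniformly hyperbolic limit measure. Let $\mathcal{N}\subset C(M,\RR)$ be the closed linear subspace of continuous functions $\varphi$ for which $\mu\mapsto\int\varphi\,d\mu$ is constant on $K$; since $K$ contains two distinct measures, $\mathcal{N}$ is a proper subspace, so $\mathcal{R} := C(M,\RR)\setminus \mathcal{N}$ is open and dense, and for every $\varphi \in \mathcal{R}$ one has $\inf_K\int\varphi\,d\mu < \sup_K\int\varphi\,d\mu$.

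Fix $\varphi \in \mathcal{R}$ and $a \le b$ strictly inside the above interval. Convexity of $K$ and continuity of $\mu \mapsto \int\varphi\,d\mu$ provide $\nu_a,\nu_b\in K$ with $\int \varphi\,d\nu_a = a$ and $\int\varphi\,d\nu_b = b$, each weak-$*$-approximable by ergodic measures on the $\Lambda_n$, and in the relevant cases simultaneously by non-uniformly hyperbolic measures in $K$. I would feed these targets into the abstract mechanism: using exponential specification inside successively larger $\Lambda_n$ it concatenates orbit segments shadowing $\nu_a$-generic, $\nu_b$-generic and, in case (c), non-uniformly hyperbolic generic trajectories. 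Tuning block and gap lengths, one simultaneously arranges that Birkhoff averages of $\varphi$ oscillate between $a$ and $b$ (placing $x \in I_\varphi[a,b]$), or converge to a single value in case (d) (placing $x \in QR(f)$); that the return-time densities to a neighbourhood basis at $x$ realize the dichotomy $Rec_{Ban}^{up}\setminus Rec^{up}$ or $Rec^{up}\setminus Rec^{low}$; and that $V_f(x)$ consists of the prescribed mixture of uniformly / non-uniformly hyperbolic measures while the orbit closure of $x$ still escapes every uniformly hyperbolic set. Uncountability and the $\alpha$-DC1-scrambled property come by indexing the construction over $\{0,1\}^\NN$ at a hierarchy of scales: paired orbits coincide within tolerance $\alpha(n)\,t$ on arbitrarily long initial windows (forcing $\Phi_{xy}^{*}(t,f,\alpha)=1$ for every $t>0$ and every $\alpha\in\mathcal{A}$) while being macroscopically separated on a positive-density set of times (forcing $\Phi_{xy}(t_0,f)=0$). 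When $H(p)=M$ or $S_\mu=M$, sparse $\varepsilon$-dense shadowing blocks are spliced in to upgrade recurrence to transitivity without disturbing any asymptotic quantity.

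The main obstacle is the joint realization on one orbit of (1) the fine recurrence dichotomy, which prescribes upper densities along Banach intervals but not along initial segments (or vice versa), (2) the non-uniform hyperbolicity of the orbit closure coexisting with an entirely uniformly hyperbolic $V_f(x)$ in cases (a), (b) and (d), and (3) $\alpha$-DC1-scrambling for \emph{every} $\alpha \in \mathcal{A}$ simultaneously. The last requirement is what distinguishes strongly distributional chaos from the ordinary DC1 chaos delivered by \cite{CT}, and it forces close-window and separated-window scales that interact delicately with the block hierarchy already constrained by (1) and (2). Resolving this tension is the role of the abstract mechanism, which organises the concatenation across the nested horseshoes $\Lambda_n$ with exponentially sharpening specification tolerances; it is precisely to enable this organisation that all target measures are chosen from the single weak-$*$ closure $K$.
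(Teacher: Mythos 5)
Your overall architecture --- exhausting $H(p)$ (respectively, a homoclinic class containing $S_\mu$ via Katok's theorem) by a nested sequence of transitive locally maximal hyperbolic sets $\Lambda_n$, and running an abstract chaos mechanism over this filtration with $K$ and $\mathcal{R}$ chosen essentially as the paper does --- matches the reduction through Theorems \ref{maintheorem-4} and \ref{maintheorem-5}. But there are two genuine gaps. First, the $\Lambda_n$ produced by the horseshoe construction are transitive locally maximal hyperbolic sets, which carry exponential \emph{shadowing} (Proposition \ref{proposition-localmax}) but not exponential \emph{specification}: specification requires topological mixing, and a transitive basic set is mixing only after passing to its spectral (periodic) decomposition. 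So one cannot feed the $\Lambda_n$ into the specification-based mechanism (Theorem \ref{maintheorem-2}) directly; the paper instead needs the shadowing-plus-transitivity version (Theorem \ref{maintheorem-3}), whose proof occupies the periodic-decomposition apparatus of Section \ref{section-3} (Lemmas \ref{AQ}--\ref{AY}), including the compatibility of the decompositions across the nested $\Lambda_n$ and the transfer of $\alpha$-DC1 from $f^k$ back to $f$ via the Lipschitz constant. Your sentence ``each $\Lambda_n$ is a horseshoe and so enjoys exponential specification'' skips all of this.

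Second, and more seriously, you treat the dichotomies $Rec_{Ban}^{up}\setminus Rec^{up}$ and $Rec^{up}\setminus Rec^{low}$ as direct constraints on return-time densities to be ``tuned'' during the concatenation. The paper never controls return times directly: it places the constructed points inside saturated sets $G_K$ for carefully chosen compact \emph{connected} subsets $K\subset\mathcal{M}_f(X)$ and then reads off the recurrence class from the characterizations of Propositions \ref{prop1}--\ref{prop3} ($x\in Rec^{low}$ iff $S_\mu=\omega(f,x)$ for every $\mu\in V_f(x)$; $x\in Rec^{up}$ iff $C_x=\omega(f,x)$; $x\in Rec_{Ban}^{up}$ iff $C^*_x=\omega(f,x)$). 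Item (a) therefore requires a $K$ all of whose elements are supported in a single $\Lambda_{n_0}$ while the orbit is dense in $H(p)$; items (b) and (c) require $K$ to contain in addition measures whose supports exhaust $H(p)$ (the $K^a$ construction of Theorem \ref{thm-5}, or a full-support measure as in Theorem \ref{thm-1}), and it is precisely this that forces the non-uniformly hyperbolic element of $V_f(x)$ in (c). Your proposal offers no substitute for this measure-theoretic reduction, and without it the simultaneous realization of Banach-density conditions, the prescribed $V_f(x)$, and $\alpha$-DC1 scrambling for every $\alpha\in\mathcal{A}$ is not established. Relatedly, the lower-density-zero half of DC1 ($\Phi_{xy}(t_0,f)=0$) requires orbit pairs separated on time sets of density tending to one; the paper manufactures these from distal pairs in $G_{\mu_i}$ for ergodic $\mu_i$ with minimal, non-periodic support (Lemmas \ref{generic distal}, \ref{lemma-mu} and \ref{BF}), whose existence is itself a nontrivial step absent from your sketch.
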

The reason why we cannot analyse whether $QR(f)\cap(Rec^{up}\setminus Rec^{low})$ and $Rec^{low}$ are strongly distributional chaotic by our method is that we did not ﬁnd a measure $\mu$ with $S_\mu=H(p)$ and $G_\mu$ has a distal pair, where $G_\mu:=\{x\in M:V_f(x)=\{\mu\}\}$ is the set of all generic points for $\mu,$ and a pair $p,q\in M$ is distal if $\liminf\limits_{i\to\infty}d(f^i(p),f^i(q))>0$.

\textbf{Organization of this paper.} In section \ref{section-preliminaries} we recall some definitions and lemmas used in our main
results. In section \ref{section-3} we show that some saturated sets are strongly distributional chaotic for systems with a sequence of nondecreasing invariant compact subsets such that every subsystem has exponential specification property, or has exponential shadowing property and transitivity. 
In section \ref{section-4} we give an abstract framework in which we show various invariant fractal sets are strongly distributional chaotic by using the results of saturated sets obtained in section \ref{section-3}. In section \ref{section-5}, we give the proofs of Theorem \ref{maintheorem-1} and \ref{maintheorem-1'} by using the abstract framework of section \ref{section-4}. In section \ref{section-6}, we apply the results in the previous sections to more systems, including transitive Anosov diffeomorphisms, mixing expanding maps, mixing subshifts of finite type, mixing sofic subshifts and $\beta$-shifts. In section \ref{section-without stong chaos}, we give some comments and questions.

\section{Preliminaries}\label{section-preliminaries}
Let $(X,d)$ be a nondegenerate $($i.e,
with at least two points$)$ compact metric space, and $f:X \rightarrow X$ be a continuous map. Such $(X,f)$ is called a dynamical system. For a dynamical system $(X,f)$, let $\mathcal{M}(X)$, $\mathcal{M}_{f}(X)$, $\mathcal{M}^{e}_{f}(X)$ denote the space of probability measures, $f$-invariant, $f$-ergodic probability measures, respectively.
Let $\mathbb{N}$, $\mathbb{N^{+}}$ denote non-negative integers, positive integers, respectively. 

\subsection{Strongly distributional chaos}
In this subsection, we recall the Li-Yorke chaos and usual distribution chaos, and show that they are weaker than strongly distributional chaotic.
The notion of chaos was first introduced in mathematic language by Li and Yorke in \cite{LY} in 1975. For a dynamical system $(X,f)$, they defined that $(X,f)$ is Li-Yorke chaotic if there is an uncountable scrambled set $S\subseteq X$, where $S$ is called a scrambled set if for any pair of distinct two points $x,y$ of $S$, $$\liminf_{n\to +\infty}d(f^n(x),f^n(y))=0,\ \limsup_{n\to +\infty}d(f^n(x),f^n(y))
>0.$$  Since then, several refinements of chaos have been introduced and extensively studied. One of the most important extensions of the concept of chaos in sense of Li and Yorke is distributional chaos \cite{SS1994}. The stronger form of chaos has three variants: DC1(distributional chaos of type 1), DC2 and DC3 (ordered from strongest to weakest).   In this paper, we focus on DC1. Readers can refer to \cite{Dwic,SS,SS2} for the definition of DC2 and DC3 and see \cite{AK,OS,BrH,Dev,BGKM,Kan,Oprocha2009,BHS} and references therein for  related topics on chaos theory if necessary.

Let us denote by $\Phi _{xy}^{*}$ the following function:
\begin{equation*}
\Phi _{xy}^{*}(t,f)=\limsup_{n \to \infty}\Phi _{xy}^{(n)}(t,f).
\end{equation*}
Both functions $\Phi _{xy}$ and $\Phi _{xy}^{*}$ are nondecreasing, $\Phi _{xy}(t,f)=\Phi _{xy}^{*}(t,f)=0$ for $t<0$
and $\Phi _{xy}(t,f)=\Phi _{xy}^{*}(t,f)=1$ for $t>\mathrm{diam}X$. 
A pair $x,y\in X$ is DC1-scrambled if the following two conditions hold:
\begin{equation*}
\Phi _{xy}(t_{0},f)=0\ \mathrm{for}\ \mathrm{some}\ t_{0}>0\ \mathrm{and}
\end{equation*}
\begin{equation*}
\Phi _{xy}^{*}(t,f)=1\ \mathrm{for}\ \mathrm{all}\ t>0.
\end{equation*}
In other words, the orbits of $x$ and $y$ are arbitrarily close with upper density one, but for some distance, with lower density zero.

\begin{Def}
	A set $S$ is called a DC1-scrambled (resp. $\alpha$-DC1-scrambled) set if any pair of distinct points in $S$ is DC1-scrambled (resp. $\alpha$-DC1-scrambled).
\end{Def}

\begin{Def}
    Dynamical system $(X,f)$ is said to be DC1 (resp. $\alpha$-DC1) if it has an uncountable DC1-scrambled (resp. $\alpha$-DC1-scrambled) set. Dynamical system $(X,f)$ is said to be strongly DC1 if it is $\alpha$-DC1 for any $\alpha\in\mathcal{A}$.
\end{Def}

In particular, we define chaos of set.
\begin{Def}
	A subset $Y\subset X$ is said to be 1-chaotic (resp. $\alpha$-1-chaotic) if it has an uncountable DC1-scrambled (resp. $\alpha$-DC1-scrambled) set. $Y\subset X$ is said to be strongly distributional chaotic if it has an uncountable $\alpha$-DC1-scrambled set for any $\alpha\in\mathcal{A}$.
\end{Def}
\begin{Rem}
	In the various definitions of chaos above, $t_{0}$ depends on $x$, $y$ and $\alpha$. However, $t_{0}$ is  independent of $x$, $y$ and $\alpha$ in theorems we prove.
\end{Rem}

Next, we show that $\alpha$-DC1 implies DC1 for any $\alpha\in\mathcal{A}$.
\begin{Prop}\label{prop-A}
	If $x,y\in X$ is $\alpha$-DC1-scrambled for some $\alpha\in\mathcal{A}$, then $x,y\in X$ is DC1-scrambled.
\end{Prop}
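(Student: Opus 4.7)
The two definitions share the first clause $\Phi_{xy}(t_0,f)=0$ verbatim, so I only need to upgrade the second clause: assuming $\Phi_{xy}^{*}(t,f,\alpha)=1$ for every $t>0$ and some fixed $\alpha\in\mathcal{A}$, I must deduce $\Phi_{xy}^{*}(t,f)=1$ for every $t>0$. The plan is to exploit a Markov/Chebyshev-style bound: if at time $i$ the Cesàro-type sum $\sum_{j=0}^{i-1}d(f^{j}(x),f^{j}(y))$ is small (of order $\alpha(i)$), then only a small fraction of indices $j\in\{0,\dots,i-1\}$ can have large distance, so most of them lie below any prescribed threshold $t$.

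More precisely, I would fix $t>0$, set $s=t$ (any positive value works), and call an index $i$ \emph{good} if $\sum_{j=0}^{i-1}d(f^{j}(x),f^{j}(y))<\alpha(i)s$. For a good $i$, the number of $j\in\{0,\dots,i-1\}$ with $d(f^{j}(x),f^{j}(y))\ge t$ is at most $\alpha(i)s/t$, hence
\[
\Phi_{xy}^{(i)}(t,f)\;\ge\;1-\frac{\alpha(i)\,s}{i\,t}.
\]
Since $\alpha\in\mathcal{A}$ satisfies $\alpha(i)/i\to 0$, the right-hand side tends to $1$ as $i\to\infty$ along good indices.

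It remains to produce a sequence of good indices tending to infinity. By hypothesis $\Phi_{xy}^{*}(s,f,\alpha)=1$, so there exists $n_k\to\infty$ with $\Phi_{xy}^{(n_k)}(s,f,\alpha)\to 1$; equivalently, the set of bad indices in $\{1,\dots,n_k\}$ has density tending to $0$. For $k$ large enough, the number of bad indices in $\{1,\dots,n_k\}$ is less than $n_k/2$, so some good index $i_k$ lies in $\{\lceil n_k/2\rceil,\dots,n_k\}$; in particular $i_k\to\infty$. Plugging this $i_k$ into the estimate above,
\[
\Phi_{xy}^{*}(t,f)\;\ge\;\limsup_{k\to\infty}\Phi_{xy}^{(i_k)}(t,f)\;\ge\;\lim_{k\to\infty}\Bigl(1-\tfrac{\alpha(i_k)s}{i_k t}\Bigr)=1,
\]
and since $\Phi_{xy}^{*}(t,f)\le 1$ trivially, equality holds. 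As $t>0$ was arbitrary, the DC1 condition is verified, completing the proof. I expect no serious obstacle here; the argument is essentially a one-line Markov inequality plus a pigeonhole to locate large good indices, and the assumption $\alpha(n)/n\to0$ is precisely what makes the error term vanish.
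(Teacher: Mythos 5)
Your proof is correct and follows essentially the same route as the paper: both arguments hinge on the Markov-type observation that when $\sum_{j=0}^{i-1}d(f^{j}(x),f^{j}(y))<\alpha(i)t$ with $\alpha(i)/i\to 0$, at most a vanishing fraction of the indices $j<i$ can have $d(f^{j}(x),f^{j}(y))\ge t$. The paper merely packages this slightly differently, first extracting from the $\alpha$-condition a subsequence $n_k$ along which $\frac{1}{n_k}\sum_{i=0}^{n_k-1}d(f^{i}(x),f^{i}(y))\to 0$ and then applying the same estimate there, whereas you apply it directly at large good indices located by pigeonhole.
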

\begin{proof}
	If $x,y\in X$ is $\alpha$-DC1-scrambled for some $\alpha\in\mathcal{A}$, one has that 
	$$\liminf_{n \to \infty}\frac{1}{n}\sum_{i=0}^{n-1}d(f^{i}(x),f^{i}(y))=0.$$
	We assume that there is a subsequence $\{n_{k}\}_{k=0}^{\infty}$ of $\{n\}$ such that $\lim\limits_{k \to \infty}\frac{1}{n_{k}}\sum_{i=0}^{n_{k}-1}d(f^{i}(x),f^{i}(y))=0.$
	Then for any $t>0$ and $\varepsilon>0$, there is $k_{t\varepsilon}\in\mathbb{N^{+}}$ such that $\frac{1}{n_{k}}\sum_{i=0}^{n_{k}-1}d(f^{i}(x),f^{i}(y))<t\varepsilon$ for any $k\geq k_{t\varepsilon}$. Thus we have 
	$$\frac{1}{n_{k}}|\{0\leq i \leq n_{k}-1:d(f^{i}(x),f^{i}(y))<t\}|> 1-\varepsilon
	.$$
	So $\limsup\limits_{n\to\infty}\frac{1}{n}|\{0\leq i \leq n-1:d(f^{i}(x),f^{i}(y))<t\}|=1.$
\end{proof}

\subsection{Specification property and exponential specification property.}
Specification property was first introduced by Bowen in \cite{Bowen}. However, we will use the definition used in \cite{Thompson2008} and \cite{Yam} because with this definition, the proofs of our main theorems will be much briefer. The differences between two kinds of definition have been elaborated in \cite{Thompson2008}. Before giving the definition, we make a notion that for a dynamical system $(X,f)$ and $x,y\in X,\ a,b\in\mathbb{N}$, we say $x$ $\varepsilon$-$traces$ $y$ on $[a,b]$ if $d(f^i(x),f^{i-a}(y))<\varepsilon$ for any $a\leq i\leq b$ and $x$ $\varepsilon$-$traces$ $y$ on $[a,b]$ with exponent $\lambda$ if $d(f^i(x),f^{i-a}(y))<\varepsilon e^{-\lambda\min\{i-a,b-i\}}$ for any $a\leq i\leq b$. 
\begin{Def}\label{definition of specification}
We say a dynamical system $(X,f)$ has Bowen’s specification property if, for
any $\varepsilon > 0$, there is a positive integer $K_\varepsilon$ such that for any integer s $\ge$ 2, any set $\{y_1,y_2,\cdots,y_s\}$ of $s$ points of $X$, and any sequence\\
$$0 = a_1 \le b_1 < a_2 \le b_2 < \cdots < a_s \le b_s$$ of 2$s$ non-negative integers with $$a_{m+1}-b_m \ge K_\varepsilon$$ for $m = 1,2,\cdots,s-1$, there is a point $x$ in $X$ such that the
following two conditions hold:
\begin{description}
	\item[(a)] $x$ $\varepsilon$-$traces$ $y_m$ on $[a_m,b_m]$ for all positive integers m $\le$ s;
	\item[(b)] $f^n(x) = x$, where $n = b_s + K_\varepsilon$.
\end{description}
If the periodicity condition (b) is omitted, we say that $f$ has specification property.
\end{Def}

\begin{Def}\label{definition of exp specification}
We say a dynamical system $(X,f)$ has Bowen’s exponential specification property with exponent $\lambda>0$ (only dependent on the system $f$ itself), if for
any $\varepsilon > 0$, there is a positive integer $K_\varepsilon$ such that for any integer s $\ge$ 2, any set $\{y_1,y_2,\cdots,y_s\}$ of $s$ points of $X$, and any sequence\\
$$0 = a_1 \le b_1 < a_2 \le b_2 < \cdots < a_s \le b_s$$ of 2$s$ non-negative integers with $$a_{m+1}-b_m \ge K_\varepsilon$$ for $m = 1,2,\cdots,s-1$, there is a point $x$ in $X$ such that the
following two conditions hold:
\begin{description}
	\item[(a)] $x$ $\varepsilon$-$traces$ $y_m$ on $[a_m,b_m]$ with exponent $\lambda$ for all positive integers m $\le$ s;
	\item[(b)] $f^n(x) = x$, where $n = b_s + K_\varepsilon$.
\end{description}
If the periodicity condition (b) is omitted, we say that $f$ has exponential specification property. When $f$ is a homeomorphism, we say that $f$ has exponential specification property if $a_1 \le b_1 < a_2 \le b_2 < \cdots < a_s \le b_s$ can be any integers.
\end{Def}


\begin{Lem}\label{lemma-unstable}
	Let $f:X\to X$ be a homeomorphism of a compact metric space $X$. If $(X,f)$ has exponential specification property with exponent $\lambda>0$, then for any $\varepsilon>0$, any integer s $\ge$ 2, any set $\{y_0,y_1,y_2,\cdots,y_s\}$ of $s+1$ points of $X$, and any sequence $$K_{\varepsilon}\leq  a_1 \le b_1 < a_2 \le b_2 < \cdots < a_s \le b_s$$ of 2$s$ non-negative integers with $$a_{m+1}-b_m \ge K_\varepsilon$$ for $m = 1,2,\cdots,s-1$, there is a point $x$ in $X$ such that the
	following two conditions hold:
	\begin{description}
		\item[(a)] $x$ $\varepsilon$-$traces$ $y_m$ on $[a_m,b_m]$ with exponent $\lambda$ for all positive integers m $\le$ s;
		\item[(b)] $d(f^{-i}x,f^{-i}y_{0})\leq \varepsilon e^{-\lambda i}$ for any integer $i\in\mathbb{N}$.
	\end{description}
\end{Lem}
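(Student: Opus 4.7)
The plan is to obtain $x$ as a compactness limit of a sequence $(x_N)_{N\ge 1}$ produced by applying the exponential specification property on ever longer windows extending into the past.

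First, for each integer $N \ge 1$, I set $y_0^{(N)} := f^{-N}(y_0)$ and apply Definition \ref{definition of exp specification} (in its homeomorphism version, which permits negative indices) to the $s+1$ points $y_0^{(N)}, y_1, \ldots, y_s$ and the $s+1$ consecutive intervals
$$[-N,0],\ [a_1,b_1],\ [a_2,b_2],\ \ldots,\ [a_s,b_s].$$
The spacing hypothesis is satisfied because $a_1 - 0 \ge K_\varepsilon$ (given $a_1 \ge K_\varepsilon$), and $a_{m+1}-b_m \ge K_\varepsilon$ for $m=1,\ldots,s-1$ by assumption. This yields a point $x_N \in X$ such that
$$d(f^i x_N,\, f^{i+N} y_0^{(N)}) \;=\; d(f^i x_N,\, f^i y_0) \;<\; \varepsilon\, e^{-\lambda \min\{i+N,\,-i\}} \qquad (-N \le i \le 0),$$
together with the required exponential tracing of each $y_m$ on $[a_m,b_m]$. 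Substituting $j=-i\ge 0$ rewrites the first estimate as $d(f^{-j} x_N, f^{-j} y_0) < \varepsilon e^{-\lambda \min\{N-j,\, j\}}$ for $0 \le j \le N$.

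Next, by compactness of $X$ I extract a subsequence $x_{N_k} \to x \in X$. For each fixed $j \in \mathbb{N}$ and every $k$ with $N_k \ge 2j$ one has $\min\{N_k-j,\,j\}=j$, so $d(f^{-j} x_{N_k}, f^{-j} y_0) < \varepsilon e^{-\lambda j}$. Continuity of $f^{-j}$ passes to the limit and yields $d(f^{-j} x, f^{-j} y_0) \le \varepsilon e^{-\lambda j}$, which is exactly (b). An identical index-by-index limiting argument on each forward interval $[a_m,b_m]$, using that the tracing estimates for $x_{N_k}$ there do not depend on $N_k$ once $N_k$ is large, delivers the $\varepsilon$-tracing with exponent $\lambda$ of (a).

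The one technical nuisance is that limits only preserve weak inequalities, whereas Definition \ref{definition of exp specification} uses strict ones. This is cosmetic: conclusion (b) is already phrased with $\le$, and for (a) one either accepts the non-strict version or applies the exponential specification in the construction step with $\varepsilon/2$ (and the correspondingly possibly larger spacing $K_{\varepsilon/2}$, which can be absorbed since the hypothesis on $a_m,b_m$ can always be strengthened) to recover strict inequalities in the limit. The essential point, and the only place where the homeomorphism hypothesis is used, is that Definition \ref{definition of exp specification} allows arbitrarily negative indices; this is what lets the left endpoint $-N$ tend to $-\infty$ while preserving the exponential rate $e^{-\lambda j}$ at every fixed $j$, and there is no serious obstacle beyond this observation.
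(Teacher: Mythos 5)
Your proposal is correct and follows essentially the same route as the paper: the paper likewise applies the exponential specification property (in its homeomorphism form, with negative indices allowed) to the points $f^{-2j-1}y_0, y_1,\dots,y_s$ on the intervals $[-2j-1,0],[a_1,b_1],\dots,[a_s,b_s]$, observes that for $0\le i\le j$ the minimum in the exponent equals $i$, and takes an accumulation point of the resulting sequence. Your additional remark about strict versus weak inequalities in the limit is a minor refinement the paper leaves implicit.
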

\begin{proof}
	For any integer $j\in\mathbb{N}$, by exponential specification property, there is a point $x_{j}\in X$ such that $x_j$ $\varepsilon$-$traces$ $f^{-2j-1}y_{0},y_1\dots,y_s,$ on 
	$[-2j-1,0],$
	$[a_1,b_1],$
	$\dots,$
	$[a_s,b_s]$
	with exponent $\lambda$ respectively.
	Then for any positive integer $i\leq j$, $d(f^{-i}x_j,f^{-i}y_{0})< \varepsilon e^{-\lambda i}$. Let $x$ be a accumulation point of $\{x_{j}\}_{j=1}^{\infty}.$ Then the item (a) and (b) hold for $x.$
\end{proof}


\subsection{Shadowing property and exponential shadowing property}
An infinite sequence $(x_{n})_{n=0}^{\infty}$ of points
in $X$ is a $\delta$-pseudo-orbit for a dynamical system $(X,f)$ if $d(x_{n+1},f(x_{n}))<\delta$ for each $n \in \mathbb{N}$. We say that a dynamical system $(X,f)$ has the shadowing property if for every $\varepsilon>0$ there is a $\delta >0$ such that any $\delta$-pseudo-orbit $(x_{n})_{n=0}^{\infty}$ can be $\varepsilon$-traced by a point
$y\in X$, that is $d(f^{n}(y),x_{n})<\varepsilon$ for all $n \in \mathbb{N}$.

Given $x\in X$ and $i\in\mathbb{N}$, let
\begin{equation*}
	\{x,i\}:=\{f^{j}(x):j=0,1,\dots,i-1\}.
\end{equation*}
For a sequence of points $(x_{n})_{n=-\infty}^{+\infty}$ in $X$ and a sequence of positive integers $(i_n)_{n=-\infty}^{+\infty}$ we call $\left\{x_{n}, i_{n}\right\}_{n=-\infty}^{+\infty}$ a $\delta$ -pseudo-orbit, if
$
d\left(f^{i_{n}}\left(x_{i}\right), x_{i+1}\right)<\delta
$
for all $n\in\mathbb{Z}.$ Given $\varepsilon>0$ and $\lambda>0$, we call a point $x\in X$ an (exponentially) $(\varepsilon,\lambda)$-shadowing point for a pseudo-orbit $\left\{x_{n}, i_{n}\right\}_{n=-\infty}^{+\infty},$ if
$$
d\left(f^{c_{n}+j}(x), f^{j}\left(x_{n}\right)\right)<\varepsilon \cdot e^{-\min \left\{j, i_n-1-j\right\} \lambda}
$$
for all $0\leq j\leq i_{n}-1$ and $n\in\mathbb{Z}$, where $c_{i}$ is defined as
$$
c_{n}=\left\{\begin{array}{ll}
	0, & \text { for } n=0 \\
	\sum_{m=0}^{n-1} i_{m}, & \text { for } n>0 \\
	-\sum_{m=n}^{-1} i_{m}, & \text { for } n<0.
\end{array}\right.
$$
Let $\lambda>0$. We say that a dynamical system $(X,f)$ has the exponential shadowing property with exponent $\lambda$ if for every $\varepsilon>0$ there is a $\delta >0$ such that any $\delta$-pseudo-orbit $\{x_{n},i_{n}\}_{n=0}^{\infty}$ can be $(\varepsilon,\lambda)$-traced by a point $x\in X$. If $f$ is a homeomorphism, we say that $(X,f)$ has the exponential shadowing property with exponent $\lambda$ if for any $\varepsilon>0$ there exists $\delta>0$ such that any $\delta$-pseudo-orbit $\left\{x_{n}, i_{n}\right\}_{n=-\infty}^{+\infty}$ can be $(\varepsilon,\lambda)$-traced by a point $x\in X$.

Let $f \in \operatorname{Diff}^{1}(M).$ A hyperbolic set $\Lambda$ is said to be locally maximal for $f$ if there exists a neighborhood $U$ of $\Lambda$ in $M$ such that $\Lambda=\bigcap_{n=-\infty}^{+\infty}f^{n}(U)$.
\begin{Prop}\cite[Proposition 2.7]{Tian2015-2}\label{proposition-localmax}
	Let $f \in \operatorname{Diff}^{1}(M)$ and $\Lambda$ be an invariant locally maximal hyperbolic closed set, then $(\Lambda,f|_{\Lambda})$ has exponential shadowing property.
\end{Prop}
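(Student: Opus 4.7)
The plan is to establish exponential shadowing for a locally maximal hyperbolic set by combining the classical tools of hyperbolic theory: an adapted metric, the stable/unstable manifold theorem, and the local product structure that local maximality affords.

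First I would fix an adapted Riemannian metric so that the hyperbolic splitting $T_\Lambda M = E^s \oplus E^u$ satisfies $\|Df|_{E^s}\| \le e^{-\lambda}$ and $\|Df^{-1}|_{E^u}\| \le e^{-\lambda}$ for some $\lambda > 0$. This $\lambda$ will be precisely the exponent appearing in the exponential shadowing conclusion. I would extend the splitting continuously to a small neighborhood $U$ of $\Lambda$ and, by shrinking, ensure that local stable and unstable manifolds $W^s_{\varepsilon_0}(x)$, $W^u_{\varepsilon_0}(x)$ exist for every $x \in \Lambda$, with $f(W^s_{\varepsilon_0}(x)) \subset W^s_{e^{-\lambda}\varepsilon_0}(f(x))$ and the symmetric statement for $f^{-1}$ on unstable manifolds. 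Local maximality, $\Lambda = \bigcap_{n \in \mathbb{Z}} f^n(U)$, then yields the local product structure: there exist $\delta_0 \in (0,\varepsilon_0)$ such that for all $x,y \in \Lambda$ with $d(x,y) < \delta_0$ the intersection $[x,y] := W^s_{\varepsilon_0}(x) \cap W^u_{\varepsilon_0}(y)$ is a single point of $\Lambda$, and the bracket $(x,y) \mapsto [x,y]$ is Lipschitz with a uniform constant.

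Given $\varepsilon > 0$ I would choose $\delta \in (0,\delta_0)$ and then build the shadowing point for a bi-infinite pseudo-orbit $\{x_n, i_n\}_{n \in \mathbb{Z}}$ by first handling finite truncations $\{x_n, i_n\}_{|n| \le N}$ and taking a subsequential limit in the compact set $\Lambda$. For a finite block, the standard construction is inductive: starting from the right-most chunk, one produces a candidate tracing $x_N$ on $[c_N, c_{N+1}-1]$, then applies $f^{-i_{N-1}}$ and slides along the stable leaf to pick up a tracing of $x_{N-1}$ via the bracket $[\,\cdot\,, x_{N-1}]$, continuing backwards. Iterating this bracket procedure on both sides of an arbitrary index $n$ shows that the accumulated error at time $c_n + j$ splits into two parts: a contribution from the future chunks, which sits in $W^s_{\varepsilon}$ and is therefore contracted by a factor $e^{-j\lambda}$ under the first $j$ iterates of $f$, and a contribution from the past chunks, which sits in $W^u_{\varepsilon}$ and is contracted by $e^{-(i_n-1-j)\lambda}$ under $f^{-(i_n-1-j)}$. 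Summing the resulting geometric series over the chunks then gives
\[
d\bigl(f^{c_n+j}(x),\, f^j(x_n)\bigr) \;\le\; \varepsilon \cdot e^{-\min\{j,\, i_n - 1 - j\}\lambda},
\]
which is exactly the required estimate.

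The main obstacle is bookkeeping: one must ensure that the exponential decay is visible simultaneously from both ends of every orbit chunk, so that the estimate does not degenerate near the middle of long chunks or in the limit $N \to \infty$. Concretely, the graph-transform estimates along stable and unstable leaves must be arranged so that corrections made at distant indices genuinely produce exponentially small perturbations at the current index; this requires choosing $\delta$ small enough relative to $\varepsilon$ and the Lipschitz constant of the bracket to absorb the geometric series uniformly in $N$ and in the chunk lengths $i_n$. Once this quantitative control is in place, passing to a subsequential limit of the finite-truncation shadowing points produces a point $x \in \Lambda$ with the desired $(\varepsilon,\lambda)$-shadowing for the full bi-infinite pseudo-orbit, which completes the proof.
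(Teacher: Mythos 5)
The paper does not prove this proposition at all --- it is quoted verbatim from \cite[Proposition 2.7]{Tian2015-2} --- so there is no internal argument to compare against; your outline is the standard quantitative shadowing lemma for locally maximal hyperbolic sets (adapted metric, local product structure, inductive bracket construction, and the splitting of the tracing error at time $c_n+j$ into a stable part contracted by $e^{-\lambda j}$ from the left end of the chunk and an unstable part contracted by $e^{-\lambda(i_n-1-j)}$ from the right end), which is exactly the argument the cited reference carries out and is correct up to absorbing a harmless factor of $2$ by starting from $\varepsilon/2$.
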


Recall that $(X,f)$ is mixing if for any non-empty open sets $U,V\subseteq X,$ there is $N\in\mathbb{N}$ such that $f^{-n}U\cap V\neq\emptyset$ for any $n\geq N.$
\begin{Prop}\label{prop-specif}\cite[Proposition 23.20]{Sig}
	Suppose that a dynamical system $(X,f)$ has shadowing property and is mixing, then $(X,f)$ has specification property.
\end{Prop}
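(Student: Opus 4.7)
\textbf{Proof plan for Proposition \ref{prop-specif}.}

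The plan is to combine shadowing with a quantitative form of mixing: shadowing lets us realize any pseudo-orbit by a true orbit, and mixing (made uniform through compactness) lets us bridge between prescribed finite orbit segments with a single ``jump'' of controlled size.

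First, given $\varepsilon>0$, invoke the shadowing property to produce $\delta>0$ such that every $\delta$-pseudo-orbit in $X$ is $\varepsilon$-traced by some true orbit. By uniform continuity of $f$, shrink $\delta$ if necessary so that $d(u,v)<\delta$ implies $d(f(u),f(v))<\delta/2$. Next, cover the compact space $X$ by finitely many open balls $B(p_1,\delta/4),\dots,B(p_r,\delta/4)$. Since $(X,f)$ is mixing, for every ordered pair $(i,j)$ with $1\le i,j\le r$ there is an integer $N_{ij}$ so that $f^{-n}(B(p_j,\delta/4))\cap B(p_i,\delta/4)\neq\emptyset$ for all $n\ge N_{ij}$. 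Put $K_\varepsilon:=\max_{i,j}N_{ij}+1$. By the covering property, this forces: for every $u,v\in X$ and every $n\ge K_\varepsilon$ there exists $z\in X$ with $d(z,u)<\delta/2$ and $d(f^n(z),v)<\delta/2$.

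Now fix the specification data $y_1,\dots,y_s$ and $0=a_1\le b_1<a_2\le b_2<\cdots<a_s\le b_s$ with $a_{m+1}-b_m\ge K_\varepsilon$ for each $m$. For each gap $m=1,\dots,s-1$, apply the uniform bridging statement to the pair $u_m:=f^{b_m-a_m+1}(y_m)$ and $v_m:=y_{m+1}$ with $n_m:=a_{m+1}-b_m-1\ge K_\varepsilon-1$ (after adjusting the constant $K_\varepsilon$ upward by one if needed) to obtain $z_m\in X$ with $d(z_m,u_m)<\delta/2$ and $d(f^{n_m}(z_m),v_m)<\delta/2$. Define the sequence $(\bar x_n)_{n\ge 0}$ by
\[
\bar x_n=\begin{cases}f^{n-a_m}(y_m),& a_m\le n\le b_m,\\ f^{n-b_m-1}(z_m),& b_m<n<a_{m+1}.\end{cases}
\]
Check that consecutive terms satisfy $d(f(\bar x_n),\bar x_{n+1})<\delta$: within each block the relation is an equality; at $n=b_m$ it reduces to $d(f(u_m \text{ shifted}),z_m)<\delta/2$ via the choice of $z_m$; at $n=a_{m+1}-1$ it reduces to $d(f^{n_m+1-a_m}(z_m),y_{m+1})<\delta/2$, again by the choice of $z_m$. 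Hence $(\bar x_n)$ is a $\delta$-pseudo-orbit, so shadowing furnishes $x\in X$ with $d(f^n(x),\bar x_n)<\varepsilon$ for every $n\ge 0$. Restricted to $[a_m,b_m]$ this is exactly $d(f^n(x),f^{n-a_m}(y_m))<\varepsilon$, i.e.\ $x$ $\varepsilon$-traces each $y_m$ on $[a_m,b_m]$, giving specification property in the sense of Definition \ref{definition of specification}(a).

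The main obstacle is the step extracting a \emph{uniform} gap length $K_\varepsilon$ from mixing: mixing a priori only yields pair-dependent mixing times $N_{ij}$ for pairs of balls from a finite cover. Compactness plus the trick of passing to a fixed finite $(\delta/4)$-cover turns these finitely many $N_{ij}$'s into a single $K_\varepsilon$ that works for all pairs of points in $X$; this is the only place where compactness is essential and it is what makes the bridging construction amount to a single $\delta$-pseudo-orbit rather than a sequence of uncontrolled jumps. The periodicity clause (b) of Definition \ref{definition of specification} is not required here, as the paper only asserts specification (not Bowen's specification).
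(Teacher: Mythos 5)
Your proof is correct. Note that the paper itself gives no argument here — it simply cites \cite[Proposition 23.20]{Sig} — so what you have written is the standard proof that lies behind that citation: uniformize the mixing times over a finite $\delta/4$-cover of the compact space to obtain a single transition length $K_\varepsilon$ valid for all pairs of points, concatenate the prescribed orbit segments with the bridging points $z_m$ into one $\delta$-pseudo-orbit, and shadow it. Two small points to clean up, neither of which is a real gap. First, the assertion that one can shrink $\delta$ so that $d(u,v)<\delta$ implies $d(f(u),f(v))<\delta/2$ is false in general (uniform continuity controls $d(f(u),f(v))$ by an $\eta$ chosen in advance, not by a fraction of the same $\delta$; think of an expanding map). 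Fortunately you never use it: both transition estimates, $d\bigl(f(\bar x_{b_m}),\bar x_{b_m+1}\bigr)=d(u_m,z_m)<\delta/2$ and $d\bigl(f(\bar x_{a_{m+1}-1}),\bar x_{a_{m+1}}\bigr)=d(f^{n_m}(z_m),y_{m+1})<\delta/2$, follow directly from the choice of $z_m$, so simply delete that sentence. Second, the paper's shadowing property is stated for infinite pseudo-orbits $(x_n)_{n=0}^{\infty}$, whereas your $(\bar x_n)$ is only defined up to $n=b_s$; extend it by $\bar x_n:=f^{\,n-b_s}(\bar x_{b_s})$ for $n>b_s$ (still a $\delta$-pseudo-orbit) before invoking shadowing. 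With these adjustments the argument is complete, and you are right that the periodicity clause is not needed since the proposition asserts only the non-periodic specification property.
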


By the same method of \cite[Proposition 23.20]{Sig}, we have the following.
\begin{Prop}\label{prop-exp-specif}
	Suppose that a dynamical system $(X,f)$ has exponential shadowing property and is mixing, then $(X,f)$ has exponential specification property.
\end{Prop}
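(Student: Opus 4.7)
The plan is to adapt Sigmund's proof of the non-exponential version \cite[Proposition 23.20]{Sig}: mixing together with compactness supplies ``bridging'' points that glue the prescribed orbit segments into a single pseudo-orbit, and the exponential shadowing property then delivers the tracing point. The one twist is that the pseudo-orbit must be organized into \emph{blocks} whose boundaries sit exactly at the positions $a_m$ and $b_m+1$, so that the exponential decay of the shadowing error in the middle of each block reproduces the factor $e^{-\lambda\min\{j,\,i_n-1-j\}}$ demanded in Definition \ref{definition of exp specification}.

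Fix $\varepsilon>0$. First apply the exponential shadowing property with exponent $\lambda$ to obtain $\delta\in(0,\varepsilon)$ such that every forward $\delta$-pseudo-orbit of blocks admits an exponentially $(\varepsilon,\lambda)$-shadowing point. Next cover $X$ by finitely many open balls of radius $\delta/2$; for each pair of centers mixing supplies a time after which the preimage of one meets the other, and taking the maximum over the finitely many pairs yields an integer $K^{\ast}$ such that $B(u,\delta)\cap f^{-n}(B(v,\delta))\neq\emptyset$ for every $u,v\in X$ and every $n\geq K^{\ast}$. Put $K_{\varepsilon}:=K^{\ast}+1$.

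Given $y_1,\dots,y_s$ and times $0=a_1\leq b_1<a_2\leq b_2<\cdots<a_s\leq b_s$ with $a_{m+1}-b_m\geq K_{\varepsilon}$, build an alternating forward pseudo-orbit whose \emph{segment} blocks have $x_n=y_m$ and $i_n=b_m-a_m+1$ for $m=1,\dots,s$, and whose \emph{bridging} blocks have $i_n=a_{m+1}-b_m-1$ and $x_n$ chosen in $B(f^{b_m-a_m+1}(y_m),\delta)\cap f^{-(a_{m+1}-b_m-1)}(B(y_{m+1},\delta))$. Such a choice exists because $a_{m+1}-b_m-1\geq K^{\ast}$, and it makes the sequence a genuine $\delta$-pseudo-orbit in the sense of Section \ref{section-preliminaries}. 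Past the last segment block, extend along the true orbit (with $i_n=1$) so that the sequence is defined for all indices $\geq 0$. Apply exponential shadowing to obtain $x\in X$ that $(\varepsilon,\lambda)$-shadows this pseudo-orbit.

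A one-line induction on $m$ shows that the cumulative time at the start of the $m$-th segment block equals $a_m$. Consequently, the shadowing estimate for that block reads $d(f^{a_m+j}(x),f^{j}(y_m))<\varepsilon e^{-\lambda\min\{j,\,b_m-a_m-j\}}$ for $0\leq j\leq b_m-a_m$, and reindexing $i=a_m+j$ recovers exactly the exponential $\varepsilon$-tracing of $y_m$ on $[a_m,b_m]$ required by Definition \ref{definition of exp specification}. The only delicate point is this alignment itself: the segment blocks must start at $a_m$ and end at $b_m$ so that the shadowing exponent $\min\{j,\,i_n-1-j\}$ matches $\min\{i-a_m,\,b_m-i\}$, and the bridging blocks must be long enough ($\geq K^{\ast}$) for mixing to deliver a connector. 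Once the block lengths are set as above, the rest is the standard compactness-plus-mixing recipe and carries over without change from \cite{Sig}.
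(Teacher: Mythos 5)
Your proposal is correct and is precisely the argument the paper intends: the paper's proof consists of the single remark that the claim follows ``by the same method of \cite[Proposition 23.20]{Sig}'', and you have carried out that method, with the block boundaries of the pseudo-orbit aligned at $a_m$ and $b_m+1$ so that the exponential shadowing estimate $\varepsilon e^{-\lambda\min\{j,\,i_n-1-j\}}$ turns directly into the required tracing bound $\varepsilon e^{-\lambda\min\{i-a_m,\,b_m-i\}}$. No gaps; the uniform mixing constant $K^{\ast}$ via a finite $\delta/2$-cover and the induction showing the $m$-th segment block starts at cumulative time $a_m$ are exactly the right ingredients.
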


\begin{Def} 
	A point $x \in X$ is almost periodic, if for any open neighborhood $U$ of $x$, there exists $N \in \mathbb{N}$ such that $f^k(x) \in U$ for some $k \in [n, n+N]$ for every $n \in \mathbb{N}$. It is well-known that $x$ { is almost periodic} $\Leftrightarrow$ $\overline{orb(x,f)}$ is a minimal set. A point $x$ is periodic, if there exists natural number $n$ such that $f^n(x)=x.$ A point $x$ is fixed, if $f(x)=x.$ We denote the sets of all almost periodic points, periodic points and fixed points by $AP$, $Per$ and $Fix$ respectively.
\end{Def}

\begin{Lem}\cite[Theorem B]{LO}\label{BE}
	Suppose that a dynamical system $(X,f)$ is transitive and has the shadowing property. Then for every invariant measure $\mu \in \mathcal{M}_{f}(X)$ and every $0\leq c \leq h_{\mu}(f)$ there exists a sequence of ergodic measures $(\mu_{n})_{n=1}^{\infty} \subseteq \mathcal{M}_{f}^{e}(X)$ supported on almost
	one-to-one extensions of odometers such that $\lim\limits_{n \to \infty }\mu_{n}=\mu$ and $\lim\limits_{n \to \infty }h_{\mu_{n}}(f)=c$.
\end{Lem}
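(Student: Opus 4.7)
The plan is to build the measures $\mu_n$ as invariant measures supported on carefully constructed minimal subsystems that factor onto odometers with almost one-to-one factor maps. The backbone of the construction is the specification property, which follows from transitivity and shadowing by Proposition \ref{prop-specif}, so from here on I would argue as if $(X,f)$ has specification.

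First I would choose parameters to encode one scale at a time. Fix $\mu\in\mathcal{M}_f(X)$ and $0\le c\le h_\mu(f)$. Using the Variational Principle and Katok's entropy formula, for any tolerance $\varepsilon>0$ and any $0\le c'\le h_\mu(f)$ one can find an integer $N$ and a family $\mathcal{F}$ of orbit segments $(x,fx,\dots,f^{N-1}x)$ such that (i) the empirical measure $\frac{1}{N}\sum_{j=0}^{N-1}\delta_{f^j x}$ is $\varepsilon$-close to $\mu$ in a fixed weak-$*$ metric for every segment, and (ii) the family is $(N,\delta)$-separated with $\log|\mathcal{F}|/N$ within $\varepsilon$ of $c'$. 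This gives us, at a single scale, both the measure and the entropy data we want to realize.

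Next I would iterate across nested scales. Pick periods $p_1\mid p_2\mid p_3\mid\cdots$ tending to infinity, gap lengths $g_k=p_k-N_k\ge K_{\varepsilon_k}$ (where $K_{\varepsilon_k}$ is the specification constant), and families $\mathcal{F}_k$ of length $N_k$ chosen as above with target entropy $c_k\to c$ and tolerance $\varepsilon_k\to 0$. Using specification, every bi-infinite sequence $(w_j^{(k)})_{j\in\mathbb{Z}}\in\mathcal{F}_k^{\mathbb{Z}}$ yields, after shadowing the periodic concatenation at gap scale $g_k$, an actual orbit that shadows a $p_k$-periodic scaffold. Let $X_n$ be the closure of the union of such orbits at level $n$, thinned so that one fixed minimal ergodic concatenation pattern is chosen at every scale $k>n$; let $\mu_n$ be an ergodic invariant measure on the resulting minimal set. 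The phase-projection $\pi_n\colon X_n\to Z_n:=\varprojlim_{k}\mathbb{Z}/p_k$ is continuous, equivariant (rotation by $1$ on $Z_n$), and almost one-to-one provided each $g_k$ is large enough relative to the shadowing error at scale $k$ to force uniqueness of the shadowing point at a residual (and hence $\mu_n$-full) set of phases. The entropy of $\mu_n$ equals $\log|\mathcal{F}_n|/p_n\to c$ and weak-$*$ convergence $\mu_n\to\mu$ comes from the empirical-measure control (i).

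The main obstacle I expect is the simultaneous control of all three limits: the weak-$*$ convergence $\mu_n\to\mu$, the entropy convergence $h_{\mu_n}(f)\to c$, and the almost one-to-one property. Pushing $h_{\mu_n}(f)$ toward a prescribed $c\le h_\mu(f)$ (rather than just toward $h_\mu(f)$) demands $|\mathcal{F}_k|\sim\exp(N_kc)$ while the segments themselves must remain Birkhoff-generic for $\mu$; selecting enough separated generic segments to hit the desired exponential count requires Katok's lower entropy estimate on generic sets, and this is the delicate step. Equally subtle is the order of quantifiers in the shadowing-to-almost-one-to-one passage: at each scale $k$, the gap $g_k$ must dominate the shadowing error so the shadowed orbit is essentially determined by its phase, yet be small enough relative to $N_k$ that concatenation density preserves the target empirical measure. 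Carrying out the parameter selection $(\varepsilon_k,\delta_k,N_k,g_k,p_k)$ in the correct order is the bookkeeping core of the proof.
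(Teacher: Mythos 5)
This lemma is quoted verbatim from \cite{LO} (Theorem B there); the paper offers no proof of its own, so your sketch can only be measured against the statement and the paper's surrounding machinery. Your proposal has a genuine gap at its very first step: you claim that specification ``follows from transitivity and shadowing by Proposition \ref{prop-specif}'', but that proposition requires the system to be \emph{mixing}, not merely transitive. Transitivity plus shadowing does not imply specification: specification forces topological mixing, and a transitive, non-mixing subshift of finite type (two symbols that must alternate, say) is transitive with shadowing yet has no specification. This is not cosmetic, since your entire construction rests on gluing arbitrary orbit segments with a uniform gap $K_{\varepsilon_k}$. In the merely transitive setting one must either first pass to a regular periodic decomposition (as the paper itself does in Lemmas \ref{AQ}--\ref{AU} for other purposes) and work with a mixing power of $f$ on each clopen piece --- which then forces extra care in how the odometer periods $p_k$ interact with the length of the decomposition --- or work directly with shadowing plus transitivity, in which case the connecting segments between consecutive words have lengths depending on the particular pair of words being joined and the uniform-gap bookkeeping collapses. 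The argument in \cite{LO} takes the latter, purely shadowing-based route.

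A second, independent problem is your justification of the almost one-to-one property: you argue that the factor map onto the odometer is almost one-to-one because large gaps ``force uniqueness of the shadowing point'' over a residual set of phases. Uniqueness of shadowing points is an expansivity phenomenon and is not available under the hypotheses here; in a non-expansive system a $\delta$-pseudo-orbit can be $\varepsilon$-traced by a continuum of points no matter how the gaps are chosen. In the Toeplitz-type constructions that actually yield almost one-to-one extensions of odometers, the singleton fibers come from the combinatorial density of coordinates that become permanently determined in the limit, not from uniqueness of tracing. The remaining skeleton of your sketch --- nested periodic scaffolds, Katok-separated families of $\mu$-generic segments calibrated to entropy $c$, empirical-measure control --- is consistent with the general strategy of \cite{LO}, but the two steps above, as written, would fail.
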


\begin{Def}\label{definition of entropy-dense property}
	We say $f$ satisfies the entropy-dense property if for any $\mu \in \mathcal{M}_{f}(X)$, for any neighbourhood $G$ of $\mu$ in $\mathcal{M}(X)$, and for any $\eta >0$, there exists a closed $f$-invariant set $\Lambda_{\mu}\subseteq X$, such that  $\mathcal{M}_{f}(\Lambda_{\mu})\subseteq G$ and $h_{top}(f,\Lambda_{\mu})>h_{\mu}-\eta$. By the classical variational principle, it is equivalent that for any neighbourhood $G$ of $\mu$ in $\mathcal{M}(X)$, and for any $\eta >0$, there exists a $\nu \in \mathcal{M}_{f}^{e}(X)$ such that $h_{\nu}>h_{\mu}-\eta$ and $\mathcal{M}_{f}(S_{\nu})\subseteq G$.
\end{Def}
By \cite[Proposition 2.3(1)]{PS}, entropy-dense property holds for systems with approximate product property. From definitions, if a dynamical system has specification property or has shadowing property and transitivity, then it has  approximate product property. So we have the following.

\begin{Prop}\cite[Proposition 2.3 (1)]{PS}\label{proposition of entropy-dense property}
	Suppose that $(X,f)$ is a dynamical system satisfying one of the following:
	\begin{description}
		\item[(1)] specification property;
		\item[(2)] shadowing property and transitivity.
	\end{description}
	Then $(X,f)$ has entropy-dense property.
\end{Prop}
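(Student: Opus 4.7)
The plan is to prove the proposition by first reducing both hypotheses to Pfister and Sullivan's \emph{approximate product property} (APP), then invoking their main entropy-density theorem. Recall APP in this setting: for every $\varepsilon_1,\varepsilon_2>0$ there exists $N\in\mathbb{N}$ such that for every $n\geq N$ and every finite sequence $x_1,\ldots,x_k\in X$, one can find nonnegative integers $h_1=0<h_2<\cdots<h_k$ with $h_{j+1}-h_j\geq n$ and a point $y\in X$ whose orbit $\varepsilon_1$-traces each $x_j$ on $[h_j,h_j+n-1]$ except possibly on an exceptional set of cardinality at most $\varepsilon_2 n$.

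For case (1), specification yields a stronger conclusion with \emph{no} exceptional indices. Given $\varepsilon_1>0$, let $K=K_{\varepsilon_1}$ be the Bowen gap constant from Definition \ref{definition of specification}; apply specification to the orbit segments of length $n$ starting at $x_1,\ldots,x_k$ with gaps $h_{j+1}-h_j=n+K$ to obtain the desired tracing point $y$. This works for every $n\geq 1$, so APP holds trivially.

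For case (2), I would first choose $\delta>0$ from the shadowing property for $\varepsilon_1$. Transitivity together with compactness provides a uniform bound $L$ on the length of orbit pieces needed to connect arbitrary $\delta/2$-balls: fix a finite cover $\{U_1,\ldots,U_r\}$ by $\delta/2$-balls, take $t_{ij}\in N(U_i,U_j)$ by transitivity, and set $L=\max_{i,j}t_{ij}$. Then concatenate the blocks $x_j,f(x_j),\ldots,f^{n-1}(x_j)$ via connecting orbit segments of length $\leq L$ to form a $\delta$-pseudo-orbit, which shadowing $\varepsilon_1$-traces by some $y$. Choosing $N$ so that $L/N<\varepsilon_2$ and setting $h_{j+1}-h_j=n+L$, the $L$ ``transition'' indices per block constitute a $\leq\varepsilon_2$ fraction, yielding APP.

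With APP established in both cases, the conclusion is precisely [PS, Proposition 2.3(1)]: given $\mu\in\mathcal{M}_f(X)$, any weak-$\ast$ neighborhood $G$ of $\mu$, and any $\eta>0$, the APP allows one to splice together finite-orbit segments sampled from generic points of $\mu$ into long pseudo-orbits that remain close to $\mu$ in the empirical-measure sense, and to extract from them an $(n,\varepsilon)$-separated set realizing entropy at least $h_\mu(f)-\eta$; averaging the Dirac masses along these orbits produces an ergodic $\nu$ with $S_\nu$ close to $S_\mu$ and $h_\nu(f)>h_\mu(f)-\eta$. This last step is the genuine technical heart of the argument and the principal obstacle, but since we are citing [PS] the present proof reduces to the two short reductions above.
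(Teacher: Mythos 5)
Your proposal takes essentially the same route as the paper: the paper proves this proposition simply by observing, in the paragraph preceding it, that both hypotheses imply Pfister--Sullivan's approximate product property ``from definitions'' and then citing \cite[Proposition 2.3(1)]{PS}, and your two reductions (specification with gap constant $K_{\varepsilon_1}$ in case (1), and the shadowing-plus-finite-cover connecting-orbit argument in case (2)) just fill in the details the paper leaves implicit. The argument is correct, up to the routine remark that the approximate product property also requires the gaps to satisfy $h_{j+1}-h_j\leq n(1+\delta_2)$, which your choices $n+K_{\varepsilon_1}$ and $n+L$ satisfy once $n$ is large relative to $K_{\varepsilon_1}$, respectively $L$.
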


\subsection{Equivalent statements of recurrence}
Let us recall some equivalent statements of recurrence  referring to \cite{HYZ,YYW,ZH,DT} whose proofs are fundamental and standard. { These statements reveal the close connection between points with different recurrent frequency and the support of measures 'generated' by the points.} Suppose that $(X,f)$ is dynamical system. 
Define the measure center of $x$ by $C_x:=\overline{\bigcup_{m\in V_f(x)}S_m}.$ 

\begin{Prop}\label{prop1} \cite{HYZ}
	For a dynamical system $(X,f)$,  let $x \in Rec$. Then the following conditions are equivalent.
	\begin{description}
		\item[(a)] $x \in  Rec^{low}$;
		\item[(b)] $x \in C_x = S_\mu\text{ for any } \mu \in V_f(x)$;
		\item[(c)] $S_\mu = \omega(f,x)\text{ for any } \mu \in V_f(x).$
	\end{description}
\end{Prop}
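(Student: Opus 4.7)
The plan is to prove the cycle $(a)\Rightarrow(c)\Rightarrow(b)\Rightarrow(a)$, with the Portmanteau theorem and the forward invariance of the support $S_\mu$ of an $f$-invariant measure $\mu$ doing essentially all the work.

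For $(a)\Rightarrow(c)$, fix $\mu\in V_f(x)$ and a subsequence $n_k\to\infty$ with $\mathcal{E}_{n_k}(x)\to\mu$. First I would show $x\in S_\mu$: for every $\varepsilon>0$, the hypothesis $x\in Rec^{low}$ gives $\underline{d}(N(x,B(x,\varepsilon)))>0$, and since the liminf along a subsequence dominates the full liminf, the closed-set Portmanteau inequality yields
\[\mu(\overline{B(x,\varepsilon)})\ge\limsup_k\mathcal{E}_{n_k}(x)(\overline{B(x,\varepsilon)})\ge\liminf_k\mathcal{E}_{n_k}(x)(B(x,\varepsilon))\ge\underline{d}(N(x,B(x,\varepsilon)))>0,\]
and shrinking $\varepsilon$ lifts this to $x\in S_\mu$. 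For the inclusion $S_\mu\subseteq\omega(f,x)$, take $y\in S_\mu$ and an open $V\ni y$; then $\mu(V)>0$ and the open-set Portmanteau inequality $\liminf_k\mathcal{E}_{n_k}(x)(V)\ge\mu(V)>0$ forces $f^i(x)\in V$ for infinitely many $i$, so $y\in\omega(f,x)$. The reverse inclusion $\omega(f,x)\subseteq S_\mu$ follows from $x\in S_\mu$ together with the fact that $S_\mu$ is closed and forward invariant (a standard consequence of $f$-invariance of $\mu$), so it contains $\overline{\orb(x,f)}\supseteq\omega(f,x)$.

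The implication $(c)\Rightarrow(b)$ is essentially tautological: from $x\in Rec$ one has $x\in\omega(f,x)$, so $(c)$ immediately gives $x\in S_\mu$ for every $\mu\in V_f(x)$; since every $S_\mu$ coincides with the closed set $\omega(f,x)$, we obtain $C_x=\overline{\bigcup_{\mu\in V_f(x)}S_\mu}=\omega(f,x)=S_\mu$, whence $x\in C_x$. For $(b)\Rightarrow(a)$, I would argue by contradiction: if some open neighborhood $U$ of $x$ had $\underline{d}(N(x,U))=0$, pick $n_k\to\infty$ with $\mathcal{E}_{n_k}(x)(U)\to 0$, extract a further subsequence along which the empirical measures converge to some $\mu\in V_f(x)$, and apply the open-set Portmanteau inequality to obtain $\mu(U)\le\liminf_k\mathcal{E}_{n_k}(x)(U)=0$, contradicting $x\in S_\mu$ granted by $(b)$.

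I do not expect a genuine obstacle: the argument is an unfolding of definitions, provided one invokes the correct direction of Portmanteau at each step, namely the closed-set bound $\mu(F)\ge\limsup\mu_n(F)$ to extract positive mass for $S_\mu$ and the open-set bound $\mu(U)\le\liminf\mu_n(U)$ in the contradiction step. The only mildly subtle point is nesting balls to pass from $\mu(\overline{B(x,\varepsilon)})>0$ for every $\varepsilon>0$ to the honest condition $x\in S_\mu$.
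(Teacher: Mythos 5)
Your proposal is correct. Note that the paper itself gives no proof of this proposition: it is quoted from the reference [HYZ] with the remark that the proofs are ``fundamental and standard,'' so there is no argument in the text to compare against. Your cycle $(a)\Rightarrow(c)\Rightarrow(b)\Rightarrow(a)$ is exactly the standard unfolding of the definitions: the two Portmanteau inequalities are invoked in the right directions (closed-set bound to get $\mu(\overline{B(x,\varepsilon)})>0$ from positive lower density, open-set bound both for $S_\mu\subseteq\omega(f,x)$ and in the contradiction step), the passage from $x\in S_\mu$ to $\omega(f,x)\subseteq S_\mu$ via closedness and forward invariance of the support of an invariant measure is legitimate (every element of $V_f(x)$ is indeed $f$-invariant), and the hypothesis $x\in Rec$ is used precisely where it is needed, namely to get $x\in\omega(f,x)$ in $(c)\Rightarrow(b)$. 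I see no gap.
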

\begin{Prop}\label{prop2}\cite{HYZ}
	For a dynamical system $(X,f)$,  let $x \in Rec$. Then the following conditions are equivalent.
	\begin{description}
		\item[(a)] $x \in  Rec^{up}$;
		\item[(b)] $x \in C_x$;
		\item[(c)] $C_x = { \omega(f,x)}$.
	\end{description}
\end{Prop}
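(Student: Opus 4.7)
The plan is to establish the cycle (a) $\Leftrightarrow$ (b) $\Leftrightarrow$ (c) directly, using only the definitions together with weak-$*$ convergence (portmanteau theorem) and the basic fact that $C_x$ is closed and forward $f$-invariant (since each support $S_\mu$ for $\mu\in V_f(x)$ is a closed $f$-invariant set).

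\textbf{Preliminary inclusion.} I would first record the general inclusion $C_x\subseteq \omega(f,x)$. If $\mu\in V_f(x)$ arises as $\mathcal{E}_{n_k}(x)\to\mu$ and $y\in S_\mu$, then every open neighborhood $U$ of $y$ has $\mu(U)>0$, so by the portmanteau theorem $\liminf_k \mathcal{E}_{n_k}(x)(U)\geq\mu(U)>0$, which forces $f^{n}(x)\in U$ for infinitely many $n$, hence $y\in\omega(f,x)$. Thus $\bigcup_{\mu\in V_f(x)}S_\mu\subseteq\omega(f,x)$, and taking closures gives $C_x\subseteq\omega(f,x)$.

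\textbf{(a) $\Rightarrow$ (b).} Fix $\varepsilon>0$. By upper recurrence, choose $n_k\to\infty$ with $\frac1{n_k}|\{0\le i<n_k : f^i(x)\in B(x,\varepsilon/2)\}|\to c>0$, and pass to a further subsequence so that $\mathcal{E}_{n_k}(x)\to\mu\in V_f(x)$. By the portmanteau theorem (upper bound for closed sets) $\mu(\overline{B(x,\varepsilon/2)})\ge c>0$, so $S_\mu\cap\overline{B(x,\varepsilon/2)}\ne\emptyset$, hence $S_\mu\cap B(x,\varepsilon)\ne\emptyset$. As $\varepsilon>0$ is arbitrary, $x\in\overline{\bigcup_{\mu\in V_f(x)}S_\mu}=C_x$.

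\textbf{(b) $\Rightarrow$ (a).} Given $\varepsilon>0$, (b) provides some $\mu\in V_f(x)$ and some $y\in S_\mu$ with $d(y,x)<\varepsilon/2$. Since $y\in S_\mu$, every neighborhood of $y$ has positive $\mu$-measure; in particular $\mu(B(x,\varepsilon))\ge\mu(B(y,\varepsilon/2))>0$. If $\mathcal{E}_{n_k}(x)\to\mu$, the portmanteau bound for open sets gives
\begin{equation*}
\liminf_{k\to\infty}\frac1{n_k}|\{0\le i<n_k : f^i(x)\in B(x,\varepsilon)\}|\ge \mu(B(x,\varepsilon))>0,
\end{equation*}
so $\bar d(N(x,B(x,\varepsilon)))>0$ and $x\in Rec^{up}$.

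\textbf{(b) $\Leftrightarrow$ (c).} Assume (b). Since $C_x$ is a closed, forward $f$-invariant set containing $x$, we have $\overline{orb(x,f)}\subseteq C_x$ and hence $\omega(f,x)\subseteq C_x$. Combined with the preliminary inclusion this yields $C_x=\omega(f,x)$, i.e.~(c). Conversely, if $C_x=\omega(f,x)$ and $x\in Rec$, then $x\in\omega(f,x)=C_x$, giving (b).

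The only mildly delicate point is consistently distinguishing open balls from their closures when invoking the two halves of the portmanteau theorem; everything else is routine.
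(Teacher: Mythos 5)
Your proof is correct. The paper does not actually supply a proof of this proposition --- it is quoted from \cite{HYZ} with the remark that the proofs are ``fundamental and standard'' --- and your argument (the portmanteau inequalities for open/closed sets applied to the empirical measures, plus the observation that $C_x$ is a closed forward-invariant set containing $\omega(f,x)$ once it contains $x$) is exactly the standard argument being alluded to, with all the delicate points (open balls versus their closures, and the use of the hypothesis $x\in Rec$ only in (c) $\Rightarrow$ (b)) handled properly.
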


A point $x$ is called quasi-generic for some measure $\mu,  $ if there are two sequence of positive { integers} $\{a_k\}, \{b_k\}$ with $b_k> a_k$ and $b_k-a_k\to\infty$ such that $$\lim_{k\rightarrow\infty}\frac{1}{b_k-a_k}\sum_{j=a_k}^{b_k-1}\delta_{f^j(x)}=\mu$$ in weak$^*$ topology. Let $V_f^*(x)=\{\mu\in \mathcal M_f(X): \,  x \text{ is quasi-generic for } \mu\}$. This concept is from \cite[Page 65]{Fur} and from there it is known $V_f^*(x)$ is always nonempty, compact and connected. Obviously $V_f(x)\subseteq V_f^*(x). $ Let $C^*_x:=\overline{\bigcup_{m\in V^*_f(x)}S_m}$.

\begin{Prop}\label{prop3}\cite{HW}
	For a dynamical system $(X,f)$,  let $x \in Rec$. Then the following conditions are equivalent.
	\begin{description}
		\item[(a)] $x \in  Rec_{Ban}^{up}$;
		\item[(b)] $x \in C^*_x$;
		\item[(c)] $x \in \omega(f,x)=C^*_x$.
	\end{description}
\end{Prop}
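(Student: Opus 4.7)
The plan is to establish the cyclic chain of implications $(a) \Rightarrow (b) \Rightarrow (c) \Rightarrow (a)$, each step resting on the Portmanteau theorem linking orbital empirical measures to the supports of their weak-$*$ limits.

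For $(a) \Rightarrow (b)$, I would fix $\varepsilon > 0$ and use the positivity of the Banach upper density of $N(x, B(x,\varepsilon/2))$ to extract integer intervals $[a_k, b_k)$ with $b_k - a_k \to \infty$ along which the visit frequency to $B(x, \varepsilon/2)$ stays bounded below by some constant $c > 0$. By weak-$*$ compactness of $\mathcal{M}(X)$, the empirical measures $\mu_k := \frac{1}{b_k - a_k}\sum_{j=a_k}^{b_k-1}\delta_{f^j(x)}$ admit (after passing to a subsequence) a weak-$*$ limit $\mu$, which by construction lies in $V_f^*(x)$. The closed-set version of Portmanteau yields $\mu(\overline{B(x, \varepsilon/2)}) \geq c$, so $S_\mu \cap \overline{B(x, \varepsilon/2)} \neq \emptyset$; letting $\varepsilon \to 0$ places $x$ in $C^*_x$. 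The implication $(c) \Rightarrow (a)$ runs the same machinery in reverse: from $x \in C^*_x$ pick $\mu \in V_f^*(x)$ and $z \in S_\mu$ with $d(z,x) < \varepsilon/2$, so that $\mu(B(x,\varepsilon)) \geq \mu(B(z, \varepsilon/2)) > 0$; writing $\mu$ as a weak-$*$ limit along intervals $[a_k, b_k)$ of diverging length and invoking the open-set version of Portmanteau, $\liminf_k \mu_k(B(x,\varepsilon)) > 0$, which delivers positive Banach upper density of $N(x, B(x, \varepsilon))$.

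For $(b) \Rightarrow (c)$, I would argue via invariance instead of a point-by-point construction. First I would verify that $C^*_x$ is closed and forward $f$-invariant: continuity of $f$ combined with $f$-invariance of each $\mu \in V_f^*(x)$ forces $f(S_\mu) \subseteq S_\mu$, and taking unions and closures preserves both properties. Since $x \in Rec$, the standard identity $\omega(f,x) = \overline{orb(x,f)}$ holds, so $x \in C^*_x$ propagates to $\overline{orb(x,f)} \subseteq C^*_x$, i.e.\ $\omega(f,x) \subseteq C^*_x$. The reverse inclusion $C^*_x \subseteq \omega(f,x)$ does not need the hypothesis $x \in C^*_x$: for any $\mu \in V_f^*(x)$ and $y \in S_\mu$, Portmanteau on small open neighborhoods $U \ni y$ gives $\liminf_k \mu_k(U) > 0$, forcing arbitrarily large indices $j$ (since $b_k \to \infty$) with $f^j(x) \in U$, hence $y \in \omega(f,x)$.

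The only genuine technical point is the simultaneous control in $(a) \Rightarrow (b)$: one must choose the intervals $[a_k, b_k)$ so that the visit frequencies to $B(x, \varepsilon/2)$ remain bounded below \emph{and} the corresponding empirical measures converge weak-$*$. This is resolved by a routine diagonal argument — first pick intervals realizing a positive fraction of the Banach upper density, then use sequential compactness of $\mathcal{M}(X)$ to refine to a convergent subsequence. Beyond this bookkeeping, the proof is essentially an exercise in the Portmanteau theorem combined with the invariance properties of $C^*_x$.
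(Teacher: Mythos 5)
Your argument is correct, but there is nothing in the paper to compare it against: the authors do not prove Proposition \ref{prop3} at all, they import it from the cited reference and explicitly describe these recurrence characterizations as ``fundamental and standard.'' So your write-up is a self-contained verification of a black-boxed citation. The three implications all check out. In $(a)\Rightarrow(b)$ the limit measure you extract depends on $\varepsilon$, but that is harmless since $C^*_x$ is the closure of the union of supports over \emph{all} of $V^*_f(x)$, so producing a possibly different $\mu_\varepsilon\in V^*_f(x)$ whose support meets $\overline{B(x,\varepsilon/2)}$ for each $\varepsilon$ still forces $x\in C^*_x$; you do need the standard fact that a weak-$*$ limit of empirical measures over intervals of diverging length is automatically $f$-invariant (so that the limit genuinely lies in $V^*_f(x)$ and its support is forward invariant), which follows from $\|f_*\mu_k-\mu_k\|\le 2/(b_k-a_k)$. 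In $(b)\Rightarrow(c)$ your two inclusions are both sound: forward invariance plus closedness of $C^*_x$ gives $\omega(f,x)=\overline{orb(x,f)}\subseteq C^*_x$ once $x\in C^*_x$ and $x\in Rec$, and the unconditional inclusion $C^*_x\subseteq\omega(f,x)$ needs exactly the observation you make, namely that the positive visit frequency to a neighborhood of $y\in S_\mu$ over intervals of length tending to infinity produces arbitrarily large return times. Note also that your $(c)\Rightarrow(a)$ step only uses $x\in C^*_x$, so you have in fact proved $(a)\Leftrightarrow(b)$ directly and then $(b)\Rightarrow(c)\Rightarrow(b)$; that is a perfectly acceptable reorganization of the cycle.
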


\section{Saturated sets}\label{section-3}
One main technique in the proofs of Theorem \ref{maintheorem-1} and \ref{maintheorem-1'} is using the saturated sets which can avoid a long construction proof for every object being considered. Suppose that $(X,f)$ is dynamical system. 
For any non-empty compact connected subset $K\subset\cM_f(X)$, denote by $G_{K}=\{x\in X:V_{f}(x)=K\}$
the saturated set of $K$. Note that for any $x\in X$, $V_{f}(x)$ is always a nonempty compact connected subset of $\cM_f(X)$ by \cite[Proposition 3.8]{Sig}, so $G_{K}\neq\emptyset$ requires that $K$ is a nonempty compact connected set. 
The existence of saturated sets is proved by Sigmund \cite{SigSpe} for systems with uniform hyperbolicity or specification property and generalized to   non-uniformly hyperbolic systems in \cite{LST}. In this section, we aim to establish strongly distributional chaos in saturated sets. 

For a dynamical system $(X,f)$, we say a pair $p,q\in X$ is distal if $\liminf\limits_{i\to\infty}d(f^i(p),f^i(q))>0$. Otherwise, the pair $p,q$ is proximal. Obviously, $\inf\{d(f^ip,f^iq): i\in\mathbb{N}\}>0$ if the pair $p,q$ is distal. We say a subset $Y\subseteq X$ has distal pair if there are distinct $p,q\in Y$ such that the pair $p,q$ is distal.
We say that a subset $A$ of $X$ is $f$-invariant (or simply invariant) if $f(A)\subset A.$ When $f$ is a homeomorphism from $X$ onto $X$, we say that a subset $A$ of $X$ is $f$-invariant if $f(A)= A.$ If $A$ is a closed $f$-invariant subset of $X,$ then $(A,f|_A)$ also is a dynamical system. We will call it a subsystem of $(X,f).$

Now, we state a theorem which study distribution chaos and strongly distributional chaos in saturated sets for systems which have a sequence of nondecreasing invariant compact subsets such that every subsystem has (exponential) specificaton property.

\begin{maintheorem}\label{maintheorem-2}
	Suppose that $(X,f)$ is a dynamical system with a sequence of nondecreasing $f$-invariant compact subsets $\{X_{n} \subseteq X:n \in \mathbb{N^{+}} \}$ such that $\overline{\bigcup_{n\geq 1}X_{n}}=X.$ 
	\begin{description}
		\item[(1)] If $({X_{n}},f|_{X_{n}})$ has specification property for any $n \in \mathbb{N^{+}}$, $K$ is a non-empty compact connected subset of $\overline{\{\mu \in \mathcal{M}_{f}(X):\mu(\bigcup_{n\geq 1}X_{n})=1\}}$ with
		a $\mu\in K$ such that $\mu=\theta\mu_1+(1-\theta)\mu_2\ (\mu_1=\mu_2\mathrm{\ could\ happens})$ where $\theta\in[0,1]$, $\mu (X_{l_{0}})=1$ for some $l_{0}\geq 1$, and $G_{\mu_i}$ has distal pair $p_i,q_i$ with $p_i,q_i\in X_{l_{0}}$ for $i\in\{1,2\},$ then for any non-empty open set $U\subseteq X,$
		$G_K\cap U\cap Trans$ is $1$-chaotic;
		\item[(2)] If $({X_{n}},f|_{X_{n}})$ has exponential specification property for any $n \in \mathbb{N^{+}}$, $K$ is a non-empty compact connected subset of $\overline{\{\mu \in \mathcal{M}_{f}(X):\mu(\bigcup_{n\geq 1}X_{n})=1\}}$ with
		a $\mu\in K$ such that $\mu=\theta\mu_1+(1-\theta)\mu_2\ (\mu_1=\mu_2\mathrm{\ could\ happens})$ where $\theta\in[0,1]$, $\mu (X_{l_{0}})=1$ for some $l_{0}\geq 1$, and $G_{\mu_i}$ has distal pair $p_i,q_i$ with $p_i,q_i\in X_{l_{0}}$ for $i\in\{1,2\},$ then for any non-empty open set $U\subseteq X,$ $G_K\cap U\cap Trans$ is strongly distributional chaotic. If further $f$ is a homeomorphism, then for any $z\in \bigcup_{n\geq 1}X_{n}$ and any $\varepsilon>0$, the set $U$ can be replaced by local unstable manifold $$W^{u}_{\varepsilon}(z)=\left\{y \in M:\lim\limits_{n \to \infty}d\left(f^{-n}(z), f^{-n}(y)\right) = 0  \text{ and }  d\left(f^{-n}(z), f^{-n}(y)\right) \leq \varepsilon \text { for all } n \geq 0\right\}.$$
	\end{description} 
\end{maintheorem}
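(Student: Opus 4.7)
The plan is a Sigmund-style saturation construction enriched with a Cantor-tree scrambling built from the hypothesised distal pairs. First I would fix a countable dense sequence $\{\nu_k\}_{k\ge 1}\subseteq K$. Since $K$ is contained in the closure of measures supported on $\bigcup_n X_n$ and each subsystem $(X_n,f|_{X_n})$ is entropy-dense by Proposition \ref{proposition of entropy-dense property}, each $\nu_k$ can be weak-$*$ approximated by an ergodic measure supported in some $X_{n_k}$ with $n_k\ge l_0$. Long orbit segments quasi-generic for these approximations, spliced together via the (exponential) specification property of a common $X_N\supseteq X_{n_k}$, produce a base pseudo-orbit whose shadowing point $x_0$ satisfies $V_f(x_0)=K$, in the spirit of Sigmund's original argument.

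Second, to produce an uncountable scrambled family I would parametrise the construction by $\xi\in\{1,2\}^{\mathbb{N}}$ and interleave two auxiliary block types, each contributing the average measure $\mu=\theta\mu_1+(1-\theta)\mu_2\in K$ so that $V_f(x_\xi)=K$ is not polluted. A \emph{twin block} on $[a_m,b_m]$ forces every $x_\xi$ to $\varepsilon$-trace the same concatenation of a length-$\theta(b_m-a_m)$ prefix of the orbit of $p_1$ followed by a length-$(1-\theta)(b_m-a_m)$ prefix of the orbit of $p_2$. A \emph{split block} on $[a'_m,b'_m]$ forces $x_\xi$ to trace $p_1,p_2$ while $x_\eta$ traces the distal partners $q_1,q_2$, the index $i(\xi,\eta)\in\{1,2\}$ being dictated by the first coordinate on which $\xi,\eta$ differ. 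On twin blocks $d(f^jx_\xi,f^jx_\eta)<2\varepsilon$, and on split blocks the distal hypothesis gives $d(f^jx_\xi,f^jx_\eta)\ge 2t_0>0$. Scheduling the blocks so that twin blocks dominate along one subsequence of times (yielding $\Phi^{*}_{x_\xi x_\eta}(t,f)=1$ for every $t>0$) while split blocks dominate along another (yielding $\Phi_{x_\xi x_\eta}(t_0,f)=0$), interleaved with the approximation blocks from Step one growing fast enough to pin down $V_f(x_\xi)=K$, is the standard balancing of \cite{CT}.

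For the intersection with $U\cap Trans$, specification yields transitivity on each $X_n$, and density of $\bigcup_n X_n$ in $X$ lets me preface the construction with a short tracing block aimed at any $z\in U\cap X_{n_0}$, forcing $x_\xi\in U$; transitivity is then enforced by periodically inserting short tracing blocks aimed at the members of a fixed countable dense subset of $X$. In the homeomorphism setting of part (2) with $U=W^{u}_{\varepsilon}(z)$, the initial block is supplied instead by Lemma \ref{lemma-unstable}, which yields a point simultaneously tracing the forward pseudo-orbit and staying within $\varepsilon e^{-\lambda i}$ of $f^{-i}(z)$ for every $i\ge 0$, hence lying in $W^{u}_{\varepsilon}(z)$.

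The main obstacle, and the reason \emph{exponential} specification is required for part (2), is upgrading $\Phi^{*}_{x_\xi x_\eta}(t,f)=1$ to $\Phi^{*}_{x_\xi x_\eta}(t,f,\alpha)=1$ for every $\alpha\in\mathcal{A}$. The $\alpha$-version demands $\sum_{j<i}d(f^jx_\xi,f^jx_\eta)<\alpha(i)t$ on a density-one set of $i$, whereas under plain specification a twin block of length $L$ contributes $\varepsilon L$ to that cumulative sum, which is not $o(\alpha(i))$ because $\alpha(i)=o(i)$ can be arbitrarily slow. Exponential specification replaces this bound by $\sum_{j=a_m}^{b_m-1}\varepsilon e^{-\lambda\min\{j-a_m,b_m-j\}}\le \tfrac{2\varepsilon}{1-e^{-\lambda}}$, a constant per twin block. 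Since the scrambled set is permitted to depend on $\alpha$, I would then tune the twin-block endpoints $b_m$ to be so large that $\alpha(b_m)$ dominates both the $O(m)$ accumulated twin contribution and the $O(\mathrm{diam}(X)\cdot\sum_k l_k)$ accumulated split contribution up to $b_m$ (possible because $\alpha(n)\to\infty$); then by monotonicity the cumulative distance stays $o(\alpha(i))$ throughout each twin block, yielding the required density-one set and hence strong distributional chaos.
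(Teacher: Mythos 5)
Your proposal follows essentially the same route as the paper's proof: a Sigmund/Pfister--Sullivan chain construction through the connected set $K$, carried out with (exponential) specification on the exhaustion $\{X_n\}$ to get saturation, membership in $U$ (or $W^{u}_{\varepsilon}(z)$ via the backward-exponential tracing lemma) and transitivity, together with separation blocks manufactured from the distal pairs in $G_{\mu_1},G_{\mu_2}$ whose empirical measures stay near $\mu=\theta\mu_1+(1-\theta)\mu_2$ — which the paper packages as Lemma \ref{lemma-mu} (quoted from \cite{CT}) rather than splicing the orbits of $p_i,q_i$ inline as you do. Your explanation of why exponential specification is needed (constant, rather than linear-in-length, cumulative contribution on shared blocks, plus a threshold time inside each dominating block and an $\alpha$-dependent choice of block lengths) is exactly the mechanism of the paper's proof of part (2).
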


If we replace (exponential) specification property by (exponential) shadowing property and transitivity in Theorem \ref{maintheorem-2}, we have the same results as follows.

\begin{maintheorem}\label{maintheorem-3}
	Suppose that $f$ is a homeomorphism from $X$ onto $X$, $(X,f)$ has a sequence of nondecreasing $f$-invariant compact subsets $\{X_{n} \subseteq X:n \in \mathbb{N^{+}} \}$ such that $\overline{\bigcup_{n\geq 1}X_{n}}=X$, and $\mathrm{Per}(f|_{X_{1}})$ $\neq \emptyset$. Denote $Q=\min \{p\in \mathbb{N^{+}}:\text{there is } x\in X_{1}\ \mathrm{such}\ \mathrm{that}\ f^{p}(x)= x\}$. 
	\begin{description}
		\item[(1)] If $({X_{n}},f|_{X_{n}})$ has shadowing property and is transitive for any $n \in \mathbb{N^{+}}$, $K$ is a non-empty compact connected subset of $\overline{\{\mu \in \mathcal{M}_{f}(X):\mu(\bigcup_{n\geq 1}X_{n})=1\}}$ with
		a $\mu\in K$ such that $\mu=\theta\mu_1+(1-\theta)\mu_2\ (\mu_1=\mu_2\mathrm{\ could\ happens})$ where $\theta\in[0,1]$, $\mu (X_{l_{0}})=1$ for some $l_{0}\geq 1$ and there exist $p_{i},q_{i}\in G_{\mu_{i}}$ for any $i \in \{1,2\}$ such that $p_{i},f^{j}(q_{i})$ is distal pair for any $0\leq j\leq Q-1$ and $p_i,q_i\in X_{l_{0}},$ then for any non-empty open set $U\subseteq X,$
		$G_K\cap U\cap Trans$ is $1$-chaotic;
		\item[(2)] If $f$ is Lipschitz, $({X_{n}},f|_{X_{n}})$ has exponential shadowing property and is transitive for any $n \in \mathbb{N^{+}}$, $K$ is a non-empty compact connected subset of $\overline{\{\mu \in \mathcal{M}_{f}(X):\mu(\bigcup_{n\geq 1}X_{n})=1\}}$ with
		a $\mu\in K$ such that $\mu=\theta\mu_1+(1-\theta)\mu_2\ (\mu_1=\mu_2\mathrm{\ could\ happens})$ where $\theta\in[0,1]$, $\mu (X_{l_{0}})=1$ for some $l_{0}\geq 1$ and there exist $p_{i},q_{i}\in G_{\mu_{i}}$ for any $i \in \{1,2\}$ such that $p_{i},f^{j}(q_{i})$ is distal pair for any $0\leq j\leq Q-1$ and $p_i,q_i\in X_{l_{0}},$ then for any non-empty open set $U\subseteq X,$
		$G_K\cap U\cap Trans$ is strongly  distributional chaotic. Moreover, for any $z\in \bigcup_{n\geq 1}X_{n}$ and any $\varepsilon>0$, the set $U$ can be replaced by local unstable manifold $W^{u}_{\varepsilon}(z).$
	\end{description} 
\end{maintheorem}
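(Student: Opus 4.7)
The plan is to reduce Theorem \ref{maintheorem-3} to Theorem \ref{maintheorem-2} by passing to the iterate $f^Q$ and applying a Bowen-style cyclic decomposition to each transitive shadowing subsystem. The hypothesis that $X_1$ contains a periodic orbit of minimum period $Q$ is present precisely to pin down the cyclic period so that $f^Q$ becomes mixing on every cyclic component.

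For each $n$, the classical spectral decomposition for transitive shadowing systems yields a positive integer $Q_n$ and closed subsets $Y_n^{(0)},\dots,Y_n^{(Q_n-1)}$ with $X_n=\bigcup_i Y_n^{(i)}$, $f(Y_n^{(i)})=Y_n^{(i+1\bmod Q_n)}$, and $(Y_n^{(i)},f^{Q_n}|_{Y_n^{(i)}})$ mixing with shadowing. Since $Q_n$ equals the minimum period of periodic orbits in $X_n$ and $X_1\subseteq X_n$ contains an orbit of minimum period $Q$, one has $Q_n\mid Q$. Hence each $Y_n^{(i)}$ is $f^Q$-invariant and $(Y_n^{(i)},f^Q|_{Y_n^{(i)}})$ is mixing with shadowing; in part (2) the Lipschitz hypothesis on $f$ ensures that exponential shadowing also passes cleanly to $f^Q|_{Y_n^{(i)}}$ with a controlled exponent. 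Propositions \ref{prop-specif} and \ref{prop-exp-specif} then supply specification (resp.\ exponential specification). Now fix a periodic point $\bar p\in X_1$ of minimum period $Q$, and for each $n$ let $Y_n$ be the cyclic component of $X_n$ containing $\bar p$; because cyclic indices shift by $1$ under $f$ and $\bar p$ lies in the same class at every level, the sequence $\{Y_n\}$ is nondecreasing, $Z:=\overline{\bigcup_n Y_n}$ is a closed $f^Q$-invariant subset of $X$, and $X=\bigcup_{j=0}^{Q-1}f^j(Z)$.

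Next, transfer $K$ to an $f^Q$-object. Decompose $\mu=\theta\mu_1+(1-\theta)\mu_2\in K$ under $f^Q$ as $\mu_i=\tfrac{1}{Q}\sum_{j=0}^{Q-1}(f^j)_{*}\mu_i'$ with $\mu_i'$ an $f^Q$-invariant measure concentrated on $Y_{l_0}$, and apply the same operation to every element of $K$ to build a compact connected $K'\subseteq\mathcal{M}_{f^Q}(Z)$. Then $\mu':=\theta\mu_1'+(1-\theta)\mu_2'$ lies in $K'\cap\overline{\{\nu\in\mathcal{M}_{f^Q}(Z):\nu(\bigcup_n Y_n)=1\}}$ and satisfies $\mu'(Y_{l_0})=1$. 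After replacing each $q_i$ (and if needed $p_i$) by the iterate $f^{j_i}(q_i)$ that lies in $Y_{l_0}$, the hypothesis that $p_i,f^j(q_i)$ is distal for every $0\le j\le Q-1$ produces, after this re-indexing, distal pairs inside $G_{f^Q,\mu_i'}\cap Y_{l_0}$. This is exactly the input required by Theorem \ref{maintheorem-2}.

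Apply Theorem \ref{maintheorem-2} to $(Z,f^Q)$ with subsystems $\{Y_n\}$, target set $K'$, and an open set $U'\subseteq Z$ chosen so that $f^{j_0}(U')\subseteq U$ for some $j_0\in\{0,\dots,Q-1\}$ (possible since $X=\bigcup_j f^j(Z)$); this yields an uncountable DC1-scrambled (resp.\ strongly distributional chaotic) set $S\subseteq G_{f^Q,K'}\cap U'$ consisting of $f^Q$-transitive points in $Z$. The image $f^{j_0}(S)$ then meets all requirements: DC1-scrambledness (resp.\ $\alpha$-DC1-scrambledness) for $f^Q$ implies the same for $f$ after adjusting the threshold via the modulus of continuity of $f$ (in part (2), via the Lipschitz constant); $V_f(\cdot)=K$ follows from $V_{f^Q}(\cdot)=K'$ and the averaging identity defining $K'$; and $f$-transitivity in $X$ follows from $f^Q$-transitivity in $Z$ together with $X=\bigcup_j f^j(Z)$. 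For the local unstable manifold statement, invoke Lemma \ref{lemma-unstable} for the exponential specification of $f^Q|_{Y_n}$ and use the Lipschitz constant to translate between $W^u_\varepsilon(z)$ for $f$ and for $f^Q$. The main obstacle is the bookkeeping above: verifying that the decomposed measures $\mu_i'$ really concentrate on the correct cyclic component and that the distal-pair hypothesis survives re-indexing under $f^Q$, which is precisely why the hypothesis quantifies distality over all $0\le j\le Q-1$ rather than only $j=0$.
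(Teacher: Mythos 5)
Your architecture is essentially the paper's: decompose each $X_n$ into cyclic components using transitivity, shadowing and the periodic orbit in $X_1$ (Lemma \ref{AU}); pass to the power of $f$ that is mixing on a fixed component; transport measures by the averaging correspondence; use the hypothesis that $p_i,f^j(q_i)$ is distal for every $0\le j\le Q-1$ to manufacture distal pairs inside that component; run the machinery of Theorem \ref{maintheorem-2} there; and push the scrambled set back by an iterate of $f$. Two points, however, are asserted rather than proved, and one of them is a genuine gap. The milder one: the nestedness $Y_n\subseteq Y_{n+1}$ of the components containing $\bar p$ does not follow formally from ``cyclic indices shift by $1$'' --- a component of $X_n$ could a priori be split between several components of $X_{n+1}$; the paper needs a separate shadowing argument for this (Lemma \ref{AY}), after first normalizing all decomposition lengths to a common $k\mid Q$ by pigeonhole.

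The serious gap is the sentence ``apply the same operation to every element of $K$ to build a compact connected $K'$''. The averaging correspondence $h_*$ is a bijection only between $\bigcup_l\mathcal{M}_{f|_{X_l}}(X_l)$ and $\bigcup_l\mathcal{M}_{f^k|_{D_0^l}}(D_0^l)$, whereas the hypothesis only places $K$ inside the closure $\overline{\{\mu:\mu(\bigcup_n X_n)=1\}}$. A limit measure $\nu\in K$ does admit some representation $\nu=\frac1Q\sum_{j}(f^j)_*\nu'$ with $\nu'\in\mathcal{M}_{f^Q}(Z)$ (the continuous affine map $\nu'\mapsto\frac1Q\sum_{j}(f^j)_*\nu'$ has compact image containing the dense good set), but the representation is not canonical: the closures $Y_0,\dots,Y_{k-1}$ may overlap, a right inverse need not be continuous, and there is no reason the preimage of $K$ should be connected or compactly selectable. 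So you cannot invoke Theorem \ref{maintheorem-2} as a black box with target $K'$. The paper circumvents this by never forming $K'$: it transfers only the countably many approximating measures $\gamma^{i,j}_s\in\mathcal{M}_{f|_{X_l}}(X_l)$ appearing inside the proof of Theorem \ref{maintheorem-2}, reruns that construction on $(Y_0,f^k|_{Y_0})$, and then verifies $V_f(x)=K$ directly from the identity $(h_*)^{-1}(\nu)=\frac1k\sum_{j}f^j_*\nu$ (steps (A) and (B) of the printed proof). With the black-box invocation replaced by this rerun, the rest of your conversions (scrambledness under passage from the power of $f$ back to $f$ via the modulus of continuity, respectively the Lipschitz constant in part (2), transitivity, and the $W^u_\varepsilon(z)$ refinement) go through as in the paper.
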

\begin{Rem}
	If $f$ is not homeomorphism in Theorem \ref{maintheorem-3}, the map $h_{*}$  constructed in the proof will not be homeomorphism. This will bring difficulties to the proof, and we don't know how to solve this problem so far. 
\end{Rem}

\subsection{ Proof of Theorem \ref{maintheorem-2}(1)} 
Before proof, we introduce some basic facts and lemmas.
\subsubsection{Some lemmas}
If $r,s\in\mathbb{N},r\leq s$, we set $[r,s]:=\{j\in\mathbb{N}:\ r\leq j\leq s\}$, and the cardinality of a finite set $\Lambda$ is denoted by $|\Lambda|$. We set
$$
\langle \varphi,\mu \rangle\ :=\ \int_X\varphi d\mu.
$$
There exists a countable and separating set of continuous functions $\{\varphi_1,\varphi_2,\cdots\}$ with $0\leq \varphi_k(x)\leq 1$, and such that
{ $$
	d(\mu,\nu)\ :=\ \sum_{k\geq 1}2^{-k}\mid\langle \varphi_k,\mu\rangle-\langle \varphi_k,\nu \rangle\mid
	$$}
defines a metric for the weak*-topology on $ \mathcal M_f(X)$. We refer to \cite{PS2} and use the metric on $X$ as following defined by Pfister and Sullivan.
$$
d(x,y) := d(\delta_x,\delta_y),
$$
which is equivalent to the original metric on $X$. Readers will find the benefits of using this metric in our proof later.
Denote an open ball in $\cM(X)$ by
$$
\mathcal{B}(\nu, \zeta):=\{\mu \in \cM(X): d(\nu, \mu)< \zeta\}.
$$

\begin{Lem}\label{measure distance}
For any $\varepsilon > 0,\delta >0$, and any two sequences $\{x_i\}_{i=0}^{n-1},\{y_i\}_{i=0}^{n-1}$ of $X$, if $d(x_i,y_i)<\varepsilon$ holds for any $i\in [0,n-1]$, then for any $J\subseteq \{0,1,\cdots,n-1\}$ with $\frac{n-|J|}{n}<\delta$, one has:
\begin{description}
\item[(a)] $d(\frac{1}{n}\sum_{i=0}^{n-1}\delta_{x_i},\frac{1}{n}\sum_{i=0}^{n-1}\delta_{y_i})<\varepsilon.$
\item[(b)] $d(\frac{1}{n}\sum_{i=0}^{n-1}\delta_{x_i},\frac{1}{|J|}\sum_{i\in J}\delta_{y_i})<\varepsilon+2\delta.$
\end{description}
\end{Lem}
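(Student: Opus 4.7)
Both estimates follow from the explicit series definition of the metric, $d(\mu,\nu)=\sum_{k\ge 1}2^{-k}|\langle\varphi_k,\mu\rangle-\langle\varphi_k,\nu\rangle|$, together with the identity $d(x,y)=d(\delta_x,\delta_y)$; no dynamics enters and the lemma is a purely elementary estimate in $\mathcal{M}(X)$.

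For (a) the plan is to expand the distance between the two empirical measures as
$$\sum_{k\ge 1}2^{-k}\Bigl|\tfrac{1}{n}\sum_{i=0}^{n-1}\bigl(\varphi_k(x_i)-\varphi_k(y_i)\bigr)\Bigr|,$$
pull the absolute value inside the $i$-average, swap the $i$- and $k$-summations (absolutely convergent since $0\le\varphi_k\le 1$), and recognize the inner $k$-sum as $d(\delta_{x_i},\delta_{y_i})=d(x_i,y_i)<\varepsilon$. Averaging the hypothesis over $i=0,\dots,n-1$ then yields the claimed bound.

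For (b) the plan is to insert the intermediate measure $\tfrac{1}{n}\sum_{i=0}^{n-1}\delta_{y_i}$, apply the triangle inequality, and use (a) to control the first piece by $\varepsilon$. The remaining piece is handled via the decomposition
$$\tfrac{1}{n}\sum_{i=0}^{n-1}\varphi_k(y_i)-\tfrac{1}{|J|}\sum_{i\in J}\varphi_k(y_i)=\tfrac{1}{n}\sum_{i\notin J}\varphi_k(y_i)+\Bigl(\tfrac{1}{n}-\tfrac{1}{|J|}\Bigr)\sum_{i\in J}\varphi_k(y_i),$$
in which each summand is bounded by $\frac{n-|J|}{n}<\delta$, using $0\le\varphi_k\le 1$ (note that the second term equals $\frac{|J|-n}{n|J|}$ times a sum of $|J|$ values in $[0,1]$). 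Weighting by $2^{-k}$ and summing contributes at most $2\delta$, so the triangle inequality gives a total strictly less than $\varepsilon+2\delta$.

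The argument is routine; the only point requiring care is correctly tracking the normalization when passing from an average over $\{0,\dots,n-1\}$ to an average over $J$, which is exactly what the two-term decomposition above isolates.
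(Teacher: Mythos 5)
Your argument is correct and complete: part (a) follows by pulling the absolute value inside the average and swapping the $i$- and $k$-sums to recover $\frac1n\sum_i d(x_i,y_i)<\varepsilon$, and part (b) follows from the triangle inequality together with your two-term decomposition, each piece of which is bounded by $\frac{n-|J|}{n}<\delta$ using $0\le\varphi_k\le 1$ and $\sum_{k\ge1}2^{-k}=1$. The paper states this lemma without proof (it only remarks that it "is easy to be verified"), and your elementary computation is exactly the intended verification.
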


Lemma \ref{measure distance} is easy to be verified and shows us that if any two orbit of $x$ and $y$ in finite steps are close in the most time, then the two empirical measures induced by $x,y$ are also close.

\begin{Lem}\label{lemma-MM}
	Suppose that $(X,f)$ is a dynamical system with a sequence of nondecreasing $f$-invariant compact subsets $\{X_{n} \subseteq X:n \in \mathbb{N^{+}} \}$ such that $\overline{\bigcup_{n\geq 1}X_{n}}=X.$ 
	Then $\overline{\{\mu \in \mathcal{M}_{f}(X):\mu(\bigcup_{n\geq 1}X_{n})=1\}}=\overline{\bigcup_{n\geq 1}\{\mu \in \mathcal{M}_{f}(X):\mu(X_{n})=1\}}$.
\end{Lem}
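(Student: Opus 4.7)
The plan is to prove the two inclusions separately, with one being trivial and the other requiring a construction of approximating measures.

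The inclusion $\supseteq$ is immediate: if $\mu(X_{n}) = 1$ for some $n$, then $\mu\bigl(\bigcup_{k\geq 1} X_{k}\bigr) \geq \mu(X_n) = 1$, so the set on the right before closure is contained in the set on the left before closure, and taking closures preserves this.

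For the reverse inclusion $\subseteq$, it suffices to show that every $\mu \in \mathcal{M}_{f}(X)$ with $\mu\bigl(\bigcup_{n\geq 1}X_{n}\bigr) = 1$ lies in the closure of $\bigcup_{n}\{\nu \in \mathcal{M}_{f}(X) : \nu(X_{n}) = 1\}$. Since the sequence $\{X_{n}\}$ is nondecreasing, continuity of measure from below gives $\mu(X_{n}) \to 1$; so $c_{n} := \mu(X_{n}) > 0$ for all sufficiently large $n$. For those $n$ I would define the normalized restriction $\mu_{n}(A) := \mu(A \cap X_{n})/c_{n}$ for every Borel $A \subseteq X$. This $\mu_{n}$ is trivially a Borel probability measure with $\mu_{n}(X_{n}) = 1$.

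The one nontrivial step is to verify that $\mu_{n}$ is $f$-invariant, and this is the main (albeit modest) obstacle since we only know $f(X_{n}) \subseteq X_{n}$, not equality, in the non-homeomorphism case. The key observation is that the inclusion $f(X_{n}) \subseteq X_{n}$ gives $f^{-1}(X_{n}) \supseteq X_{n}$, so $\mu$-invariance forces $\mu(f^{-1}(X_{n}) \setminus X_{n}) = \mu(f^{-1}(X_{n})) - \mu(X_{n}) = 0$. Combined with the set identity $f^{-1}(A) \cap X_{n} = f^{-1}(A \cap X_{n}) \cap X_{n}$ (valid because $f(X_{n}) \subseteq X_{n}$), I can compute
\[
\mu(f^{-1}(A) \cap X_{n}) \;=\; \mu(f^{-1}(A \cap X_{n})) - \mu(f^{-1}(A \cap X_{n}) \setminus X_{n}) \;=\; \mu(A \cap X_{n}),
\]
using that the second term is bounded by $\mu(f^{-1}(X_{n}) \setminus X_{n}) = 0$ and the first equals $\mu(A \cap X_{n})$ by $\mu$-invariance. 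Dividing by $c_{n}$ gives $\mu_{n}(f^{-1}A) = \mu_{n}(A)$.

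Finally, weak-$*$ convergence $\mu_{n} \to \mu$ is straightforward: for any $\varphi \in C(X)$,
\[
\Bigl|\int \varphi\,d\mu_{n} - \int \varphi\,d\mu\Bigr| \;\leq\; \frac{1-c_{n}}{c_{n}}\Bigl|\int_{X_{n}} \varphi\,d\mu\Bigr| + \Bigl|\int_{X \setminus X_{n}} \varphi\,d\mu\Bigr| \;\leq\; 2\|\varphi\|_{\infty}(1-c_{n}) \;\longrightarrow\; 0.
\]
Thus $\mu_{n} \in \{\nu \in \mathcal{M}_{f}(X) : \nu(X_{n}) = 1\}$ and $\mu_{n} \to \mu$, which places $\mu$ in the closure on the right, finishing the proof.
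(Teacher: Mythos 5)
Your proof is correct and follows essentially the same route as the paper: the trivial inclusion one way, and the normalized restriction $\mu_n = \mu(\cdot \cap X_n)/\mu(X_n)$ with a weak-$*$ convergence estimate the other way. The only difference is that you explicitly verify the $f$-invariance of $\mu_n$ (using $\mu(f^{-1}(X_n)\setminus X_n)=0$), a point the paper leaves implicit; this is a welcome addition rather than a departure.
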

\begin{proof}
	On one hand, for any $n\in \mathbb{N^{+}}$ and any $\mu \in \mathcal{M}_{f}(X)$ with $\mu(X_{n})=1$, one has $\mu(\bigcup_{l\geq 1}X_{l})=\mu(X_{n})=1$. Thus $\overline{\bigcup_{n\geq 1}\{\mu \in \mathcal{M}_{f}(X):\mu(X_{n})=1\}}\subseteq\overline{\{\mu \in \mathcal{M}_{f}(X):\mu(\bigcup_{n\geq 1}X_{n})=1\}}$.
	
	On the other hand, for any $\mu \in \mathcal{M}_{f}(X)$ with $\mu(\bigcup_{n\geq 1}X_{n})=1$, and any $\varepsilon >0$, there is $N \in \mathbb{N}$ such that  $\mu(X_{n})>1-\varepsilon$ for any $n>N$. Let $\mu_{n}:= \frac{\mu}{\mu(X_{n})}$, then $\mu_{n}(X_{n})=1$, and for any continuous function $\varphi,$ we have
	\begin{equation*}
	\begin{split}
	& |\int_{X}\varphi d\mu-\int_{X}\varphi d\mu_{n}|\\
	= & |\int_{X_{n}^{c}}\varphi d\mu+\int_{X_{n}}\varphi d\mu-\int_{X_{n}}\varphi d\mu_{n}|\\
	\le & ||\varphi||\mu(X_{n}^{c})+|\int_{X_{n}}\varphi d\mu-\frac{1}{\mu(X_{n})}\int_{X_{n}}fd\mu|\\
	\le & ||\varphi||\varepsilon+(\frac{1}{\mu(X_{n})}-1)||\varphi||\\
	\le & (\frac{1}{1-\varepsilon}-1+\varepsilon)||\varphi||,
	\end{split}
	\end{equation*}
    where $||\varphi||=\sup_{x\in X}|\varphi(x)|.$
	Thus $\lim\limits_{n\to\infty}\mu_{n}=\mu$ by the weak*-topology of $\mathcal{M}(X)$. So $$\overline{\{\mu \in \mathcal{M}_{f}(X):\mu(\bigcup_{n\geq 1}X_{n})=1\}}\subset \overline{\bigcup_{n\geq 1}\{\mu \in \mathcal{M}_{f}(X):\mu(X_{n})=1\}}.$$
	We complete the proof of Lemma \ref{lemma-MM}.
\end{proof}

\begin{Lem}\cite[Lemma 3.4.]{CT}\label{lemma-mu1}
	Suppose that $(X,f)$ is a dynamical system with specification property, and there are $\mu_1,\mu_2\in\mathcal M_f(X)$ such that $G_{\mu_1}$, $G_{\mu_2}$ have distal pair $(p_1,q_1)$, $(p_2,q_2)$ respectively. Let $$\zeta=\mathrm{min}\{\inf\{d(f^i(p_1),f^i(q_1)): i\in\mathbb{N}\},\inf\{d(f^i(p_2),f^i(q_2)): i\in\mathbb{N}\}\},$$ then for any $\delta>0$, any $0<\varepsilon<\zeta$ and any $\theta\in[0,1]$, there exist $x_1,x_2\in X$ and $N\in\mathbb{N}$ such that for any $n>N$,
	\begin{description}
		\item[(a)] $\mathcal E_{n}(x_1), \mathcal E_{n}(x_2)\in B(\theta\mu_1+(1-\theta)\mu_2,\varepsilon+\delta);$
		\item[(b)] $\frac{|\{0\leq i\leq n-1:d(f^i(x_1),f^i(x_2))<\zeta-\varepsilon\}|}{n}<\delta.$
	\end{description}
\end{Lem}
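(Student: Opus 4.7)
The strategy is to use the specification property to glue together carefully chosen finite orbit segments, producing $x_1$ and $x_2$ whose empirical measures both approximate $\theta\mu_1+(1-\theta)\mu_2$ while their orbits remain separated on a set of density essentially one. The separation is forced by the distal pairs $(p_i,q_i)$, while the measure-theoretic approximation exploits that $p_i,q_i\in G_{\mu_i}$.

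Concretely, I would first fix a small tracing precision $\varepsilon' < \varepsilon/2$ and let $K_0=K_{\varepsilon'}$ be the specification constant from Definition \ref{definition of specification}. Choose rapidly growing lengths $L_k\to\infty$ and set $a_k=\lfloor \theta L_k\rfloor$, $b_k=L_k-a_k$, with growth slow enough that $L_{k+1}\big/\sum_{j\le k}(L_j+2K_0)\to 0$ so that any "partial" block is negligible, and fast enough that $K_0\big/L_k\to 0$ so that the gaps have asymptotically zero density. Apply the specification property with the sequence of reference points $(p_1,p_2,p_1,p_2,\dots)$ and target segments of lengths $(a_1,b_1,a_2,b_2,\dots)$ separated by gaps of length $K_0$; this yields $x_1$ that $\varepsilon'$-traces $p_1$ on the $a_k$-blocks and $p_2$ on the $b_k$-blocks. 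Performing the parallel construction with $(q_1,q_2,q_1,q_2,\dots)$ and the \emph{same} gap structure produces $x_2$.

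To verify (a), let $n$ be large; since $L_{k+1}/\sum_{j\le k}(L_j+2K_0)\to 0$, the empirical measure $\mathcal E_n(x_i)$ is controlled by the contribution of completed blocks. On the $a_k$-block of $x_1$, Lemma \ref{measure distance}(a) gives $d\!\left(\frac{1}{a_k}\sum_{i}\delta_{f^i(x_1)},\mathcal E_{a_k}(p_1)\right)<\varepsilon'$, and $\mathcal E_{a_k}(p_1)\to\mu_1$ because $p_1\in G_{\mu_1}$; analogously for $b_k$-blocks and for $x_2$. Since the $a_k$-to-$b_k$ time ratio tends to $\theta/(1-\theta)$ and gap time has vanishing density, a weighted-average estimate gives $\mathcal E_n(x_i)\in B(\theta\mu_1+(1-\theta)\mu_2,\varepsilon+\delta)$ for all large $n$.

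For (b), on every tracing segment of length $a_k$ we have $d(f^i(x_1),f^i(p_1))<\varepsilon'$ and $d(f^i(x_2),f^i(q_1))<\varepsilon'$, hence by the definition of $\zeta$ and the triangle inequality
\[
d(f^i(x_1),f^i(x_2))\;\geq\;d(f^i(p_1),f^i(q_1))-2\varepsilon'\;\geq\;\zeta-2\varepsilon'\;>\;\zeta-\varepsilon,
\]
and the same holds on every $b_k$-block with $(p_2,q_2)$. Therefore $\{0\le i\le n-1:d(f^i(x_1),f^i(x_2))<\zeta-\varepsilon\}$ lies inside the union of gaps, whose density $\le CK_0/L_k$ tends to $0$, giving (b) for $n>N$. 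The main technical nuisance is controlling empirical measures at \emph{intermediate} times $n$ (not at block boundaries); this is why we must force $L_{k+1}$ to be negligible compared to the total time up through block $k$, so that a partial final block cannot spoil the convergence established by Lemma \ref{measure distance}(b).
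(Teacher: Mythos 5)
Your construction is correct and is essentially the argument behind the cited result \cite[Lemma 3.4]{CT} (the present paper quotes the lemma without reproving it): alternate long tracing blocks modelled on $p_1,p_2$ (resp.\ $q_1,q_2$) with lengths in ratio $\theta:(1-\theta)$, let the block lengths grow slowly enough that each new block is negligible against the accumulated past, and deduce (a) from $p_i,q_i\in G_{\mu_i}$ together with Lemma~\ref{measure distance}, and (b) from distality plus the vanishing density of the specification gaps. The only step you leave implicit is that the specification property as defined applies to finitely many segments, so $x_1,x_2$ must be produced as accumulation points of the points tracing the first $m$ segments (the tracing estimates surviving the limit with $\leq$ in place of $<$) --- the same routine limiting procedure the paper carries out in the proof of Theorem~\ref{maintheorem-2}(1).
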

\begin{Lem}\label{lemma-mu}
	Suppose that $(X,f)$ is a dynamical system with a sequence of nondecreasing $f$-invariant compact subsets $\{X_{n} \subseteq X:n \in \mathbb{N^{+}} \}$ such that $\overline{\bigcup_{n\geq 1}X_{n}}=X,$ $({X_{n}},f|_{X_{n}})$ has specification property for any $n \in \mathbb{N^{+}}$, and there are $\mu_1,\mu_2\in \mathcal{M}_{f|_{X_l}}(X_l)$ for some $l \geq 1$ such that $G_{\mu_i}$ has distal pair $p_i,q_i$ with $p_i,q_i\in X_{l}$ for $i\in\{1,2\}$. Let $$\zeta=\mathrm{min}\{\inf\{d(f^i(p_1),f^i(q_1)): i\in\mathbb{N}\},\inf\{d(f^i(p_2),f^i(q_2)): i\in\mathbb{N}\}\},$$ then for any $\delta>0$, any $0<\varepsilon<\zeta$ and any $\theta\in[0,1]$, there {exist} $x_1,x_2\in X_{l}$ and $N\in\mathbb{N}$ such that for any $n>N$,
	\begin{description}
		\item[(a)] $\mathcal E_{n}(x_1), \mathcal E_{n}(x_2)\in B(\theta\mu_1+(1-\theta)\mu_2,\varepsilon+\delta);$
		\item[(b)] $\frac{|\{0\leq i\leq n-1:d(f^i(x_1),f^i(x_2))<\zeta-\varepsilon\}|}{n}<\delta$.
	\end{description}
\end{Lem}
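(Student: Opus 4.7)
The plan is to reduce this directly to Lemma \ref{lemma-mu1} applied to the subsystem $(X_l, f|_{X_l})$. First I observe that $(X_l, f|_{X_l})$ is itself a compact dynamical system (since $X_l$ is a closed $f$-invariant subset of $X$ and $f$ is continuous), and by hypothesis it has specification property. Moreover, $\mu_1, \mu_2 \in \mathcal{M}_{f|_{X_l}}(X_l)$ are invariant probability measures for this subsystem.

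Second, I would verify that the distal pairs survive the reduction. Since $X_l$ is forward-invariant and $p_i, q_i \in G_{\mu_i} \cap X_l$, the orbits of $p_i$ and $q_i$ under $f$ coincide with their orbits under $f|_{X_l}$ and stay in $X_l$; hence the empirical measures $\mathcal{E}_n(p_i), \mathcal{E}_n(q_i)$ are supported in $X_l$ and converge to $\mu_i$ in $\mathcal{M}(X)$, so they also converge to $\mu_i$ in $\mathcal{M}(X_l)$. Therefore $p_i, q_i$ are generic for $\mu_i$ in the subsystem. The pair $(p_i, q_i)$ is still distal with the same constant, since distances between points of $X_l$ are identical whether measured in $X$ or $X_l$; in particular the constant $\zeta$ is unchanged.

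Third, I apply Lemma \ref{lemma-mu1} to the data $(X_l, f|_{X_l}, \mu_1, \mu_2, p_1, q_1, p_2, q_2)$ with the same parameters $\delta, \varepsilon, \theta$. This yields $x_1, x_2 \in X_l$ and $N \in \mathbb{N}$ such that for all $n > N$ the condition (a) holds with the ball taken in $\mathcal{M}(X_l)$, and condition (b) holds as stated. To transfer (a) from $\mathcal{M}(X_l)$ to $\mathcal{M}(X)$, I choose the countable separating family on $X_l$ defining the Pfister–Sullivan metric to be the restrictions of the family $\{\varphi_k\}$ used on $X$; then the natural inclusion $\mathcal{M}(X_l) \hookrightarrow \mathcal{M}(X)$ (sending a measure on $X_l$ to its pushforward on $X$) is an isometry, because
\[
d_X(\iota(\mu), \iota(\nu)) = \sum_{k \geq 1} 2^{-k} \bigl|\langle \varphi_k, \mu\rangle - \langle \varphi_k, \nu\rangle\bigr| = d_{X_l}(\mu, \nu),
\]
so the ball $B(\theta\mu_1 + (1-\theta)\mu_2, \varepsilon + \delta)$ in $\mathcal{M}(X_l)$ corresponds exactly to the ball of the same radius in $\mathcal{M}(X)$. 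Condition (b) requires no transfer since the distances $d(f^i(x_1), f^i(x_2))$ with $x_1, x_2 \in X_l$ coincide in the two ambient spaces.

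There is no real obstacle: the proof is a routine verification that Lemma \ref{lemma-mu1} applies in the subsystem and that the conclusion lifts back to $(X, f)$. The only place where care is needed is to make the metric conventions match so that balls in $\mathcal{M}(X_l)$ and $\mathcal{M}(X)$ are comparable, but this is handled by a standard compatibility choice of the separating family.
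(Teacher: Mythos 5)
Your proposal is correct and follows exactly the paper's route: the paper's entire proof is the single sentence ``By applying Lemma \ref{lemma-mu1} to $(X_{l},f|_{X_{l}})$, we finish the proof.'' Your additional verifications (that generic points and distal pairs survive restriction to the invariant subsystem, and that the Pfister--Sullivan metric on $\mathcal{M}(X_l)$ is compatible with that on $\mathcal{M}(X)$) are exactly the routine checks the paper leaves implicit.
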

\begin{proof}
	By applying Lemma \ref{lemma-mu1} to $(X_{l},f|_{X_{l}})$, we finish the proof.
\end{proof}

\begin{Lem}\cite[Page 944]{PS2}\label{lemma-A}
		Suppose that $(X,f)$ is a dynamical system. If $K\subseteq \cM_f(X)$ is a non-empty compact connected set, then there exists a sequence $\left\{\alpha_{1}, \alpha_{2}, \cdots\right\}$ in $K$ such that
		$$
		\overline{\left\{\alpha_{j}: j \in \mathbb{N}^+, j>n\right\}}=K, \forall n \in \mathbb{N}^+ \text { and } \lim _{j \rightarrow \infty} d\left(\alpha_{j}, \alpha_{j+1}\right)=0.
		$$
\end{Lem}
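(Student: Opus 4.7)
My plan is to reduce the statement to the classical fact that any two points of a compact connected metric space can be joined by an $\varepsilon$-chain, and then to concatenate such chains into a single sequence. Since $K \subset \mathcal{M}_f(X)$ is compact metrizable, it is separable; I would fix a countable dense subset $\{\gamma_1,\gamma_2,\dots\}$ of $K$ and arrange it as a sequence $(\beta_k)_{k\ge 1}$ in which every $\gamma_i$ appears infinitely often (for example $\gamma_1,\gamma_1,\gamma_2,\gamma_1,\gamma_2,\gamma_3,\gamma_1,\dots$).

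Next I would prove the $\varepsilon$-chain lemma: for every $\varepsilon>0$ and every $x,y\in K$ there exist $m\in\mathbb{N}^+$ and points $z_0=x,z_1,\dots,z_m=y$ in $K$ with $d(z_i,z_{i+1})<\varepsilon$ for each $i$. Indeed, fixing $x$, the set $A\subset K$ of endpoints of such chains is nonempty (contains $x$), open by the triangle inequality, and closed (a limit point $y$ of $A$ lies within $\varepsilon$ of some $y'\in A$, so extending a chain from $x$ to $y'$ by the single step $y',y$ gives a chain from $x$ to $y$). Hence by connectedness of $K$ we get $A=K$.

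I would then construct $(\alpha_j)$ by inductive concatenation. Set $\alpha_1=\beta_1$ and $N_1=1$. Assuming $\alpha_1,\dots,\alpha_{N_k}$ have been defined with $\alpha_{N_k}=\beta_k$, apply the chain lemma with threshold $\varepsilon_k=1/(k+1)$ to obtain a chain $\beta_k=z_0,z_1,\dots,z_m=\beta_{k+1}$ in $K$ whose consecutive gaps are less than $\varepsilon_k$. Append $z_1,\dots,z_m$ to the sequence by setting $\alpha_{N_k+i}=z_i$ for $1\le i\le m$, and put $N_{k+1}=N_k+m$, so that $\alpha_{N_{k+1}}=\beta_{k+1}$ and the induction continues.

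Finally I would verify both conclusions. By construction, whenever $j\ge N_k$ the pair $(\alpha_j,\alpha_{j+1})$ lies in a block assembled with threshold at most $1/(k+1)$, so $d(\alpha_j,\alpha_{j+1})\to 0$. For density of the tails: given $n\in\mathbb{N}^+$, every $\gamma_i$ equals $\beta_k$ for infinitely many $k$, hence for some $k$ with $N_k>n$, and $\beta_k=\alpha_{N_k}$ lies in $\{\alpha_j:j>n\}$; thus the dense set $\{\gamma_i\}$ is contained in every tail, so every tail is dense in $K$. I do not anticipate a serious obstacle: the only nontrivial ingredient is the $\varepsilon$-chain lemma, and its proof is a one-line clopen argument from connectedness.
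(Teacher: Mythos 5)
Your proof is correct. The paper does not prove this lemma at all --- it only cites Pfister--Sullivan \cite[Page 944]{PS2} --- and your argument (enumerate a countable dense subset with each point repeated infinitely often, join consecutive terms by $\varepsilon_k$-chains whose existence follows from the clopen argument in a connected space, and concatenate) is precisely the standard construction underlying that reference, so there is nothing substantive to contrast. The only points worth being careful about, and which you handle correctly, are that the chain points must be taken inside $K$ (they are, since the clopen set $A$ lives in $K$) and that $N_{k+1}>N_k$ so the tails really do pick up every $\gamma_i$.
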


\subsubsection{Proof of Theorem \ref{maintheorem-2}(1)}
Let $\zeta=\mathrm{min}\{\inf\{d(f^i(p_1),f^i(q_1)): i\in\mathbb{N}\},\inf\{d(f^i(p_2),f^i(q_2)): i\in\mathbb{N}\}\}.$
For any non-empty open set $U$, we can fix an $\varepsilon>0$ and a point $z\in X_{l_{0}^{'}}$ for some $l_{0}^{'} \geq 1$ such that $\overline{B(z,\varepsilon)}\subseteq U,$ where $B(z,\varepsilon)=\{x\in X:d(x,z)<\varepsilon\}$. Let $\varepsilon_n=\varepsilon/2^n$. Let $0<\delta_1<1,\ \delta_n=\delta_{n-1}/2$. 
By Lemma \ref{lemma-A},  there exists a sequence $\{\alpha_1,\alpha_2,\cdots\}\subseteq K$ such that $$\overline{\{\alpha_j:j\in\mathbb{N}^+,j>n\}}=K,\ \forall n\in\mathbb{N}.$$
Note that $K$ is connected, so for any $j \in \mathbb{N}^+$, $1\leq i \leq j$, we can find a sequence $\{\beta^{i,j}_1,\beta^{i,j}_2,\cdots,\beta^{i,j}_{m_{i,j}}\}\subseteq K$ such that $d(\beta^{i,j}_{s+1},\beta^{i,j}_s)<\varepsilon_{j},$ for any $s\in\{1,2,\cdots,m_{i,j}-1\}$ and $\beta^{i,j}_1=\mu$, $\beta^{i,j}_{m_{i,j}}=\alpha_{i}$.

By Lemma \ref{lemma-MM}, for any $j \in \mathbb{N}^+$, $1\leq i \leq j$, $s\in\{1,\cdots,m_{i,j}\}$, there exists $\gamma^{i,j}_s \in \cM_{f|_{X_{l^{i,j}_s}}}(X_{l^{i,j}_s})$ for some $l^{i,j}_s \in \mathbb{N^{+}}$ such that $d(\beta^{i,j}_s,\gamma^{i,j}_s)<\varepsilon_{j}$. For any $j \in \mathbb{N}^+$, $1\leq i \leq j$, $s\in\{m_{i,j}+1,m_{i,j}+2,\cdots,2m_{i,j}-1\}$, let $\gamma^{i,j}_s = \gamma^{i,j}_{2m_{i,j}-s}$.

Note that for any invariant ergodic measure $\nu,$ one has $\nu(G_{\nu})=1$ by Birkhoff’s ergodic theorem. Then by applying Proposition \ref{proposition of entropy-dense property} to $(X_{l^{i,j}_s},f)$, for any $j \in \mathbb{N}^+$, $1\leq i \leq j$, $s\in\{1,\cdots,2m_{i,j}-1\}$, there exists $x^{i,j}_s\in X_{l^{i,j}_s}$ and $N^{i,j}_s\in \mathbb{N}$ such that $\mathcal E_n(x^{i,j}_s)\in B(\gamma^{i,j}_s, \varepsilon_j)$ holds for any $n\geq N^{i,j}_s$.

By Lemma \ref{lemma-mu}, for any $k\in\mathbb{N}^+$, we can obtain $x_1^{\varepsilon_k,\delta_k}$, $x_2^{\varepsilon_k,\delta_k} \in X_{l_{0}}$ and $N^{\varepsilon_k,\delta_k}$ such that for any $n\geq N^{\varepsilon_k,\delta_k}$,
\begin{equation}\label{equation-AL}
	\mathcal E_{n}(x_1^{\varepsilon_k,\delta_k}), \mathcal E_{n}(x_2^{\varepsilon_k,\delta_k})\in B(\mu,\varepsilon_{k}+\delta_{k}),
\end{equation}
\begin{equation}\label{equation-AM}
	\frac{|\{0\leq i\leq n-1:d(f^i(x_1^{\varepsilon_k,\delta_k}),f^i(x_2^{\varepsilon_k,\delta_k}))<\zeta-\varepsilon_{k}\}|}{n}<\delta_k.
\end{equation}
Let $N^{i,k}_{2m_{i,k}}=N^{\varepsilon_k,\delta_k}$ for $k \in \mathbb{N}^+$, $1\leq i \leq k.$

For any $k \geq 1$, $X_{k}$ is compact, thus there is a finite set $\Delta _{k}:=\{x_{1}^{k},x_{2}^{k},\dots,x_{t_{k}}^{k}\}\subseteq X_{k}$ such that $\Delta _{k}$ is $\varepsilon_{k}$-dense in $X_{k}$.

Let $L_{1}=\max\{\max\{l^{1,1}_s:1\leq s\leq 2m_{1,1}-1\},l_{0},l_{0}^{'},1\}$ and $L_{k}=\max\{\max\{l^{i,k}_s:1\leq i \leq k,1\leq s\leq 2m_{i,k}-1\},l_{0},l_{0}^{'},k,L_{k-1}\}$ for any $k\geq 2,$ then one has $L_{k} \leq L_{k+1}$ for any $k \geq 1$.

Take a sequence $\{\eta_{n}\}_{n=1}^{\infty}$ such that 
\begin{equation}\label{new-variable-3}
	0<\eta_{n}\leq \varepsilon_{n} \text{ for any } n\in\mathbb{N^{+}}.
\end{equation}
Denote $K_{k}=K_{\eta_{k}}$ which is defined in Definition \ref{definition of specification} for $(X_{L_{k}},f|_{X_{L_{k}}})$.

Now, giving an $\xi=(\xi_1,\xi_2,\cdots)\in\{1,2\}^\infty$, we construct the $x_\xi$ inductively.

{\bf Step 1: $construct\ x_{\xi_1}$.} 
Let $c^{1}_{s}=sK_{1}$ for any $1\leq s\leq t_{1}$.
We take $\{a^{1,1}_{s}\}$ and $\{b^{1,1}_{s}\}$ such that for any $s\in\{1,\cdots,2m_{1,1}-1\}$ 
\begin{equation}
	a^{1,1}_{1}=c^{1}_{t_1}+K_{1},
\end{equation}
\begin{equation}
	\frac{a^{1,1}_{s}+K_{1}+N^{1,1}_{s+1}}{b^{1,1}_{s}-a^{1,1}_{s}}<\delta_{1},\ b^{1,1}_{s}-a^{1,1}_{s}>N^{1,1}_{s},
\end{equation}
\begin{equation}
	a^{1,1}_{s+1}= b^{1,1}_{s}+K_{1},
\end{equation}
\begin{equation}
	\frac{a^{1,1}_{2m_{1,1}}+(t_{2}+1)K_{2}+N^{1,2}_{1}}{b^{1,1}_{2m_{1,1}}-a^{1,1}_{2m_{1,1}}}<\delta_{1},\ b^{1,1}_{2m_{1,1}}-a^{1,1}_{2m_{1,1}}>N^{1,1}_{2m_{1,1}}.
\end{equation}
By specification property of $(X_{L_{1}},f|_{X_{L_{1}}})$, we can obtain an $x_{\xi_1} \in X_{L_{1}}$ $\eta_1$-$traces$ $z,x_{1}^{1},\dots,x_{t_{1}}^{1},x^{1,1}_{1},\dots,$$x^{1,1}_{2m_{1,1}-1}$, $x_{\xi_1}^{\varepsilon_1,\delta_1}$ on 
$$[0,0],$$
$$[c^{1}_{1},c^{1}_{1}],\dots,[c^{1}_{t_{1}},c^{1}_{t_{1}}],$$
$$[a^{1,1}_{1},b^{1,1}_{1}],\dots,[a^{1,1}_{2m_{1,1}-1},b^{1,1}_{2m_{1,1}-1}],[a^{1,1}_{2m_{1,1}},b^{1,1}_{2m_{1,1}}],$$
respectively.

{\bf Step k: $construct\ x_{\xi_1\dots\xi_{k}}$.} 
Let $c^{k}_{s}=b^{k-1,k-1}_{2m_{k-1,k-1}}+sK_k$ for any $1\leq s\leq t_{k}$.
We take $\{a^{i,k}_{s}\}$ and $\{b^{i,k}_{s}\}$ such that for any $1\leq i\leq k$ and $s\in\{1,\cdots,2m_{i,k}-1\}$ 
\begin{equation}
	a^{1,k}_{1}=c^{k}_{t_{k}}+ K_{k},
\end{equation}
\begin{equation}\label{equation-AQ}
	\frac{a^{i,k}_{s}+K_{k}+N^{i,k}_{s+1}}{b^{i,k}_{s}-a^{i,k}_{s}}<\delta_{k},\ b^{i,k}_{s}-a^{i,k}_{s}>N^{i,k}_{s},
\end{equation}
\begin{equation}
	a^{i,k}_{s+1}=b^{i,k}_{s}+ K_{k},
\end{equation}
\begin{equation}\label{equation-AR}
	\frac{a^{i,k}_{2m_{i,k}}+K_{k}+N^{i+1,k}_{1}}{b^{i,k}_{2m_{i,k}}-a^{i,k}_{2m_{i,k}}}<\delta_{k},\  b^{i,k}_{2m_{i,k}}-a^{i,k}_{2m_{i,k}}>N^{i,k}_{2m_{i,k}},\ \mathrm{for}\ \mathrm{any}\ 1\leq i\leq k-1,
\end{equation}
\begin{equation}
	a^{i+1,k}_{1}=b^{i,k}_{2m_{i,k}}+ K_{k},
\end{equation}
\begin{equation}\label{equation-AS}
	\frac{a^{k,k}_{2m_{k,k}}+(t_{k+1}+1)K_{k+1}+N^{1,k+1}_{1}}{b^{k,k}_{2m_{k,k}}-a^{k,k}_{2m_{k,k}}}<\delta_{k},\ b^{k,k}_{2m_{k,k}}-a^{k,k}_{2m_{k,k}}>N^{k,k}_{2m_{k,k}}.
\end{equation}
By specification property of $(X_{L_{k}},f|_{X_{L_{k}}})$, we can obtain an $x_{\xi_1\dots\xi_{k}} \in X_{L_{1}}$ $\eta_k$-$traces$ $x_{\xi_1\dots\xi_{k-1}},x_{1}^{k},\dots,x_{t_{k}}^{k},$ $x^{1,k}_{1},\dots,x^{1,k}_{2m_{1,k}-1},x_{\xi_1}^{\varepsilon_k,\delta_k},\dots,x^{k,k}_{1},\dots,x^{k,k}_{2m_{k,k}-1},x_{\xi_k}^{\varepsilon_k,\delta_k}$ on 
$$[0,b^{k-1,k-1}_{2m_{k-1,k-1}}],$$
$$[c^{k}_{1},c^{k}_{1}],\dots,[c^{k}_{t_{k}},c^{k}_{t_{k}}],$$
$$[a^{1,k}_{1},b^{1,k}_{1}],\dots,[a^{1,k}_{2m_{1,k}-1},b^{1,k}_{2m_{1,k}-1}],[a^{1,k}_{2m_{1,k}},b^{1,k}_{2m_{1,k}}],$$
$$\dots,$$
$$[a^{k,k}_{1},b^{k,k}_{1}],\dots,[a^{k,k}_{2m_{k,k}-1},b^{k,k}_{2m_{k,k}-1}],[a^{k,k}_{2m_{k,k}},b^{k,k}_{2m_{k,k}}],$$
respectively.

Obviously, $d(x_{\xi_1\cdots\xi_{k-1}},x_{\xi_1\cdots\xi_k})<\eta_{k}\leq\varepsilon_k$, so $\{x_{\xi_1\cdots\xi_k}\}_{k=1}^{\infty}$ is a cauchy sequence in $\overline{B(z,\varepsilon)}$ since $\sum_{i=k}^{+\infty}\varepsilon_i\leq 2\varepsilon_k$. Denote the accumulation point of $\{x_{\xi_1\cdots\xi_k}\}_{k=1}^{\infty}$ by $x_\xi$, and it is easy to verify that $x_\xi$ $2\varepsilon_k$-$traces$ $x_{1}^{k},\dots,x_{t_{k}}^{k},$ $x^{1,k}_{1},\dots,x^{1,k}_{2m_{1,k}-1},x_{\xi_1}^{\varepsilon_k,\delta_k},\dots,x^{k,k}_{1},\dots,x^{k,k}_{2m_{k,k}-1},x_{\xi_k}^{\varepsilon_k,\delta_k}$ on 
$$[c^{k}_{1},c^{k}_{1}],\dots,[c^{k}_{t_{k}},c^{k}_{t_{k}}],$$
$$[a^{1,k}_{1},b^{1,k}_{1}],\dots,[a^{1,k}_{2m_{1,k}-1},b^{1,k}_{2m_{1,k}-1}],[a^{1,k}_{2m_{1,k}},b^{1,k}_{2m_{1,k}}],$$
$$\dots,$$
$$[a^{k,k}_{1},b^{k,k}_{1}],\dots,[a^{k,k}_{2m_{k,k}-1},b^{k,k}_{2m_{k,k}-1}],[a^{k,k}_{2m_{k,k}},b^{k,k}_{2m_{k,k}}],$$
respectively.

For any $x \in X$ and $k \geq 1$, there exist $x' \in X_{l_{x}}$ for some $l_{x}\geq k$ such that $d(x,x') < \varepsilon_{k}$. Since $\Delta _{l_{x}}$ is $\varepsilon_{l_{x}}-$dense in $X_{l_{x}}$, there exists $1\leq j \leq t_{l_{x}}$ such that $d(x',x_{j}^{l_{x}})< \varepsilon_{l_{x}}<\varepsilon_{k}$. Thus $$d(f^{c^{l_{x}}_{j}}(x_\xi),x)<d(f^{c^{l_{x}}_{j}}(x_\xi),x_{j}^{l_{x}})+d(x_{j}^{l_{x}},x')+d(x',x)<4\varepsilon_{k}.$$ 
Then $orb(x_\xi,f)$ is $4\varepsilon_{k}$-dense in $X$ for any $k \geq 1,$
we obtain $x_\xi \in Trans$.

Fix $\xi,\eta\in\{1,2\}^\infty$, we claim that $x_\xi\neq x_\eta$ and $x_\xi,x_\eta$ is a DC1-scrambled pair if $\xi\neq \eta$. Suppose $\xi_s\neq \eta_s$ for some $s\in\mathbb{N^{+}}$, then for any $k\geq s$ $x_\xi$ $2\varepsilon_k$-$traces$ $x_{\xi_s}^{\varepsilon_k,\delta_k}$ on $[a^{s,k}_{2m_{s,k}},b^{s,k}_{2m_{s,k}}]$ and $x_\eta$ $2\varepsilon_k$-$traces$ $x_{\eta_s}^{\varepsilon_k,\delta_k}$ on $[a^{s,k}_{2m_{s,k}},b^{s,k}_{2m_{s,k}}]$. For any fixed $0<\kappa<\zeta$, we can get an $I_\kappa>s$ such that $\zeta-\kappa>5\varepsilon_{I_\kappa}$. Note that (\ref{equation-AM}), we have
$$\frac{|\{i\in[0,b^{s,k}_{2m_{s,k}}-a^{s,k}_{2m_{s,k}}]:d(f^i(x_{\xi_s}^{\varepsilon_k,\delta_k}),f^i(x_{\eta_s}^{\varepsilon_k,\delta_k}))<\zeta-\varepsilon_k\}|}{b^{s,k}_{2m_{s,k}}-a^{s,k}_{2m_{s,k}}+1}<\delta_k<1$$
holds for any $k\geq I_\kappa$. So we have
\begin{equation}\label{equation-AT}
	\frac{|\{i\in[a^{s,k}_{2m_{s,k}},b^{s,k}_{2m_{s,k}}]:d(f^i(x_{\xi}),f^i(x_{\eta}))<\zeta-5\varepsilon_k\}|}{b^{s,k}_{2m_{s,k}}-a^{s,k}_{2m_{s,k}}+1}<\delta_k<1
\end{equation}
holds for any $k\geq I_\kappa$, which implies for any $k\geq I_\kappa$, there is $t\in [a^{s,k}_{2m_{s,k}},b^{s,k}_{2m_{s,k}}]$ such that $d(f^t(x_{\xi}),f^t(x_{\eta}))\geq\zeta-5\varepsilon_k>\kappa$. Therefore, $x_\xi\neq x_\eta$ and $\{x_\xi\}_{\xi\in \{1,2\}^\infty}$$($denote by $S_K)$ is an uncountable set. Meanwhile, by (\ref{equation-AT}) and (\ref{equation-AR})
\begin{equation}\label{equation-DD}
	\begin{split}
		& \liminf_{n\to \infty}\frac{1}{n}|\{j\in [0,n-1]:\ d(f^j(x_\xi),f^j(x_\eta))<\kappa\}|\\
		\le & \liminf_{k\geq I_\kappa,\ k\to \infty}\frac{1}{b^{s,k}_{2m_{s,k}}}|\{j\in [0,b^{s,k}_{2m_{s,k}}]:\ d(f^j(x_\xi),f^j(x_\eta))<\kappa\}|\\
		\le & \liminf_{k\geq I_\kappa,\ k\to \infty}\frac{a^{s,k}_{2m_{s,k}}}{b^{s,k}_{2m_{s,k}}}+\delta_k\\
		\le & \liminf_{k\geq I_\kappa,\ k\to \infty}2\delta_k
		= 0.
	\end{split}
\end{equation}
On the other hand, For any fixed $t>0$, we can choose $k_t\in\mathbb{N}$ large enough such that $4\varepsilon_k<t$ holds for any $k\geq k_t$. Note that $x_\xi$ and $x_\eta$ are both $2\varepsilon_k$-$traces$ $x^{1,k}_{m_{1,k}}$ on $[a^{1,k}_{m_{1,k}},b^{1,k}_{m_{1,k}}]$. So by (\ref{equation-AQ})
\begin{align*}
	&\limsup_{n\to \infty}\frac{1}{n}|\{j\in [0,n-1]:\ d(f^j(x_\xi),f^j(x_\eta))<t\}|\\
	\ge & \limsup_{n\to \infty}\frac{1}{n}|\{j\in [0,n-1]:\ d(f^j(x_\xi),f^j(x_\eta))<4\varepsilon_{k_t}\}|\\
	\ge & \limsup_{k\geq k_t,\ k\to \infty}\frac{1}{b^{1,k}_{m_{1,k}}}|\{j\in [0,b^{1,k}_{m_{1,k}}]:\ d(f^j(x_\xi),f^j(x_\eta)))<4\varepsilon_k\}|\\
	\ge & \limsup_{k\geq k_t,\ k\to \infty}\frac{b^{1,k}_{m_{1,k}}-a^{1,k}_{m_{1,k}}}{b^{1,k}_{m_{1,k}}}\\
	\ge & \limsup_{k\geq k_t,\ k\to \infty}(1-\delta_k)\\
	= & 1.
\end{align*}
So far we have proved that $S_K=\{x_\xi\}_{\xi\in \{1,2\}^\infty}\subseteq \overline{B(z,\varepsilon)}\subseteq U$ is an   uncountable DC1-scrambled set. To complete this proof, we need to check that $V_{f}(x_\xi)=K$ for any $\xi\in\{1,2\}^\infty$. On one hand, for any fixed $s\in\mathbb{N}^+$, when $k\geq s$,
note that $x_\xi$ $2\varepsilon_k$-$traces$ $x^{s,k}_{m_{s,k}}$ on $[a^{s,k}_{m_{s,k}},b^{s,k}_{m_{s,k}}]$, so we have
\begin{align*}
	& d(\mathcal{E}_{b^{s,k}_{m_{s,k}}}(x_\xi),\ \alpha_s)\\
	\le & d(\mathcal{E}_{b^{s,k}_{m_{s,k}}-a^{s,k}_{m_{s,k}}}(f^{a^{s,k}_{m_{s,k}}}x_\xi),\ \alpha_s)+2\delta_k\\
	\le & d(\mathcal{E}_{b^{s,k}_{m_{s,k}}-a^{s,k}_{m_{s,k}}}(x^{s,k}_{m_{s,k}}),\ \alpha_s)+2\varepsilon_k+2\delta_k\\
	\le & 2\varepsilon_{k}+ 2\varepsilon_k+2\delta_k\\
	= & 4\varepsilon_k+2\delta_k
\end{align*}
by Lemma \ref{measure distance} and (\ref{equation-AQ}). Let $k\to\infty,$ we have $\alpha_s\in V_{f}(x_\xi)$ for any $s\in\mathbb{N}^+$, which implies $K\subseteq V_{f}(x_\xi)$.

On the other hand, for any large enough and fixed $n\in\mathbb{N}^+$, we consider $\mathcal{E}_{n}(x_\xi)$. Obviously, there is a $k\in\mathbb{N}$ such that $n\in[b^{k,k}_{2m_{k,k}},b^{k+1,k+1}_{2m_{k+1,k+1}}]$. If $n$ lies in $[b^{k,k}_{2m_{k,k}},a^{1,k+1}_{1}]$, we have
\begin{equation}
	\begin{split}
		d(\mathcal{E}_{n}(x_\xi),\mu)&\leq d(\mathcal{E}_{n}(x_\xi),\mathcal{E}_{ b^{k,k}_{2m_{k,k}}-a^{k,k}_{2m_{k,k}}}(f^{a^{k,k}_{2m_{k,k}}}x_\xi))+d(\mathcal{E}_{ b^{k,k}_{2m_{k,k}}-a^{k,k}_{2m_{k,k}}}(f^{a^{k,k}_{2m_{k,k}}}x_\xi),\mu)\\
		&\leq d(\mathcal{E}_{ b^{k,k}_{2m_{k,k}}-a^{k,k}_{2m_{k,k}}}(f^{a^{k,k}_{2m_{k,k}}}x_\xi),\mu)+2\delta_{k}\\
		&\leq d(\mathcal{E}_{ b^{k,k}_{2m_{k,k}}-a^{k,k}_{2m_{k,k}}}(f^{a^{k,k}_{2m_{k,k}}}x_\xi),\mathcal{E}_{ b^{k,k}_{2m_{k,k}}-a^{k,k}_{2m_{k,k}}}(x_{\xi_{k}}^{\varepsilon_{k},\delta_{k}}))\\
		&\quad+d(\mathcal{E}_{ b^{k,k}_{2m_{k,k}}-a^{k,k}_{2m_{k,k}}}(x_{\xi_{k}}^{\varepsilon_{k},\delta_{k}}),\mu)+2\delta_{k}\\
		&\leq 2\varepsilon_{k}+\varepsilon_{k}+\delta_{k}+2\delta_{k}\\
		&=3\varepsilon_{k}+3\varepsilon_{k}
	\end{split}
\end{equation}
by Lemma \ref{measure distance}, (\ref{equation-AL}) and (\ref{equation-AS}).

We assume that $n\in [a^{i,k}_{s},b^{i,k}_{s}]$ for some $1\leq i\leq k$ and some $s\in\{2,\dots,2m_{i,k}-1\}$, if $n-a^{i,k}_{s}\leq N^{i,k}_{s}$, one has
\begin{equation}\label{equation-AU}
	\begin{split}
		d(\mathcal{E}_{n}(x_\xi),\beta^{i,k}_{s-1})&\leq d(\mathcal{E}_{n}(x_\xi),\mathcal{E}_{ b^{i,k}_{s-1}-a^{i,k}_{s-1}}(f^{a^{i,k}_{s-1}}x_\xi))+d(\mathcal{E}_{ b^{i,k}_{s-1}-a^{i,k}_{s-1}}(f^{a^{i,k}_{s-1}}x_\xi,r),\beta^{i,k}_{s-1})\\
		&\leq d(\mathcal{E}_{ b^{i,k}_{s-1}-a^{i,k}_{s-1}}(f^{a^{i,k}_{s-1}}x_\xi,r),\beta^{i,k}_{s-1})+2\delta_{k}\\
		&\leq d(\mathcal{E}_{ b^{i,k}_{s-1}-a^{i,k}_{s-1}}(f^{a^{i,k}_{s-1}}x_\xi),\mathcal{E}_{ b^{i,k}_{s-1}-a^{i,k}_{s-1}}(x^{i,k}_{s-1}))+d(\mathcal{E}_{ b^{i,k}_{s-1}-a^{i,k}_{s-1}}(x^{i,k}_{s-1}),\beta^{i,k}_{s-1})+2\delta_{k}\\
		&\leq 2\varepsilon_{k}+2\varepsilon_{k}+2\delta_{k}\\
		&=4\varepsilon_{k}+2\delta_{k}
	\end{split}
\end{equation}
by Lemma \ref{measure distance} and (\ref{equation-AQ}). 

If $n-a^{i,k}_{s}\geq N^{i,k}_{s}$, one has
\begin{equation}\label{equation-AV}
	\begin{split}
		d(\mathcal{E}_{n}(x_\xi),\beta^{i,k}_{s})&\leq \frac{a^{i,k}_{s}}{n}d(\mathcal{E}_{a^{i,k}_{s}}(x_\xi),\beta^{i,k}_{s})+\frac{n-a^{i,k}_{s}}{n}d(\mathcal{E}_{ n-a^{i,k}_{s}}(f^{a^{i,k}_{s}}x_\xi),\beta^{i,k}_{s})\\
		&\leq \frac{a^{i,k}_{s}}{n}(d(\mathcal{E}_{a^{i,k}_{s}}(x_\xi),\beta^{i,k}_{s-1})+d(\beta^{i,k}_{s-1},\beta^{i,k}_{s}))+\frac{n-a^{i,k}_{s}}{n}d(\mathcal{E}_{ n-a^{i,k}_{s}}(f^{a^{i,k}_{s}}x_\xi),\beta^{i,k}_{s})\\
		&\leq \frac{a^{i,k}_{s}}{n}(4\varepsilon_{k}+2\delta_{k}+\varepsilon_{k})+\frac{n-a^{i,k}_{s}}{n}(d(\mathcal{E}_{ n-a^{i,k}_{s}}(f^{a^{i,k}_{s}}x_\xi),\mathcal{E}_{n-a^{i,k}_{s}}(x^{i,k}_{s}))+d(\mathcal{E}_{n-a^{i,k}_{s}}(x^{i,k}_{s}),\beta^{i,k}_{s}))\\
		&\leq \frac{a^{i,k}_{s}}{n}(5\varepsilon_{k}+2\delta_{k})+\frac{n-a^{i,k}_{s}}{n}(2\varepsilon_{k}+2\varepsilon_{k})\\
		&\leq 5\varepsilon_{k}+2\delta_{k}
	\end{split}
\end{equation}
by (\ref{equation-AU}). 
Combine with (\ref{equation-AU}) and (\ref{equation-AV}), one has
\begin{equation}
	d(\mathcal{E}_{n}(x_\xi),\beta^{i,k}_{s})\leq 5\varepsilon_{k}+2\delta_{k}.
\end{equation}
Then $\mathcal{E}_{n}(x_\xi)\in B(K,5\varepsilon_k+2\delta_k)$. In other situations of the interval where $n$ lies, we can also prove $\mathcal{E}_{n}(x_\xi)\subseteq B(K,5\varepsilon_k+2\delta_k)$ with a little modification of the { above method}. When $n\to\infty$, forcing $k\to\infty$, $B(K,5\varepsilon_k+2\delta_k)\to K$, hence we have $V_{f}(x_\xi)=K$. 
So we complete the proof of Theorem \ref{maintheorem-2}(1).\qed

\subsection{ Proof of Theorem \ref{maintheorem-2}(2)} 
We make a notion that for a dynamical system $(X,f)$ and $x,y\in X,\ a,b\in\mathbb{N}$, we say $x$ $\{\varepsilon_{n}\}_{n=1}^{\infty}$-$traces$ $y$ on $[a,b]$ with exponent $\{\lambda_{n}\}_{n=1}^{\infty}$ if $$d(f^i(x),f^{i-a}(y))<\sum_{n=1}^{\infty}\varepsilon_{n} e^{-\lambda_{n}\min\{i-a,b-i\}}\ \forall i\in[a,b].$$
\begin{Lem}\label{lemma-unstable2}
	Suppose that $f:X\to X$ is a homeomorphism of a compact metric space $X,$ $(X,f)$ has a sequence of nondecreasing $f$-invariant compact subsets $\{X_{n} \subseteq X:n \in \mathbb{N^{+}} \}$ such that $\overline{\bigcup_{n\geq 1}X_{n}}=X,$ $({X_{n}},f|_{X_{n}})$ has exponential specification property for any $n \in \mathbb{N^{+}}$. Let $\{y_n\}_{n=0}^{\infty}$ be a sequence with $y_{n}\in X_{l_n}$ for some $l_n\in\mathbb{N^{+}}$. Denote $L_n=\max\{l_i: 0\leq i\leq n\}$. Then for any sequence $\{\eta_{n}\}_{n=1}^{\infty}$ with $\eta_{n}>0$ and $\lim\limits_{m \to \infty}\sum\limits_{n=1}^{m}\eta_{m}<+\infty,$ and for any integer sequence $0=b_0<a_1 \le b_1 < a_2 \le b_2 < \cdots < a_s \le b_s< \cdots$ with $a_{m}-b_{m-1} \ge K_{\eta_m}$ for any $m\in\mathbb{N}^{+}$ where $K_{\eta_m}$ is defined in Definition \ref{definition of exp specification} for $(X_{L_m},f|_{X_{L_m}})$, there is a point $x$ in $X$ such that the
	following two conditions hold:
	\begin{description}
		\item[(a)] $x$ $\{\eta_{n}\}_{n=m}^{\infty}$-$traces$ $y_m$ on $[a_m,b_m]$ with exponent $\{\lambda_{L_n}\}_{n=m}^{\infty}$ for any $m\in\mathbb{N^{+}};$
		\item[(b)] $d(f^{-i}(x),f^{-i}(y_{0}))\leq \sum\limits_{n=1}^{\infty}\eta_n e^{-\lambda_{L_n} i}$ for any $i\in\mathbb{N}$.
	\end{description}
\end{Lem}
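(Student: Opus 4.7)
The plan is to build $x$ as an accumulation point in $X$ of an inductively constructed sequence $x_j \in X_{L_j}$, where $x_j$ is obtained by applying exponential specification in the subsystem $(X_{L_j}, f|_{X_{L_j}})$ with parameter $\eta_j$ and exponent $\lambda_{L_j}$. Since $\{X_n\}$ is nondecreasing and $L_n$ is nondecreasing in $n$, we get the nestings $y_0, y_1, \ldots, y_j \in X_{L_j}$ and $x_{j-1} \in X_{L_{j-1}} \subseteq X_{L_j}$, so every specification step lives legitimately inside the intended subsystem.

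For the base step $j=1$, I would mimic the proof of Lemma \ref{lemma-unstable} inside $(X_{L_1}, f|_{X_{L_1}})$: apply exponential specification with parameter $\eta_1$ to the two blocks $[-M, 0]$ (shadowing $f^{-M}y_0$) and $[a_1, b_1]$ (shadowing $y_1$), with $a_1 \ge K_{\eta_1}$, then let $M \to \infty$ and extract an accumulation point $x_1 \in X_{L_1}$. For $j \ge 2$, apply the same procedure in $X_{L_j}$ with parameter $\eta_j$ to the two blocks shadowing $x_{j-1}$ on $[-M, b_{j-1}]$ and $y_j$ on $[a_j, b_j]$; the gap $a_j - b_{j-1} \ge K_{\eta_j}$ makes this legal. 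The resulting $x_j \in X_{L_j}$ then satisfies
\[ d(f^i x_j, f^i x_{j-1}) \le \eta_j e^{-\lambda_{L_j}(b_{j-1} - i)} \quad \text{for all integers } i \le b_{j-1}, \]
\[ d(f^i x_j, f^{i-a_j} y_j) \le \eta_j e^{-\lambda_{L_j}\min\{i-a_j,\, b_j - i\}} \quad \text{for } i \in [a_j, b_j]. \]

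Taking $x$ to be any accumulation point of $\{x_j\}$ in the compact space $X$, both conclusions follow from telescoping. For (b), setting $x_0 := y_0$,
\[ d(f^{-i}x, f^{-i}y_0) \le \sum_{j \ge 1} d(f^{-i}x_j, f^{-i}x_{j-1}) \le \sum_{j \ge 1} \eta_j e^{-\lambda_{L_j}(b_{j-1} + i)} \le \sum_{j \ge 1} \eta_j e^{-\lambda_{L_j} i}, \]
where the $j$-th summand uses the first displayed inequality at index $-i \le 0 \le b_{j-1}$ together with $b_{j-1} \ge 0$. For (a) on $[a_m, b_m]$, telescope the distance from $x$ to $y_m$ through the tail $x_m, x_{m+1}, \ldots$:
\[ d(f^i x, f^{i-a_m} y_m) \le d(f^i x_m, f^{i-a_m} y_m) + \sum_{j > m} d(f^i x_j, f^i x_{j-1}) \le \sum_{j \ge m} \eta_j e^{-\lambda_{L_j}\min\{i-a_m,\, b_m - i\}}. \]
The key comparison is that for $j > m$ and $i \in [a_m, b_m]$, interval-ordering gives $b_{j-1} \ge b_m \ge i$, hence $e^{-\lambda_{L_j}(b_{j-1} - i)} \le e^{-\lambda_{L_j}(b_m - i)} \le e^{-\lambda_{L_j}\min\{i-a_m,\, b_m-i\}}$.

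The main obstacle I anticipate is the exponential bookkeeping across stages: each inductive refinement of $x_j$ must degrade the earlier tracing of $y_m$ by an amount of exactly order $\eta_j e^{-\lambda_{L_j}\min\{i-a_m,\, b_m-i\}}$, so that the telescoping fits into the claimed sum rather than producing a weaker summand. This is exactly what the comparison above delivers, and the finiteness $\sum_n \eta_n < \infty$ then guarantees that the bounds in (a) and (b) are meaningful (and in particular that accumulation points exist and inherit the inductive estimates).
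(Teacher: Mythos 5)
Your proof is correct and takes essentially the same route as the paper's: an inductive use of exponential specification in $(X_{L_j},f|_{X_{L_j}})$ producing points $x_j\in X_{L_j}$ that exponentially shadow $x_{j-1}$ on times $\le b_{j-1}$ and $y_j$ on $[a_j,b_j]$, followed by passing to a limit and telescoping the estimates. The only (cosmetic) difference is that the paper obtains the stage-$j$ point by invoking Lemma \ref{lemma-unstable} on time-shifted data (taking $z_j$ relative to $f^{b_{j-1}}(x_{j-1})$ and setting $x_j=f^{-b_{j-1}}z_j$), whereas you re-run that diagonal limiting argument directly and spell out the comparison $b_{j-1}-i\ge \min\{i-a_m,\,b_m-i\}$ that the paper leaves implicit in its items (kc)--(kd).
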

\begin{proof}
	we construct the $x$ inductively.
	
	{\bf Step 1: $construct\ x_{1}$.} 
	Using Lemma \ref{lemma-unstable} for $(X_{L_{1}},f|_{X_{L_{1}}})$, we can obtain an $x_{1} \in X_{L_{1}}$ such that the
	following two conditions hold:\
	\begin{description}
		\item[(1a)] $x_1$ $\eta_1$-$traces$ $y_1$ on $[a_1,b_1]$ with exponent $\lambda_{L_1};$
		\item[(1b)] $d(f^{-i}(x_1),f^{-i}(y_{0}))\leq \eta_1 e^{-\lambda_{L_1} i}$ for any integer $i\in\mathbb{N}.$
	\end{description}
	
	{\bf Step k: $construct\ x_{k}$.} 
	Using Lemma \ref{lemma-unstable} for $(X_{L_{k}},f|_{X_{L_{k}}})$, we can obtain a $z_{k} \in X_{L_{k}}$ such that the
	following two conditions hold:\
	\begin{description}
		\item[(ka)] $z_k$ $\eta_k$-$traces$ $y_k$ on $[a_k-b_{k-1},b_k-b_{k-1}]$ with exponent $\lambda_{L_k};$
		\item[(kb)] $d(f^{-i}(z_k),f^{-i}(f^{b_{k-1}}(x_{k-1})))\leq \eta_k e^{-\lambda_{L_k} i}$ for any integer $i\in\mathbb{N}$.
	\end{description}
	Let $x_k=f^{-b_{k-1}}z_k$, then the
	following two conditions hold:\
    \begin{description}
    	\item[(kc)] $x_k$ $\{\eta_{n}\}_{n=m}^{k}$-$traces$ $y_m$ on $[a_m,b_m]$ with exponent $\{\lambda_{L_n}\}_{n=m}^{k}$ for any $1\le m\le k;$
    	\item[(kd)] $d(f^{-i}(x_k),f^{-i}(x_{k-1}))\leq \eta_k e^{-\lambda_{L_k} i}$ for any integer $i\in\mathbb{N}$.
    \end{description}
	Since $\lim\limits_{m \to \infty}\sum\limits_{n=1}^{m}\eta_{m}<+\infty$, one has $\{x_{k}\}_{k=1}^{\infty}$ is a cauchy sequence. Denote the accumulation point of $\{x_{k}\}_{k=1}^{\infty}$ by $x$, then it is easy to verify that the item (a) and (b) hold for $x.$
\end{proof}

Since exponential specification property implies specification property, following the proof of Theorem \ref{maintheorem-2}(1) there is an uncountable DC1-scrambled set $S_K\subseteq G_K\cap U\cap Trans$ under the assumption of Theorem \ref{maintheorem-2}(2). Let $\alpha\in\mathcal{A},$ in order to let $S_K$ be an uncountable $\alpha$-DC1-scrambled set, we just need to make a little modification of $\{\eta_{n}\}_{n=1}^{\infty}$ and $\{b^{1,k}_{m_{1,k}}\}$. We strengthen (\ref{new-variable-3}) to 
\begin{equation}\label{new-variable-4}
	\eta_{n}\frac{4}{1-e^{-\lambda_{L_{n}}}}\leq \varepsilon_{n} \text{ for any } n\in\mathbb{N^{+}}
\end{equation}
where $\lambda_{L_{n}}$ is defined in the Definition \ref{definition of exp specification} for $(X_{L_{n}},f|_{X_{L_{n}}})$. For any $k\in\mathbb{N}^{+},$ there exists $d^{1,k}_{m_{1,k}}\in\mathbb{N}^{+}$ such that $4\alpha(d^{1,k}_{m_{1,k}})\varepsilon_{k}>a^{1,k}_{m_{1,k}}\text{diam}X+2\varepsilon_{k}$ by $\lim\limits_{n\to\infty}\alpha(n)=+\infty$, where $\text{diam}X$ is the diameter of $X$. Based on (\ref{equation-AQ}), we add the requirement that $b^{1,k}_{m_{1,k}}$ satisfies
\begin{equation}\label{equation-BF}
	\frac{d^{1,k}_{m_{1,k}}}{b^{1,k}_{m_{1,k}}}<\delta_{k}.
\end{equation}
Giving an $\xi=(\xi_1,\xi_2,\cdots)\in\{1,2\}^\infty$, replacing specification property by exponential specification property in the proof of Theorem \ref{maintheorem-2}(1) we can get $x_\xi\in X$ such that 
$x_\xi$ $\{\eta_{n}\}_{n=k}^{\infty}$-$traces$ $x_{1}^{k},\dots,x_{t_{k}}^{k},$ $x^{1,k}_{1},\dots,x^{1,k}_{2m_{1,k}-1},x_{\xi_1}^{\varepsilon_k,\delta_k},\dots,x^{k,k}_{1},\dots,$ $x^{k,k}_{2m_{k,k}-1}$  $,x_{\xi_k}^{\varepsilon_k,\delta_k}$ on 
$$[c^{k}_{1},c^{k}_{1}],\dots,[c^{k}_{t_{k}},c^{k}_{t_{k}}],$$
$$[a^{1,k}_{1},b^{1,k}_{1}],\dots,[a^{1,k}_{2m_{1,k}-1},b^{1,k}_{2m_{1,k}-1}],[a^{1,k}_{2m_{1,k}},b^{1,k}_{2m_{1,k}}],$$
$$\dots,$$
$$[a^{k,k}_{1},b^{k,k}_{1}],\dots,[a^{k,k}_{2m_{k,k}-1},b^{k,k}_{2m_{k,k}-1}],[a^{k,k}_{2m_{k,k}},b^{k,k}_{2m_{k,k}}],$$
with exponent $\{\lambda_{L_{n}}\}_{n=k}^{\infty}$ respectively. 
Since $\sum\limits_{n=k}^{\infty}\eta_{n} e^{-\lambda_{L_{n}}i}\leq \sum\limits_{n=k}^{\infty}\eta_{n}\frac{1}{1-e^{-\lambda_{L_{n}}}}\leq \sum\limits_{n=k}^{\infty}\frac{\varepsilon_{n}}{4}< \varepsilon_{k}$ for any $i\in\mathbb{N}$, we obtain that 
$x_\xi$ $\varepsilon_{k}$-$traces$ $x_{1}^{k},\dots,x_{t_{k}}^{k},$ $x^{1,k}_{1},\dots,x^{1,k}_{2m_{1,k}-1},x_{\xi_1}^{\varepsilon_k,\delta_k},\dots,x^{k,k}_{1},\dots,$ $x^{k,k}_{2m_{k,k}-1}$, $x_{\xi_k}^{\varepsilon_k,\delta_k}$ on 
$$[c^{k}_{1},c^{k}_{1}],\dots,[c^{k}_{t_{k}},c^{k}_{t_{k}}],$$
$$[a^{1,k}_{1},b^{1,k}_{1}],\dots,[a^{1,k}_{2m_{1,k}-1},b^{1,k}_{2m_{1,k}-1}],[a^{1,k}_{2m_{1,k}},b^{1,k}_{2m_{1,k}}],$$
$$\dots,$$
$$[a^{k,k}_{1},b^{k,k}_{1}],\dots,[a^{k,k}_{2m_{k,k}-1},b^{k,k}_{2m_{k,k}-1}],[a^{k,k}_{2m_{k,k}},b^{k,k}_{2m_{k,k}}],$$ respectively. So the result of Theorem \ref{maintheorem-2}(1) still holds. Next we prove that $S_K=\{x_{\xi}:\xi\in\{1,2\}^{\infty}\}$ is $\alpha-$DC1-scrambled.

Fix $\xi\neq \eta\in\{1,2\}^\infty$.
For any fixed $t>0$, we can choose $k_t\in\mathbb{N}$ large enough such that $4\varepsilon_k<t$ holds for any $k\geq k_t$.  Note that $x_\xi$ and $x_\eta$ are both $\{\eta_{n}\}_{n=k}^{\infty}$-$tracea$ $x^{1,k}_{m_{1,k}}$ on $[a^{1,k}_{m_{1,k}},b^{1,k}_{m_{1,k}}]$ with exponent $\{\lambda_{L_{n}}\}_{n=k}^{\infty}$, then for any $d^{1,k}_{m_{1,k}}\leq j\leq b^{1,k}_{m_{1,k}}$ one has
\begin{equation}\label{equation-DB}
	\begin{split}
		\sum_{i=0}^{j-1}d(f^{i}(x_\xi),f^{i}(x_\eta))&=\sum_{i=0}^{a^{1,k}_{m_{1,k}}-1}d(f^{i}(x_\xi),f^{i}(x_\eta))+\sum_{i=a^{1,k}_{m_{1,k}}}^{j-1}d(f^{i}(x_\xi),f^{i}(x_\eta))\\
		&\leq a^{1,k}_{m_{1,k}}\text{diam}X+\sum_{i=a^{1,k}_{m_{1,k}}}^{j-1}\sum_{n=k}^{\infty}2\eta_{n} e^{-\lambda_{L_{n}}\min\{i-a^{1,k}_{m_{1,k}},b^{1,k}_{m_{1,k}}-i\}}\\
		&= a^{1,k}_{m_{1,k}}\text{diam}X+\sum_{n=k}^{\infty}\sum_{i=a^{1,k}_{m_{1,k}}}^{j-1}2\eta_{n} e^{-\lambda_{L_{n}}\min\{i\tilde{k}-a^{1,k}_{m_{1,k}},b^{1,k}_{m_{1,k}}-i\tilde{k}\}}\\
		&\leq a^{1,k}_{m_{1,k}}\text{diam}X+\sum_{n=k}^{\infty}2\eta_{n}\frac{2}{1-e^{-\lambda_{L_{n}}}}\\
		&\leq a^{1,k}_{m_{1,k}}\text{diam}X+\sum_{n=k}^{\infty}\varepsilon_{n}\\
		&\leq a^{1,k}_{m_{1,k}}\text{diam}X+2\varepsilon_{k}\\
		&<4\alpha(j)\varepsilon_{k}
	\end{split}
\end{equation}
by (\ref{new-variable-4}).
So by (\ref{equation-BF}) and (\ref{equation-DB}), one has
\begin{align*}
	\Phi _{x_\xi x_{\eta}}^{*}(t,f,\alpha)=&\limsup_{n\to \infty}\frac{1}{n}|\{j\in [1,n]:\ \sum_{i=0}^{j-1}d(f^{i}(x_\xi),f^{i}(x_\eta))<\alpha(j)t\}|\\
	\ge & \limsup_{n\to \infty}\frac{1}{n}|\{j\in [1,n]:\ \sum_{i=0}^{j-1}d(f^{i}(x_\xi),f^{i}(x_\eta))<4\alpha(j)\varepsilon_{k}\}|\\
	\ge & \limsup_{k\geq k_t,\ k\to \infty}\frac{1}{b^{1,k}_{m_{1,k}}}|\{j\in [1,b^{1,k}_{m_{1,k}}]:\ \sum_{i=0}^{j-1}d(f^{i}(x_\xi),f^{i}(x_\eta))<4\alpha(j)\varepsilon_k\}|\\
	\ge & \limsup_{k\geq k_t,\ k\to \infty}\frac{b^{1,k}_{m_{1,k}}-d^{1,k}_{m_{1,k}}}{b^{1,k}_{m_{1,k}}}\\
	\ge & \limsup_{k\geq k_t,\ k\to \infty}1-\frac{d^{1,k}_{m_{1,k}}}{b^{1,k}_{m_{1,k}}}\\
	\ge & \limsup_{k\geq k_t,\ k\to \infty}(1-\delta_k)\\
	= & 1.
\end{align*}
Combining with (\ref{equation-DD}), we have that $x_\xi, x_{\eta}$ is $\alpha$-DC1-scrambled.

If $f$ is a homeomorphism, by Lemma \ref{lemma-unstable2}, the $x_\xi$ can also satisfy that $d(f^{-i}(x_\xi),f^{-i}(z))\leq \sum\limits_{n=1}^{\infty}\eta_n e^{-\lambda_{L_n} i}$ for any integer $i\in\mathbb{N}$. Then by (\ref{new-variable-4}),
\begin{equation*}
	\sum_{i=0}^{\infty}d(f^{-i}(x_\xi),f^{-i}(z))\leq \sum_{i=0}^{\infty}\sum_{n=1}^{\infty}\eta_n e^{-\lambda_{L_n} i}=\sum_{n=1}^{\infty}\sum_{i=0}^{\infty}\eta_n e^{-\lambda_{L_n} i}=\sum_{n=1}^{\infty}\eta_{n}\frac{1}{1-e^{-\lambda_{L_{n}}}}\leq\sum_{n=1}^{\infty}\varepsilon_{n}=\varepsilon
\end{equation*} 
which implies $\lim\limits_{i\to\infty}d(f^{-i}(x_\xi),f^{-i}(z))=0$ and $d(f^{-i}(x_\xi),f^{-i}(z))\leq \varepsilon$ for any $i\in\mathbb{N}$.
So we have $x_\xi \in W^{u}_\varepsilon(z).$  So we complete the proof of Theorem \ref{maintheorem-2}(2).\qed

\subsection{Proof of Theorem \ref{maintheorem-3}(1)}
\subsubsection{Periodic decomposition}

\begin{Lem}\cite[Theorem 2.3.3]{AH}\label{AQ}
	Let $(X,f)$ be a dynamical system and $k>0$ be an integer, then $f$ has shadowing property if and only if so does
	$f^{k}$.
\end{Lem}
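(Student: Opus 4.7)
The plan is to prove both implications directly from the definition of the shadowing property, using uniform continuity of $f$ on the compact space $X$ as the technical tool for translating between $\delta$-pseudo-orbits of $f$ and those of $f^k$.

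For the forward direction, assume $f$ has shadowing. Given $\varepsilon>0$, take the $\delta>0$ supplied by shadowing of $f$ at scale $\varepsilon$. For any $\delta$-pseudo-orbit $(y_n)_{n\geq 0}$ of $f^k$, I would interpolate it to an $f$-sequence by setting $x_{nk+i}:=f^i(y_n)$ for $0\le i\le k-1$. Inside each block of length $k$ the jumps $d(f(x_{nk+i}),x_{nk+i+1})$ vanish, while the jump between consecutive blocks is $d(f^k(y_n),y_{n+1})<\delta$, so $(x_m)$ is a $\delta$-pseudo-orbit for $f$. Its $\varepsilon$-tracing point $z$ automatically satisfies $d(f^{nk}(z),y_n)=d(f^{nk}(z),x_{nk})<\varepsilon$ for all $n$, i.e., $z$ is the desired $\varepsilon$-shadow of $(y_n)$ under $f^k$.

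For the converse, assume $f^k$ has shadowing. Given $\varepsilon>0$, I first use uniform continuity of $f,f^2,\dots,f^{k-1}$ to pick $\varepsilon_1\in(0,\varepsilon)$ such that $d(a,b)<\varepsilon_1$ forces $d(f^j(a),f^j(b))<\varepsilon/2$ for every $0\le j<k$, and then apply shadowing of $f^k$ at scale $\varepsilon_1$ to obtain $\delta_1>0$. Using uniform continuity a second time, choose $\delta>0$ so small that $d(a,b)<\delta$ implies $d(f^j(a),f^j(b))<\delta_1/k$ for every $0\le j\le k$. For any $\delta$-pseudo-orbit $(x_m)$ of $f$, the subsampled sequence $(x_{nk})_{n\ge 0}$ is then a $\delta_1$-pseudo-orbit of $f^k$, via the telescoping bound $d(f^k(x_{nk}),x_{(n+1)k})\le\sum_{i=0}^{k-1}d(f^{k-i}(x_{nk+i}),f^{k-i-1}(x_{nk+i+1}))<k\cdot(\delta_1/k)=\delta_1$. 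Shadowing of $f^k$ yields $z\in X$ with $d(f^{nk}(z),x_{nk})<\varepsilon_1$, and for $0\le i<k$ the triangle inequality $d(f^{nk+i}(z),x_{nk+i})\le d(f^i(f^{nk}(z)),f^i(x_{nk}))+d(f^i(x_{nk}),x_{nk+i})$ together with the choice of $\varepsilon_1$ (for the first term) and a second telescoping (for the second) gives $d(f^{nk+i}(z),x_{nk+i})<\varepsilon$, so $z$ shadows $(x_m)$ under $f$.

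The proof is elementary, and the only delicate point is the backward direction, where one must keep two nested uniform-continuity constants straight so that a single $\delta$ controls every block simultaneously; this bookkeeping, rather than any genuine obstacle, is what the argument has to manage carefully.
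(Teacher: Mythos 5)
The paper does not prove this lemma at all---it simply cites \cite[Theorem 2.3.3]{AH}---and your argument is the standard interpolation/subsampling proof of that result, which is correct in both directions. The only loose end is in the backward direction: your second telescoping bounds $d(f^i(x_{nk}),x_{nk+i})$ by $i\cdot\delta_1/k\le\delta_1$ rather than by $\varepsilon/2$, so you should either assume without loss of generality that $\delta_1\le\varepsilon/2$ (shrinking the shadowing constant is harmless) or choose $\delta$ so that each increment is below $\min\{\delta_1/k,\varepsilon/(2k)\}$; this is exactly the bookkeeping you flag and is trivially repaired.
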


\begin{Lem}\label{AS}
	Suppose that a dynamical system $(X,f)$ is transitive. If $(X,f^{k})$ is mixing for some $k \in \mathbb{N^{+}}$, then $(X,f)$ is mixing.
\end{Lem}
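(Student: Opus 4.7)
The plan is to lift mixing of $f^k$ to mixing of $f$ by the standard interpolation over residue classes modulo $k$. Fix nonempty open sets $U, V \subseteq X$; the goal is to produce $N \in \mathbb{N}$ such that $f^{-m}(U) \cap V \neq \emptyset$ for every $m \geq N$.

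First I would establish that $f$ is surjective, since the argument will need $f^{-j}(V)$ to be nonempty for each $0 \leq j \leq k-1$. Transitivity of $f$ on the nondegenerate compact metric space $X$ rules out isolated points (otherwise a transitive orbit would eventually become periodic, which is incompatible with $X$ being infinite — and mixing of $f^k$ forces $X$ to be infinite, since otherwise taking singletons as $U,V$ in the mixing definition yields a contradiction). Then for any transitive point $x$ one has $\overline{\{f^n(x) : n \geq 1\}} = X$, and since $f(X)$ is closed and contains this orbit, $f(X) = X$. Consequently $f^{-j}(V)$ is a nonempty open set for each $0 \leq j \leq k-1$.

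Next, for each such $j$, I would apply the mixing hypothesis of $f^k$ to the pair $(U, f^{-j}(V))$ to obtain $N_j \in \mathbb{N}$ with $(f^k)^{-n}(U) \cap f^{-j}(V) \neq \emptyset$ for all $n \geq N_j$. Picking $x$ in this intersection and setting $y = f^j(x) \in V$ gives $f^{kn-j}(y) = f^{kn}(x) \in U$, so $y \in f^{-(kn-j)}(U) \cap V$. Setting $N = k \max_{0 \leq j < k} N_j$, every integer $m \geq N$ admits a unique representation $m = kn - j$ with $0 \leq j \leq k-1$ and $n \geq \max_j N_j$, which yields $f^{-m}(U) \cap V \neq \emptyset$. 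This is precisely the mixing condition for $f$. The whole argument is elementary, and I do not anticipate any real obstacle; the transitivity hypothesis serves solely to secure surjectivity of $f$, and could in fact be deduced from mixing of $f^k$ alone by taking $V = X$ in the mixing definition to get $f^{kn}(X) = X$ for large $n$.
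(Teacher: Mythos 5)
Your proof is correct and follows essentially the same route as the paper's: deduce surjectivity of $f$ from transitivity, pull one of the open sets back by $f^{j}$ for $j=0,\dots,k-1$, apply mixing of $f^{k}$ to each pair, and take a maximum of the resulting thresholds to cover all residues modulo $k$. The only differences are cosmetic — you pull back $V$ where the paper pulls back $U$, and your justification of surjectivity (via isolated points and infiniteness of $X$) is more roundabout than needed, since with the open-sets formulation of transitivity surjectivity is immediate on a compact space.
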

\begin{proof}
	By transitivity of $f$, $f$ is surjective. For any nonempty open sets $U$ and $V$ in $X$, $f^{-i}(U)$ is nonempty open set for any $i \in \{1,\dots,k-1\}$. Since $f^{k}$ is mixing, for any $i \in \{0,\dots,k-1\}$, there exists $N_{i} \in \mathbb{N}^{+}$ such that $f^{-i}(U)\cap f^{-kn}(V)\neq \emptyset$, i.e., $U\cap f^{-(kn-i)}(V)\neq \emptyset$ for any $n\geq N_{i}$. Let $N=\max\{N_{0},\dots,N_{k-1}\}$, one has $U\cap f^{-(kn-i)}(V)\neq \emptyset$ for any $n\geq N$, any $i \in \{0,\dots,k-1\}$. Thus $U\cap f^{-n}(V)\neq \emptyset$ for any $n \geq (N-1)k+1$. So $(X,f)$ is mixing.
\end{proof}

\begin{Lem}\label{AR}
	Suppose that a dynamical system $(X,f)$ is transitive and has shadowing property. If there is a fixed point $x_{0} \in X$ for $f$, i.e., $f(x_{0})=x_{0}$, then $(X,f)$ is mixing.
\end{Lem}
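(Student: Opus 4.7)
The goal is to strengthen transitivity to mixing using the fixed point $x_0$ as a convenient ``junction'' where one can wait an arbitrary number of steps for free. Concretely, to show mixing I must prove that for any pair of nonempty open sets $U,V \subset X$ there is some $N$ with $f^{-n}(U) \cap V \neq \emptyset$ for every $n \geq N$. I will build, for each large $n$, a $\delta$-pseudo-orbit that starts in $V$, parks at $x_0$ for the right number of steps to absorb the freedom in $n$, and ends in $U$, then read off a true orbit via shadowing.

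Fix $U,V$ and choose $\varepsilon > 0$ together with $u \in U$, $v \in V$ so that $B(u,2\varepsilon) \subset U$ and $B(v,2\varepsilon) \subset V$. Let $\delta \in (0,\varepsilon)$ be the shadowing constant associated to $\varepsilon$, and use continuity of $f$ at $x_0$ (with $f(x_0)=x_0$) to choose $\delta_0 \in (0,\delta/2)$ such that $d(y,x_0) < \delta_0$ implies $d(f(y),x_0) < \delta/2$. By the topological transitivity of $f$ (which is equivalent to: every pair of nonempty open sets is linked by some positive iterate), I can pick
\[
p \in V \cap B(v,\varepsilon), \quad n_1 \geq 1 \text{ with } f^{n_1}(p) \in B(x_0,\delta_0),
\]
and
\[
q \in B(x_0,\delta_0), \quad n_2 \geq 1 \text{ with } f^{n_2}(q) \in B(u,\varepsilon).
\]

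For each integer $k \geq 0$, form the sequence
\[
\xi_0,\xi_1,\ldots \;=\; p,\,f(p),\,\ldots,\,f^{n_1}(p),\,\underbrace{x_0,x_0,\ldots,x_0}_{k \text{ times}},\,q,\,f(q),\,\ldots,\,f^{n_2}(q),\,f^{n_2+1}(q),\,\ldots
\]
Every transition $d(f(\xi_i),\xi_{i+1})$ is either $0$ (inside a true orbit block), or equals $d(f(f^{n_1}(p)),x_0) < \delta/2$ at the $p$-to-$x_0$ splice (by choice of $\delta_0$), or equals $d(x_0,q) < \delta_0 < \delta$ at the $x_0$-to-$q$ splice, so $(\xi_i)$ is a $\delta$-pseudo-orbit. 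Apply the shadowing property to extract $z \in X$ with $d(f^i(z),\xi_i) < \varepsilon$ for all $i \geq 0$. Then $d(z,p) < \varepsilon$ forces $z \in B(v,2\varepsilon) \subset V$, while $d(f^{n_1+k+n_2+1}(z),f^{n_2}(q)) < \varepsilon$ forces $f^{n_1+k+n_2+1}(z) \in B(u,2\varepsilon) \subset U$. Setting $N = n_1+n_2+1$ and letting $k$ range over $\mathbb{N}$ gives $f^{-n}(U) \cap V \neq \emptyset$ for every $n \geq N$, proving that $(X,f)$ is mixing.

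The only genuinely delicate point, and what motivates the whole scheme, is the first splice: the pseudo-orbit condition demands control of $d(f \circ f^{n_1}(p),\, x_0)$ rather than just $d(f^{n_1}(p),x_0)$, which is why I interpose the auxiliary constant $\delta_0$ coming from continuity at $x_0$. Everything else is routine bookkeeping, and the fixed point plays exactly the role it should: a legal resting place that lets me pad the pseudo-orbit to any prescribed length.
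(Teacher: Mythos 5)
Your proof is correct and follows essentially the same route as the paper's: use transitivity to obtain an orbit segment from $V$ into a small neighborhood of the fixed point and another from such a neighborhood into $U$, pad the junction with an arbitrary number of copies of $x_0$ to form a $\delta$-pseudo-orbit, and shadow it to land a true orbit witnessing $f^{-n}(U)\cap V\neq\emptyset$ for all large $n$. The only (cosmetic) difference is that the paper ends the first orbit block at $f^{n_1-1}(y_1)$ so the splice condition is exactly the transitivity estimate $d(f^{n_1}(y_1),x_0)<\delta$, whereas you keep $f^{n_1}(p)$ in the block and compensate with an extra continuity argument at $x_0$ via $\delta_0$.
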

\begin{proof}
	For any nonempty open sets $U$ and $V$ in $X$, there exist $x_{1},x_{2}$ and $\varepsilon>0$ such that $B(x_{1},2\varepsilon)\subseteq U$ and $B(x_{2},2\varepsilon)\subseteq V$. Since $f(x_{0})=x_{0}$, one has $B(x_{0},\delta)=B(f^{n}(x_{0}),\delta)$ for any $n \in \mathbb{N^{+}}$. For the $\varepsilon$ we take $\delta_{1}>0$ as in the definition of the shadowing property. Let $0<\delta<\min\{\delta_{1},\varepsilon\}$. Since $f$ is transitive, there exist $y_{1},y_{2}\in X$ and $n_{1},n_{2}\in \mathbb{N}^{+}$ such that $y_{1}\in B(x_{1},\delta)\cap f^{-n_{1}}B(x_{0},\delta)$ and $y_{2}\in B(x_{0},\delta)\cap f^{-n_{2}}B(x_{2},\delta)= B(f^{n}(x_{0}),\delta)\cap f^{-n_{2}}B(x_{2},\delta)$. Let $\alpha$ be the $\delta$-pseudo-orbit, $$y_{1},f(y_{1}),\dots,f^{n_{1}-1}(y_{1}),x_{0},f(x_{0}),\dots,f^{n-1}(x_{0}),y_{2},f(y_{2}),\dots,f^{n_{2}}(y_{2}).$$
	Then there exist $y\in X$ such that $\alpha$ is $\varepsilon-$traced by $y$. Thus $y \in U\cap f^{-(n_{1}+n_{2}+n)}(V)$. So $U\cap f^{-(n_{1}+n_{2}+n)}(V)\neq \emptyset$ for any $n\in \mathbb{N^{+}}$. One has that $(X,f)$ is mixing.
\end{proof}

Suppose we have a collection $\mathcal{D}=\{D_{0},D_{1},\dots,D_{n-1}\}$ of subsets of
$X$ such that $f(D_{i}) \subseteq D_{i+1(\mathrm{mod}\ n)}$ for any $i \in \{1,\dots,n-1\}$. We call $\mathcal{D}$ a periodic orbit of sets. Clearly, the union of the $D_{i}$ is an invariant subset in these circumstances and $f^{n}$
is invariant on each $D_{i}$. Also, $f^{k}(D_{i}) \subseteq D_{i+k(\mathrm{mod}\ n)}$ for all $k \in \mathbb{N}$. We call $\mathcal{D}$ a periodic decomposition if the $D_{i}$ are all closed, $D_{i}\cap D_{j}$ is nowhere dense whenever $i\neq j$, and the union of the $D_{i}$ is $X$. The condition that the $D_{i}$ have nowhere dense overlap is equivalent to saying that their interiors are disjoint. In the special case where the $D_{i}$ are mutually disjoint, each $D_{i}$ is clopen since its complement is a finite union of closed sets. The number of sets in a decomposition will be called the length of the decomposition.

A closed set is called regular closed if it is the closure of its interior or, equivalently, it is the closure of an open set. A periodic decomposition is regular if all of its elements are regular closed. Since clopen sets are regular closed, a periodic decomposition is always regular in the case where $D_{i}\cap D_{j}$ is empty for $i\neq j$.

\begin{Lem}\label{AO}\cite[Lemma 2.1]{Ban}
	If $\mathcal{D}=\{D_{0},D_{1},\dots,D_{n-1}\}$ is a regular periodic decomposition for a transitive
	map $f$, then:
	\begin{description}
		\item[(i)] $f^{k}(D_{i})=D_{k+i\ (\mathrm{mod} \ n)}$ for all $i \in \{0,1,\dots,n-1\}$ and $k \in \mathbb{N}^{+}$;
		
		\item[(ii)] $f^{k}(D_{i})\subseteq D_{i}$ iff $k=0\ (\mathrm{mod} \ n)$.
	\end{description}
\end{Lem}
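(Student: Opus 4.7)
The plan is to establish statement (i) first and then deduce (ii) from it. Since $f^k$ is the $k$-th iterate of $f$, if I can show $f(D_i)=D_{i+1\bmod n}$ for every $i$, then (i) follows by induction on $k$. The inclusion $f(D_i)\subseteq D_{i+1\bmod n}$ is already part of the definition of a periodic decomposition, so the nontrivial content is the reverse inclusion.

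My approach to the reverse inclusion is a Baire-category decomposition argument using transitivity, surjectivity, and regular-closedness. First I would note that transitivity of $f$ on a compact metric space with at least two points forces $f(X)=X$: the closed set $f(X)$ contains the dense orbit of any transitive point from time one onwards, hence equals $X$. Therefore $\bigcup_{i'}f(D_{i'})=X$, and intersecting with $D_{i+1\bmod n}$ gives
\[
D_{i+1\bmod n}=\bigl(D_{i+1\bmod n}\cap f(D_i)\bigr)\cup\bigcup_{i'\neq i}\bigl(D_{i+1\bmod n}\cap f(D_{i'})\bigr).
\]
For each $i'\neq i$ the piece on the right is contained in $D_{i+1\bmod n}\cap D_{i'+1\bmod n}$, which is closed and nowhere dense by hypothesis; since $f(D_{i'})$ is closed as the continuous image of a compact set, each such piece is closed nowhere dense. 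A finite union of closed nowhere dense sets is again closed nowhere dense, so call this union $N$. Hence $D_{i+1\bmod n}\setminus N\subseteq f(D_i)$. The left-hand side contains $U_{i+1\bmod n}\setminus N$, where $U_{i+1\bmod n}:=\mathrm{int}(D_{i+1\bmod n})$, and this is open and dense in $U_{i+1\bmod n}$. Taking closures in the closed set $f(D_i)$ and invoking the regular-closed hypothesis $\overline{U_{i+1\bmod n}}=D_{i+1\bmod n}$, I obtain $D_{i+1\bmod n}\subseteq f(D_i)$, completing the base case.

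For (ii), the ``if'' direction is immediate from (i). For ``only if'', suppose $f^k(D_i)\subseteq D_i$; part (i) then rewrites this as $D_{i+k\bmod n}\subseteq D_i$, and taking interiors gives $U_{i+k\bmod n}\subseteq D_i\cap D_{i+k\bmod n}$. If $k\not\equiv 0\bmod n$, the right-hand side is nowhere dense, which forces the open set $U_{i+k\bmod n}$ to be empty; but then $D_{i+k\bmod n}=\overline{U_{i+k\bmod n}}=\emptyset$, contradicting the (tacit) nonemptiness of the elements of the decomposition.

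The main obstacle I expect is the Baire-category bookkeeping in the base case: one must verify that the finite union of the error pieces is genuinely closed nowhere dense in $X$, and then correctly invoke regular-closedness to promote ``$f(D_i)$ contains a dense open subset of $U_{i+1\bmod n}$'' to ``$f(D_i)$ contains all of $D_{i+1\bmod n}$''. The rest reduces to direct manipulation of the hypotheses.
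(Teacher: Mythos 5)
The paper does not prove this lemma; it is imported verbatim as \cite[Lemma 2.1]{Ban}, so there is no internal proof to compare against. Your argument is correct and complete, and it is essentially the standard one from Banks's paper: reduce to $f(D_i)=D_{i+1\ (\mathrm{mod}\ n)}$, get the reverse inclusion by writing $D_{i+1\ (\mathrm{mod}\ n)}$ as $\bigl(D_{i+1\ (\mathrm{mod}\ n)}\cap f(D_i)\bigr)$ together with a finite union of closed nowhere dense overlap pieces, and then use regular closedness to upgrade a dense open subset to the whole set; part (ii) then follows since a nonempty regular closed set cannot be nowhere dense. The one step worth tightening is the claim that transitivity forces $f(X)=X$: justify it from the open-set formulation of transitivity (take $U=V=X\setminus f(X)$ and note $f^n(y)\in f(X)$ for $n\geq 1$), rather than from density of the orbit tail of a transitive point, since for a point with dense orbit the tail $\{f^n(x):n\geq 1\}$ can fail to be dense when $x$ is isolated (e.g.\ $X=\{0\}\cup\{1/n:n\geq 1\}$ with $1/n\mapsto 1/(n+1)$); in the applications in this paper the restriction $f|_{X_l}$ is surjective anyway, so nothing is at stake there.
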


\begin{Thm}\cite[Theorem 2.1]{Ban}\label{AN}
	Let $\mathcal{D}=\{D_{0},D_{1},\dots,D_{n-1}\}$ be a regular periodic decomposition for a
	map $f$. Then $f$ is transitive iff $f^{n}$ is transitive on each $D_{i}$.
\end{Thm}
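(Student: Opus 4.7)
The plan is to prove both directions by exploiting the fact that $D_i$ being regular closed gives $\mathrm{int}(D_i)$ (interior in $X$) dense in $D_i$, together with Lemma \ref{AO} which tells us that $f^k(D_i) = D_{k+i \;(\mathrm{mod}\; n)}$ and that $f^k(D_i) \subseteq D_i$ forces $n \mid k$.

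For the implication $(\Leftarrow)$, suppose $f^n|_{D_i}$ is transitive on each $D_i$. Given non-empty open sets $U, V \subseteq X$, I would first use that $X = \bigcup_{i=0}^{n-1} D_i$ together with density of $\bigcup_i \mathrm{int}(D_i)$ in $X$ (each $D_i$ is regular closed) to pick indices $i, j$ and non-empty open subsets $U' \subseteq U \cap \mathrm{int}(D_i)$ and $V' \subseteq V \cap \mathrm{int}(D_j)$. By Lemma \ref{AO}(i), $f^{j-i}(D_i) = D_j \supseteq V'$, so $W := D_i \cap f^{-(j-i)}(V')$ is non-empty, and it is open in $D_i$ by continuity of $f$. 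The hypothesis then yields some $k \geq 0$ with $f^{nk}(U') \cap W \neq \emptyset$, whence $f^{nk + (j-i)}(U) \cap V \neq \emptyset$, establishing transitivity of $f$.

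For the implication $(\Rightarrow)$, suppose $f$ is transitive and fix $D_i$ together with non-empty open subsets $U, V$ of $D_i$ (in the subspace topology). Writing $U = D_i \cap \widetilde U$ and $V = D_i \cap \widetilde V$ for some open $\widetilde U, \widetilde V \subseteq X$, I would use density of $\mathrm{int}(D_i)$ in $D_i$ to upgrade these to non-empty open subsets $U' := \widetilde U \cap \mathrm{int}(D_i)$ and $V' := \widetilde V \cap \mathrm{int}(D_i)$ of $X$, both contained in $D_i$. Applying transitivity of $f$, there is $k \geq 1$ with $f^k(U') \cap V' \neq \emptyset$; since this intersection lies in $D_i$ and in $f^k(D_i) \cap D_i$, Lemma \ref{AO}(ii) forces $n \mid k$, say $k = nm$. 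Then $f^{nm}(U) \cap V \neq \emptyset$, which gives transitivity of $f^n|_{D_i}$.

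The main subtle point is the passage between the subspace topology on $D_i$ and the ambient topology of $X$: one needs non-empty open sets in $D_i$ to produce non-empty open sets in $X$ (and conversely), and this is exactly what the regularity assumption on the decomposition buys. Once that is in place the argument is essentially bookkeeping through Lemma \ref{AO}. A minor point to verify is that the index $j$ chosen for $V$ in the $(\Leftarrow)$ direction is well-defined even when $V$ meets several $D_j$'s; this is handled by shrinking $V$ to a subset of a single $\mathrm{int}(D_j)$, which is possible because the overlaps $D_i \cap D_j$ are nowhere dense.
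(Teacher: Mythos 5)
Your overall strategy (pass between the subspace topology on $D_i$ and the ambient topology via regular closedness, then do the index bookkeeping) is the right one, but two of your key justifications fail as written. In the direction where you assume $f^{n}$ transitive on each piece and want transitivity of $f$, you invoke Lemma \ref{AO}(i) to get $f^{j-i}(D_i)=D_j$ and hence $W\neq\emptyset$. Lemma \ref{AO} is stated (and is only true in general) for a \emph{transitive} map $f$, which is exactly what you are trying to prove in this direction, so the appeal is circular; moreover, without some surjectivity the equality $f^{j-i}(D_i)=D_j$ can simply fail. What you actually need is only that $f^{r}(D_i)$ meets $V'$, i.e.\ that $f^{r}(D_i)$ is dense in $D_j$, where $r\equiv j-i\ (\mathrm{mod}\ n)$, and this does follow from your hypothesis: for any nonempty relatively open $B\subseteq D_j$, transitivity of $f^{n}|_{D_j}$ gives $k\geq 1$ with $f^{nk}(D_j)\cap B\neq\emptyset$, and $f^{nk}(D_j)\subseteq f^{n}(D_j)$, so $f^{n}(D_j)$ is dense in $D_j$; since $f^{n-r}(D_j)\subseteq D_i$, we get $f^{n}(D_j)=f^{r}(f^{n-r}(D_j))\subseteq f^{r}(D_i)$, hence $f^{r}(D_i)$ is dense in $D_j$. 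With this substitute your argument for that direction goes through.

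In the converse direction the citation of Lemma \ref{AO}(ii) is off target: from $f^{k}(U')\cap V'\neq\emptyset$ you only learn $f^{k}(D_i)\cap D_i\neq\emptyset$, whereas (ii) is about the containment $f^{k}(D_i)\subseteq D_i$; distinct pieces of the decomposition may intersect (in a nowhere dense set), so nonempty intersection alone does not force $n\mid k$. The repair uses information you set up but did not exploit: the intersection point lies in $V'\subseteq\mathrm{int}(D_i)$, and by Lemma \ref{AO}(i) (legitimate here, since $f$ \emph{is} transitive) it also lies in $f^{k}(D_i)=D_{i+k\ (\mathrm{mod}\ n)}$. If $n\nmid k$, then $D_{i+k}\cap D_i$ is nowhere dense while $D_{i+k}$ is regular closed, so a point of $D_{i+k}$ inside the open set $\mathrm{int}(D_i)$ would be approximated by points of $\mathrm{int}(D_{i+k})$, producing a nonempty open set $\mathrm{int}(D_{i+k})\cap\mathrm{int}(D_i)$ contained in the overlap -- a contradiction; hence $n\mid k$ and your conclusion $f^{nm}(U)\cap V\neq\emptyset$ with $m\geq1$ follows. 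These are the genuine gaps; the rest of the topological bookkeeping is correct. (For reference, the paper does not prove this statement itself but quotes it from Banks, so there is no in-paper proof to compare against.)
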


\begin{Thm}\cite[Theorem 2.3]{Ban}\label{AM}
	Let $f$ be transitive with $f^{p}$ not transitive, where $p$ is prime. Then $f$ admits a regular periodic decomposition of length $p$.
\end{Thm}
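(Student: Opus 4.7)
The plan is to construct the decomposition directly from a transitive point, use primality of $p$ to rule out a smaller period, and then extract the topological regularity properties.

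First, I would fix a transitive point $x_0 \in X$ for $f$ and set
$$Y := \overline{\{f^{pn}(x_0) : n \geq 0\}}, \qquad D_i := f^i(Y) \text{ for } i = 0, 1, \ldots, p-1.$$
By construction $f^p|_Y$ has $x_0$ as a transitive point; since $Y$ is compact, $f^p(Y)$ is closed and dense in $Y$, hence equals $Y$. This yields the cyclic relation $f(D_i) = D_{(i+1)\bmod p}$, including the wrap-around $f(D_{p-1}) = f^p(Y) = Y = D_0$. Splitting the orbit of $x_0$ into arithmetic progressions modulo $p$ and using that finite unions commute with closure gives $X = \bigcup_{i=0}^{p-1} D_i$, with the alternative description $D_i = \overline{\{f^{pn+i}(x_0) : n \geq 0\}}$.

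Next, primality of $p$ forces the $D_i$ to be pairwise distinct. The cyclic $f$-action on $\{D_0, \ldots, D_{p-1}\}$ means the number of distinct members divides $p$; hence this number is $1$ or $p$. If it were $1$, then $D_0 = X$ and the $f^p$-orbit of $x_0$ would be dense in $X$, contradicting the hypothesis that $f^p$ is not transitive. So all $D_i$ are distinct.

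The main technical obstacle is showing that $D_i \cap D_j$ is nowhere dense for $i \neq j$. I would argue by contradiction: suppose $D_i \cap D_j$ contains a non-empty open set $W$. Since $\gcd(j-i, p) = 1$ by primality, iterates of $W$ under $f$ together with the cyclic structure $f(D_k) = D_{(k+1)\bmod p}$ and transitivity of $f$ would propagate an open intersection through a cycle of pairs $(D_k, D_\ell)$ with $k \neq \ell$, and a careful bookkeeping using surjectivity of $f$ on $X$ ultimately forces $D_i = D_j$, contradicting the distinctness established above. Granting nowhere-denseness, regular closedness of each $D_i$ follows easily: the set $E_i := D_i \setminus \bigcup_{k \neq i} D_k$ is open in $X$ (its complement is a finite union of closed sets, and $X = \bigcup_k D_k$) and dense in $D_i$ (its complement in $D_i$ is a finite union of nowhere dense closed sets in $D_i$). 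Hence $E_i \subseteq \operatorname{int}(D_i)$, and $D_i = \overline{E_i} \subseteq \overline{\operatorname{int}(D_i)}$, so $D_i$ is regular closed. Assembling these ingredients yields the desired regular periodic decomposition of length $p$.
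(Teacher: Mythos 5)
First, a point of reference: the paper does not prove this statement at all --- it is quoted verbatim from Banks \cite[Theorem 2.3]{Ban} and used as a black box --- so there is no in-paper argument to compare against and your proposal has to stand entirely on its own.

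The orbit-closure construction $D_i=\overline{\{f^{pn+i}(x_0):n\ge 0\}}$ is a reasonable starting point, but there is a genuine gap exactly where you locate ``the main technical obstacle'': the nowhere-density of $D_i\cap D_j$ is disposed of by ``careful bookkeeping \dots\ ultimately forces $D_i=D_j$'' without any actual argument, and this step is essentially the whole content of the theorem. It can be carried out (sketch: if $W\subseteq D_i\cap D_j$ is open and nonempty, then every nonempty open $W'\subseteq W$ contains orbit points at times congruent to both $i$ and $j$ mod $p$; intersecting $W'$ with $f^{-n}(W')$ for a suitable $n\equiv j-i$ and iterating $p-1$ times, primality forces the visit times of the orbit to $W$ to realize every residue class mod $p$; transitivity propagates this to every nonempty open set, and then, writing $R(U)$ for the residues mod $p$ of the visit times to $U$, the inclusion $N(U,V)\bmod p\supseteq R(V)-R(U)$ contradicts the non-transitivity of $f^p$), but none of this appears in your write-up, and the natural contradiction lands at ``$f^p$ is transitive'' rather than at distinctness of the $D_i$. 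Two further steps need repair. (i) Regular closedness: you need $D_i\cap D_k$ to be nowhere dense \emph{in the subspace $D_i$}, whereas the preceding step only gives nowhere density \emph{in $X$}; for a pair of closed sets the former does not follow from the latter, so the density of $E_i$ in $D_i$ is unproved. (Banks sidesteps this by building the pieces as closures of unions of \emph{preimages} $f^{-n}(U)$ of a fixed open set, which are regular closed by construction.) (ii) The wrap-around equality $f^p(Y)=Y$, on which your divisibility argument for distinctness relies, amounts to $x_0\in\overline{\{f^{pn}(x_0):n\ge 1\}}$, i.e.\ recurrence of $x_0$ under $f^p$; ``closed and dense in $Y$'' is asserted, not proved, and it requires either restricting to the case where $X$ has no isolated points together with the fact that an $f$-recurrent point is $f^p$-recurrent, or a separate treatment of the finite case.
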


\begin{Lem}\label{AW}
	Suppose that a dynamical system $(X,f)$ is transitive. $\mathcal{D}=\{D_{0},D_{1},\dots,D_{n-1}\}$ is a regular periodic decomposition for $f$. If $f^{k}$ is transitive on $D_{i}$ for some $i \in \{0,1,\dots,n-1\}$ and some $k \in \mathbb{N}^{+}$, then $f^{k}$ is transitive on $D_{i}$ for any $i \in \{0,1,\dots,n-1\}$.
\end{Lem}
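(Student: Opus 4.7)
The plan is to transport a transitive point in $D_i$ to transitive points in the other $D_j$'s by composing with iterates of $f$, exploiting the commutativity $f^k\circ f^j=f^j\circ f^k$ together with Lemma \ref{AO}.

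First I would pin down the value of $k$. In order that $f^k$ act on $D_i$ at all, we need $f^k(D_i)\subseteq D_i$, and by Lemma \ref{AO}(ii) this forces $k\equiv 0\pmod n$. In particular $f^k(D_\ell)=D_\ell$ for every $\ell$, so the question of transitivity of $f^k$ on each $D_\ell$ makes sense.

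Next, say $f^k$ is transitive on $D_{i_0}$; since $D_{i_0}$ is a compact metric space (a closed subset of $X$), I would fix a point $x\in D_{i_0}$ whose $f^k$-orbit $\{f^{km}(x):m\ge 0\}$ is dense in $D_{i_0}$. For an arbitrary $j\in\{0,1,\dots,n-1\}$ set $y:=f^j(x)$, which lies in $D_{i_0+j\,(\mathrm{mod}\,n)}$ by Lemma \ref{AO}(i). Using that $f^k$ and $f^j$ commute,
\begin{equation*}
\{f^{km}(y):m\ge 0\}=\{f^j(f^{km}(x)):m\ge 0\}=f^j\bigl(\{f^{km}(x):m\ge 0\}\bigr).
\end{equation*}
Since $f^j\colon D_{i_0}\to D_{i_0+j\,(\mathrm{mod}\,n)}$ is a continuous surjection (again by Lemma \ref{AO}(i)), and the image of a dense subset under a continuous surjection between topological spaces is dense, the $f^k$-orbit of $y$ is dense in $D_{i_0+j\,(\mathrm{mod}\,n)}$. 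This shows $f^k$ is transitive on every $D_\ell$, as desired.

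The argument is essentially soft and there is no real obstacle; the only thing to be careful about is the divisibility condition on $k$, and the elementary topological fact that continuous surjections preserve density, which is precisely what lets the single transitive point $x\in D_{i_0}$ generate transitive points in all the other pieces simultaneously.
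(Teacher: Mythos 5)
Your proof is correct and follows essentially the same route as the paper's: push the $f^k$-transitive point of $D_{i_0}$ forward by an iterate of $f$ and use the surjectivity and continuity of $f^{j}\colon D_{i_0}\to D_{i_0+j\,(\mathrm{mod}\ n)}$ from Lemma \ref{AO}(i) to transfer density of the orbit. The paper performs this density transfer by an explicit $\varepsilon$--$\delta$ argument instead of quoting the general fact that continuous surjections preserve density, and it omits your (correct, but inessential) preliminary remark that $k\equiv 0\pmod n$; these are only cosmetic differences.
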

\begin{proof}
	Let $x_{0} \in D_{i}$ be a transitive point for $f^{k}$, i.e., $\{f^{km}(x_{0}):m \in \mathbb{N}\}$ is dense in $D_{i}$. Let $j\neq i$ and $j \in \{0,1,\dots,n-1\}$. For any $y \in D_{j}$, by Lemma \ref{AO}(i) there is $x\in D_{i}$ such that $f^{j-i (\mathrm{mod} \ n)}(x)=y$. For any $\varepsilon>0$, there is a $\delta >0$ such that $f^{j-i (\mathrm{mod} \ n)}(B(x,\delta))\subseteq B(y,\varepsilon)$ since $f^{j-i (\mathrm{mod} \ n)}$ is continuous. Then $f^{km_{j}}(x_{0})\in B(x,\delta)$ for some $m_{j}\in \mathbb{N}$. Thus $f^{j-i (\mathrm{mod} \ n)}(f^{km_{j}}(x_{0}))\in B(y,\varepsilon)$, i.e., $f^{km_{j}}(f^{j-i (\mathrm{mod} \ n)}(x_{0}))\in B(y,\varepsilon)$. Let $y_{j}= f^{j-i (\mathrm{mod} \ n)}(x_{0})$, one has $y_{j}$ is a transitive point for $f^{k}$ on $D_{j}$.
\end{proof}

\begin{Lem}\label{AT}
	Suppose that a dynamical system $(X,f)$ is transitive and has shadowing property.  $\mathcal{D}=\{D_{0},D_{1},\dots,D_{n-1}\}$ is a regular periodic decomposition for $f$. Then $D_{i}\cap D_{j}$ is empty for $1\leq i\neq j\leq n-1.$
\end{Lem}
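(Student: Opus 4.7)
The plan is to argue by contradiction. By Lemma~\ref{AO}(i) we may cyclically shift indices and assume $D_0 \cap D_j \neq \emptyset$ for some $1 \leq j \leq n-1$; fix $x \in D_0 \cap D_j$. The strategy is to construct a $\delta$-pseudo-orbit whose shadow $p$ lies in $D_0$, yet Lemma~\ref{AO}(ii), combined with the forward iteration of the pseudo-orbit into $D_j$, forces $f^{Nn}(p)$ to approximate a point of $D_j$ that is uniformly separated from $D_0$, yielding the contradiction.

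The first structural observation is that $D_j \not\subseteq D_0$: otherwise $\mathrm{int}(D_j)$ would be a nonempty open subset of $D_0$, hence of $\mathrm{int}(D_0)$, contradicting the disjointness of the interiors of distinct members of $\mathcal{D}$ (an immediate consequence of regularity and of the nowhere-dense intersection hypothesis). Consequently, for all sufficiently small $\varepsilon > 0$, the set $W_\varepsilon := \{w \in D_j : d(w,D_0) > \varepsilon\}$ is a nonempty open subset of $D_j$. Shrinking $\varepsilon$ further if necessary, I also ensure that the $\varepsilon$-interior $\{y \in D_0 : B(y,\varepsilon) \subseteq D_0\}$ is nonempty (which is possible since $\mathrm{int}(D_0) \neq \emptyset$), and I take $\delta > 0$ as given by the shadowing property for this $\varepsilon$.

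By Theorem~\ref{AN}, $f^n|_{D_0}$ and $f^n|_{D_j}$ are transitive, so transitive points for each restricted system form a residual subset of $D_0$ and $D_j$ respectively. Combining this with Baire's theorem and the (easily verified) fact that intersections $D_k \cap D_l$, $k \neq l$, remain nowhere dense inside each $D_k$, I select (i) a point $y$ in the $\varepsilon$-interior of $D_0$ that avoids $\bigcup_{l \neq 0} D_l$ and has dense $f^n$-orbit in $D_0$, and (ii) a point $z \in \mathrm{int}(D_j) \setminus D_0$ with $d(z,x) < \delta/2$ and dense $f^n$-orbit in $D_j$. Since $\{f^{kn}(y)\}_k$ is dense in $D_0 \ni x$, I pick $M$ with $d(f^{Mn}(y), x) < \delta/2$; then the sequence $y, f(y), \ldots, f^{Mn-1}(y), z, f(z), f^2(z), \ldots$ is a $\delta$-pseudo-orbit, the only nontrivial jump being bounded by $d(f^{Mn}(y), z) \leq d(f^{Mn}(y), x) + d(x, z) < \delta$. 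Applying shadowing yields $p \in X$ that $\varepsilon$-traces this pseudo-orbit.

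Since $d(p,y) < \varepsilon$ and $B(y, \varepsilon) \subseteq D_0$, we conclude $p \in D_0$, whence by Lemma~\ref{AO}(ii), $f^{(M+k)n}(p) \in D_0$ for every $k \geq 0$. By density of $\{f^{kn}(z)\}_k$ in $D_j$, there exists $k_0 \geq 0$ with $f^{k_0 n}(z) \in W_\varepsilon$, i.e.\ $d(f^{k_0 n}(z), D_0) > \varepsilon$. Shadowing gives $d(f^{(M+k_0)n}(p), f^{k_0 n}(z)) < \varepsilon$, and together these inequalities force $d(f^{(M+k_0)n}(p), D_0) > 0$, contradicting $f^{(M+k_0)n}(p) \in D_0$. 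The main technical obstacle is the simultaneous realization of all the Baire-theoretic and metric constraints on $y$, $z$, $M$, $\varepsilon$, and $\delta$; in particular, the single-jump pseudo-orbit construction requires $z$ to be close to $x$ (hence close to $D_0$), while the final contradiction needs some forward $f^n$-iterate of $z$ to be well-separated from $D_0$. This tension is resolved by the correct order of quantifiers $\varepsilon \to \delta \to y, z \to M$, exploiting the density of $\{f^{kn}(z)\}_k$ in $D_j$ so that $z$ itself need not lie far from $D_0$—only some iterate $f^{k_0 n}(z)$ must.
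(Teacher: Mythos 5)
Your argument is correct. It shares the paper's overall skeleton — take a point $x$ in the overlap, use Theorem~\ref{AN} to get transitivity of $f^{n}$ on the pieces, glue an $f^{n}$-orbit segment inside one piece to one inside the other at the common point $x$, and shadow the resulting $\delta$-pseudo-orbit — but the contradiction is reached by a genuinely different mechanism. The paper chooses $x_1\in\mathrm{int}(D_1)$, $x_2=f(x_1)\in\mathrm{int}(D_2)$, builds a finite pseudo-orbit of length a multiple of $n$ from near $x_1$ to near $x_2$ through the overlap point, and gets a contradiction purely arithmetically via Lemma~\ref{AO}(ii): the shadowing point realizes a transition from $\mathrm{int}(D_1)$ to $\mathrm{int}(D_2)$ in a time $\equiv 0\ (\mathrm{mod}\ n)$, which would force $\equiv 1\ (\mathrm{mod}\ n)$. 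You instead use an infinite one-jump pseudo-orbit whose tail is the genuine $f^{n}$-orbit of a transitive point $z$ of $D_j$ near $x$: the shadow $p$ is trapped in $D_0$ (so all iterates $f^{(M+k)n}(p)$ stay in $D_0$ by Lemma~\ref{AO}), while density of $\{f^{kn}(z)\}$ in $D_j$ together with the metric observation $D_j\not\subseteq D_0$ (disjointness of interiors plus regular closedness) produces an iterate of $z$ uniformly $\varepsilon$-far from $D_0$, contradicting $\varepsilon$-tracing. Your route buys a softer, purely metric contradiction at the cost of needing the auxiliary fact $D_j\not\subseteq D_0$ and an infinite pseudo-orbit; the paper's route is shorter once the clever endpoint choice $x_2=f(x_1)$ is made. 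Two cosmetic remarks: the requirements that $y$ avoid $\bigcup_{l\neq 0}D_l$ and that $z\in\mathrm{int}(D_j)\setminus D_0$ are never used; and to place a transitive point of $f^{n}|_{D_0}$ in the $\varepsilon$-interior of $D_0$ you should arrange that this $\varepsilon$-interior contains a set open in $D_0$ (e.g.\ fix $y_0\in\mathrm{int}(D_0)$ with $B(y_0,2\varepsilon)\subseteq D_0$ and pick the transitive point in $B(y_0,\varepsilon)$), since mere nonemptiness of the $\varepsilon$-interior does not by itself let you invoke density of transitive points; both are trivial to repair.
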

\begin{proof}
	Suppose that $D_{1}\cap D_{2}\neq \emptyset$. Take $x_{0}\in D_{1}\cap D_{2}$. By the definition of regular periodic decomposition, $\mathrm{int} (D_{i})\neq \emptyset$ for any $i \in \{0,1,\dots,n-1\}$. Let $x_{1}\in \mathrm{int}(D_{1})$, $x_{2}=f(x_{1})$ and $\varepsilon >0$ such that $B(x_{i},\varepsilon)\subseteq \mathrm{int} (D_{i})$ for any $i \in \{1,2\}$. Then if $f^{k}( B(x_{1},\varepsilon))\cap  B(x_{2},\varepsilon
	)\neq \emptyset$ we have $k=1\ (\mathrm{mod} \ n)$ by Lemma \ref{AO}(ii). We take $\delta_{1}>0$ as in the definition of the shadowing property for $\varepsilon/2$. Let $0<\delta<\min\{\delta_{1},\varepsilon/2\}$. Denote $U_{i}=B(x_{0},\delta/2)\cap D_{i}$, $i \in \{1,2\}$. By Theorem \ref{AN}, $f^{n}$ is transitive on each $D_{i}$. Then there exist $y_{1}\in B(x_{1},\delta)\cap f^{-k_{1}n}U_{1}$ and $y_{2}\in U_{2} \cap f^{-k_{2}n}B(x_{2},\delta)$ for some $k_{1},k_{2}\in \mathbb{N}^+$. Let $\alpha$ be the $\delta$-pseudo-orbit, $$y_{1},f(y_{1}),\dots,f^{k_{1}n-1}(y_{1}),y_{2},f(y_{2}),\dots,f^{k_{2}n}(y_{2}).$$ Then there exist $y\in X$ such that $\alpha$ is $\varepsilon/2-$traced by $y$. Thus $y\in B(x_{1},\varepsilon)\cap f^{-(k_{1}n+k_{2}n)}B(x_{2},\varepsilon)$ and $k_{1}n+k_{2}n=0\ (\mathrm{mod} \ n)$. So $D_{1}\cap D_{2}= \emptyset$. Using same method we can prove that $D_{i}\cap D_{j}$ is empty for $1\leq i\neq j\leq n-1.$
\end{proof}

\begin{Lem}\label{AU}
	Suppose that a dynamical system $(X,f)$ is transitive and has shadowing property. If there is a periodic point $p_{0}$ with period $l$, then there is a regular periodic decomposition $\mathcal{D}=\{D_{0},D_{1},\dots,D_{n-1}\}$ for some $n$ dividing $l$ such that $D_{i}\cap D_{j}$ is empty for $1\leq i\neq j\leq n-1$, $f^{n}$ is mixing on each $D_{i}$ and $f^{n}|_{D_{i}}$ has shadowing property for any $i \in \{0,1,\dots,n-1\}$.
\end{Lem}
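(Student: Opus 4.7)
The plan is to let $n$ be the maximum length over all regular periodic decompositions that $f$ admits, and then show this $n$ meets every requirement. The admissible lengths are bounded above by $l$: for any regular periodic decomposition $\{D_0,\ldots,D_{n-1}\}$, the point $p_0$ lies in some $D_{i_0}$, and Lemma~\ref{AO}(ii) gives $f^k(p_0)\in D_{i_0}$ if and only if $n\mid k$; applied to $k=l$, this forces $n\mid l$. Thus a maximum $n$ exists and $n\mid l$. Fix a corresponding decomposition $\{D_0,\ldots,D_{n-1}\}$, cyclically relabeled so that $p_0\in D_0$. Lemma~\ref{AT} gives that the pieces are pairwise disjoint (hence each is clopen), Theorem~\ref{AN} gives that $f^n|_{D_i}$ is transitive, and Lemma~\ref{AQ} combined with clopenness (pick $\delta$ smaller than the separation between $D_i$ and its complement, so that every $\delta$-pseudo-orbit starting in $D_i$ remains in $D_i$ and is shadowed by a point of $D_i$) gives that $f^n|_{D_i}$ has the shadowing property.

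It remains to prove $f^n|_{D_i}$ is mixing. I will first prove it for $i=0$; the result for the other $i$ follows by a factor argument, since Lemma~\ref{AO}(i) makes $f^i\colon D_0\to D_i$ a continuous surjection intertwining $f^n|_{D_0}$ with $f^n|_{D_i}$, so the latter is a topological factor of the former and inherits mixing. Write $g:=f^n|_{D_0}$; then $p_0$ is periodic for $g$ with period $m:=l/n$, so $g^m$ fixes $p_0$. Suppose for contradiction that $g$ is not mixing. If $g^m$ were transitive, then being transitive with shadowing and a fixed point, $g^m$ would be mixing by Lemma~\ref{AR} and then $g$ itself would be mixing by Lemma~\ref{AS}, a contradiction; so $g^m$ is not transitive.

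The goal now is to construct a regular periodic decomposition of $D_0$ under $g$ of some length $t\ge 2$; this lifts to a regular periodic decomposition of $X$ under $f$ of length $nt>n$ via $G_j:=f^j(F_0)$ for $j=0,\ldots,nt-1$, contradicting the maximality of $n$. Factor $m$ into primes $m=q_1 q_2\cdots q_r$ and set $a_j:=q_1\cdots q_j$, so $g=g^{a_0}$ is transitive and $g^{a_r}=g^m$ is not. Let $j^*$ be the smallest index with $g^{a_{j^*}}$ not transitive. Then $h:=g^{a_{j^*-1}}$ is transitive while $h^{q_{j^*}}=g^{a_{j^*}}$ is not, so by Theorem~\ref{AM} applied to $h$ there is a regular periodic decomposition $\{E_0,\ldots,E_{q_{j^*}-1}\}$ of $D_0$ under $h$. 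Setting $F_s:=g^s(E_0)$ and letting $t$ be the $g$-period of $E_0$, one has $t\mid a_{j^*}$ and $t\nmid a_{j^*-1}$ (otherwise $g^{a_{j^*-1}}(E_0)=E_0$, contradicting $g^{a_{j^*-1}}(E_0)=h(E_0)=E_1$), so $t\ge 2$, and $\{F_0,\ldots,F_{t-1}\}$ is the desired $g$-decomposition.

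The main obstacle is verifying that the lifted collection $\{G_j\}_{j=0}^{nt-1}$ is genuinely a regular periodic decomposition of $X$ under $f$: one checks that $f^{nt}(F_0)=g^t(F_0)=F_0$ closes the cycle, that the $G_j$ are pairwise disjoint (using that $f$ is a homeomorphism so each $f^i$ is injective, together with disjointness of the $D_i$ and of the $F_s$ within $D_0$, the latter from Lemma~\ref{AT} applied to $g$), and that the $G_j$ cover $X$. Once the contradiction is secured, $g=f^n|_{D_0}$ is mixing, and the factor argument transfers mixing to each $f^n|_{D_i}$.
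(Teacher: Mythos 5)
Your architecture is a legitimate reorganization of the paper's: you use the same toolkit (Theorem~\ref{AM}, Theorem~\ref{AN}, Lemmas~\ref{AO}, \ref{AQ}, \ref{AR}, \ref{AS}, \ref{AT}), and your observation that transitivity of $g^m=f^l|_{D_0}$ together with the fixed point $p_0$ and shadowing yields mixing via Lemmas~\ref{AR} and~\ref{AS} is exactly the paper's closing step. The paper, however, needs no contradiction: it refines the decomposition inductively along the prime factorization of $l$ until $f^l$ is transitive on every piece, and then applies Lemmas~\ref{AR} and~\ref{AS} directly. In your version all of the work is pushed into the contradiction branch, and that branch has a genuine gap.

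The gap is the assertion that $\{F_s:=g^s(E_0)\}_{s=0}^{t-1}$ is a regular periodic decomposition of $(D_0,g)$ (and hence that the lifted family $\{G_j\}$ is one for $(X,f)$). Theorem~\ref{AM} gives a decomposition $\{E_j\}$ permuted cyclically by $h=g^{a_{j^*-1}}$, but $g$ itself need not permute the $E_j$: a priori $g^v(E_0)$ can straddle several $E_j$, so $F_{s_1}\cap F_{s_2}$ could have nonempty interior. Your appeal to Lemma~\ref{AT} for disjointness of the $F_s$ is circular, because Lemma~\ref{AT} takes as input a collection already known to be a regular periodic decomposition (nowhere dense overlaps, regular closed pieces) and merely upgrades the overlaps to empty ones; here the nowhere-dense-overlap property is precisely what is unproved. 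This can be repaired --- for instance, one checks that $\{g^v(E_j)\}_{j}$ is again a pairwise disjoint regular periodic decomposition of $(D_0,h)$ and runs the argument of Lemma~\ref{AY} to conclude it is a rotation of $\{E_j\}$, so that $g^v(E_0)\cap E_0\neq\emptyset$ forces $g^v(E_0)=E_0$ and hence $t\mid v$ --- but no such argument appears. A second, smaller defect: the lemma does not assume $f$ is invertible (the paper applies it to restrictions $f|_{X_n}$ of maps that need not be homeomorphisms), so you may not use injectivity of $f^i$ to separate the $G_j$; this is easily fixed, since any $x\in f^j(F_{s_1})\cap f^j(F_{s_2})$ satisfies $f^{nt-j}(x)\in g^t(F_{s_1})\cap g^t(F_{s_2})=F_{s_1}\cap F_{s_2}$, but as written the step rests on a hypothesis you do not have. (To be fair, the paper's own proof also treats the union of the refined sub-decompositions as a regular periodic decomposition of $(X,f)$ without verification; the subtlety is present in both, but the specific justification you offer is the one that does not go through.)
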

\begin{proof}
	We write $l$ as a product of primes $l=l_{1}l_{2}\dots l_{k}$. Let $g_{0}=f$ and $\mathcal{D}_{0}=\{X\}$. We construct the $\mathcal{D}$ inductively.\\
	{\bf Step 1:} Let $g_{1}=g_{0}^{l_{1}}$. If $g_{1}$ is transitive on $X$, let $\mathcal{D}_{1}=\mathcal{D}_{0}$. If $g_{1}$ is not transitive on $X$, by Theorem \ref{AM}, $g_{0}$ admits a regular periodic decomposition of length $l_{1}$, we denote the periodic decomposition by $\mathcal{D}_{X}$. Let $\mathcal{D}_{1}=\mathcal{D}_{X}$. Then $|\mathcal{D}_{1}|\mid l$. By Theorem \ref{AN}, $g_{1}$ is transitive on $U$ for any $U \in \mathcal{D}_{1}$.\\
	{\bf Step m:} when $m\in \{2,\dots,k\}$. Let $g_{m}=g_{m-1}^{l_{m}}$. If $g_{m}$ is transitive on $U$ for some $U \in \mathcal{D}_{m-1}$, then $g_{m}$ is transitive on $U$ for any $U \in \mathcal{D}_{m-1}$ by Lemma \ref{AW}. let $\mathcal{D}_{m}=\mathcal{D}_{m-1}$. Otherwise, for any $U \in \mathcal{D}_{m-1}$, $g_{m}$ is not transitive on $U$. By Theorem \ref{AM}, $(U,g_{m-1})$ admits a regular periodic decomposition of length $l_{m}$, we denote the periodic decomposition by $\mathcal{D}_{U}$. Let $\mathcal{D}_{m}=\bigcup_{U\in \mathcal{D}_{m-1}}\mathcal{D}_{U}$. Then $|\mathcal{D}_{m}|\mid l$. By Theorem \ref{AN}, $g_{m}$ is transitive on $U$ for any $U \in \mathcal{D}_{m}$.\\
	After {\bf Step k}, we obtain a regular periodic decomposition $\mathcal{D}_{k}$. Let $n=|\mathcal{D}_{k}|$ and $$\mathcal{D}=\mathcal{D}_{k}=\{D_{0},D_{1},\dots,D_{n-1}\}.$$ Then $n\mid l$ and $g_{k}=f^{l}$ is transitive on $U$ for any $U \in \mathcal{D}$. Then combining with Theorem \ref{AN}, $f^{n}$ is transitive on each $D_{i}$. By Lemma \ref{AT}, $D_{i}\cap D_{j}$ is empty for $i\neq j$. By Lemma \ref{AQ}, $f^{n}$ and $f^{l}$ both have shadowing property on each $D_{i}$. We can assume that $p_{0} \in D_{0}$, then $p_{i}=f^{i}(p_{0})\in D_{i}$ for any $i \in \{1,\dots,n-1\}$. Thus $p_{i}$ is a fixed point for $f^{l}$ in $D_{i}$. By Lemma \ref{AR}, $f^{l}$ is mixing on each $D_{i}$. By Lemma \ref{AS}, $f^{n}$ is mixing on each $D_{i}$. 
\end{proof}

\begin{Lem}\label{AY}
	Suppose that $(X,f)$ is a dynamical system. Let $Y_1\subseteq Y_2\subseteq X$ be two non-empty compact $f$-invariant subsets. If for each $i\in\{1,2\},$ $(Y_i,f|_{Y_i})$ is transitive and has shadowing property, there is a regular periodic decomposition $\mathcal{D}_i=\{D_{0}^i,D_{1}^i,\dots,D_{n-1}^i\}$ for $(Y_i,f|_{Y_i})$ with $D_{j_1}\cap D_{j_2}=\emptyset$ for $0\leq j_1\neq j_2\leq n-1$, and $D_0^1\cap D_0^2\neq \emptyset,$ then we have $D_j^1\subseteq D_j^2$ for any $0\leq j\leq n-1.$
\end{Lem}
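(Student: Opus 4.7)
The plan is to reduce everything to the single inclusion $D_0^1\subseteq D_0^2$ and then push it forward using the dynamics. First I would note that, since $\{D_0^2,\ldots,D_{n-1}^2\}$ are pairwise disjoint closed sets whose union is $Y_2$, each $D_k^2$ is clopen in $Y_2$ (its complement is a finite union of closed sets). In particular each $D_k^2$ is a closed subset of $X$ and, being an element of a periodic decomposition, satisfies $f^n(D_k^2)\subseteq D_k^2$.

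Next I would invoke Theorem \ref{AN} applied to the regular periodic decomposition $\mathcal{D}_1$ of the transitive system $(Y_1,f|_{Y_1})$, which gives that $f^n|_{D_0^1}$ is transitive. Pick a transitive point $x_0\in D_0^1$ for $f^n|_{D_0^1}$. Since $Y_1\subseteq Y_2$, there is a unique $k$ with $x_0\in D_k^2$. The $f^n$-invariance of $D_k^2$ shows $\{f^{mn}(x_0):m\in\mathbb{N}\}\subseteq D_k^2$, and since $D_k^2$ is closed in $X$ we obtain
\begin{equation*}
D_0^1 \;=\; \overline{\{f^{mn}(x_0):m\in\mathbb{N}\}} \;\subseteq\; D_k^2 .
\end{equation*}
The assumption $D_0^1\cap D_0^2\neq\emptyset$ combined with the pairwise disjointness of $\{D_k^2\}_{k=0}^{n-1}$ then forces $k=0$, so $D_0^1\subseteq D_0^2$.

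Finally, since both $\mathcal{D}_1$ and $\mathcal{D}_2$ are regular periodic decompositions for transitive maps of length $n$, Lemma \ref{AO}(i) yields $D_j^i = f^j(D_0^i)$ for every $0\leq j\leq n-1$ and every $i\in\{1,2\}$, whence
\begin{equation*}
D_j^1 \;=\; f^j(D_0^1) \;\subseteq\; f^j(D_0^2) \;=\; D_j^2,
\end{equation*}
as required. The one nontrivial step is the transitivity argument in the second paragraph: the inclusion $D_0^1\subseteq D_k^2$ is not a purely topological fact (the $D_0^1\cap D_k^2$ partition $D_0^1$ into finitely many clopen pieces, and $D_0^1$ need not be connected), so the dense $f^n$-orbit supplied by Theorem \ref{AN} is essential to rule out the possibility that $D_0^1$ splits between several $D_k^2$. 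Once that is done, matching the indices via Lemma \ref{AO}(i) is immediate.
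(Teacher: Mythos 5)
Your proof is correct, but it follows a genuinely different route from the paper's. The paper argues by contradiction in the style of its Lemma \ref{AT}: assuming $D_0^1$ meets some $D_j^2$ with $j\neq 0$, it uses the transitivity of $f^n$ on the pieces together with the \emph{shadowing property} of $(Y_2,f|_{Y_2})$ to build a $\delta$-pseudo-orbit of length divisible by $n$ running from $\mathrm{int}(D_0^2)$ to $\mathrm{int}(D_1^2)$, traces it by a true orbit, and contradicts Lemma \ref{AO}(ii); this yields $D_{j_1}^1\cap D_{j_2}^2=\emptyset$ for $j_1\neq j_2$ and hence the inclusions. You instead take a transitive point $x_0$ of $f^n|_{D_0^1}$ (supplied by Theorem \ref{AN}), observe that its $f^n$-orbit stays in the unique closed, $f^n$-invariant piece $D_k^2$ containing $x_0$, conclude $D_0^1=\overline{\{f^{mn}(x_0)\}}\subseteq D_k^2$, identify $k=0$ from $D_0^1\cap D_0^2\neq\emptyset$ and disjointness, and push forward with Lemma \ref{AO}(i). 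The notable difference is that your argument never uses the shadowing hypothesis, so it is more elementary and in fact shows that hypothesis is superfluous for this particular lemma (shadowing is still what produces the disjoint decompositions elsewhere, e.g.\ in Lemmas \ref{AT} and \ref{AU}); the paper's pseudo-orbit argument, by contrast, is methodologically uniform with the surrounding lemmas and directly delivers the stronger-looking statement $D_{j_1}^1\cap D_{j_2}^2=\emptyset$ for all $j_1\neq j_2$ (which, of course, also follows from your inclusions). One small point worth making explicit: Theorem \ref{AN} gives topological transitivity of $f^n$ on $D_0^1$, and the existence of a point with dense forward orbit then follows from the standard Baire-category argument on the compact metric space $D_0^1$; with that sentence added, your proof is complete.
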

\begin{proof}
	Suppose that $D_0^1\cap D_1^2\neq \emptyset$. Take $x_{0}\in D_0^1\cap D_0^2$ and $y_{0}\in D_0^1\cap D_1^2$. By the definition of regular periodic decomposition, $\mathrm{int} (D_j^i)\neq \emptyset$ for any $i\in\{1,2\}$ and $j\in \{0,1,\dots,n-1\}$. Let $x_{1}\in \mathrm{int}(D_0^2)$, $x_{2}=f(x_{1})$ and $\varepsilon >0$ such that $B(x_1,\varepsilon)\subseteq \mathrm{int} (D_0^2)$ and $B(x_2,\varepsilon)\subseteq \mathrm{int} (D_1^2)$. Then if $f^{k}( B(x_{1},\varepsilon))\cap  B(x_{2},\varepsilon
	)\neq \emptyset$ we have $k=1\ (\mathrm{mod} \ n)$ by Lemma \ref{AO}(ii). We take $\delta_{1}>0$ as in the definition of the shadowing property for $(Y_2,f|_{Y_2})$  and $\varepsilon/2$. Let $0<\delta<\min\{\delta_{1},\varepsilon/2\}$. Denote $U_1=B(x_{0},\delta/2)\cap D_0^1,$ $U_2=B(x_{0},\delta/2)\cap D_0^2$, $V_1=B(y_{0},\delta/2)\cap D_0^1,$ and $V_1=B(y_{0},\delta/2)\cap D_1^2$. By Theorem \ref{AN}, $f^{n}$ is transitive on each $D_j^i$ for any $i\in\{1,2\}$ and $j\in \{0,1,\dots,n-1\}$. Then there exist $y_{1}\in B(x_{1},\delta)\cap f^{-k_{1}n}U_{2},$ $y_{2}\in U_1 \cap f^{-k_{2}n}V_1$ and $y_{3}\in V_{2} \cap f^{-k_{2}n}B(x_{2},\delta)$ for some $k_{1},k_{2},k_3\in \mathbb{N}^+$. Let $\alpha$ be the $\delta$-pseudo-orbit in $Y_2$, $$y_{1},f(y_{1}),\dots,f^{k_{1}n-1}(y_{1}),y_{2},f(y_{2}),\dots,f^{k_{2}n-1}(y_{2})y_{3},f(y_{3}),\dots,f^{k_{3}n}(y_{3}).$$ Then there exist $y\in Y_2$ such that $\alpha$ is $\varepsilon/2-$traced by $y$. Thus $y\in B(x_{1},\varepsilon)\cap f^{-(k_{1}n+k_{2}n+k_3n)}B(x_{2},\varepsilon)$ and $k_{1}n+k_{2}n+k_3n=0\ (\mathrm{mod} \ n)$. So $D_0^1\cap D_1^2= \emptyset$. Using same method we can prove that $D_{j_1}^1\cap D_{j_2}^2=\emptyset$ for $0\leq j_1\neq j_2\leq n-1$. This implies $D_j^1\subseteq D_j^2$ for any $0\leq j\leq n-1.$
\end{proof}

\subsubsection{Proof of Theorem \ref{maintheorem-3}(1)}
By Lemma \ref{AU}, for any $l\in \mathbb{N^{+}}$, there is a periodic decomposition $\mathcal{D}_{l}=\{D^{l}_{0},D^{l}_{1},\dots,D^{l}_{n_{l}-1}\}$ for some $n_{l}$ dividing $Q$ such that $D^{l}_{i}\cap D^{l}_{j}$ is empty for $i\neq j$, $f^{n_{l}}$ is mixing on each $D^{l}_{i}$ and $f^{n_{l}}|_{D^{l}_{i}}$ has shadowing property for any $i \in \{0,1,\dots,n_{l}-1\}$. Using the pigeon-hole principle we can assume that there exists $k$ dividing $Q$ such that $n_{l}=k$ for any $l \in \mathbb{N^{+}}$.

Suppose that there exists a periodic point $p_{0}$ with period $Q$ such that $p_{0} \in D_{0}^{l}$ for any $l \in \mathbb{N^{+}}$. Then $f^{i}(p_{0}) \in D_{i}^{l}$ for any $l \in \mathbb{N^{+}}$ and $i \in \{0,1,\dots,k-1\}$.
By Lemma \ref{AY}, we have 
$D_{i}^{l} \subseteq D_{i}^{l+1}$ for any $l \in \mathbb{N^{+}}$, $i \in \{0,1,\dots,k-1\}$.

For any $\mu \in \mathcal{M}(X_{l})$ and $l \in \mathbb{N^{+}}$, define $h_{*}^{l}(\mu) \in \mathcal{M}(D_{0}^{l})$ by: $h_{*}^{l}(\mu)(A)=\mu (A \cup f(A) \cup \dots \cup f^{k-1}(A))$, where $A$ ia a Borel set of $D_{0}^{l}$. By \cite[Proposition 23.17]{Sig}, $h_{*}^{l}$ is a homeomorphism from $\mathcal{M}_{f|_{X_{l}}}(X_{l})$ onto $\mathcal{M}_{f^{k}|_{D_{0}^{l}}}(D_{0}^{l})$ and $(h_{*}^{l})^{-1}(\nu)=\frac{1}{k}(\nu + f_{*}\nu +\dots + f^{k-1}_{*}\nu) \in \mathcal{M}_{f|_{X_{l}}}(X_{l})$ for any $\nu \in \mathcal{M}_{f^{k}|_{D_{0}^{l}}}(D_{0}^{l})$ where $f_*\nu(B)=\nu(f^{-1}(B))$ for any Borel set $B$.

Since $h^{l+1}_{*}|_{\mathcal{M}_{f|_{X_{l}}}(X_{l})}=h^{l}_{*}$ for any $l \in \mathbb{N^{+}}$, we can define $h_{*}$ in $\bigcup_{l\geq 1}\mathcal{M}_{f|_{X_{l}}}(X_{l})=\bigcup_{l\geq 1}\{\mu \in \mathcal{M}_{f}(X):\mu(X_{l})=1\}$ such that $h_{*}|_{\mathcal{M}_{f|_{X_{l}}}(X_{l})}=h^{l}_{*}$. Then $h_{*}$ is a homeomorphism from $\bigcup_{l\geq 1}\mathcal{M}_{f|_{X_{l}}}(X_{l})$ onto $\bigcup_{l\geq 1}\mathcal{M}_{f^{k}|_{D_{0}^{l}}}(D_{0}^{l})$. 

By Lemma \ref{lemma-A},  there exists a sequence $\{\alpha_1,\alpha_2,\cdots\}\subseteq K$ such that $$\overline{\{\alpha_j:j\in\mathbb{N}^+,j>n\}}=K,\ \forall n\in\mathbb{N}.$$ 

Note that $K$ is connected, so for any $j \in \mathbb{N}^+$, $1\leq i \leq j$, we can find a sequence $\{\beta^{i,j}_1,\beta^{i,j}_2,\cdots,\beta^{i,j}_{m_{i,j}}\}\subseteq K$ such that $d(\beta^{i,j}_{s+1},\beta^{i,j}_s)<\varepsilon_{j}$ for any $s\in\{1,2,\cdots,m_{i,j}-1\}$ and $\beta^{i,j}_1=\mu$, $\beta^{i,j}_{m_{i,j}}=\alpha_{i}$.
By Lemma \ref{lemma-MM}, for any $j \in \mathbb{N}^+$, $1\leq i \leq j$, $s\in\{2,\cdots,m_{i,j}\}$, there exists $\gamma^{i,j}_s \in \cM_{f|_{X_{l^{i,j}_s}}}(X_{l^{i,j}_s})$ for some $l^{i,j}_s \in \mathbb{N^{+}}$ such that $d(\beta^{i,j}_s,\gamma^{i,j}_s)<\varepsilon_{j}$. Let $\gamma^{i,j}_1 = \mu$. For any $j \in \mathbb{N}^+$, $1\leq i \leq j$, $s\in\{m_{i,j}+1,m_{i,j}+2,\cdots,2m_{i,j}-1\}$, let $\gamma^{i,j}_s = \gamma^{i,j}_{2m_{i,j}-s}$. Let $\gamma^{i,j}_{2m_{i,j}} = \mu$.

Now we define $\{\omega_{n}\}_{n=1}^{\infty}$ by setting for: if $n=\sum_{j=1}^{j'-1}\sum_{i=1}^{j}2m_{i,j}+\sum_{i=1}^{i'-1}2m_{i,j'}+s$ with $1\leq s \leq 2m_{i',j'}$, let $\omega_{n}=\gamma^{i',j'}_{s}$.

For any $j \in \mathbb{N}^+$, $1\leq i \leq j$, $s\in\{1,2,\cdots,2m_{i,j}\}$, let $\nu^{i,j}_s=h_{*}(\gamma^{i,j}_s)\in\mathcal{M}_{f^{k}|_{D_{0}^{l^{i,j}_{s}}}}(D_{0}^{l^{i,j}_{s}})$, $\mu^{0}=h_{*}(\mu)$, $\mu^{0}_{1}=h_{*}(\mu_{1})$, $\mu^{0}_{2}=h_{*}(\mu_{2})$. Then $\nu^{i,j}_{2m_{i,j}}=\mu^{0}$, $\mu^{0}=\theta \mu^{0}_{1} + (1-\theta)\mu^{0}_{2}$ and $\mu^{0},\mu^{0}_{1},\mu^{0}_{2} \in \mathcal{M}_{f^{k}|_{D_{0}^{l_{0}}}}(D_{0}^{l_{0}})$.

Denote $\omega_{n}^{0}=h_{*}(\omega_{n})$ for any $n \in \mathbb{N^{+}}$. Then if $n=\sum\limits_{j=1}^{j'-1}\sum\limits_{i=1}^{j}2m_{i,j}+\sum\limits_{i=1}^{i'-1}2m_{i,j'}+s$ with $1\leq s \leq 2m_{i',j'}$, one has $\omega_{n}^{0}=\nu^{i',j'}_{s}$. 

Since $x_{1},y_{1}\in G_{\mu_{1}}$ and $x_{1},f^{j}(y_{1})$ is distal pair of $(X_{l_{0}},f)$ for any $0\leq j\leq Q-1$. There exist $k_{x},k_{y} \in \{0,1,\dots,k-1\}$ such that $x_{1}\in D_{k_{x}}^{l_{0}}$,$y_{1}\in D_{k_{y}}^{l_{0}}$. If $k_{x}<k_{y}$, then $x_{1},f^{k-k_{y}+k_{x}}(y_{1})\in D_{k_{x}}^{l_{0}}\cap G_{\mu_{1}}$ and  $x_{1},f^{k-k_{y}+k_{x}}(y_{1})$ is distal pair of $(X_{l_{0}},f)$, let $x_{1}^{0}=f^{k-k_{x}}(x_{1}),y_{1}^{0}=f^{2k-k_{y}}(y_{1})$; If $k_{x}\geq k_{y}$, then $x_{1},f^{k_{x}-k_{y}}(y_{1})\in D_{k_{x}}^{l_{0}}\cap G_{\mu_{1}}$ and  $x_{1},f^{k_{x}-k_{y}}(y_{1})$ is distal pair of $(X_{l_{0}},f)$, let $x_{1}^{0}=f^{k-k_{x}}(x_{1}),y_{1}^{0}=f^{k-k_{y}}(y_{1})$. One has $x_{1}^{0},y_{1}^{0} \in D_{0}^{l_{0}}\cap G_{\mu_{1}}$ and $x_{1}^{0},y_{1}^{0}$ is distal pair of $(X_{l_{0}},f)$.

Since $x_{1}^{0},y_{1}^{0} \in G_{\mu_{1}}$, one has $\lim\limits_{n\to \infty}\frac{1}{nk}\sum\limits_{i=0}^{nk-1}\delta_{f^{i}(x)}=\mu_{1}$ for any $x \in \{x_{1}^{0},y_{1}^{0}\}$. Then $$\lim_{n\to\infty}\frac{1}{n}\sum\limits_{i=0}^{n-1}\delta_{f^{ik}(x)}=\mu_{1}^{0} \ \mathrm{for}\ x \in \{x_{1}^{0},y_{1}^{0}\}$$ by $h_{*}(\frac{1}{nk}\sum\limits_{i=0}^{nk-1}\delta_{f^{i}(x)})=\frac{1}{n}\sum\limits_{i=0}^{n-1}\delta_{f^{ik}(x)}$. Thus $x_{1}^{0},y_{1}^{0} \in D_{0}^{l_{0}}\cap G_{\mu_{1}^{0}}$ and $x_{1}^{0},y_{1}^{0}$ is distal pair of $(D_{0}^{l_{0}},f^{k})$.

Similarly, we can obtain $x_{2}^{0},y_{2}^{0} \in D_{0}^{l_{0}}\cap G_{\mu_{2}^{0}}$ and $x_{2}^{0},y_{2}^{0}$ is distal pair of $(D_{0}^{l_{0}},f^{k})$.

Let $Y_{i}=\overline{\bigcup_{l\geq 1}D_{i}^{l}}$ for any $i \in \{0,1,\dots,k-1\}$. Then $X=\bigcup_{0\leq i \leq k-1}Y_{i}$ and $f^{j}(Y_{i})=Y_{i+j\ (\mathrm{mod} \ n)}$. For any non-empty open set $U\subseteq X$, there exists $i_{0} \in \{0,1,\dots,k-1\}$ such that $U \cap Y_{i_{0}} \neq \emptyset$. Then $f^{-i_{0}}(U)\cap Y_{0}\neq \emptyset$. Let $V=f^{-i_{0}}(U)\cap Y_{0}$, then $V$ is any non-empty open set for $Y_{0}$.

Since for any $l \in \mathbb{N^{+}},$ $f^{k}$ is mixing on $D^{l}_{0}$ and $f^{k}|_{D^{l}_{0}}$ has shadowing property, then $(D_{0}^{l},f^{k}|_{D_{0}^{l}})$ has specification property by Proposition \ref{prop-specif}.
Using similar construction of Theorem \ref{maintheorem-2}(1) considering $(Y_{0},f^{k}|_{Y_{0}})$,  there exists an uncountable DC1-scrambled set $S\subseteq V$ such that for any $x \in S$
\begin{description}
	\item[(a)] For any sequence $\{\chi_{m}\}_{m=1}^{\infty} \subseteq \mathbb{N}$, there is a sequence $\{n_{m}\}_{m=1}^{\infty}\subseteq \mathbb{N}$ such that $$\lim_{m\to \infty }d(\frac{1}{n_{m}}\sum_{j=0}^{n_{m}-1}\delta_{f^{jk}(x)},\omega_{\chi _{m}}^{0})=0.$$
	
	\item[(b)] For any $\{n_{m}\}_{m=1}^{\infty} \subseteq \mathbb{N}$, there exists a sequence $\{\chi_{m}\}_{m=1}^{\infty} \subseteq \mathbb{N}$ such that $$\lim_{m\to \infty }d(\frac{1}{n_{m}}\sum_{j=0}^{n_{m}-1}\delta_{f^{jk}(x)},\omega_{\chi _{m}}^{0})=0.$$
	
	\item[(c)] $\{f^{ik}(x):i\in \mathbb{N}\}$ is dense in $Y_{0}$.
\end{description}
Then one has 
\begin{description}
	\item[(A)] For any $i \in \mathbb{N^{+}}$, there is a sequence $\{\chi_{m}\}_{m=1}^{\infty} \subseteq \mathbb{N}$ such that $\lim\limits_{m\to\infty }d(\omega_{\chi _{m}},\alpha_{i})=0.$ For the $\{\chi_{m}\}_{m=1}^{\infty}$, there is a sequence $\{n_{m}\}_{m=1}^{\infty}\subseteq \mathbb{N}$ such that $$\lim_{m\to \infty }d(\frac{1}{n_{m}}\sum_{j=0}^{n_{m}-1}\delta_{f^{jk}(x)},\omega_{\chi _{m}}^{0})=0.$$  Then $$\lim_{m\to\infty }d(\frac{1}{n_{m}}\sum_{j=0}^{n_{m}-1}\delta_{f^{jk+s}(x)},f_{*}^{s}\omega_{\chi _{m}}^{0})=\lim_{m\rightarrow \infty }d(f_{*}^{s}(\frac{1}{n_{m}}\sum_{j=0}^{n_{m}-1}\delta_{f^{jk}(x)}),f_{*}^{s}\omega_{\chi _{m}}^{0})=0$$ for any $s \in \{0,1,\dots,k-1\}$. Thus one has $$\lim_{m\rightarrow \infty }d(\frac{1}{n_{m}k}\sum_{j=0}^{n_{m}k-1}\delta_{f^{j}(x)},\omega_{\chi _{m}})=\lim_{m\rightarrow \infty }d(\frac{1}{n_{m}k}\sum_{j=0}^{n_{m}k-1}\delta_{f^{j}(x)},\frac{1}{k}\sum_{i=0}^{k-1}f^{i}_{*}\omega_{\chi _{m}}^{0})=0.$$ So $\lim\limits_{m\to\infty }d(\frac{1}{n_{m}k}\sum\limits_{j=0}^{n_{m}k-1}\delta_{f^{j}(x)},\alpha_{i})=0,$ i.e., $K \subseteq V_{f}(x)$.
	
	\item[(B)] For any $\{N_{m}\} \subseteq \mathbb{N}$, there exist $\{n_{m}\}$ and $\{r_{m}\}$ where $r_{m}\in \{0,1,\dots,k-1\}$ such that $N_{m}=n_{m}k+r_{m}$. Then $\lim\limits_{m\rightarrow \infty }d(\frac{1}{n_{m}k}\sum\limits_{j=0}^{n_{m}k-1}\delta_{f^{j}(x)},\frac{1}{N_{m}}\sum\limits_{j=0}^{N_{m}-1}\delta_{f^{j}(x)})=0$ by Lemma \ref{measure distance}. By the item (b) there exists a sequence $\{\chi_{m}\} \subseteq \mathbb{N}$ such that $\lim\limits_{m\rightarrow \infty }d(\frac{1}{n_{m}}\sum\limits_{j=0}^{n_{m}-1}\delta_{f^{jk}(x)},\omega_{\chi _{m}}^{0})=0$. Then $$\lim\limits_{m\rightarrow \infty }d(\frac{1}{n_{m}k}\sum_{j=0}^{n_{m}k-1}\delta_{f^{j}(x)},\omega_{\chi _{m}})=\lim_{m\rightarrow \infty }d(\frac{1}{n_{m}k}\sum_{j=0}^{n_{m}k-1}\delta_{f^{j}(x)},\frac{1}{k}\sum_{i=0}^{k-1}f^{i}_{*}\omega_{\chi _{m}}^{0})=0.$$ Thus $\lim\limits_{m\rightarrow \infty } \frac{1}{N_{m}}\sum\limits_{j=0}^{N_{m}-1}\delta_{f^{j}(x)} \in K$. So $V_{f}(x) \subseteq K$.
	
	\item[(C)] $\{f^{i}(x):i\in \mathbb{N}\}$ is dense in $X$.
\end{description}
Thus we have $S\subseteq G_{K}\cap Trans \cap V$ and $S$ is an uncountable DC1-scrambled set for $(X,f)$ since $f$ is a homeomorphism. Let $S_{K}=\{f^{i_{0}}(x):x\in S\}$, then $S_{K}\subseteq G_{K}\cap Trans \cap U$ and $S_{K}$ is an uncountable  DC1-scrambled set for $(X,f)$.  So we complete the proof of Theorem \ref{maintheorem-3}(1).\qed

\subsection{Proof of Theorem \ref{maintheorem-3}(2)}
Note that exponential shadowing property implies shadowing property and if $(X,f)$ has exponential shadowing property, then $(X,f^{k})$ has exponential shadowing property for any $k\in\mathbb{N^{+}}$. So with same method of Lemma \ref{AU}, we have the following:
\begin{Lem}\label{AX}
	Suppose that a dynamical system $(X,f)$ is transitive and has exponential shadowing property. If there is a periodic point $p_{0}$ with period $l$, then there is a periodic decomposition $\mathcal{D}=\{D_{0},D_{1},\dots,D_{n-1}\}$ for some $n$ dividing $l$ such that $D_{i}\cap D_{j}$ is empty for $i\neq j$, $f^{n}$ is mixing on each $D_{i}$ and $f^{n}|_{D_{i}}$ has exponential shadowing property for any $i \in \{0,1,\dots,n-1\}$.
\end{Lem}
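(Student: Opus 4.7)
The plan is to mirror the proof of Lemma \ref{AU} line by line, replacing the shadowing property with its exponential refinement at each stage. Two preliminary facts make this substitution legitimate: (i) exponential shadowing implies ordinary shadowing, so every structural result invoked in the proof of Lemma \ref{AU} (the periodic decomposition machinery of Theorems \ref{AN}--\ref{AM}, disjointness of components via Lemma \ref{AT}, and the transitivity-plus-fixed-point-implies-mixing step from Lemma \ref{AR}) remains available; and (ii) as noted in the remark immediately preceding the statement, exponential shadowing passes to all iterates $f^k$. Consequently, the whole block-building scheme of the original proof goes through with no changes, and only the final assertion about exponential shadowing on each $D_i$ requires a small additional argument.

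First I would apply Lemma \ref{AU} to $(X,f)$ to obtain a periodic decomposition $\mathcal{D}=\{D_0,\ldots,D_{n-1}\}$ with $n \mid l$, with the $D_i$ pairwise disjoint and $f^n$ mixing on each $D_i$. Since the $D_i$ are pairwise disjoint closed sets whose union is $X$, each $D_i$ is clopen. Next I would combine the two preliminary facts: $(X,f^n)$ itself has exponential shadowing property (with the same exponent $\lambda$ as $f$, via the standard iterate argument used in Lemma \ref{AQ}, adapted to the exponential version). The remaining task is to restrict this property from $(X,f^n)$ down to $(D_i,f^n|_{D_i})$.

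For the restriction step, fix $i$ and let $\varepsilon_0>0$ be smaller than $\dist(D_i,X\setminus D_i)/2$, which is positive because $D_i$ is clopen and $X$ is compact. Given any $\varepsilon\in(0,\varepsilon_0)$, let $\delta>0$ be the constant supplied by the exponential shadowing property of $(X,f^n)$ for $\varepsilon$. Any $\delta$-pseudo-orbit $\{x_m,i_m\}$ in $D_i$ is also a $\delta$-pseudo-orbit in $X$, so there exists $x\in X$ exponentially $(\varepsilon,\lambda)$-shadowing it. The inequality $d(x,x_0)<\varepsilon<\varepsilon_0$ together with $x_0\in D_i$ forces $x\in D_i$, and the same argument applied at every block shows that all tracing points automatically lie in $D_i$. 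Thus $f^n|_{D_i}$ has the exponential shadowing property with exponent $\lambda$.

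I expect the genuine content of the argument to lie entirely in the first paragraph's appeal to Lemma \ref{AU}; the exponential upgrade in the last paragraph is essentially bookkeeping, made clean by the clopen nature of each $D_i$. The only point that demands care is verifying that exponential shadowing survives passage to the iterate $f^n$ with a controlled exponent, but this is precisely the content of the observation made in the remark introducing the lemma, so no new difficulty arises.
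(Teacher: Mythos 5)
Your proposal is correct and follows essentially the same route as the paper, which proves this lemma by exactly the two observations you list (exponential shadowing implies shadowing, and exponential shadowing passes to iterates $f^k$) followed by "the same method as Lemma \ref{AU}". Your extra paragraph restricting exponential shadowing from $(X,f^n)$ to each clopen $D_i$ via $\varepsilon<\operatorname{dist}(D_i,X\setminus D_i)$ is a sound filling-in of a step the paper leaves implicit, not a departure from its argument.
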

By Proposition \ref{prop-exp-specif}, we have that if a dynamical system $(X,f)$ is mixing and has exponential shadowing property, then $(X,f)$ has exponential specification property.

Let $\alpha\in\mathcal{A}.$
Replacing Lemma \ref{AU} and Theorem \ref{maintheorem-2}(1) by Lemma \ref{AX} and Theorem \ref{maintheorem-2}(2) in the proof of Theorem \ref{maintheorem-3}(1), we obtain  $S_{K}\subseteq G_{K}\cap Trans \cap U$ and $S_{K}$ is an uncountable $\alpha$-DC1-scrambled set for $(X,f^{k})$. Next, we prove that $S_{K}$ is an uncountable $\alpha$-DC1-scrambled set for $(X,f)$.
Let $L=\max\{\sup_{x\neq y\in X}\frac{d(f(x),f(y))}{d(x,y)},2\}$, then $2\le L<+\infty$ because $f$ is Lipschitz. For any $x,y\in S_{K}$ and any $t>0,$ one has 
\begin{equation}
	\Phi _{xy}^{*}(\frac{1-L}{1-L^{k}}t,f^{k},\alpha)=1
\end{equation}
If $\sum\limits_{j=0}^{i-1}d(f^{jk}(x),f^{jk}(y))<\frac{1-L}{1-L^{k}}\alpha(i)t$ for some $i\geq 3$, then for any $(i-1)k\leq \tilde{i}\leq ik-1$ we have
\begin{equation}
	\sum_{j=0}^{\tilde{i}}d(f^{j}(x),f^{j}(y))\leq
	\sum_{j=0}^{ik-1}d(f^{j}(x),f^{j}(y))\leq \frac{1-L^{k}}{1-L}\sum_{j=0}^{i-1}d(f^{jk}(x),f^{jk}(y))<\alpha(i)t\leq \alpha(\tilde{i})t,
\end{equation}
by that $f$ is Lipschitz and $i<\tilde{i}$.
Then 
\begin{equation}
	\begin{split}
		\Phi _{xy}^{(nk)}(t,f,\alpha)=&\frac{1}{nk}|\{1\leq i \leq nk:\sum_{j=0}^{i-1}d(f^{j}(x),f^{j}(y))<\alpha(i)t\}|\\
		\geq &\frac{1}{nk}k|\{1\leq i \leq n:\sum_{j=0}^{i-1}d(f^{jk}(x),f^{jk}(y))<\frac{1-L}{1-L^{k}}\alpha(i)t\}|\\
		= &\Phi _{xy}^{(n)}(\frac{1-L}{1-L^{k}}t,f^{k},\alpha),
	\end{split}
\end{equation}
thus $$\Phi _{xy}^{*}(t,f,\alpha)\geq \limsup_{n \to \infty}\Phi _{xy}^{(nk)}(t,f,\alpha)\geq \limsup_{n \to \infty}\Phi _{xy}^{(n)}(\frac{1-L}{1-L^{k}}t,f^{k},\alpha)=\Phi _{xy}^{*}(\frac{1-L}{1-L^{k}}t,f^{k},\alpha)=1.$$
Hence $S_{K}$ is an uncountable $\alpha-$DC1-scrambled set for $(X,f)$.

Finally, for any $z\in \bigcup_{n\geq 1}X_{n}$ and any $\varepsilon>0$, there exists $i_{0} \in \{0,1,\dots,k-1\}$ such that $z\in  \bigcup_{l\geq 1}D_{i_0}^{l}$. Then $f^{k-i_{0}}(z)\in \bigcup_{l\geq 1}D_{i}^{0}$. Since $f$ is a homeomorphism, there exists $\varepsilon_{0}>0$ such that if $d(x,y)\le\varepsilon_{0}$ then $d(f^{-i}(x),f^{-i}(y))\le \varepsilon$ for $0\leq i\leq k-1.$ Using similar construction of Theorem \ref{maintheorem-2}(2) considering $(Y_{0},f^{k})$,  there exists an uncountable $\alpha$-DC1-scrambled set $S\subseteq \{y: \lim\limits_{i\to\infty}d(f^{-ik}(y),f^{-ik}(f^{k-i_{0}}(z)))=0 \text{ and } d(f^{-ik}(y),f^{-ik}(f^{k-i_{0}}(z)))\leq\varepsilon_{0} \text { for all } i \geq 0 \}$, then
\begin{equation*}
	\begin{split}
		S_{K}&=\{f^{-(k-i_{0})}(x):x\in S\}\\
		&\subseteq \{y: \lim_{i\to\infty}d(f^{-i}(y),f^{-i}(z))=0 \text{ and }d(f^{-i}(y),f^{-i}(z))\leq\varepsilon \text { for all } i \geq 0 \}\\
		&=W^u_\varepsilon(z).
	\end{split}
\end{equation*}
So we complete the proof of Theorem \ref{maintheorem-3}(2). \qed

\section{An abstract framework for saturated sets and various invariant fractal sets}\label{section-4}

In this section, we give an abstract framework in which we show various invariant fractal sets are strongly distributional chaotic using the results of saturated sets obtained in Section \ref{section-3}.

\subsection{Distal pair in minimal sets}\label{section-mainlemma}

\begin{Def}
	Given a dynamical system $(X,f)$. We say $S\subseteq X$ is periodic if there is $x\in Per(f)$ such that  $S=\{f^{i}(x): i \in \mathbb{N} \}$. 
\end{Def}

\begin{Lem}\label{generic distal}
	Given a dynamical system $(X,f)$. Suppose that $\mu\in\mathcal{M}_f^e(X)$, $S_\mu$ is minimal and is not periodic. Then for any $Q \in \mathbb{N^{+}}$, there exist $x,y\in G_{\mu}$ such that $x,f^{j}(y)$ is distal pair for any $0\leq j\leq Q-1$.
\end{Lem}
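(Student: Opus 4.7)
The plan is three-staged: first secure the genericity of $G_\mu$, then produce a distal pair in $G_\mu\times G_\mu$ using Ellis--Furstenberg--Auslander theory, and finally upgrade to the $Q$-fold distal condition via a commutation trick for minimal subsets of the product system.

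First I would verify that $\mu$ is non-atomic. If $\mu$ had an atom, ergodicity would force $\mu$ to be the uniform measure on a single periodic orbit, contradicting the assumption that $S_\mu$ is minimal but not periodic. Since $S_\mu$ is the support of $\mu$, this means $S_\mu$ is an infinite compact metric space and contains no periodic points (any periodic orbit would be a nonempty proper closed invariant subset of $S_\mu$). By Birkhoff's ergodic theorem, $\mu(G_\mu)=1$, so $G_\mu$ is an $f$-invariant, uncountable, dense subset of $S_\mu$ meeting uncountably many distinct $f$-orbits.

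Next, I would exhibit a distal pair inside $G_\mu\times G_\mu$. A nontrivial minimal $\mathbb{Z}$-action on a compact metric space is never proximal: by the Auslander--Ellis theorem every point is proximal to an almost periodic one, and if the system were proximal this a.p.\ point would be unique, necessarily a fixed point, which minimality would then force to be the whole space, contradicting infinitude. Therefore there exists an $(f\times f)$-minimal closed set $M\subseteq S_\mu\times S_\mu$ disjoint from the diagonal $\Delta$; every pair in $M$ is an almost periodic pair off $\Delta$, hence distal, and $\pi_i(M)=S_\mu$ since $\pi_i(M)$ is a nonempty closed $f$-invariant subset of the minimal set $S_\mu$. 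To place a distal pair inside $G_\mu$, I would select an $(f\times f)$-ergodic invariant probability $\nu$ supported on $M$ with marginals $\pi_i\nu=\mu$ (automatic when $(S_\mu,f|_{S_\mu})$ is uniquely ergodic; in general produced by a compactness and selection argument within the nonempty convex set of $(\mu,\mu)$-joinings, followed by ergodic decomposition). Then $\nu$-a.e.\ pair $(x,y)$ is distal with $x,y\in G_\mu$.

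For the $Q$-fold upgrade, I would exploit that $\mathrm{id}\times f^i$ commutes with $f\times f$ for every $i\geq 1$, so $(\mathrm{id}\times f^i)(M)$ is again $(f\times f)$-minimal and closed. The union $B_Q=\bigcup_{i=0}^{Q-1}\{(c,d):c=f^id\}$ of graphs of $f^i$ is a closed $(f\times f)$-invariant subset of $S_\mu\times S_\mu$; by minimality of $M$, either $M\cap B_Q=\emptyset$ or $M$ coincides with exactly one graph $G_{i_0}=\{(f^{i_0}d,d):d\in S_\mu\}$ with $i_0\in\{1,\ldots,Q-1\}$ (note $S_\mu$ being non-periodic makes each $G_i$ for $i\ge 1$ disjoint from $\Delta$, and the $G_i$ are pairwise disjoint). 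In the favorable case, every $(x,y)\in M$ satisfies $x\ne f^iy$ for $0\le i\le Q-1$, and each $(\mathrm{id}\times f^i)(M)$ is minimal and off the diagonal, so $(x,f^iy)$ is an almost periodic pair off $\Delta$, hence distal. In the unfavorable case, I would replace $M$ by a different minimal off-diagonal set $M'$ not in the finite list $\{G_1,\ldots,G_{Q-1}\}$; the selection argument for $\nu$ is then repeated on $M'$ to keep $x,y\in G_\mu$.

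The main obstacle is the case analysis in the last step: when the initial minimal set $M$ happens to coincide with one of the finitely many graphs $G_{i_0}$ with $i_0\in\{1,\ldots,Q-1\}$, one must find an alternative minimal off-diagonal set $M'$ still supporting an ergodic joining with both marginals equal to $\mu$. This requires a combined selection in the space of minimal $(f\times f)$-invariant subsets of $S_\mu\times S_\mu$ and in the space of $(\mu,\mu)$-joinings, and the crux is verifying that the nontriviality and non-periodicity of $(S_\mu,f)$ produce enough such alternative minimal sets to sidestep the finite exclusion.
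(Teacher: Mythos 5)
There is a genuine gap, and it sits exactly where you flag the "crux": your route from a distal pair somewhere in $S_\mu\times S_\mu$ to a distal pair with both coordinates in $G_\mu$ is not established. You need an ergodic $(f\times f)$-invariant measure $\nu$ with both marginals equal to $\mu$ \emph{and} with $\nu(M)=1$ for a minimal set $M$ disjoint from $\Delta$ (and, for the $Q$-fold statement, disjoint from the graphs $G_1,\dots,G_{Q-1}$). The "compactness and selection argument within the set of $(\mu,\mu)$-joinings" does not produce this: for a fixed minimal set $M$ the set of $(\mu,\mu)$-joinings carried by $M$ can perfectly well be empty, since an invariant measure on $M$ only has marginals that are \emph{some} invariant measures on $S_\mu$ (when $(S_\mu,f)$ is not uniquely ergodic nothing forces them to be $\mu$), and conversely no joining of $(\mu,\mu)$ need charge $M$ at all (the product joining is spread over all of $S_\mu\times S_\mu$, the diagonal joining lives on $\Delta$). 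So both the main step and the repair in the "unfavorable case" rest on an existence statement you have not proved, and the case analysis over the graphs $G_{i_0}$ compounds it: you would have to find, among minimal off-diagonal sets avoiding finitely many graphs, one that moreover carries a $(\mu,\mu)$-joining. Note also that avoiding the graphs is the wrong instinct: the graphs are the one family of minimal sets that \emph{always} carry a $(\mu,\mu)$-joining (push $\mu$ forward under $d\mapsto(f^k d,d)$), and taking $M=G_k$ with $k\geq Q$ would make every shifted pair $(x,f^iy)=(f^ky,f^iy)$, $0\le i\le Q-1$, lie on a single orbit.

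That observation is in fact the paper's entire proof, which bypasses Auslander--Ellis theory and joinings altogether: pick $x\in S_\mu\cap G_\mu$ and set $y=f(x)$, so $x,y\in G_\mu$ by invariance of $G_\mu$. If $x$ and $f^{i}(y)=f^{i+1}(x)$ were proximal for some $0\le i\le Q-1$, then choosing times $n_j$ with $d(f^{n_j}x,f^{n_j+i+1}x)\to 0$ and $f^{n_j}x\to z$ gives $z=f^{i+1}(z)$, a periodic point in $\omega_f(x)\subseteq S_\mu$; minimality of $S_\mu$ would then force $S_\mu$ to be that periodic orbit, contradicting the hypotheses. So while your first two stages (non-atomicity, non-proximality of a nontrivial minimal system, existence of an off-diagonal product-minimal set) are correct, they are far heavier than needed, and the argument as proposed does not close without the unproved joining-selection step.
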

\begin{proof}
	$S_\mu\cap G_\mu\neq \emptyset$ since $\mu\in\mathcal{M}_f^e(X)$. Let $x\in S_\mu\cap G_\mu$, then $f(x)\in S_\mu\cap G_\mu$. Let $y=f(x)$. Assume that $x, f^{i}(y)$ are proximal for some $0 \leq i \leq Q-1$, then $\omega_f(x)$ contains a periodic point, which implies $\omega_f(x)$ is either periodic or non-minimal. Then $S_\mu$ is either periodic or non-minimal since $\omega_f(x)\subseteq S_\mu$.
\end{proof}

\begin{Lem}\label{BD}
	Suppose that a dynamical system $(X,f)$ with $Per(f)\neq \emptyset$ is transitive and has the
	shadowing property. If $X$ is not periodic, then $h_{top}(f)>0,$ where $h_{top}(f)$ is the topological entropy of $(X,f).$
\end{Lem}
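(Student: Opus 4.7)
The plan is to reduce to a mixing non-trivial subsystem with specification and then invoke the classical fact that specification on a space with more than one point forces positive topological entropy.

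First, I would apply Lemma \ref{AU} to a periodic point $p_0\in\Per(f)$ of period $l$. This produces a regular periodic decomposition $\mathcal{D}=\{D_0,D_1,\ldots,D_{n-1}\}$ for some $n\mid l$ with $D_i\cap D_j=\emptyset$ for $i\neq j$, such that $f^n$ is mixing on each $D_i$ and $f^n|_{D_i}$ has the shadowing property. Proposition \ref{prop-specif} then upgrades this to specification property for $(D_0,f^n|_{D_0})$. Since the $D_i$ are mutually disjoint and their union is $X$, if every $D_i$ were a singleton then $X$ would consist of exactly $n$ points cyclically permuted by $f$, contradicting the hypothesis that $X$ is not periodic. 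Consequently $D_0$ (and hence every $D_i$, as they are homeomorphic via iterates of $f$) contains at least two distinct points; pick $x,y\in D_0$ with $d(x,y)=4\varepsilon>0$.

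Second, I would run the standard specification-implies-positive-entropy construction on $(D_0,f^n|_{D_0})$. Let $K=K_\varepsilon$ be the gap constant from Definition \ref{definition of specification}. For each binary string $\xi=(\xi_0,\ldots,\xi_{N-1})\in\{0,1\}^N$, specification produces a point $z_\xi\in D_0$ that $\varepsilon$-traces $x$ when $\xi_j=0$ and $y$ when $\xi_j=1$ on the singleton intervals $[j(K+1),j(K+1)]$ for $j=0,\ldots,N-1$. Whenever $\xi\neq\xi'$, if $\xi_j\neq\xi'_j$ then
\[
d\bigl((f^n)^{j(K+1)}(z_\xi),(f^n)^{j(K+1)}(z_{\xi'})\bigr)\geq d(x,y)-2\varepsilon=2\varepsilon,
\]
so the $2^N$ points $\{z_\xi\}$ are $(N(K+1),\varepsilon)$-separated for $f^n|_{D_0}$ in the Bowen metric. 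Therefore
\[
h_{\topp}(f^n|_{D_0})\geq\limsup_{N\to\infty}\frac{\log 2^N}{N(K+1)}=\frac{\log 2}{K+1}>0.
\]

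Finally, I would lift this to $X$ via the standard relations $h_{\topp}(f^n)=n\,h_{\topp}(f)$ and $h_{\topp}(f^n)\geq h_{\topp}(f^n|_{D_0})$, yielding
\[
h_{\topp}(f)\geq \frac{1}{n}h_{\topp}(f^n|_{D_0})>0.
\]
No real obstacle is anticipated: the periodic decomposition machinery in Lemma \ref{AU} packages all the delicate steps (reducing to a mixing subsystem while preserving shadowing), Proposition \ref{prop-specif} hands over specification, and the separated-set counting argument is textbook. The only point that requires a small argument is ruling out the trivial case $|D_0|=1$, which is precisely where the hypothesis that $X$ is not a periodic orbit is used.
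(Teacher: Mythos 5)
Your proof is correct and follows essentially the same route as the paper: Lemma \ref{AU} to get the disjoint regular periodic decomposition, Proposition \ref{prop-specif} to get specification on $(D_0,f^n|_{D_0})$, nondegeneracy of $D_0$ from non-periodicity of $X$, and finally $h_{\topp}(f)=\tfrac1n h_{\topp}(f^n)\geq \tfrac1n h_{\topp}(f^n|_{D_0})>0$. The only difference is that where the paper simply cites \cite[Proposition 21.6]{Sig} for ``specification on a nondegenerate space implies positive entropy,'' you inline its standard separated-set proof, which is fine.
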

\begin{proof}
	By Lemma \ref{AU}, then there is a periodic decomposition $\mathcal{D}=\{D_{0},D_{1},\dots,D_{n-1}\}$ for some $n\in \mathbb{N^{+}}$ such that $D_{i}\cap D_{j}$ is empty for $i\neq j$, $f^{n}$ is mixing on each $D_{i}$ and $f^{n}|_{D_{i}}$ has shadowing property for any $i \in \{0,1,\dots,n-1\}$. Then $(D_{0},f^{n})$ has specification property by Proposition \ref{prop-specif}. Since $X$ is not periodic, $D_{0}$ is nondegenerate $($i.e,
	with at least two points$)$. Then by \cite[Proposition 21.6]{Sig}, $h_{top}(f^{n}|_{D_{0}})>0$. Then $h_{top}(f^{n})>0$. By \cite[Proposition 14.17]{Sig}, $h_{top}(f)=\frac{1}{n}h_{top}(f^{n})>0$.
\end{proof}

\begin{Lem}\label{BF}
	Suppose that $(X,f)$ is a dynamical system with a sequence of nondecreasing $f$-invariant compact subsets $\{X_{n} \subseteq X:n \in \mathbb{N^{+}} \}$ such that $\overline{\bigcup_{n\geq 1}X_{n}}=X$, $({X_{n}},f|_{X_{n}})$ has shadowing property and is transitive for any $n \in \mathbb{N^{+}}$, and $\mathrm{Per}(f|_{X_{1}})\neq \emptyset$. 
	If $X$ is not periodic, then  $$\bigcup_{n\geq 1}\{\mu:\mu \in \mathcal{M}_{f|_{X_{n}}}^{e}(X_{n}), S_{\mu}\ \mathrm{is}\ \mathrm{minimal}\ \mathrm{and}\ \mathrm{is}\ \mathrm{not}\ \mathrm{periodic}, \mathrm{and}\ h_{\mu}(f)>0\}$$ is dense in $\overline{\{\mu \in \mathcal{M}_{f}(X):\mu(\bigcup_{n\geq 1}X_{n})=1\}}$.
\end{Lem}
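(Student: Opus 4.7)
The plan is to combine Lemma \ref{lemma-MM}, Lemma \ref{BD} and Lemma \ref{BE}. First I would use Lemma \ref{lemma-MM} to reduce the problem to showing that every $\mu\in\mathcal{M}_f(X)$ with $\mu(X_m)=1$ for some $m\in\mathbb{N}^+$ lies in the weak$^*$-closure of the target set.

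Before invoking the quantitative tools, I would establish the auxiliary fact that $X_n$ is not periodic for all sufficiently large $n$. The argument is: since each $X_n$ is transitive, being periodic forces $X_n$ to be a single periodic orbit, hence a finite minimal set; any nonempty $f$-invariant subset of a minimal set equals the whole set, so the nested sequence of finite minimal orbits $X_1\subseteq X_2\subseteq\cdots$ would be eventually constant, forcing $X=\overline{\bigcup_n X_n}$ to be a single periodic orbit, contradicting the hypothesis. By nestedness I may then enlarge $m$ so that $X_m$ is not periodic (replacing $m$ by $\max(m,n_0)$, noting $\mu(X_m)=1$ implies $\mu(X_{m'})=1$ for $m'\geq m$).

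Next I would apply Lemma \ref{BD} to $(X_m,f|_{X_m})$, which is transitive, has shadowing, contains the nonempty set $\mathrm{Per}(f|_{X_1})\subseteq \mathrm{Per}(f|_{X_m})$, and is not periodic, to obtain $h_{\topp}(f|_{X_m})>0$. By the variational principle I pick $\nu\in \mathcal{M}^e_{f|_{X_m}}(X_m)$ with $h_\nu(f)>0$, and for $\varepsilon\in(0,1)$ form the convex combination $\mu_\varepsilon=(1-\varepsilon)\mu+\varepsilon\nu\in\mathcal{M}_{f|_{X_m}}(X_m)$. Affinity of the entropy map gives $h_{\mu_\varepsilon}(f)\geq \varepsilon h_\nu(f)>0$, while $\mu_\varepsilon\to\mu$ weak$^*$ as $\varepsilon\to 0$.

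Finally, I would apply Lemma \ref{BE} inside $(X_m,f|_{X_m})$ to $\mu_\varepsilon$ with $c=h_{\mu_\varepsilon}(f)/2>0$, obtaining ergodic measures $\sigma_j\in\mathcal{M}^e_{f|_{X_m}}(X_m)$ supported on almost one-to-one extensions of odometers with $\sigma_j\to\mu_\varepsilon$ and $h_{\sigma_j}(f)\to c$. Odometers are minimal and almost one-to-one extensions of minimal systems remain minimal, so each $S_{\sigma_j}$ is minimal; once $j$ is large enough that $h_{\sigma_j}(f)>0$, the variational principle together with the vanishing of topological entropy on a single periodic orbit forces $S_{\sigma_j}$ to be non-periodic. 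A diagonal choice $\varepsilon_k\to 0$ followed by $j_k\to\infty$ then yields a sequence in the target set converging to $\mu$. The main obstacle is that $\mu$ itself may have zero entropy, which would block a direct application of Lemma \ref{BE} with $c>0$; this is precisely what the convex perturbation by the positive-entropy ergodic measure $\nu$ circumvents, using the positive topological entropy supplied by Lemma \ref{BD}.
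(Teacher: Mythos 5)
Your proposal is correct and follows essentially the same route as the paper: reduce via Lemma \ref{lemma-MM} to measures supported on some $X_m$, use Lemma \ref{BD} to produce a positive-entropy measure on $X_m$, perturb $\mu$ by a small convex combination with it, and then invoke Lemma \ref{BE} (odometers being minimal, almost one-to-one extensions of minimal systems being minimal, and periodic orbits carrying zero entropy) to approximate by ergodic measures with minimal non-periodic support and positive entropy. The only differences are presentational — you use a one-parameter family $\mu_\varepsilon$ with a diagonal extraction where the paper fixes a neighborhood $U$ and a single convex combination, and you spell out the justification for passing to a tail of the sequence $\{X_n\}$ on which no $X_n$ is periodic, a point the paper dispatches with ``we may assume.''
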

\begin{proof}
	Given $\mu \in U \subseteq \mathcal{M}_{f}(X)$ with $\mu(\bigcup_{l\geq 1}X_{l})=1$, there exists $\mu' \in \mathcal{M}_{f|_{X_{n}}}(X_{n})$ for some $n \in \mathbb{N^{+}}$ such that $\mu' \in U$ by Lemma \ref{lemma-MM}. Since $\overline{\bigcup_{l\geq 1}X_{l}}=X$, and $X$ is not periodic, we may assume that $X_{l}$ is not periodic for any $l\in\mathbb{N^{+}}$. By Lemma \ref{BD}, there exists $\mu'' \in \mathcal{M}_{f|_{X_{n}}}(X_{n})$ such that $h_{\mu''}(f)>0$. Choose $\theta \in (0,1)$ such that $\nu = \theta \mu' + (1-\theta)\mu'' \in U\cap \mathcal{M}_{f|_{X_n}}(X_{n})$. Then $h_{\nu}(f)\geq (1-\theta)h_{\mu''}(f)>0$. Since every odometer is minimal and an almost one-to-one extension of a minimal system is minimal \cite{Dwic2}, by Lemma \ref{BE} there exists $\alpha \in \mathcal{M}_{f|_{X_n}}^{e}(X_{n})\cap U$ such that $h_{\alpha}(f)>\frac{h_{\nu}(f)}{2}>0$ and $S_\alpha$ is minimal. Since $h_{\alpha}(f)=0$ if $S_{\mu}$ is periodic, we have that $S_{\alpha}$ is not periodic.
\end{proof} 

\begin{Prop}\label{CG}
	Suppose that a dynamical system $(X,f)$ has specification property. Then the almost periodic points are dense in $X$.
\end{Prop}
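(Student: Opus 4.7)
The plan is to show density by a direct application of specification to several copies of a given point. Given a nonempty open set $U\subseteq X$, I would pick $x_{0}\in U$ and $\varepsilon>0$ with $B(x_{0},\varepsilon)\subseteq U$, set $K=K_{\varepsilon/2}$ as in Definition \ref{definition of specification}, and aim to produce an almost periodic point inside $U$ whose orbit is controlled on the subsequence of times $iK$.

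First I would use specification to obtain, for each integer $s\ge 2$, a point $z_{s}$ that $(\varepsilon/2)$-traces the constant sequence $y_{1}=\cdots=y_{s}=x_{0}$ on the single-step windows $a_{i}=b_{i}=(i-1)K$; the gap condition $a_{i+1}-b_{i}=K\ge K_{\varepsilon/2}$ is exactly met. Passing to a cluster point $z$ of the sequence $\{z_{s}\}$ in the compact space $X$, I would get $d(f^{iK}(z),x_{0})\le\varepsilon/2$ for every $i\ge 0$, so the set
$$A:=\{y\in X:d(f^{iK}(y),x_{0})\le\varepsilon/2\text{ for all }i\ge 0\}$$
is a nonempty closed $f^{K}$-invariant subset contained in $\overline{B(x_{0},\varepsilon/2)}$.

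Then I would invoke the standard Birkhoff/Zorn argument for the compact $f^{K}$-invariant system $(A,f^{K}|_{A})$ to select a minimal subset $M\subseteq A$ and take any $w\in M$. By construction $w\in A\subseteq B(x_{0},\varepsilon)\subseteq U$, so the remaining task is to check that $w$ is almost periodic for $f$ (not merely for $f^{K}$). This is a short syndeticity-scaling observation: if every $f^{K}$-return window of $w$ to a neighborhood $V$ has length at most $N_{0}$, the same property holds for $f$ with window length at most $(N_{0}+1)K$, since any $n\in\mathbb{N}$ lies in an interval $[mK,(m+N_{0})K]$ with $m=\lceil n/K\rceil$.

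I do not anticipate a genuine obstacle here; the construction is mechanical once the specification parameters are chosen correctly. Note that under Bowen's specification property (with the periodicity clause (b)) the argument collapses dramatically: taking $s=2$, $y_{1}=y_{2}=x_{0}$, $a_{1}=b_{1}=0$, $a_{2}=b_{2}=K_{\varepsilon}$ produces a periodic point $z$ with $d(z,x_{0})<\varepsilon$, which is automatically almost periodic. It is only the weaker form of specification (without periodicity) that requires the detour through the set $A$ and the extraction of a minimal subset, and even there the sole subtlety is the routine conversion from $f^{K}$-almost periodicity to $f$-almost periodicity described above.
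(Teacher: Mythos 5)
Your proof is correct, but it follows a genuinely different route from the paper. You argue by direct construction: apply specification to the constant target sequence $y_1=\cdots=y_s=x_0$ on the one-step windows $a_i=b_i=(i-1)K$ (the gap condition $a_{i+1}-b_i=K=K_{\varepsilon/2}$ and the requirement $a_1=0$ are indeed met), pass to a cluster point to see that the closed, $f^{K}$-invariant set $A$ of points shadowing $x_0$ along the times $iK$ is nonempty, extract a minimal subset of $(A,f^{K}|_A)$ by Zorn's lemma, and convert $f^{K}$-almost periodicity into $f$-almost periodicity by the syndeticity-scaling observation, which is stated correctly. The paper instead quotes Dateyama: invariant measures supported on minimal sets are dense in $\mathcal{M}_f(X)$ for systems with specification, and there exists an invariant measure $\mu$ with $S_\mu=X$; it then approximates $\mu$ by measures $\mu_i$ with $S_{\mu_i}\subseteq AP$ and uses upper semicontinuity of $\nu\mapsto\nu(\overline{AP})$ in the weak* topology to conclude $X=S_\mu\subseteq\overline{AP}$. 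Your argument is more elementary and self-contained (it needs only specification, compactness, and Birkhoff's theorem that points of minimal sets are almost periodic), and it is slightly more quantitative, producing an almost periodic point within $\varepsilon$ of a prescribed $x_0$ whose orbit along an arithmetic progression of times stays near $x_0$; the paper's argument is shorter given the cited measure-theoretic results and fits the measure-oriented machinery used elsewhere in the paper, yielding in addition that minimal-supported measures approximate arbitrary invariant measures. Your closing remark about Bowen's specification collapsing the argument to a periodic point is also accurate.
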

\begin{proof}
	From \cite[Proposition 2]{DYM}, we know that for any dynamical system with specification property (not necessarily Bowen's strong version), the invariant measures supported on minimal sets are dense in $\cM_{f}(X)$. By \cite[Theorem 1]{DYM} for any dynamical system with specification property there exists $\mu\in\cM_{f}(X)$ such that $S_\mu=X.$ Then there is a sequence of invariant measures $\mu_i$ with $S_{\mu_i}\subset AP$ converging
	to $\mu$. Then $1=\limsup\limits_{n\to \infty}\mu_n(AP)\leq\limsup\limits_{n\to \infty}\mu_n(\overline{AP})\leq\mu(\overline{AP})$. It follows that $X=S_\mu\subset \overline{AP}$.
\end{proof}

\begin{Prop}\label{BG}
	Suppose that $(X,f)$ is a dynamical system with a sequence of nondecreasing $f$-invariant compact subsets $\{X_{n} \subseteq X:n \in \mathbb{N^{+}} \}$ such that $\overline{\bigcup_{n\geq 1}X_{n}}=X$, $({X_{n}},f|_{X_{n}})$ has shadowing property and is transitive for any $n \in \mathbb{N^{+}}$, and $\mathrm{Per}(f|_{X_{1}})\neq \emptyset$. 
	Then $(X,f)$ has measure $\nu$ with full support $($i.e. $S_\nu=X$$)$. Moreover, the set of such measures is dense in $\overline{\{\mu \in \mathcal{M}_{f}(X):\mu(\bigcup_{n\geq 1}X_{n})=1\}}$.
\end{Prop}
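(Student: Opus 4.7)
The plan is to build a full-support invariant measure on each subsystem $(X_n,f|_{X_n})$ from its periodic decomposition, then combine these into a single full-support measure on $X$, and finally perturb arbitrary elements of the closure toward this measure to obtain density.

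Fix $n$. Since $\mathrm{Per}(f|_{X_1})\neq\emptyset$ and $X_1\subseteq X_n$, the subsystem $(X_n,f|_{X_n})$ has a periodic point, so Lemma \ref{AU} yields a regular periodic decomposition $\{D_0^n,\dots,D_{k_n-1}^n\}$ with pairwise disjoint pieces such that $f^{k_n}$ is mixing on each $D_i^n$ and $f^{k_n}|_{D_i^n}$ has the shadowing property. By Proposition \ref{prop-specif}, $f^{k_n}|_{D_0^n}$ has specification, so the argument recalled in the proof of Proposition \ref{CG} (based on \cite[Theorem 1]{DYM}) furnishes an $f^{k_n}$-invariant measure $\eta_0^n$ on $D_0^n$ with $S_{\eta_0^n}=D_0^n$. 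Averaging along the cycle gives
\[\nu_n:=\frac{1}{k_n}\sum_{i=0}^{k_n-1}f_*^i\eta_0^n,\]
an $f$-invariant probability measure (because $f_*^{k_n}\eta_0^n=\eta_0^n$), and by Lemma \ref{AO}(i) one has $S_{\nu_n}=\bigcup_{i=0}^{k_n-1}f^i(D_0^n)=\bigcup_{i=0}^{k_n-1}D_i^n=X_n$.

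Set $\nu_*:=\sum_{n\geq 1}2^{-n}\nu_n$. This is an $f$-invariant probability measure with $\nu_*(\bigcup_{n\geq 1} X_n)=1$ and
\[S_{\nu_*}=\overline{\bigcup_{n\geq 1}S_{\nu_n}}=\overline{\bigcup_{n\geq 1}X_n}=X,\]
which handles existence. For the density claim, fix $\mu_0\in\overline{\{\mu\in\mathcal{M}_f(X):\mu(\bigcup_n X_n)=1\}}$ and any open neighborhood $U\subseteq\mathcal{M}(X)$ of $\mu_0$. By Lemma \ref{lemma-MM} there exist $k\in\mathbb{N}^{+}$ and $\mu'\in\mathcal{M}_{f|_{X_k}}(X_k)$ with $\mu'\in U$. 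For $\theta>0$ sufficiently small, $\nu:=(1-\theta)\mu'+\theta\nu_*$ lies in $U$ by weak-$\ast$ continuity; moreover $S_\nu\supseteq S_{\nu_*}=X$ forces $S_\nu=X$, and $\nu(\bigcup_n X_n)=1$ places $\nu$ inside the stated closure.

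No serious obstacle is anticipated; the only mild subtlety is that $(X_n,f|_{X_n})$ need not itself have specification when $k_n>1$, so we must first extract a full-support measure on the single mixing piece $D_0^n$ and then symmetrize around the cycle, rather than apply \cite[Theorem 1]{DYM} directly to $X_n$.
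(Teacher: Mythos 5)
Your proposal is correct and follows essentially the same route as the paper's proof: the periodic decomposition from Lemma \ref{AU}, specification of $f^{k_n}$ on the mixing pieces via Proposition \ref{prop-specif}, a full-support invariant measure on each $X_n$, the weighted sum $\sum_{n}2^{-n}\nu_n$, and a convex perturbation (together with Lemma \ref{lemma-MM}) for density. The only variation is the middle step: the paper produces the full-support measure on $X_n$ by invoking Proposition \ref{CG} to get a dense set of almost periodic points and summing invariant measures carried by their $\omega$-limit sets, whereas you take a single full-support $f^{k_n}$-invariant measure on $D_0^n$ (from \cite[Theorem 1]{DYM}) and average its pushforwards around the cycle --- a slightly more direct variant that is equally valid.
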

\begin{proof}
	Given $l\in \mathbb{N^{+}}$, by Lemma \ref{AU} considering $(X_{l},f|_{X_{l}})$, there is a periodic decomposition $\mathcal{D}_{l}=\{D^{l}_{0},D^{l}_{1},\dots,D^{l}_{k-1}\}$ for some $k$ such that $D^{l}_{i}\cap D^{l}_{j}$ is empty for $i\neq j$, $f^{k}$ is mixing on each $D^{l}_{i}$ and $f^{k}|_{D^{l}_{i}}$ has shadowing property for any $i \in \{0,1,\dots,k-1\}$. Then $(D_{i}^{l},f^{k}|_{D_{i}^{l}})$ has specification property for any $i \in \{0,1,\dots,k-1\}$ by Proposition \ref{prop-specif}. 
	By Proposition \ref{CG}, then the almost periodic points ($AP$) are dense in $D_{i}^{l}$ for any $i \in \{0,1,\dots,k-1\}$. Since $AP$ of $(D_{i}^{l},f^{k}|_{D_{i}^{l}})$ are still $AP$ of $(X_{l},f)$, one has $AP$ are dense in $X_{l}$. Take a sequence of points $\{x_j\}\in AP\cap X_{l}$ dense in $X_{l}$. For any $j$, take $\mu_j$ to be an invariant measure on $\omega(f,x_j)$. Then $x_j\in\omega(f,x_j)=S_{\mu_j}$ and so $\overline{\bigcup_{j\geq 1} S_{\mu_j}}=X_{l}$. Let $\mu=\sum\limits_{j\geq 1}\frac{1}{2^j}\mu_j$. Then $\mu\in \mathcal M_{f|_{X_l}}(X_{l})$ and $S_\mu=X_{l}$. 
	
	For any $n \in \mathbb{N^{+}}$, take $\nu_{n}\in \mathcal M_{f|_{X_n}}(X_{n})$ with $S_{\nu_{n}}=X_{n}$. Let $\nu=\sum\limits_{n\geq 1}\frac{1}{2^n}\nu_n$. Then $\nu(\bigcup_{n\geq 1}X_{n})=\sum\limits_{n\geq 1}\frac{1}{2^n}\nu_n(\bigcup_{n\geq 1}X_{n})=1$ and $S_\nu=X$ since $X=\overline{\bigcup_{n\geq 1}X_{n}}$. 
	
	For any $\omega\in \mathcal{M}_{f}(X)$ with $\omega(\bigcup_{n\geq 1}X_{n})=1$, let $\omega_{n}=\frac{1}{n}\nu+(1-\frac{1}{n})\omega$. Then $\omega_n(\bigcup_{n\geq 1}X_{n})=1$, $S_{\omega_{n}}=X$ and $\lim\limits_{n\to \infty}\omega_{n}=\omega$, the proof is complete.
\end{proof}
\begin{Cor}\label{coro-AA}
	Suppose that $(X,f)$ is a dynamical system with a sequence of nondecreasing $f$-invariant compact subsets $\{X_{n} \subseteq X:n \in \mathbb{N^{+}} \}$ such that $\overline{\bigcup_{n\geq 1}X_{n}}=X$, $({X_{n}},f|_{X_{n}})$ has shadowing property and is transitive for any $n \in \mathbb{N^{+}}$, and $\mathrm{Per}(f|_{X_{1}})\neq \emptyset.$ Then the almost periodic points are dense in $X$.
\end{Cor}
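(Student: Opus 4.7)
The plan is to observe that Corollary \ref{coro-AA} is essentially a byproduct of the first paragraph of the proof of Proposition \ref{BG}, repackaged as a standalone consequence. Concretely, I would fix any $l \in \mathbb{N}^+$ and apply Lemma \ref{AU} to $(X_l, f|_{X_l})$ to produce a regular periodic decomposition $\{D_0^l, \ldots, D_{k-1}^l\}$ whose cyclic pieces $(D_i^l, f^k|_{D_i^l})$ are mixing and have the shadowing property. By Proposition \ref{prop-specif}, each piece then has the specification property, so Proposition \ref{CG} yields that the set of almost periodic points of $(D_i^l, f^k|_{D_i^l})$ is dense in $D_i^l$.

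Next I would invoke the elementary observation, already used tacitly in the proof of Proposition \ref{BG}, that a point which is almost periodic for $f^k|_{D_i^l}$ is also almost periodic for $f$. This follows from the characterization of almost periodicity via minimality of the orbit closure: if $\overline{orb(x, f^k)}$ is $f^k$-minimal, then $\overline{orb(x,f)} = \bigcup_{j=0}^{k-1} f^j\bigl(\overline{orb(x,f^k)}\bigr)$ is $f$-minimal, because each $f^j$ maps $f^k$-minimal sets to $f^k$-minimal sets and because the property of being a minimal set is intrinsic to the orbit, independent of whether one views the system as acting on $D_i^l$, on $X_l$, or on the ambient $X$ (all of which are closed $f$-invariant). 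This gives $AP \cap X_l$ dense in $X_l = \bigcup_{i=0}^{k-1} D_i^l$ for every $l$.

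Finally, given any non-empty open $U \subseteq X$, the hypothesis $\overline{\bigcup_{n\geq 1} X_n} = X$ forces $U \cap X_l \neq \emptyset$ for some $l$, and the denseness of $AP$ in $X_l$ then produces a point of $AP \cap U$. Since every ingredient is already in place, there is no genuine obstacle: the only thing to verify carefully is the passage from $f^k$-almost periodicity on $D_i^l$ to $f$-almost periodicity on $X$, and this is a purely formal consequence of the definition via minimality. Thus the corollary follows by assembling these three short steps.
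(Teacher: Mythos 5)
Your proposal is correct and follows essentially the same route the paper intends: Corollary \ref{coro-AA} is extracted from the first paragraph of the proof of Proposition \ref{BG}, namely Lemma \ref{AU} to get the periodic decomposition, Proposition \ref{prop-specif} to get specification on each piece, Proposition \ref{CG} for density of almost periodic points there, and the observation that $f^k$-almost periodic points of $D_i^l$ are $f$-almost periodic in $X$. Your explicit justification of that last passage via minimality of $\overline{orb(x,f)}=\bigcup_{j=0}^{k-1}f^j\bigl(\overline{orb(x,f^k)}\bigr)$ is a correct filling-in of a step the paper leaves tacit.
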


\begin{Prop}\label{Trans-BR}
	Suppose that $(X,f)$ is a dynamical system with a sequence of nondecreasing $f$-invariant compact subsets $\{X_{n} \subseteq X:n \in \mathbb{N^{+}} \}$ such that $\overline{\bigcup_{n\geq 1}X_{n}}=X$, $({X_{n}},f|_{X_{n}})$ has shadowing property and is transitive for any $n \in \mathbb{N^{+}}$, and $\mathrm{Per}(f|_{X_{1}})\neq \emptyset.$
	Then $x\in Trans$ implies $x\in Rec_{Ban}^{up}$.
\end{Prop}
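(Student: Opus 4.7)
My plan is to verify the defining condition of $Rec_{Ban}^{up}$ directly: for every $\varepsilon>0$, the visiting-time set $N(x,B(x,\varepsilon))$ should have positive Banach upper density. The key input is Corollary \ref{coro-AA}, which ensures that the set $AP$ of almost periodic points is dense in $X$. So I can approximate the transitive point $x$ by an almost periodic point $y$ with $d(x,y)<\varepsilon/3$, and then transfer the syndetic return structure of $y$ to $x$ using density of the orbit of $x$ combined with uniform continuity of a fixed finite block of iterates.

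Fix $\varepsilon>0$ and choose $y\in AP$ as above. Almost periodicity furnishes an integer $L=L(\varepsilon)$ such that every integer interval of length $L+1$ meets $N(y,B(y,\varepsilon/3))$; hence for every integer $T\geq L+1$,
$$|N(y,B(y,\varepsilon/3))\cap[0,T-1]|\ \geq\ \left\lfloor\tfrac{T}{L+1}\right\rfloor.$$
For each such $T$, uniform continuity of $f,f^2,\ldots,f^T$ on the compact space $X$ yields $\delta_T>0$ with the property that $d(z,w)<\delta_T$ implies $d(f^j(z),f^j(w))<\varepsilon/3$ for all $0\leq j\leq T$. Since $x\in Trans$, its orbit visits $B(y,\delta_T)$, so I can pick $n_T\in\mathbb{N}$ with $d(f^{n_T}(x),y)<\delta_T$. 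Then for each $j\in N(y,B(y,\varepsilon/3))\cap[0,T-1]$ the triangle inequality gives
$$d(f^{n_T+j}(x),x)\leq d(f^{n_T+j}(x),f^j(y))+d(f^j(y),y)+d(y,x)<\varepsilon,$$
so $n_T+j\in N(x,B(x,\varepsilon))$.

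Consequently $|N(x,B(x,\varepsilon))\cap[n_T,n_T+T-1]|\geq\lfloor T/(L+1)\rfloor$. Letting $T\to\infty$, the intervals $[n_T,n_T+T-1]$ have length tending to infinity, so the definition of Banach upper density yields $B^*(N(x,B(x,\varepsilon)))\geq 1/(L+1)>0$. Since $\varepsilon>0$ was arbitrary, $x\in Rec_{Ban}^{up}$. The only mild subtlety is that $L$ and $\delta_T$ depend on $\varepsilon$ (in fact on the auxiliary choice of $y$), but the argument only requires a positive lower bound for each fixed $\varepsilon$, so no uniformity across $\varepsilon$ is needed, and I do not anticipate a real obstacle beyond setting up this continuity-transfer step cleanly.
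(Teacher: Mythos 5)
Your proof is correct, but it takes a genuinely different route from the paper's. The paper argues measure-theoretically: since $x\in Trans$ gives $\omega(f,x)=X$, Furstenberg's quasi-genericity result yields $\mathcal M_f(X)\subseteq V_f^*(x)$, and the existence of a fully supported invariant measure (Proposition \ref{BG}) then forces $C^*_x=X=\omega(f,x)$, so the characterization of Proposition \ref{prop3} ($x\in Rec_{Ban}^{up}\Leftrightarrow x\in C^*_x$) finishes the proof in three lines. You instead verify the combinatorial definition of Banach upper density directly, using the density of almost periodic points (Corollary \ref{coro-AA}) and a continuity transfer of the syndetic return structure of a nearby almost periodic point onto a far-out window of the orbit of $x$. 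Your argument is sound: the only delicate points --- that $L$ and $\delta_T$ depend on $\varepsilon$ and $T$, and that the windows $[n_T,n_T+T-1]$ drift to infinity --- are exactly what the Banach upper density (as a $\limsup$ over arbitrary long intervals, not initial segments) is designed to tolerate, and you handle them correctly; there is at worst an off-by-one in the count $\lfloor T/(L+1)\rfloor$ coming from excluding the return time $k=0$, which does not affect the limit. The trade-off: the paper's proof is shorter given the machinery already assembled in Section 2 and in Proposition \ref{BG}, while yours is more elementary and self-contained (no invariant measures, no quasi-generic points) and even produces an explicit lower bound $1/(L+1)$ on the Banach upper density for each $\varepsilon$. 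Both ultimately rest on the same structural input, namely the specification property of the pieces of the periodic decomposition, which simultaneously yields dense $AP$ and fully supported measures.
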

\begin{proof}
	From \cite[Proposition 3.9]{Fur} we know that for a point $x_0$ and an ergodic
	measure $\mu_0\in \mathcal M_f(\omega(f,x_0))$, $x_0$ is quasi-generic for $\mu_0$. 
	So if $x\in Trans$, $\mathcal M_f(\omega(f,x))=\mathcal M_f(X)$. By Proposition \ref{BG}, there is a measure with full support. Then $C^*_x=\overline{\bigcup_{m\in V^*_f(x)}S_m}=\overline{\bigcup_{m\in M_f(\omega(f,x))}S_m}=\overline{\bigcup_{m\in \mathcal M_f(X)}S_m}=X=\omega(f,x).$ By Proposition \ref{prop3}, we have $x\in Rec_{Ban}^{up}$.
\end{proof}

\subsection{Some abtract results}
Next we introduct some abtract results for possibly more applications. Let $C^{0}(X,\mathbb{R})$ denote the space of real continuous functions on $X$ with the norm $||\varphi||:=\sup_{x\in X}|\varphi(x)|.$
\begin{Prop}\label{proposition-AD}
	Suppose that $(X,f)$ is a dynamical system. Let $Y\subseteq X$ be a non-empty compact $f$-invariant set. If there is $\phi_0\in C^{0}(Y,\mathbb{R})$ such that $I_{\phi_0}(f|_{Y})\neq\emptyset,$ the set $\mathcal C^*:=\{\phi\in C^{0}(X,\mathbb{R}):I_\phi(f)\cap Y\neq\emptyset\}$ is an open and dense subset in $C^{0}(X,\mathbb{R})$.
\end{Prop}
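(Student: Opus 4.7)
The plan is to prove openness and density of $\mathcal C^*$ in $C^0(X,\R)$ as two separate short arguments. The key unifying idea is that the divergence of Birkhoff averages is detected by $\liminf$ being strictly less than $\limsup$, and these one-sided limits depend in a controlled (Lipschitz) way on the observable via the sup norm.

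For openness, fix $\phi \in \mathcal C^*$ and a witness $x \in I_\phi(f) \cap Y$. Set
\[
a := \liminf_{n\to\infty}\frac{1}{n}\sum_{i=0}^{n-1}\phi(f^i x), \qquad b := \limsup_{n\to\infty}\frac{1}{n}\sum_{i=0}^{n-1}\phi(f^i x),
\]
so that $a < b$. For any $\psi \in C^0(X,\R)$ with $\|\psi-\phi\| < (b-a)/3$, uniform convergence yields
\[
\Bigl|\tfrac{1}{n}\sum_{i=0}^{n-1}\psi(f^i x)-\tfrac{1}{n}\sum_{i=0}^{n-1}\phi(f^i x)\Bigr|\ \le\ \|\psi-\phi\|
\]
for every $n$, hence the $\liminf$ and $\limsup$ of the $\psi$-averages along the orbit of $x$ differ by at least $(b-a)/3 > 0$. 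Therefore $x \in I_\psi(f) \cap Y$ and $\psi \in \mathcal C^*$, which proves openness.

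For density, extend $\phi_0 \in C^0(Y,\R)$ to $\tilde\phi_0 \in C^0(X,\R)$ via the Tietze extension theorem, and fix the witness $x_0 \in I_{\phi_0}(f|_Y) \subseteq Y$. Given an arbitrary $\phi \in C^0(X,\R)$ and $\eps > 0$, I split into cases. If $\phi \in \mathcal C^*$ already there is nothing to prove. Otherwise, in particular, the sequence $\tfrac{1}{n}\sum_{i=0}^{n-1}\phi(f^i x_0)$ converges to some real number $c$. For any $t \neq 0$ set $\psi_t := \phi + t\,\tilde\phi_0$; since $\tilde\phi_0|_Y = \phi_0$ and $f(Y)\subseteq Y$, the orbit $\{f^i x_0\}$ stays in $Y$, so
\[
\frac{1}{n}\sum_{i=0}^{n-1}\psi_t(f^i x_0)\ =\ \frac{1}{n}\sum_{i=0}^{n-1}\phi(f^i x_0)\ +\ t\cdot\frac{1}{n}\sum_{i=0}^{n-1}\phi_0(f^i x_0).
\]
The first term converges to $c$, while the second diverges by choice of $x_0$, so their sum diverges and $x_0 \in I_{\psi_t}(f) \cap Y$, i.e.\ $\psi_t \in \mathcal C^*$. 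Finally choose $t$ with $0 < |t| < \eps/(1+\|\tilde\phi_0\|)$ so that $\|\psi_t - \phi\| = |t|\cdot\|\tilde\phi_0\| < \eps$, which establishes density.

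There is no serious obstacle here: the whole proof is a standard semi-continuity argument plus a one-parameter perturbation trick, with the Tietze extension used only to lift the hypothesis on $Y$ to the ambient space $X$. The one mild point to be careful about is that in the density step the perturbation must be tested at the same orbit $\{f^i x_0\} \subseteq Y$ on which $\phi_0$ fails to average, which is exactly why invariance of $Y$ is needed so that $\tilde\phi_0$ and $\phi_0$ agree along this orbit.
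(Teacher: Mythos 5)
Your proof is correct and follows essentially the same route as the paper: density via a Tietze extension of $\phi_0$ and the one-parameter perturbation $\phi + t\tilde\phi_0$ tested at the fixed witness $x_0\in Y$, and openness via stability of the gap in the Birkhoff averages under small sup-norm perturbations. The only cosmetic difference is that you verify openness directly with the $\liminf$/$\limsup$ of the Birkhoff sums, whereas the paper phrases the same stability through two measures $\mu_1,\mu_2\in V_f(y)$ with $\int\phi\,d\mu_1<\int\phi\,d\mu_2$; the two formulations are equivalent.
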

\begin{proof} 
	Since $Y$ is a closed set of $X$, by Tietze extension theorem we can extend continuously $\phi_0$ to the whole space $X.$ Then there is $\tilde{\phi}_0\in C^{0}(X,\mathbb{R})$ such that $I_{\tilde{\phi}_0}(f)\cap Y\supseteq I_{\phi_0}(f|_{Y})\neq\emptyset.$
	Take $x_0\in   I_{\tilde{\phi}_0}(f)\cap Y$.
	On one hand, we show $\mathcal C^*$  is  dense in $C^{0}(X,\mathbb{R})\setminus \mathcal C^*.$ Fix $\phi\in C^{0}(X,\mathbb{R})\setminus \mathcal C^*.$ Then $I_{\phi}(f)\cap Y=\emptyset.$ Take $\phi_n=\frac 1n \tilde{\phi}_0 + \phi,\,n\geq 1.$ Then $\phi_n$ converges to $\phi$ in sup norm. By construction, it is easy to check that $x_0\in I _{\phi_n}(f) \cap Y,\,n\geq 1.$ That is, $\phi_n\in \mathcal C^*$.
	
	On the other hand, we prove that $\mathcal C^*$ is open. Fix $\phi\in \mathcal C^*$ and   $y\in I_{\phi}(f)\cap Y.$  Then   there must exist two different invariant measures $\mu_1,\mu_2\in V_f (y)$ such that $ \int \phi d\mu_1< \int \phi d\mu_2.$  By continuity of sup norm, we can take an open neighborhood of $\phi$, denoted by $U (\phi)$, such that for any $\varphi\in U (\phi),$  $\int \varphi d\mu_1< \int \varphi d\mu_2.$  Notice that $\mu_1,\mu_2\in V_f (y)$.  Thus  $y\in I_{\varphi}(f)\cap Y$ for any $\varphi\in U (\phi).$ This implies $\varphi\in \mathcal C^*,$ for any $\varphi\in U (\phi).$ 
\end{proof}

\begin{Thm}\label{thm-1}
	Suppose that $f$ is a homeomorphism from $X$ onto $X$, $f$ is Lipschitz, $X$ is not periodic, $(X,f)$ has a sequence of nondecreasing $f$-invariant compact subsets $\{X_{n} \subseteq X:n \in \mathbb{N^{+}} \}$ such that $\overline{\bigcup_{n\geq 1}X_{n}}=X$, $({X_{n}},f|_{X_{n}})$ has exponential shadowing property and is transitive for any $n \in \mathbb{N^{+}}$, and $\mathrm{Per}(f|_{X_{1}})\neq \emptyset$. Given $\varphi \in C^{0}(X,\mathbb{R})$ with $I_{\varphi}(f) \cap X_{n_0}\neq\emptyset$ for some $n_0 \in \mathbb{N^{+}}$.
	Then for any $a,b \in \mathrm{Int}(L_\varphi|_{X_{n_0}})$ with $a\leq b,$ and any non-empty open set $U\subseteq X,$ the following three sets are all strongly distributional chaotic:
	\begin{description}
		\item[(a)] $(Rec_{Ban}^{up}\setminus Rec^{up})\cap Trans\cap I_{\varphi}[a,b]\cap U\cap \{x\in X: S_\mu\subseteq X_{n_0} \text{ for any }\mu\in V_f(x) \}$,
		\item[(b)] $(Rec^{up}\setminus Rec^{low})\cap Trans\cap I_{\varphi}[a,b]\cap U\cap \{x\in X: \exists\ \mu_1,\mu_{2}\in V_f(x)\text{ s.t. }S_{\mu_1}\subseteq X_{n_0}, S_{\mu_{2}}=X \}$,
		\item[(c)] $(Rec_{Ban}^{up}\setminus Rec^{up})\cap Trans\cap QR(f)\cap U\cap \{x\in X: S_\mu\subseteq X_{n_0} \text{ for any }\mu\in V_f(x) \}$,
	\end{description}
	where $L_\varphi|_{X_{n_0}}=\left[\inf_{\mu\in \mathcal M_{f|_{X_{n_0}}}(X_{n_0})}\int\varphi d\mu,  \,  \sup_{\mu\in \mathcal M_{f|_{X_{n_0}}}(X_{n_0})}\int\varphi d\mu\right].$
	Moreover, for any $z\in \bigcup_{n\geq 1}X_{n}$ and any $\varepsilon>0$, the set $U$ can be replaced by local unstable manifold $W^{u}_{\varepsilon}(z).$
	Moreover, for any $m\geq 1,$ the functions with  $I_{\varphi}(f) \cap X_{m}\neq\emptyset$ are open and dense in $C^{0}(X,\mathbb{R})$. 
\end{Thm}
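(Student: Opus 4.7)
The strategy is to reduce each of (a)--(c) to an application of Theorem \ref{maintheorem-3}(2) with an appropriately chosen compact connected set $K\subseteq\mathcal{M}_f(X)$, and then to read off the recurrence and level-set memberships from the identity $V_f(x)=K$ using Propositions \ref{prop1}--\ref{prop3} together with the inclusion $Trans\subseteq Rec_{Ban}^{up}$ given by Proposition \ref{Trans-BR}. The hypothesis $I_\varphi(f)\cap X_{n_0}\neq\emptyset$ forces $X_{n_0}$ to be non-periodic, so $(X_{n_0},f|_{X_{n_0}})$ itself satisfies the hypotheses of Lemmas \ref{BF} and \ref{generic distal} (take the trivial tower $X_n:=X_{n_0}$); consequently $\mathcal{M}^e_{f|_{X_{n_0}}}(X_{n_0})$ contains a dense set of ergodic measures whose supports are minimal and non-periodic, each carrying the distal pairs $p,f^j(q)$, $0\leq j\leq Q-1$, $p,q\in X_{n_0}$ required by Theorem \ref{maintheorem-3}(2).

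\textbf{Construction of $K$.} For case (a) I pick ergodic $\alpha_1,\alpha_2\in\mathcal{M}^e_{f|_{X_{n_0}}}(X_{n_0})$ with minimal non-periodic supports and $\int\varphi\,d\alpha_i\in[a,b]$ (available by the above density together with $a,b\in\mathrm{Int}\,L_\varphi|_{X_{n_0}}$), and auxiliary measures $\nu_a,\nu_b\in\mathcal{M}_{f|_{X_{n_0}}}(X_{n_0})$ with $\int\varphi\,d\nu_a=a$, $\int\varphi\,d\nu_b=b$, existing because $a,b$ lie in the interior of $L_\varphi|_{X_{n_0}}$. Set $K:=\mathrm{conv}\{\nu_a,\nu_b,\tfrac12(\alpha_1+\alpha_2)\}$, a compact convex connected subset of $\mathcal{M}_{f|_{X_{n_0}}}(X_{n_0})$ with $\min_K\int\varphi=a$ and $\max_K\int\varphi=b$, containing the decomposable measure $\mu_0=\tfrac12(\alpha_1+\alpha_2)=\theta\mu_1+(1-\theta)\mu_2$ (with $\theta=\tfrac12$, $\mu_i=\alpha_i$, $l_0=n_0$) whose ergodic components carry the distal pairs. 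For case (b) I also incorporate a full-support $\nu^*\in\mathcal{M}_f(X)$ with $\nu^*(\bigcup_n X_n)=1$ produced by Proposition \ref{BG}, convex-combining if necessary so that $\min_K\int\varphi=a$ and $\max_K\int\varphi=b$ persist. For case (c) I take $K=\{\alpha_1\}$, a singleton, with trivial decomposition $\alpha_1=1\cdot\alpha_1+0\cdot\alpha_1$. Each such $K$ satisfies all hypotheses of Theorem \ref{maintheorem-3}(2), which then delivers, for every $\alpha\in\mathcal{A}$, an uncountable $\alpha$-DC1-scrambled set inside $G_K\cap U\cap Trans$ (and inside $G_K\cap W^u_\varepsilon(z)\cap Trans$ for the ``moreover'' clause).

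\textbf{Checking the fractal memberships.} To separate $Rec^{up}$ from $Rec_{Ban}^{up}$ in (a), (c) and $Rec^{low}$ from $Rec^{up}$ in (b), I force the chaotic points outside the closed invariant set $F:=\overline{\bigcup_{\mu\in K,\,S_\mu\subseteq X_{n_0}}S_\mu}\subseteq X_{n_0}$. Since each $S_{\alpha_i}$ is minimal and non-periodic in $X$, and $X$ is not minimal (because $\mathrm{Per}(f|_{X_1})\neq\emptyset$ coexists with $X$ being non-periodic), $F\subsetneq X$ is nowhere dense. The constructions in Theorems \ref{maintheorem-2}(2) and \ref{maintheorem-3}(2) place the chaotic points inside a small ball $B(z,\varepsilon)\subseteq U$ for a freely chosen $z\in\bigcup_n X_n$, so I pick $z\in(U\setminus F)\cap\bigcup_n X_n$ (nonempty since $\bigcup_n X_n$ is dense and $F$ is nowhere dense) and shrink $\varepsilon$ so $B(z,\varepsilon)\cap F=\emptyset$; the $W^u_\varepsilon(z)$ variant is treated identically. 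With this in place, $V_f(x)=K$ combined with Proposition \ref{Trans-BR} gives $x\in Rec_{Ban}^{up}\cap I_\varphi[a,b]$ (the level-set membership follows because $\liminf\tfrac1n\sum_{i<n}\varphi(f^ix)=\min_K\int\varphi=a$ and $\limsup=b$); in (a) and (c), $C_x=F\not\ni x$ yields $x\notin Rec^{up}$ by Proposition \ref{prop2}, and in (c) the singleton $V_f(x)=\{\alpha_1\}$ forces $x\in QR(f)$; in (b), $C_x\supseteq S_{\nu^*}=X$ gives $x\in Rec^{up}$, while $\alpha_1\in V_f(x)$ with $x\notin S_{\alpha_1}$ yields $x\notin Rec^{low}$ by Proposition \ref{prop1}.

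\textbf{Density of admissible $\varphi$ and main obstacle.} The last clause follows from Proposition \ref{proposition-AD} applied with $Y=X_m$, once some $\phi_0\in C^0(X_m,\mathbb{R})$ with $I_{\phi_0}(f|_{X_m})\neq\emptyset$ is exhibited. Whenever $X_m$ carries at least two distinct ergodic invariant measures $\mu_1\neq\mu_2$ (which holds as soon as $X_m$ is not a single periodic orbit, by the shadowing-plus-transitivity of $(X_m,f|_{X_m})$ and the periodic-decomposition and specification machinery of Section \ref{section-3}), one picks $\phi_0\in C^0(X_m,\mathbb{R})$ with $\int\phi_0\,d\mu_1\neq\int\phi_0\,d\mu_2$ and applies Theorem \ref{maintheorem-3}(2) inside $X_m$ with $K$ the segment $[\mu_1,\mu_2]$ to produce a point in $I_{\phi_0}(f|_{X_m})$. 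The main obstacle, I expect, is the orchestration in the previous paragraph: simultaneously securing $V_f(x)=K$ as an equality (not mere inclusion), $x\in Trans$, and $x\notin F$, while preserving the strong distributional-chaos control and the local-unstable-manifold inclusion. Everything ultimately hinges on the minimal non-periodic nature of the generating supports $S_{\alpha_i}$, itself a direct consequence of Lemma \ref{BF}.
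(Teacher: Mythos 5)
Your overall route coincides with the paper's: build a compact connected $K$ satisfying the hypotheses of Theorem \ref{maintheorem-3}(2) from ergodic measures with minimal non-periodic supports (Lemma \ref{BF}, Lemma \ref{generic distal}) together with a full-support measure (Proposition \ref{BG}), obtain strong distributional chaos in $G_K\cap U\cap Trans$, read off the memberships from $V_f(x)=K$ via Propositions \ref{prop1}--\ref{prop3} and Proposition \ref{Trans-BR}, and finish the density clause with Proposition \ref{proposition-AD}. The gap is in your construction of $K$ for cases (a) and (b). First, your auxiliary measures $\nu_a,\nu_b$ are chosen only so that $\int\varphi\,d\nu_a=a$ and $\int\varphi\,d\nu_b=b$; their supports are completely uncontrolled, and since they belong to $K$, the measure center $C_x=\overline{\bigcup_{\mu\in K}S_\mu}$ contains $\overline{S_{\nu_a}\cup S_{\nu_b}}$. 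Your set $F$ therefore need not be a proper, let alone nowhere dense, subset of $X$ (nothing prevents $X_{n_0}=X$ and $S_{\nu_a}$ dense); your justification that $F\subsetneq X$ only invokes the minimality of the $S_{\alpha_i}$. If $C_x=X$ then $x\in Rec^{up}$ by Proposition \ref{prop2} and conclusion (a) fails outright; moreover, your patch of placing the scrambled set in a ball disjoint from $F$ is unavailable in the $W^u_\varepsilon(z)$ variant, where $z$ is prescribed and may well lie in $F$. Second, when $a=b$ (which the theorem allows in (a)) your recipe requires ergodic minimal-support measures with $\int\varphi\,d\alpha_i$ exactly equal to $a$; the density statement of Lemma \ref{BF} only gives approximation, so this step fails.

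The paper's construction repairs both defects at once: choose the ergodic minimal-support measures so that $\int\varphi\,d\mu_1<a\leq b<\int\varphi\,d\mu_2$ and realize the endpoint values as convex combinations $\nu_1=\theta_1\mu_1+(1-\theta_1)\mu_2$ and $\nu_2=\theta_2\mu_1+(1-\theta_2)\mu_2$ with $\int\varphi\,d\nu_1=a$, $\int\varphi\,d\nu_2=b$, then set $K_1=\mathrm{conv}\{\nu_1,\nu_2\}$ (and $K_2=\mathrm{conv}\{\nu_2,\nu_3\}$ with $\nu_3$ a full-support combination, for (b)). Every measure in $K_1$ is then supported on $S_{\mu_1}\cup S_{\mu_2}$, a union of two minimal sets, so $C_x=S_{\mu_1}\cup S_{\mu_2}$ is a proper subset of $\omega(f,x)=X$ for transitive $x$ and $x\notin Rec^{up}$ follows from Proposition \ref{prop2} with no constraint on the location of $x$ --- which is exactly what the unstable-manifold variant needs --- while the case $a=b$ comes for free ($\nu_1=\nu_2$, $K_1$ a singleton, which also yields (c)). Your singleton for (c), your treatment of (b), and your argument for the open-and-dense clause are otherwise consistent with the paper, the last modulo the fact that the paper quotes the irregular-set entropy result of \cite{DOT} rather than re-running Theorem \ref{maintheorem-3}(2) inside $X_m$.
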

\begin{proof}
	For any $ \mu_1,\mu_2\in \mathcal M_f(X)$, we define $$\mathrm{conv}\{\mu_1,\mu_2\}=\{\theta\mu_1+(1-\theta)\mu_2:\ \theta\in[0,1]\}.$$ 
	
	Since $I_{\varphi}(f) \cap X_{n_0}\neq\emptyset$, one has  $\mathrm{Int}(L_\varphi|_{X_{n_0}})\neq\emptyset$, then there exist $\lambda_1,\lambda_2\in\mathcal M_{f|_{X_{n_0}}}(X_{n_0})$ such that $\int\varphi d\lambda_1<a<b< \int\varphi d\lambda_2$. By Lemma \ref{BF} and Proposition \ref{BG}, we can choose $\mu_1,\mu_2\in\mathcal M_{f|_{X_{n_0}}}^{e}(X_{n_0})$ and $\mu_3,\mu_4\in\cM_{f}(X)$ satisfying that \\
	(1) $S_{\mu_{1}}$ and $S_{\mu_{2}}$ are minimal and not periodic;\\
	(2) $S_{\mu_3}=S_{\mu_4}=X$, $\mu_3(\bigcup_{n\geq 1}X_{n})=\mu_4(\bigcup_{n\geq 1}X_{n})=1$; \\ 
	(3) $\int\varphi d\mu_1<a<b< \int\varphi d\mu_2;$ \\
	(4) $\int\varphi d\mu_3<a<b< \int\varphi d\mu_4.$\\ 
	Then for any $Q \in \mathbb{N^{+}}$, there exist $p_{i},q_{i}\in G_{\mu_{i}}$ for any $i \in \{1,2\}$ such that $p_{i},f^{j}(q_{i})$ is distal pair for any $0\leq j\leq Q-1$ and $p_i,q_i\in X_{n_0}$ by Lemma \ref{generic distal}. Now, we can choose proper $\theta_1, \theta_2,\theta_3\in(0,1)$ such that
	$$\theta_1\int\varphi d\mu_1+(1-\theta_1)\int\varphi d\mu_2 = \theta_3\int\varphi d\mu_3+(1-\theta_3)\int\varphi d\mu_4=a,$$ 
	and $$\theta_2\int\varphi d\mu_1+(1-\theta_2)\int\varphi d\mu_2=b.$$ 
	Set $\nu_1=\theta_1\mu_1+(1-\theta_1)\mu_2,$ $\nu_2=\theta_2\mu_1+(1-\theta_2)\mu_2,$ $\nu_3=\theta_3\mu_3+(1-\theta_3)\mu_4$. Obviously, $S_{\mu_1}\cup S_{\mu_2}\neq X$ since $S_{\mu_1}, S_{\mu_2}$ are minimal. Let
	\begin{align*}
		K_1\ &:=\ \ \mathrm{conv}\{\nu_1,\nu_2\},\\
		K_2\ &:=\ \ \mathrm{conv}\{\nu_2,\nu_3\}.
	\end{align*}
	One can observe that $G_{K_i}\subseteq I_{\varphi}[a,b]$, $i\in\{1,2\}$. 
	Applying Theorem \ref{maintheorem-3}(2) to $K_i$, $i\in\{1,2\}$, for any open set $U$, $G_{K_i}\cap U\cap Trans$ is strongly distributional chaotic. By Proposition \ref{Trans-BR}, we have $G_{K_i}\subseteq Rec_{Ban}^{up}$ for $i\in\{1,2\}.$ By Proposition \ref{prop1} and Proposition \ref{prop2}, we have $G_{K_1}\subseteq Rec_{Ban}^{up}\setminus Rec^{up}$ and $G_{K_2}\subseteq Rec^{up}\setminus Rec^{low}.$ Note that $S_{\nu_1}=S_{\nu_2}=S_{\mu_1}\cup S_{\mu_2}\subseteq X_{n_0},$ and $S_{\nu_3}=X.$ Thus $\{x\in M: S_\mu\subseteq X_{n_0} \text{ for any }\mu\in V_f(x) \}\cap(Rec_{Ban}^{up}\setminus Rec^{up})\cap Trans\cap I_{\varphi}[a,b]\cap U$,
	and $\{x\in M: \exists\ \mu_1,\mu_{2}\in V_f(x)\text{ s.t. }S_{\mu_1}\subseteq X_{n_0}, S_{\mu_{2}}=X \}\cap(Rec^{up}\setminus Rec^{low})\cap Trans\cap I_{\varphi}[a,b]\cap U$ are all strongly distributional chaotic.
	
	Note that $G_{K_1}\subseteq (Rec_{Ban}^{up}\setminus Rec^{up})\cap QR(f)$ when $a=b.$ So $\{x\in M: S_\mu\subseteq X_{n_0} \text{ for any }\mu\in V_f(x) \}\cap(Rec_{Ban}^{up}\setminus Rec^{up})\cap Trans\cap QR(f)\cap U$ is strongly distributional chaotic.
	
	Now we prove that for any $m \in \mathbb{N^{+}}$, the functions with  $I_{\varphi}(f) \cap X_{m}\neq\emptyset$  are open and dense in $C^{0}(X,\mathbb{R})$. Since $\overline{\bigcup_{n\geq 1}X_{n}}=X$, and $X$ is not periodic, we may assume that $X_{n}$ is not periodic for any $n\in\mathbb{N^{+}}$. Moreover, $(X_n,f|_{X_n})$ is transitive and has the shadowing property, then by Lemma \ref{BD} it has positive topological entropy. By \cite[Theorem 1.5]{DOT} the set of irregular points $\cup_{\phi\in C^{0}(X_m,\mathbb{R})} I_\phi(f|_{X_m})$ is not empty and carries same topological entropy as $f|_{X_m}$, in particular  there exists some $\phi\in C^{0}(X_m,\mathbb{R})$ with $I_{\phi}(f|_{X_m})\neq \emptyset.$ By Proposition \ref{proposition-AD}, the proof is complete.
\end{proof}

Let $K=M_{f|_{X_{n_0}}}(X_{n_0})$ in Theorem \ref{thm-1}. Then for any $\varphi \in C^{0}(X,\mathbb{R})$ with $I_{\varphi}(f) \cap X_{n_0}\neq\emptyset,$ we have $\inf_{\mu\in \mathcal M_{f|_{X_{n_0}}}(X_{n_0})}\int \varphi d\mu<\sup_{\mu\in \mathcal M_{f|_{X_{n_0}}}(X_{n_0})}\int \varphi d\mu.$ So we have the following.
\begin{Cor}\label{coro-A}
	Suppose that $f$ is a homeomorphism from $X$ onto $X$, $f$ is Lipschitz, $X$ is not periodic, $(X,f)$ has a sequence of nondecreasing $f$-invariant compact subsets $\{X_{n} \subseteq X:n \in \mathbb{N^{+}} \}$ such that $\overline{\bigcup_{n\geq 1}X_{n}}=X$, $({X_{n}},f|_{X_{n}})$ has exponential shadowing property and is transitive for any $n \in \mathbb{N^{+}}$, and $\mathrm{Per}(f|_{X_{1}})\neq \emptyset$. 
	Then there exist a non-empty compact convex subset $K$ of $\cM_{f}(X),$ an open and dense subset $\mathcal{R}$ of $C^{0}(X,\mathbb{R})$ and $n_0\in\mathbb{N^{+}}$ such that for any $\varphi\in\mathcal{R},$  $\inf_{\mu\in K}\int \varphi d\mu<\sup_{\mu\in K}\int \varphi d\mu,$ and for any $\inf_{\mu\in K}\int \varphi d\mu<a\leq b<\sup_{\mu\in K}\int \varphi d\mu,$ and any non-empty open set $U\subseteq X,$ the following three sets are all strongly distributional chaotic:
	\begin{description}
		\item[(a)] $(Rec_{Ban}^{up}\setminus Rec^{up})\cap Trans\cap I_{\varphi}[a,b]\cap U\cap \{x\in X: S_\mu\subseteq X_{n_0} \text{ for any }\mu\in V_f(x) \}$,
		\item[(b)] $(Rec^{up}\setminus Rec^{low})\cap Trans\cap I_{\varphi}[a,b]\cap U\cap \{x\in X: \exists\ \mu_1,\mu_{2}\in V_f(x)\text{ s.t. }S_{\mu_1}\subseteq X_{n_0}, S_{\mu_{2}}=X \}$,
		\item[(c)] $(Rec_{Ban}^{up}\setminus Rec^{up})\cap Trans\cap QR(f)\cap U\cap \{x\in X: S_\mu\subseteq X_{n_0} \text{ for any }\mu\in V_f(x) \}$,
	\end{description}
	Moreover, for any $z\in \bigcup_{n\geq 1}X_{n}$ and any $\varepsilon>0$, the set $U$ can be replaced by local unstable manifold $W^{u}_{\varepsilon}(z).$
\end{Cor}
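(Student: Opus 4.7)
The plan is to obtain Corollary \ref{coro-A} as an immediate specialization of Theorem \ref{thm-1}; no substantively new argument should be required, only the correct choice of data. I would fix any index $n_0 \in \mathbb{N}^+$ (for concreteness, $n_0 = 1$), set
\[
K := \mathcal{M}_{f|_{X_{n_0}}}(X_{n_0}),
\]
viewed as a subset of $\mathcal{M}_f(X)$ via the inclusion $X_{n_0} \hookrightarrow X$, and let
\[
\mathcal{R} := \{\varphi \in C^0(X,\mathbb{R}) : I_\varphi(f) \cap X_{n_0} \neq \emptyset\}.
\]
Then $K$ is non-empty (Krylov–Bogolyubov on $(X_{n_0},f|_{X_{n_0}})$), compact, and convex in $\mathcal{M}_f(X)$, and the last clause of Theorem \ref{thm-1} asserts that $\mathcal{R}$ is open and dense in $C^0(X,\mathbb{R})$.

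The only point that demands verification is the strict inequality $\inf_{\mu \in K}\int\varphi d\mu < \sup_{\mu \in K}\int\varphi d\mu$ for every $\varphi \in \mathcal{R}$. This is precisely the observation recorded in the remark just before the corollary: if $x \in I_\varphi(f) \cap X_{n_0}$, then $V_{f|_{X_{n_0}}}(x) \subseteq K$ must contain at least two distinct invariant measures giving different values of $\int\varphi$ (otherwise the Birkhoff averages of $\varphi$ along the orbit of $x$ would converge), so the interval of integrals of $\varphi$ over $K$ is non-degenerate. Equivalently, $\mathrm{Int}(L_\varphi|_{X_{n_0}}) \neq \emptyset$, and in fact this interior coincides with the open interval $(\inf_K\int\varphi d\mu,\ \sup_K\int\varphi d\mu)$.

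With these identifications in hand, any pair $a \leq b$ satisfying $\inf_K\int\varphi d\mu < a \leq b < \sup_K\int\varphi d\mu$ automatically lies in $\mathrm{Int}(L_\varphi|_{X_{n_0}})$, so Theorem \ref{thm-1} applies verbatim to $\varphi$, $a$, $b$, the open set $U$, and (in the ``moreover'' case) to the local unstable manifold $W^u_\varepsilon(z)$. The conclusions about the three sets (a), (b), (c) being strongly distributional chaotic are then inherited directly from Theorem \ref{thm-1}. The main (minor) ``obstacle'' is therefore not a proof difficulty at all, but rather the bookkeeping point that the abstract condition ``$a,b \in \mathrm{Int}(L_\varphi|_{X_{n_0}})$'' appearing in Theorem \ref{thm-1} can be repackaged as ``$a,b$ lie strictly between the extrema of $\int\varphi d\mu$ over the fixed compact convex set $K$,'' which is the form used in Corollary \ref{coro-A}.
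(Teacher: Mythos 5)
Your proposal is correct and is essentially identical to the paper's own argument: the paper also takes $K=\mathcal{M}_{f|_{X_{n_0}}}(X_{n_0})$, observes that $I_\varphi(f)\cap X_{n_0}\neq\emptyset$ forces $\inf_{\mu\in K}\int\varphi\,d\mu<\sup_{\mu\in K}\int\varphi\,d\mu$ (equivalently $\mathrm{Int}(L_\varphi|_{X_{n_0}})\neq\emptyset$), and then invokes Theorem \ref{thm-1} together with its final clause for the openness and density of $\mathcal{R}$. No gaps.
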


From the proof of Theorem \ref{thm-1}, it is easy to see that if we replace $I_{\varphi}(f) \cap X_{n_0}\neq\emptyset$ by $\mathrm{Int}(L_\varphi|_{X_{n_0}})\neq \emptyset,$ the result also holds. So we have the following.
\begin{Cor}\label{Coro-1}
	Suppose that $f$ is a homeomorphism from $X$ onto $X$, $f$ is Lipschitz, $X$ is not periodic, $(X,f)$ has a sequence of nondecreasing $f$-invariant compact subsets $\{X_{n} \subseteq X:n \in \mathbb{N^{+}} \}$ such that $\overline{\bigcup_{n\geq 1}X_{n}}=X$, $({X_{n}},f|_{X_{n}})$ has exponential shadowing property and is transitive for any $n \in \mathbb{N^{+}}$, and $\mathrm{Per}(f|_{X_{1}})\neq \emptyset$.  If for any $\mu \in \mathcal{M}_{f}(X)$ and for any neighbourhood $G$ of $\mu$ in $\mathcal{M}(X)$, there exist $n\in\mathbb{N^{+}}$ and $\nu \in \mathcal{M}_{f|_{X_n}}(X_n)$ such that $\nu\in G.$ Then for any  $\varphi \in C^{0}(X,\mathbb{R})$ with $I_{\varphi}(f)\neq\emptyset,$ there exists $N\in\mathbb{N^{+}}$ such that for any $n_0\geq N,$ any $a,b \in \mathrm{Int}(L_\varphi|_{X_{n_0}})$ with $a\leq b,$ and any non-empty open set $U\subseteq X,$ the following three sets are all strongly distributional chaotic:
	\begin{description}
		\item[(a)] $(Rec_{Ban}^{up}\setminus Rec^{up})\cap Trans\cap I_{\varphi}[a,b]\cap U\cap \{x\in X: S_\mu\subseteq X_{n_0} \text{ for any }\mu\in V_f(x) \}$,
		\item[(b)] $(Rec^{up}\setminus Rec^{low})\cap Trans\cap I_{\varphi}[a,b]\cap U\cap \{x\in X: \exists\ \mu_1,\mu_{2}\in V_f(x)\text{ s.t. }S_{\mu_1}\subseteq X_{n_0}, S_{\mu_{2}}=X \}$,
		\item[(c)] $(Rec_{Ban}^{up}\setminus Rec^{up})\cap Trans\cap QR(f)\cap U\cap \{x\in X: S_\mu\subseteq X_{n_0} \text{ for any }\mu\in V_f(x) \}$,
	\end{description}
	where $L_\varphi|_{X_{n_0}}=\left[\inf_{\mu\in \mathcal M_{f|_{X_{n_0}}}(X_{n_0})}\int\varphi d\mu,  \,  \sup_{\mu\in \mathcal M_{f|_{X_{n_0}}}(X_{n_0})}\int\varphi d\mu\right].$
	Moreover, for any $z\in \bigcup_{n\geq 1}X_{n}$ and any $\varepsilon>0$, the set $U$ can be replaced by local unstable manifold $W^{u}_{\varepsilon}(z).$
	Moreover, the functions with  $I_{\varphi}(f)\neq\emptyset$ are open and dense in $C^{0}(X,\mathbb{R})$. 
\end{Cor}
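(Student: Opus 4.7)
The plan is to reduce the corollary directly to the variant of Theorem \ref{thm-1} flagged in the paragraph immediately preceding the statement, namely the version in which the hypothesis $I_\varphi(f)\cap X_{n_0}\neq\emptyset$ is replaced by $\mathrm{Int}(L_\varphi|_{X_{n_0}})\neq\emptyset$. So the only real new task is to produce, from the weaker assumption $I_\varphi(f)\neq\emptyset$, some threshold $N$ such that $\mathrm{Int}(L_\varphi|_{X_{n_0}})\neq\emptyset$ holds for every $n_0\geq N$.

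First I pick any $x\in I_\varphi(f)$. Since the ergodic average of $\varphi$ along $x$ diverges, $V_f(x)$ is a non-empty compact connected subset of $\mathcal{M}_f(X)$ containing two measures $\mu_1,\mu_2$ with $\int\varphi\,d\mu_1<\int\varphi\,d\mu_2$. Now I invoke the extra hypothesis of the corollary: for each $i\in\{1,2\}$ and each neighbourhood $G_i$ of $\mu_i$ in $\mathcal{M}(X)$, there exist $n_i\in\mathbb{N^{+}}$ and $\nu_i\in \mathcal{M}_{f|_{X_{n_i}}}(X_{n_i})\cap G_i$. Using the continuity of the map $\nu\mapsto\int\varphi\,d\nu$ on $\mathcal{M}(X)$, I can shrink $G_1,G_2$ enough to guarantee $\int\varphi\,d\nu_1<\int\varphi\,d\nu_2$. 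Because the family $\{X_n\}$ is nondecreasing and $f$-invariant, $\mathcal{M}_{f|_{X_{n_i}}}(X_{n_i})\subseteq \mathcal{M}_{f|_{X_{n_0}}}(X_{n_0})$ whenever $n_0\geq n_i$; setting $N:=\max\{n_1,n_2\}$, both $\nu_1,\nu_2$ belong to $\mathcal{M}_{f|_{X_{n_0}}}(X_{n_0})$ for every $n_0\geq N$, so that
\[
\mathrm{Int}(L_\varphi|_{X_{n_0}})\supseteq\left(\int\varphi\,d\nu_1,\int\varphi\,d\nu_2\right)\neq\emptyset.
\]
For any such $n_0$ and any $a,b\in\mathrm{Int}(L_\varphi|_{X_{n_0}})$ with $a\leq b$, the remarked variant of Theorem \ref{thm-1} then delivers the strongly distributional chaotic conclusion for the three sets (a), (b), (c), including the refinement in which $U$ is replaced by a local unstable manifold $W^{u}_{\varepsilon}(z)$.

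It remains to establish that $\{\varphi\in C^{0}(X,\mathbb{R}):I_\varphi(f)\neq\emptyset\}$ is open and dense in $C^{0}(X,\mathbb{R})$. By the last assertion of Theorem \ref{thm-1}, for each $m\geq 1$ the set $\{\varphi: I_\varphi(f)\cap X_m\neq\emptyset\}$ is open and dense; in particular it is non-empty, which produces some $\phi_0\in C^{0}(X,\mathbb{R})$ with $I_{\phi_0}(f)\neq\emptyset$. Applying Proposition \ref{proposition-AD} with $Y:=X$ to this $\phi_0$ then shows that $\{\varphi\in C^{0}(X,\mathbb{R}):I_\varphi(f)\neq\emptyset\}$ is itself open and dense, completing the proof. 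The only non-mechanical step is the continuity-and-approximation argument used to put the two approximants $\nu_1,\nu_2$ into a single $\mathcal{M}_{f|_{X_{n_0}}}(X_{n_0})$ while preserving the strict inequality between their $\varphi$-integrals; everything else is a direct invocation of Theorem \ref{thm-1} and Proposition \ref{proposition-AD}.
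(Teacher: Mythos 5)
Your proposal is correct and follows essentially the same route as the paper: both arguments extract two invariant measures with distinct $\varphi$-integrals from $I_\varphi(f)\neq\emptyset$, use the approximation hypothesis together with the monotonicity of $\{X_n\}$ to place approximating measures $\nu_1,\nu_2$ with $\int\varphi\,d\nu_1<\int\varphi\,d\nu_2$ inside a single $\mathcal{M}_{f|_{X_N}}(X_N)$, conclude $\mathrm{Int}(L_\varphi|_{X_{n_0}})\neq\emptyset$ for all $n_0\geq N$, and then invoke the $\mathrm{Int}(L_\varphi|_{X_{n_0}})\neq\emptyset$ variant of Theorem \ref{thm-1}. Your explicit treatment of the final open-and-dense claim via Proposition \ref{proposition-AD} with $Y=X$ is a reasonable way to make precise what the paper leaves implicit.
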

\begin{proof}
	Since $I_{\varphi}(f)\neq\emptyset,$ one has  $\mathrm{Int}(L_\varphi)\neq\emptyset$, then there exist $\lambda_1,\lambda_2\in\mathcal M_{f}(X)$ and $a,b\in\mathbb{R}$ such that $\int\varphi d\lambda_1<a<b< \int\varphi d\lambda_2.$ Then there exist $N\in\mathbb{N^{+}}$ and $\nu_1,\nu_2 \in \mathcal{M}_{f|_{X_N}}(X_N)$ such that $\int\varphi d\nu_1<\frac{a+b}{2}< \int\varphi d\nu_2.$ This implies $\mathrm{Int}(L_\varphi|_{X_{n_0}})\neq\emptyset$ for any $n_0\geq N.$ So by Theorem \ref{thm-1} we complete tha proof of Corollary \ref{Coro-1}.
\end{proof}

\begin{Thm}\label{thm-5}
	Suppose that $f$ is a homeomorphism from $X$ onto $X$, $f$ is Lipschitz, $X$ is not periodic, $(X,f)$ has a sequence of nondecreasing $f$-invariant compact subsets $\{X_{n} \subseteq X:n \in \mathbb{N^{+}} \}$ such that $\overline{\bigcup_{n\geq 1}X_{n}}=X$, $({X_{n}},f|_{X_{n}})$ is transitive, expansive, and has exponential shadowing property for any $n \in \mathbb{N^{+}}$, and $\mathrm{Per}(f|_{X_{1}})\neq \emptyset$. Given $\varphi \in C^{0}(X,\mathbb{R})$ with $I_{\varphi}(f) \cap X_{n_0}\neq\emptyset$ for some $n_0 \in \mathbb{N^{+}}$.
	Then for any $a,b \in \mathrm{Int}(L_\varphi|_{X_{n_0}})$ with $a\leq b,$ and any non-empty open set $U\subseteq X,$ the following set is strongly distributional chaotic:
	$$(Rec^{up}\setminus Rec^{low})\cap Trans\cap I_{\varphi}[a,b]\cap U\cap \{x\in X: \text{for any }\mu\in V_f(x) \text{ there is } n\in\mathbb{N^{+}} \text{ such that }S_\mu\subseteq X_{n} \}.$$
	Moreover, for any $z\in \bigcup_{n\geq 1}X_{n}$ and any $\varepsilon>0$, the set $U$ can be replaced by local unstable manifold $W^{u}_{\varepsilon}(z).$
	Moreover, for any $m\geq 1,$ the functions with  $I_{\varphi}(f) \cap X_{m}\neq\emptyset$ are open and dense in $C^{0}(X,\mathbb{R})$. 
\end{Thm}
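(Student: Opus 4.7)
The proof will follow the architecture of Theorem \ref{thm-1}, but replace the single full-support measure $\nu_3$ (whose presence in $V_f(x)$ prevented the desired support-constrained inclusion) by a sequence of measures $\eta^{(n)}$, each supported exactly on $X_n$, that weakly converge to a measure in $\mathcal{M}_{f|_{X_{n_0}}}(X_{n_0})$. The goal is to build a compact connected $K\subseteq\mathcal{M}_f(X)$ lying entirely inside $\bigcup_{n\geq 1}\mathcal{M}_{f|_{X_n}}(X_n)$, yet with $\overline{\bigcup_{\mu\in K}S_\mu}=X$, and then appeal to Theorem \ref{maintheorem-3}(2).

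First I will, exactly as in Theorem \ref{thm-1}, invoke Lemma \ref{BF} inside $(X_{n_0},f|_{X_{n_0}})$ and Lemma \ref{generic distal} to choose ergodic $\mu_1,\mu_2\in\mathcal{M}_{f|_{X_{n_0}}}^{e}(X_{n_0})$ with minimal, non-periodic supports, $\int\varphi d\mu_1<a\leq b<\int\varphi d\mu_2$, and distal generic pairs inside $X_{n_0}$. Form $\nu_1=\theta_1\mu_1+(1-\theta_1)\mu_2$ and $\nu_2=\theta_2\mu_1+(1-\theta_2)\mu_2$ with $\int\varphi d\nu_1=a$ and $\int\varphi d\nu_2=b$. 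Next, for every $n\geq n_0$, Proposition \ref{BG} applied to $(X_n,f|_{X_n})$ provides some $\alpha_n^\ast\in\mathcal{M}_{f|_{X_n}}(X_n)$ with $S_{\alpha_n^\ast}=X_n$; since $a\in\mathrm{Int}(L_\varphi|_{X_{n_0}})\subseteq\mathrm{Int}(L_\varphi|_{X_n})$, I may adjust $\alpha_n^\ast$ by a convex combination with ergodic measures whose integrals straddle $a$ to produce $\alpha_n\in\mathcal{M}_{f|_{X_n}}(X_n)$ with $S_{\alpha_n}=X_n$ and $\int\varphi d\alpha_n=a$. Setting $\eta^{(n)}=(1-2^{-n})\nu_1+2^{-n}\alpha_n$ then yields a measure in $\mathcal{M}_{f|_{X_n}}(X_n)$ with $S_{\eta^{(n)}}=X_n$, $\int\varphi d\eta^{(n)}=a$, and $\eta^{(n)}\to\nu_1$ weakly.

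Then I will set $T_n=\mathrm{conv}\{\nu_1,\nu_2,\eta^{(n)}\}\subseteq\mathcal{M}_{f|_{X_n}}(X_n)$ and $K=\overline{\bigcup_{n\geq n_0}T_n}$. Because $T_n$ Hausdorff-converges to $\mathrm{conv}\{\nu_1,\nu_2\}\subseteq\mathcal{M}_{f|_{X_{n_0}}}(X_{n_0})$, the set $K$ is compact, is connected (each $T_n$ contains $\nu_1$), lies in $\overline{\{\mu\in\mathcal{M}_f(X):\mu(\bigcup_n X_n)=1\}}$, and every $\mu\in K$ sits in some $\mathcal{M}_{f|_{X_n}}(X_n)$. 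Applying Theorem \ref{maintheorem-3}(2) with this $K$ and distinguished measure $\nu_1=\theta_1\mu_1+(1-\theta_1)\mu_2\in K$, I obtain that $G_K\cap U\cap Trans$ (and, for $z\in\bigcup_n X_n$ and $\varepsilon>0$, also $G_K\cap W^u_\varepsilon(z)\cap Trans$) is strongly distributional chaotic. Finally I verify $G_K\cap U\cap Trans\subseteq$ (target set): (i) $V_f(x)=K$ and the $\varphi$-integrals over $K$ fill $[a,b]$ with extremes realized at $\nu_1,\nu_2$, so $x\in I_\varphi[a,b]$; (ii) $C_x=\overline{\bigcup_{\mu\in K}S_\mu}=\overline{\bigcup_{n\geq n_0}X_n}=X=\omega(f,x)$, so $x\in Rec^{up}$ by Proposition \ref{prop2}; (iii) $\nu_1\in V_f(x)$ with $S_{\nu_1}\subseteq X_{n_0}\subsetneq X=\omega(f,x)$, so $x\notin Rec^{low}$ by Proposition \ref{prop1}; (iv) every $\mu\in V_f(x)=K$ lies in some $\mathcal{M}_{f|_{X_n}}(X_n)$. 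The density and openness of $\{\varphi\in C^0(X,\mathbb{R}):I_\varphi(f)\cap X_m\neq\emptyset\}$ follows, just as in Theorem \ref{thm-1}, from Lemma \ref{BD}, \cite[Theorem 1.5]{DOT}, and Proposition \ref{proposition-AD} applied to $Y=X_m$.

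The main obstacle is the construction of $K$: one must make $K$ compact connected while ensuring that \emph{every} element, including all weak-$\ast$ accumulation points of the $T_n$'s, lies in some $\mathcal{M}_{f|_{X_n}}(X_n)$ rather than merely in the closure $\overline{\bigcup_n\mathcal{M}_{f|_{X_n}}(X_n)}$. The trick of arranging $\eta^{(n)}\to\nu_1\in\mathcal{M}_{f|_{X_{n_0}}}(X_{n_0})$ is what forces the Hausdorff limit of the $T_n$'s to land inside $\mathcal{M}_{f|_{X_{n_0}}}(X_{n_0})$ instead of escaping $\bigcup_n\mathcal{M}_{f|_{X_n}}(X_n)$, which is the whole point of the change relative to Theorem \ref{thm-1}.
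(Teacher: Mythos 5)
Your proposal is correct, and the final verification steps (the inclusion of $G_K\cap U\cap Trans$ in the target set via Propositions \ref{prop1} and \ref{prop2}) coincide with the paper's. Where you genuinely diverge is in the construction of the connected set $K$ with $\overline{\bigcup_{\mu\in K}S_\mu}=X$ but every element supported in some $X_n$. The paper gets this by invoking expansiveness: it cites \cite[Corollary A]{DT} to obtain density of periodic points and of periodic-orbit measures in each $X_n$, assembles countably many periodic-orbit measures into measures $l_n$ with $\int\varphi\,dl_n=a$ and $\overline{\bigcup_n S_{l_n}}=X$, and then forms the chain $K^a=\{\nu_1\}\cup\bigcup_n\mathrm{conv}\{\omega_n,\omega_{n+1}\}$ with $\omega_n=\theta_n\nu_1+(1-\theta_n)l_n\to\nu_1$, finally setting $K=K^a\cup\mathrm{conv}\{\nu_1,\nu_2\}$. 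You instead take the full-support measures $\alpha_n\in\mathcal M_{f|_{X_n}}(X_n)$ supplied by Proposition \ref{BG} (which needs only shadowing and transitivity), normalize their $\varphi$-integrals to $a$, and let the triangles $T_n=\mathrm{conv}\{\nu_1,\nu_2,\eta^{(n)}\}$ collapse onto the segment $\mathrm{conv}\{\nu_1,\nu_2\}$; your computation that $d\bigl(s\nu_1+t\nu_2+u\eta^{(n)},(s+u)\nu_1+t\nu_2\bigr)=u\,d(\eta^{(n)},\nu_1)$ shows $\bigcup_nT_n$ is already closed, so $K$ is compact, connected, and contained in $\bigcup_n\mathcal M_{f|_{X_n}}(X_n)$ as required by Theorem \ref{maintheorem-3}(2). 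The trade-off: the paper's periodic-orbit construction yields a $K$ whose auxiliary supports are finite unions of periodic orbits (potentially useful elsewhere), while your route is shorter and, notably, never uses the expansiveness hypothesis, so it would prove the theorem under the same hypotheses as Theorem \ref{thm-1}; this is a genuine simplification worth recording.
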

\begin{proof}
	Since $I_{\varphi}(f) \cap X_{n_0}\neq\emptyset$, one has  $\mathrm{Int}(L_\varphi|_{X_{n_0}})\neq\emptyset$, then there exist $\lambda_1,\lambda_2\in\mathcal M_{f|_{X_{n_0}}}(X_{n_0})$ such that $\int\varphi d\lambda_1<a<b< \int\varphi d\lambda_2$. By Lemma \ref{BF}, we can choose $\mu_1,\mu_2\in\mathcal M_{f|_{X_{n_0}}}^{e}(X_{n_0})$ satisfying that \\
	(1) $S_{\mu_{1}}$ and $S_{\mu_{2}}$ are minimal and not periodic,\\
	(2) $\int\varphi d\mu_1<a<b< \int\varphi d\mu_2.$ \\
	Then for any $Q \in \mathbb{N^{+}}$, there exist $p_{i},q_{i}\in G_{\mu_{i}}$ for any $i \in \{1,2\}$ such that $p_{i},f^{j}(q_{i})$ is distal pair for any $0\leq j\leq Q-1$ and $p_i,q_i\in X_{n_0}$ by Lemma \ref{generic distal}. Now, we can choose proper $\theta_1, \theta_2\in(0,1)$ such that
	$$\theta_1\int\varphi d\mu_1+(1-\theta_1)\int\varphi d\mu_2 =a,$$ 
	and $$\theta_2\int\varphi d\mu_1+(1-\theta_2)\int\varphi d\mu_2=b.$$ 
	Set $\nu_1=\theta_1\mu_1+(1-\theta_1)\mu_2,$ $\nu_2=\theta_2\mu_1+(1-\theta_2)\mu_2.$ Then $\int \varphi d\nu_1=a,$ $\int \varphi d\nu_2=b.$  Obviously, $S_{\nu_1}=S_{\nu_2}=S_{\mu_1}\cup S_{\mu_2}\neq X$ since $S_{\mu_1}, S_{\mu_2}$ are minimal.

	Since $({X_{n}},f|_{X_{n}})$ is transitive, expansive, and has exponential shadowing property for any $n \in \mathbb{N^{+}}$, by \cite[Corollary A]{DT}, we have \\
	(3) $\{\mu\in \mathcal M_{f|_{X_{n}}}(X_{n}):S_\mu\text{ is periodic}\}$ is dense in $\mathcal M_{f|_{X_{n}}}(X_{n})$,\\
	(4) $Per(f|_{X_n})$ is dense in $X_n.$ \\
	Since $X_n$ and $\mathcal M_{f|_{X_{n}}}(X_{n})$ are compact metric spaces, by (3) and (4) we can take a countable dense subset $P_n\subseteq Per(f|_{X_n})$ and a countable dense subset $M_n\subseteq \mathcal M_{f|_{X_{n}}}(X_{n}).$ Then $\bigcup_{n=1}^{\infty}(\cup_{\mu\in M_n}S_\mu\cup P_n)$ is a countable dense subset of $\bigcup_{n= 1}^{\infty}X_n$, denoted by $\{x_i\}_{i=1}^{\infty}.$ Moreover, $\bigcup_{n=1}^{\infty}(\cup_{x\in P_n}m_x\cup M_n)$ is a countable and dense subset of $\bigcup_{n=1}^{\infty}\mathcal M_{f|_{X_{n}}}(X_{n})$, where $m_x$ denote the $f$-invariant measure supported on the periodic orbit of $x.$ So $\overline{\{m_i\}_{i=1}^{\infty}}=\overline{\bigcup_{n=1}^{\infty}\mathcal M_{f|_{X_{n}}}(X_{n})}$, where $m_i$ denotes the $f$-invariant measure supported on the periodic orbit of $x_i.$ By weak* topology and the continuity of $\varphi,$ $$\left\{\int \varphi d m_i:i\in\mathbb{N^{+}}\right\}$$ is dense in $\left[\inf_{\mu\in \overline{\bigcup_{n=1}^{\infty}\mathcal M_{f|_{X_{n}}}(X_{n})}}\int\varphi d\mu,  \,  \sup_{\mu\in\overline{\bigcup_{n=1}^{\infty}\mathcal M_{f|_{X_{n}}}(X_{n})}}\int\varphi d\mu\right].$ Let
	$$
	\begin{aligned}
		&K_{1}:=\left\{m : \int \varphi d m>a, m \in\left\{m_{i}\right\}_{n=1}^{\infty}\right\},\\
		&K_{2}:=\left\{m : \int \varphi d m<a, m \in\left\{m_{i}\right\}_{n=1}^{\infty}\right\},
	\end{aligned}
	$$
	and
	$$
	K_{3}:=\left\{m : \int \varphi d m=a, m \in\left\{m_{i}\right\}_{n=1}^{\infty}\right\}.
	$$
	It is easy to see that $K_{1}$ and $K_{2}$ are countable. Remark that $K_{3}$ may be empty, finite or countable. Without loss of generality, we can assume $K_{i}=\left\{m_{j}^{(i)}\right\}_{j=1}^{\infty}, i=1,2,3 .$ Then we can choose suitable $\theta_{j, k} \in(0,1)$ such that $m_{j, k}=\theta_{j, k} m_{j}^{(1)}+\left(1-\theta_{j, k}\right) m_{k}^{(2)}$ satisfies $\int \varphi d m_{j, k}=a .$ For any $n \geq 1$, let
	$$
	l_{n}=\frac{\sum_{j+k=n} m_{j, k}+m_{n}^{(3)}}{n}
	$$
	then $\int \varphi d l_{n}=a.$ Remark that every $S_{l_{n}}$ is composed of finite periodic orbits and $\bigcup_{n = 1}^{\infty} S_{l_{n}}=\left\{x_{i}\right\}_{n=1}^{\infty}.$ 
	
	Take an increasing sequence of $\left\{\theta_{n} : \theta_{n} \in(0,1)\right\}_{n=1}^{\infty}$ with $\lim\limits_{n\to\infty}\theta_{n}=1.$ Let $\omega_{n}=\theta_{n} \nu_1+\left(1-\theta_{n}\right) l_{n}.$  Then $\int \phi d \omega_n=a$ for any $n\in\mathbb{N^{+}}.$ Remark that $S_{\omega_{n}}=S_{\nu_1} \cup S_{l_{n}}.$ 
	Now we consider
	$$
	K^a=\{\nu_1\} \cup \bigcup_{n \geq 1}\left\{\tau \omega_n+(1-\tau) \omega_{n+1} : \tau \in[0,1]\right\}.
	$$
	Then $K^a$ is a nonempty compact connected subset of $\bigcup_{n=1}^{\infty}\mathcal M_{f|_{X_{n}}}(X_{n})$ because $\omega_n \rightarrow \nu_1$ in weak* topology. 
	Let $$K=K^a\cup\mathrm{conv}\{\nu_1,\nu_2\}.$$
	Then $K$ is a nonempty compact connected subset of $\bigcup_{n=1}^{\infty}\mathcal M_{f|_{X_{n}}}(X_{n}).$
	One can observe that $G_{K}\subseteq I_{\varphi}[a,b].$
	Applying Theorem \ref{maintheorem-3}(2) to $K$, for any open set $U$, $G_{K}\cap U\cap Trans$ is strongly distributional chaotic. 
	Fix $x\in G_K\cap U\cap Trans.$
	Notice that 
	$$C_x=\overline{\bigcup_{m\in V_f(x)}S_m}=\overline{\bigcup_{m\in K}S_m}\supseteq \overline{\bigcup_{n\in\mathbb{N^{+}}}S_{\omega_{n}}}\supseteq \overline{\bigcup_{n\in\mathbb{N^{+}}}S_{l_{n}}}=\overline{\left\{x_{i}\right\}_{n=1}^{\infty}}=\overline{\bigcup_{n= 1}^{\infty}X_n}=X.$$
	From Proposition \ref{prop2}, one has $x\in Rec^{up}.$ Since $S_{\nu_1}= S_{\nu_2}=S_{\mu_1}\cup S_{\mu_2}\neq X=C_x,$ by Proposition \ref{prop1}, one has $x\not\in Rec^{low}.$ Then we have $G_{K}\cap U\cap Trans\subset (Rec^{up}\setminus Rec^{low})\cap Trans\cap I_{\varphi}[a,b]\cap U\cap \{x\in X:\text{for any }\mu\in V_f(x) \text{ there is } n\in\mathbb{N^{+}} \text{ such that }S_\mu\subseteq X_{n} \}.$
\end{proof}

Let $K=M_{f|_{X_{n_0}}}(X_{n_0})$ in Theorem \ref{thm-5}. Then for any $\varphi \in C^{0}(X,\mathbb{R})$ with $I_{\varphi}(f) \cap X_{n_0}\neq\emptyset,$ we have $\inf_{\mu\in \mathcal M_{f|_{X_{n_0}}}(X_{n_0})}\int \varphi d\mu<\sup_{\mu\in \mathcal M_{f|_{X_{n_0}}}(X_{n_0})}\int \varphi d\mu.$ So we have the following.
\begin{Cor}\label{coro-B}
	Suppose that $f$ is a homeomorphism from $X$ onto $X$, $f$ is Lipschitz, $X$ is not periodic, $(X,f)$ has a sequence of nondecreasing $f$-invariant compact subsets $\{X_{n} \subseteq X:n \in \mathbb{N^{+}} \}$ such that $\overline{\bigcup_{n\geq 1}X_{n}}=X$, $({X_{n}},f|_{X_{n}})$ is transitive, expansive, and has exponential shadowing property for any $n \in \mathbb{N^{+}}$, and $\mathrm{Per}(f|_{X_{1}})\neq \emptyset$. 
	Then there exist a non-empty compact convex subset $K$ of $\cM_{f}(X)$ and an open and dense subset $\mathcal{R}$ of $C^{0}(X,\mathbb{R})$ such that for any $\varphi\in\mathcal{R},$  $\inf_{\mu\in K}\int \varphi d\mu<\sup_{\mu\in K}\int \varphi d\mu,$ and for any $\inf_{\mu\in K}\int \varphi d\mu<a\leq b<\sup_{\mu\in K}\int \varphi d\mu,$ and any non-empty open set $U\subseteq X,$ the following set is strongly distributional chaotic:
	$$(Rec^{up}\setminus Rec^{low})\cap Trans\cap I_{\varphi}[a,b]\cap U\cap \{x\in X: \text{for any }\mu\in V_f(x) \text{ there is } n\in\mathbb{N^{+}} \text{ such that }S_\mu\subseteq X_{n} \}.$$
	Moreover, for any $z\in \bigcup_{n\geq 1}X_{n}$ and any $\varepsilon>0$, the set $U$ can be replaced by local unstable manifold $W^{u}_{\varepsilon}(z).$
\end{Cor}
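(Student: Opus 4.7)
The proof will be an essentially direct application of Theorem \ref{thm-5}, re-packaged in the language of a convex subset $K \subseteq \cM_f(X)$ rather than in terms of the specific bounds $L_\varphi|_{X_{n_0}}$. First I would fix any $n_0 \in \mathbb{N^{+}}$ (for instance $n_0 = 1$) and set
$$K := \cM_{f|_{X_{n_0}}}(X_{n_0}),$$
viewed as a subset of $\cM_f(X)$ in the natural way. Non-emptiness is immediate since $X_{n_0}$ is a non-empty compact $f$-invariant set; compactness in the weak* topology follows from compactness of $X_{n_0}$ together with the fact that $f$-invariance is a closed condition; convexity is clear because a convex combination of measures supported in $X_{n_0}$ is again supported in $X_{n_0}$ and $f$-invariant.

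Next I would define
$$\mathcal{R} := \{\varphi \in C^{0}(X,\mathbb{R}) : I_\varphi(f) \cap X_{n_0} \neq \emptyset\}.$$
That $\mathcal{R}$ is open and dense in $C^0(X,\mathbb{R})$ is exactly the content of the ``moreover'' clause in Theorem \ref{thm-5} (applied with $m = n_0$), so nothing further needs to be shown here.

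The key verification is the strict inequality $\inf_{\mu \in K} \int \varphi \, d\mu < \sup_{\mu \in K} \int \varphi \, d\mu$ for $\varphi \in \mathcal{R}$. Given any $x \in I_\varphi(f) \cap X_{n_0}$, by definition the Birkhoff averages $\frac{1}{n}\sum_{i=0}^{n-1}\varphi(f^i x)$ diverge, so the $\liminf$ and $\limsup$ are distinct real numbers. Picking subsequences realizing each, and extracting weak* limits of the corresponding empirical measures, produces two invariant measures $\mu_1, \mu_2$ in $V_f(x)$ with $\int \varphi \, d\mu_1 \neq \int \varphi \, d\mu_2$. Since $X_{n_0}$ is compact and $f$-invariant and contains the full orbit of $x$, the measures $\mu_1, \mu_2$ lie in $\cM_{f|_{X_{n_0}}}(X_{n_0}) = K$, and the strict inequality follows. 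In particular $\mathrm{Int}(L_\varphi|_{X_{n_0}}) \neq \emptyset$ and coincides with the open interval $(\inf_K \int \varphi\,d\mu,\, \sup_K \int \varphi\,d\mu)$.

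Finally, for any $a, b$ with $\inf_K \int \varphi \, d\mu < a \le b < \sup_K \int \varphi \, d\mu$, these $a,b$ lie in $\mathrm{Int}(L_\varphi|_{X_{n_0}})$, so Theorem \ref{thm-5} applies verbatim and yields that the set
$$(Rec^{up}\setminus Rec^{low})\cap Trans\cap I_{\varphi}[a,b]\cap U\cap \{x\in X: \forall\,\mu\in V_f(x)\ \exists\, n\in\mathbb{N^{+}}\text{ s.t. }S_\mu\subseteq X_{n} \}$$
is strongly distributional chaotic for any non-empty open $U \subseteq X$, with the same replacement $U \rightsquigarrow W^u_\varepsilon(z)$ available. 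There is no substantive obstacle in this argument; the entire content of the corollary is already present in Theorem \ref{thm-5}, and the only task is to translate the level-set bounds into the language of the convex set $K$.
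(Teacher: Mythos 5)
Your proposal is correct and is essentially the paper's own derivation: the paper likewise sets $K=\cM_{f|_{X_{n_0}}}(X_{n_0})$, takes $\mathcal{R}=\{\varphi\in C^{0}(X,\mathbb{R}): I_{\varphi}(f)\cap X_{n_0}\neq\emptyset\}$ via the ``moreover'' clause of Theorem \ref{thm-5}, and notes $\inf_{\mu\in K}\int\varphi d\mu<\sup_{\mu\in K}\int\varphi d\mu$ before applying that theorem, with your extraction of two limit measures in $V_f(x)$ supported in $X_{n_0}$ supplying the strict inequality the paper merely asserts. The one caution is the parenthetical ``for instance $n_0=1$'': if $X_{1}$ were a single periodic orbit then $I_{\varphi}(f)\cap X_{1}=\emptyset$ for every $\varphi$ and $\mathcal{R}$ would be empty, so $n_0$ should be chosen with $X_{n_0}$ not periodic (such $n_0$ exists because the $X_n$ are nondecreasing with dense union and $X$ is not periodic), which is what the paper implicitly does.
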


In a similar manner of Corollary \ref{Coro-1}, we have the following.

\begin{Cor}
	Suppose that $f$ is a homeomorphism from $X$ onto $X$, $f$ is Lipschitz, $X$ is not periodic, $(X,f)$ has a sequence of nondecreasing $f$-invariant compact subsets $\{X_{n} \subseteq X:n \in \mathbb{N^{+}} \}$ such that $\overline{\bigcup_{n\geq 1}X_{n}}=X$, $({X_{n}},f|_{X_{n}})$ is transitive, expansive, and has exponential shadowing property for any $n \in \mathbb{N^{+}}$, and $\mathrm{Per}(f|_{X_{1}})\neq \emptyset$. If for any $\mu \in \mathcal{M}_{f}(X)$ and for any neighbourhood $G$ of $\mu$ in $\mathcal{M}(X)$, there exist $n\in\mathbb{N^{+}}$ and $\nu \in \mathcal{M}_{f|_{X_n}}(X_n)$ such that $\nu\in G.$ Then for any  $\varphi \in C^{0}(X,\mathbb{R})$ with $I_{\varphi}(f)\neq\emptyset,$ there exists $N\in\mathbb{N^{+}}$ such that for any $n_0\geq N,$ any $a,b \in \mathrm{Int}(L_\varphi|_{X_{n_0}})$ with $a\leq b,$ and any non-empty open set $U\subseteq X,$ the following set is strongly distributional chaotic:
	$$(Rec^{up}\setminus Rec^{low})\cap Trans\cap I_{\varphi}[a,b]\cap U\cap \{x\in X: \text{for any }\mu\in V_f(x) \text{ there is } n\in\mathbb{N^{+}} \text{ such that }S_\mu\subseteq X_{n} \}.$$
	Moreover, for any $z\in \bigcup_{n\geq 1}X_{n}$ and any $\varepsilon>0$, the set $U$ can be replaced by local unstable manifold $W^{u}_{\varepsilon}(z).$
	Moreover, the functions with  $I_{\varphi}(f)\neq\emptyset$ are open and dense in $C^{0}(X,\mathbb{R})$. 
\end{Cor}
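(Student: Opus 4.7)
The plan is to imitate the proof of Corollary \ref{Coro-1} almost verbatim, swapping Theorem \ref{thm-1} for Theorem \ref{thm-5}. The density hypothesis $\overline{\bigcup_{n\geq 1}\mathcal{M}_{f|_{X_n}}(X_n)}=\mathcal{M}_f(X)$ will let me upgrade the bare input $I_\varphi(f)\neq\emptyset$ to the stronger hypothesis $I_\varphi(f)\cap X_{n_0}\neq\emptyset$ required by Theorem \ref{thm-5}, uniformly for every $n_0$ above some threshold $N$. To locate $N$, note that $I_\varphi(f)\neq\emptyset$ forces the existence of $\lambda_1,\lambda_2\in\mathcal{M}_f(X)$ with $\int\varphi\,d\lambda_1<\int\varphi\,d\lambda_2$; I fix values $a_0<b_0$ strictly between these. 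By weak$^*$-continuity of $\mu\mapsto\int\varphi\,d\mu$ I choose open neighborhoods $G_i\ni\lambda_i$ on which $\int\varphi\,d\mu<a_0$ (for $G_1$) and $\int\varphi\,d\mu>b_0$ (for $G_2$). The density hypothesis then furnishes $\nu_i\in G_i\cap\mathcal{M}_{f|_{X_{n_i}}}(X_{n_i})$ for some $n_i$, and setting $N=\max\{n_1,n_2\}$ gives $\nu_1,\nu_2\in\mathcal{M}_{f|_{X_N}}(X_N)$ with $\int\varphi\,d\nu_1<a_0<b_0<\int\varphi\,d\nu_2$ by the nesting $X_{n_i}\subseteq X_N$. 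Hence $\mathrm{Int}(L_\varphi|_{X_{n_0}})\neq\emptyset$ for every $n_0\geq N$.

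Now I fix $n_0\geq N$ and verify the hypothesis $I_\varphi(f)\cap X_{n_0}\neq\emptyset$ of Theorem \ref{thm-5}. The subsystem $(X_{n_0},f|_{X_{n_0}})$ is transitive, expansive, has exponential (hence ordinary) shadowing, and is nondegenerate because $\mathrm{Int}(L_\varphi|_{X_{n_0}})\neq\emptyset$ produces two distinct invariant measures, so $X_{n_0}$ is not a single periodic orbit. Lemma \ref{BD} then gives $h_{top}(f|_{X_{n_0}})>0$, so arguing as in the final paragraph of the proof of Theorem \ref{thm-1} via \cite[Theorem 1.5]{DOT} (applied to the periodic-decomposition components produced by Lemma \ref{AX}), the condition $\mathrm{Int}(L_\varphi|_{X_{n_0}})\neq\emptyset$ entails $I_\varphi(f|_{X_{n_0}})\neq\emptyset$ for the specific $\varphi$ under consideration. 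Since $I_\varphi(f|_{X_{n_0}})\subseteq I_\varphi(f)\cap X_{n_0}$, Theorem \ref{thm-5} now applies at every such $n_0$ and produces the required strongly distributional chaotic set for every $a,b\in\mathrm{Int}(L_\varphi|_{X_{n_0}})$ with $a\leq b$ and every nonempty open $U$, as well as the local unstable manifold variant with $U$ replaced by $W^u_\varepsilon(z)$.

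For the final ``moreover'' clause I invoke Proposition \ref{proposition-AD} with $Y=X$: it suffices to exhibit a single $\phi_0\in C^0(X,\mathbb{R})$ with $I_{\phi_0}(f)\neq\emptyset$. Such a $\phi_0$ is supplied by the last sentence of Theorem \ref{thm-5} applied to any $m$ for which $X_m$ is non-periodic (which exists because $X$ is not periodic and $X=\overline{\bigcup_n X_n}$, so $X_m$ is non-periodic for large $m$ and thus has positive topological entropy by Lemma \ref{BD}): the set of $\varphi$ with $I_\varphi(f)\cap X_m\neq\emptyset$ is then open and dense and in particular nonempty. I expect the main obstacle to be the passage from $\mathrm{Int}(L_\varphi|_{X_{n_0}})\neq\emptyset$ to $I_\varphi(f|_{X_{n_0}})\neq\emptyset$ for our \emph{prescribed} $\varphi$, since \cite{DOT} a priori only guarantees an irregular point for \emph{some} continuous function; however the specification-type machinery available on each $X_{n_0}$ (exponential shadowing plus the periodic decomposition of Lemma \ref{AX}) is exactly what is needed to saturate an arbitrary $\varphi$ whose Birkhoff spectrum on $X_{n_0}$ is nondegenerate.
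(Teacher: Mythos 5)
Your first step---using the density hypothesis to produce $N$ and $\nu_1,\nu_2\in\mathcal{M}_{f|_{X_N}}(X_N)$ with $\int\varphi\,d\nu_1<a_0<b_0<\int\varphi\,d\nu_2$, hence $\mathrm{Int}(L_\varphi|_{X_{n_0}})\neq\emptyset$ for all $n_0\geq N$---is exactly the paper's proof of Corollary \ref{Coro-1}, and the paper's proof of the present corollary is just that argument repeated. The divergence is in what you do next. The paper never verifies the literal hypothesis $I_\varphi(f)\cap X_{n_0}\neq\emptyset$ of Theorem \ref{thm-5}: as the remark preceding Corollary \ref{Coro-1} states (and as the first line of the proof of Theorem \ref{thm-5} confirms), that hypothesis is used only to obtain $\mathrm{Int}(L_\varphi|_{X_{n_0}})\neq\emptyset$, so one may substitute the weaker condition and the rest of the proof of Theorem \ref{thm-5} runs unchanged. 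Your detour of actually verifying $I_\varphi(f)\cap X_{n_0}\neq\emptyset$ is where your writeup has a genuine gap: \cite[Theorem 1.5]{DOT}, as it is used in the last paragraph of the proof of Theorem \ref{thm-1}, only yields \emph{some} $\phi\in C^{0}(X_{n_0},\mathbb{R})$ with $I_\phi(f|_{X_{n_0}})\neq\emptyset$, not the prescribed $\varphi$; your upgrade from ``$\varphi$ has nondegenerate spectrum on $X_{n_0}$'' to ``$I_\varphi(f|_{X_{n_0}})\neq\emptyset$'' is only asserted, and you yourself flag it as the main obstacle without discharging it.

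The assertion is in fact true and can be closed with the paper's own tools, but not by citing \cite{DOT}: apply Theorem \ref{maintheorem-3}(2) to the subsystem $(X_{n_0},f|_{X_{n_0}})$ (with the constant sequence $X_n\equiv X_{n_0}$; note $X_{n_0}$ cannot be a single periodic orbit once it carries two invariant measures with distinct $\varphi$-integrals), taking $K=\mathrm{conv}\{\mu_1,\mu_2\}$ where, by Lemma \ref{BF} and Lemma \ref{generic distal} applied inside $X_{n_0}$, the $\mu_i$ are ergodic with minimal non-periodic supports, satisfy $\int\varphi\,d\mu_1<\int\varphi\,d\mu_2$, and admit the required distal pairs. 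Any point of the resulting $G_K\subseteq X_{n_0}$ is $\varphi$-irregular, giving $I_\varphi(f)\cap X_{n_0}\neq\emptyset$. This works, but it reconstructs a sizable piece of the proof of Theorem \ref{thm-5} merely to re-enter that theorem through its stated hypothesis; the paper's route avoids the issue entirely. Your treatment of the final clause (Proposition \ref{proposition-AD} with $Y=X$, seeded by one $\phi_0$ with nonempty irregular set on a non-periodic $X_m$ of positive entropy via Lemma \ref{BD}) is correct and consistent with the paper.
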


\section{Homoclinic class and hyperbolic ergodic measure: proofs of Theorem \ref{maintheorem-1} and  \ref{maintheorem-1'}}\label{section-5}
First, we state one results to show that the combined sets of various fractal sets are strongly distributional chaotic in a nontrival homoclinic class $H(p)$ that is not uniformly hyperbolic.

\begin{maintheorem}\label{maintheorem-4}
	Let $f \in \operatorname{Diff}^{1}(M).$ If $X=H(p)$ is a nontrival homoclinic class that is not uniformly hyperbolic, then there exist a non-empty compact convex subset $K$ of $\cM_{f}(M)$ and an open and dense subset $\mathcal{R}$ of $C^{0}(M,\mathbb{R})$ such that for any $\varphi\in\mathcal{R},$  $\inf_{\mu\in K}\int \varphi d\mu<\sup_{\mu\in K}\int \varphi d\mu,$ and for any $\inf_{\mu\in K}\int \varphi d\mu<a\leq b<\sup_{\mu\in K}\int \varphi d\mu,$ and any non-empty open set $U\subseteq X,$ the following four sets are all strongly distributional chaotic:
	\begin{description}
		\item[(a)] $(Rec_{Ban}^{up}\setminus Rec^{up})\cap I_{\varphi}[a,b]\cap U\cap \{x\in X:\omega(f,x)=X, \text{each }\mu\in V_f(x) \text{ is uniformly hyperbolic}\},$
		\item[(b)] $(Rec^{up}\setminus Rec^{low})\cap I_{\varphi}[a,b]\cap U\cap \{x\in X: \omega(f,x)=X,\text{each }\mu\in V_f(x) \text{ is uniformly hyperbolic}\},$
		\item[(c)] $(Rec^{up}\setminus Rec^{low})\cap I_{\varphi}[a,b]\cap U\cap \{x\in X: \omega(f,x)=X,\exists\ \mu_1,\mu_{2}\in V_f(x)\text{ s.t. } \mu_1\text{ is uniformly h-}\\ \text{yperbolic, } \mu_2\text{ is not uniformly hyperbolic} \},$
		\item[(d)] $(Rec_{Ban}^{up}\setminus Rec^{up})\cap QR(f) \cap U\cap \{x\in X: \omega(f,x)=X,V_f(x) \text{ consists of one uniformly hyperbolic}\\ \text{measure}\}.$
	\end{description}
	Moreover, if $z=p$ or $z$ is a hyperbolic periodic point which is homoclinic related to $p$, then for any $\varepsilon>0$, the set $U$ can be replaced by local unstable manifold $W^{u}_{\varepsilon}(z).$  
\end{maintheorem}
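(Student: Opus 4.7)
The plan is to apply the abstract framework of Section~\ref{section-4}, specifically Corollaries~\ref{coro-A} and~\ref{coro-B}, to the subsystem $(H(p), f|_{H(p)})$. The central task is to produce a nested sequence of $f$-invariant compact subsets inside $H(p)$ with the right hyperbolic/dynamical properties, and then to translate the measure-theoretic conclusions of the framework into the statements about uniformly hyperbolic measures in Theorem~\ref{maintheorem-4}.

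First I would construct a nondecreasing family $\{X_n\}_{n \geq 1}$ of transitive locally maximal hyperbolic sets satisfying $p\in X_1\subseteq X_2\subseteq\cdots$ and $\overline{\bigcup_{n \geq 1} X_n}=H(p)$. Since hyperbolic periodic points homoclinically related to $p$ are dense in $H(p)$, one takes a countable dense sequence $\{q_i\}$ of such points and inductively defines $X_n$ to be a transitive horseshoe containing $X_{n-1}\cup\{q_n\}$, produced via the lambda-lemma applied to finitely many transverse homoclinic intersections. With this in hand, the hypotheses of Corollaries~\ref{coro-A} and~\ref{coro-B} for $(H(p),f|_{H(p)})$ are easy to check: $f|_{H(p)}$ is a Lipschitz homeomorphism (as $f$ is $C^1$ on a compact manifold), $H(p)$ is not periodic since it is nontrivial, each $(X_n,f|_{X_n})$ has the exponential shadowing property by Proposition~\ref{proposition-localmax}, is transitive by construction, and is expansive because hyperbolic; finally, $\mathrm{Per}(f|_{X_1})\neq\emptyset$ because $p\in X_1$.

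Applying Corollary~\ref{coro-A} to $(H(p),f|_{H(p)})$ with the sequence $\{X_n\}$ yields parts (a), (c), (d) of Theorem~\ref{maintheorem-4}, while applying Corollary~\ref{coro-B} yields part (b). The dictionary between the two formulations is the following: any invariant probability measure whose support lies in some $X_n$ is uniformly hyperbolic (since each $X_n$ is a hyperbolic set), while a measure $\mu$ with $S_\mu=H(p)$ is not uniformly hyperbolic under the standing hypothesis that $H(p)$ is not uniformly hyperbolic. The condition $\omega(f,x)=X$ in the theorem corresponds exactly to the transitivity condition $x\in Trans$ in the subsystem $(H(p),f|_{H(p)})$, since for a recurrent point $x\in\omega(f,x)$. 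For the final local unstable manifold statement, if $z=p$ or $z$ is a hyperbolic periodic point homoclinically related to $p$, then $z\in\bigcup_{n\geq 1}X_n$, so the $W^u_\varepsilon(z)$ conclusion of each corollary applies verbatim.

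The main obstacle is Step~1, namely producing a genuinely nested family $X_1\subseteq X_2\subseteq\cdots$ of transitive hyperbolic sets dense in $H(p)$. While the existence of arbitrarily large horseshoes in $H(p)$ is classical, achieving monotonicity requires a careful lambda-lemma argument at each inductive stage to absorb new periodic points without destroying transitivity or the hyperbolic structure of the previously constructed $X_{n-1}$. A secondary, milder concern is that the subset $K$ of $\mathcal{M}_f(H(p))$ and the dense class $\mathcal{R}\subseteq C^0(M,\mathbb{R})$ appearing in the conclusion must be chosen compatibly across both applications of the framework; taking $K=\mathcal{M}_{f|_{X_{n_0}}}(X_{n_0})$ for a single sufficiently large $n_0$, together with $\mathcal{R}=\{\varphi\in C^0(M,\mathbb{R}): I_\varphi(f)\cap X_{n_0}\neq\emptyset\}$, handles this in one stroke.
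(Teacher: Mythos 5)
Your proposal is correct and follows essentially the same route as the paper: the paper likewise builds an increasing sequence of transitive locally maximal hyperbolic sets $\Lambda_n$ from a dense family of periodic points homoclinically related to $p$ (citing Newhouse for the horseshoe containing two homoclinically related orbits, where you invoke the lambda-lemma), verifies exponential shadowing via Proposition~\ref{proposition-localmax} and expansiveness, and then concludes from Corollaries~\ref{coro-A} and~\ref{coro-B} together with Proposition~\ref{proposition-AD}, using exactly your dictionary between measures supported in some $\Lambda_n$ (uniformly hyperbolic) and fully supported measures (not uniformly hyperbolic).
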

\begin{proof}
	By definition, we can take a sequense of hyperbolic periodic points $\{p_n\}$ dense in $H(p)$ and all homoclinically related to $p$ such that $p_{1}=p$.
	Then we can construct an increasing sequence of hyperbolic horseshoes $\{\Lambda_n\}_{n= 1}^{\infty}$ contained in $H(p)$ inductively as follows:
	\begin{itemize}
		\item Let $\Lambda_1$ be a transitive locally maximal hyperbolic set that contains $p_1$ and $p_2$. Such $\Lambda_1$ exists since $p_1$ is homoclinically related to $p_2$ (for example, see \cite[Lemma 8]{Newho1979}).
		\item For $n\geq 2$, let $\Lambda_n$ be a transitive locally maximal hyperbolic that contains $p_{n+1}$ and also contains the  hyperbolic set $\Lambda_{n-1}$.
	\end{itemize}
	By density of  $\{p_n\}$, $\bigcup_{n=1}^\infty\Lambda_n=H(p)$. Since $\Lambda_n$ is a transitive locally maximal hyperbolic, then $(\Lambda_n,f|_{\Lambda_n})$ has exponential shadowing property by Proposition \ref{proposition-localmax}, and is expansive by \cite[Corollary 6.4.10]{KatHas}. Note that for any $n\in\mathbb{N^{+}},$ each $\mu\in\mathcal{M}_{f|_{\Lambda_n}}(\Lambda_n)$ is uniformly hyperblic, and each $\nu\in\mathcal{M}_f(H(p))$ with $S_{\nu}=H(p)$ is not uniformly hperbolic. So by Corollary \ref{coro-A}, Corollary \ref{coro-B} and Proposition \ref{proposition-AD} we complete the proof.
\end{proof}

From \cite{Katok} (or Theorem S.5.3 on Page 694 of book \cite{KatHas}) we know that the support of any ergodic and non-atomic hyperbolic measure of a $C^{1+\alpha}$ diffeomorphism is contained in a non-trivial homoclinic class, then we have the following result.

\begin{maintheorem}\label{maintheorem-5}
	Let $f \in \operatorname{Diff}^{1+\alpha}(M)$ preserving a hyperbolic ergodic measure $\mu$ satisfying $S_\mu$ is not uniformly hyperbolic, then there exist a non-empty compact convex subset $K$ of $\cM_{f}(M)$ and an open and dense subset $\mathcal{R}$ of $C^{0}(M,\mathbb{R})$ such that for any $\varphi\in\mathcal{R},$  $\inf_{\mu\in K}\int \varphi d\mu<\sup_{\mu\in K}\int \varphi d\mu,$ and for any $\inf_{\mu\in K}\int \varphi d\mu<a\leq b<\sup_{\mu\in K}\int \varphi d\mu,$ the following four sets are all strongly distributional chaotic:
	\begin{description}
		\item[(a)] $(Rec_{Ban}^{up}\setminus Rec^{up})\cap I_{\varphi}[a,b]\cap \{x\in M:\omega(f,x)\supset S_\mu, \text{each }\mu\in V_f(x) \text{ is uniformly hyperbolic}\},$
		\item[(b)] $(Rec^{up}\setminus Rec^{low})\cap I_{\varphi}[a,b]\cap \{x\in M: \omega(f,x)\supset S_\mu,\text{each }\mu\in V_f(x) \text{ is uniformly hyperbolic}\},$
		\item[(c)] $(Rec^{up}\setminus Rec^{low})\cap I_{\varphi}[a,b]\cap \{x\in M: \omega(f,x)\supset S_\mu,\exists\ \mu_1,\mu_{2}\in V_f(x)\text{ s.t. } \mu_1\text{ is uniformly hyp-}\\ \text{erbolic, } \mu_2\text{ is not uniformly hyperbolic} \},$
		\item[(d)] $(Rec_{Ban}^{up}\setminus Rec^{up})\cap QR(f) \cap \{x\in M: \omega(f,x)\supset S_\mu,V_f(x) \text{ consists of one uniformly hyperbolic} \\ \text{measure}\}.$
	\end{description}
\end{maintheorem}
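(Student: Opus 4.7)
The plan is to reduce Theorem \ref{maintheorem-5} to Theorem \ref{maintheorem-4} using Katok's horseshoe theorem (or rather its consequence quoted just above): for a $C^{1+\alpha}$ diffeomorphism, the support of any non-atomic ergodic hyperbolic measure is contained in a non-trivial homoclinic class. So the whole job is to locate a suitable homoclinic class, verify it is not uniformly hyperbolic, apply Theorem \ref{maintheorem-4} inside it, and transfer the open-dense set of observables from $C^0(H(p),\mathbb{R})$ to $C^0(M,\mathbb{R})$.

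First I would verify that $\mu$ is non-atomic. Indeed, if $\mu$ were atomic and ergodic, its support would be a single periodic orbit; since $\mu$ is hyperbolic, this orbit is hyperbolic, hence uniformly hyperbolic, contradicting the assumption that $S_\mu$ is not uniformly hyperbolic. Thus, by \cite{Katok} (see also Theorem S.5.3 on p.~694 of \cite{KatHas}), there is a hyperbolic periodic point $p$ with $S_\mu\subseteq H(p)$ and $H(p)$ non-trivial. Moreover, $H(p)$ is not uniformly hyperbolic: every compact invariant subset of a uniformly hyperbolic set inherits the hyperbolic splitting, so the existence of the non-uniformly hyperbolic invariant subset $S_\mu\subseteq H(p)$ forces $H(p)$ itself to be non-uniformly hyperbolic. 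Now apply Theorem \ref{maintheorem-4} to the subsystem $(H(p),f|_{H(p)})$ with $U=H(p)$: this produces a non-empty compact convex $K\subset\mathcal{M}_{f|_{H(p)}}(H(p))\subset\mathcal{M}_f(M)$ and an open-dense $\mathcal{R}_0\subset C^0(H(p),\mathbb{R})$ such that for every $\varphi_0\in\mathcal{R}_0$ and every admissible $a\le b$, the four sets in Theorem \ref{maintheorem-4} are strongly distributional chaotic. For any point $x$ in any of those sets we have $\omega(f,x)=H(p)\supset S_\mu$, so the $\omega$-limit condition in Theorem \ref{maintheorem-5} is automatic; the uniform-hyperbolicity conditions on $V_f(x)$ are identical in both statements.

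The remaining step is to pull $\mathcal{R}_0$ back to an open-dense subset $\mathcal{R}$ of $C^0(M,\mathbb{R})$ via the restriction map $r:C^0(M,\mathbb{R})\to C^0(H(p),\mathbb{R})$, setting $\mathcal{R}=r^{-1}(\mathcal{R}_0)$. Openness of $\mathcal{R}$ is immediate from continuity of $r$. For density, given $\varphi\in C^0(M,\mathbb{R})$ and $\epsilon>0$, choose $\psi\in\mathcal{R}_0$ with $\|\psi-\varphi|_{H(p)}\|<\epsilon$ by density; extend $g:=\psi-\varphi|_{H(p)}$ from $H(p)$ to $\tilde g\in C^0(M,\mathbb{R})$ via Tietze with $\|\tilde g\|=\|g\|<\epsilon$; then $\varphi+\tilde g\in\mathcal{R}$ and $\|\varphi+\tilde g-\varphi\|<\epsilon$. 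Since $\int\varphi\,d\nu=\int\varphi|_{H(p)}\,d\nu$ for all $\nu\in K$, the extremes $\inf_{\nu\in K}\int\varphi\,d\nu<\sup_{\nu\in K}\int\varphi\,d\nu$ transfer to $\mathcal{R}$, and for any $x\in H(p)$ the Birkhoff averages of $\varphi$ along $\operatorname{orb}(x,f)$ depend only on $\varphi|_{H(p)}$, so $I_\varphi[a,b]\cap H(p)$ computed in $M$ coincides with the analogous level set in $H(p)$. Combining these identifications, the sets produced by Theorem \ref{maintheorem-4} are contained in the corresponding sets of Theorem \ref{maintheorem-5}, and an uncountable $\alpha$-DC1-scrambled subset of the former is automatically an uncountable $\alpha$-DC1-scrambled subset of the latter.

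There is no serious obstacle here beyond bookkeeping: the deep work lives in Theorem \ref{maintheorem-4} (which in turn relies on the sequences of horseshoes $\Lambda_n$ exhausting $H(p)$ together with the saturated-set machinery of Theorem \ref{maintheorem-3}). The only mildly subtle point is the Tietze-based transfer of $\mathcal{R}_0$, which must preserve both openness and density simultaneously, but the construction above handles this by extending the \emph{difference} rather than $\psi$ itself so that the approximating function restricts exactly to $\psi$.
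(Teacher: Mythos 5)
Your proposal is correct and follows essentially the same route as the paper: the paper derives Theorem E from Theorem D in one step, invoking Katok's theorem (Theorem S.5.3 in \cite{KatHas}) to place $S_\mu$ inside a non-trivial homoclinic class $H(p)$, which is then not uniformly hyperbolic, and applying Theorem D there. The details you supply (non-atomicity of $\mu$, heredity of uniform hyperbolicity to the subset $S_\mu$, and the Tietze-extension transfer of the open--dense set of observables, which in the paper is already built into Theorem D via Proposition \ref{proposition-AD}) are exactly the implicit steps of the paper's argument.
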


Now, we prove Theorem \ref{maintheorem-1} and \ref{maintheorem-1'}.

\noindent\textbf{Proofs of Theorem \ref{maintheorem-1} and \ref{maintheorem-1'}:} since $H(p)$ and $S_\mu$ are not uniformly hyperbolic,  then $x$ is not uniformly hyperbolic if $\omega(f,x)=H(p)$ or $\omega(f,x)\supset S_\mu.$ Then we have Theorem \ref{maintheorem-1'} by Theorem \ref{maintheorem-4} and Theorem \ref{maintheorem-5}. Theorem \ref{maintheorem-1'} implies Theorem \ref{maintheorem-1}.\qed

\section{Other dynamical systems}\label{section-6}
In this section, we apply the results in the previous sections to more systems, including transitive Anosov diffeomorphisms, mixing expanding maps, mixing subshifts of finite type, mixing sofic subshifts and $\beta$-shifts. Before that, we recall the definitions of these systems.

\subsection{Anosov diffeomorphisms and expanding maps}

Let $M$ be a compact smooth Riemann manifold without boundary. $f:M\to M$ is a diffeomorphism. A $f$-invariant set $\Lambda\subset M$ is said to be uniformly hyperbolic, if for any $x\in \Lambda$ there is a splitting of the tangent space $T_{x}M=E^{s}(x)\oplus E^{u}(x)$ which is preserved by the differential $Df$ of $f$:
\begin{equation*}
	Df(E^{s}(x))=E^{s}(f(x)),\ Df(E^{u}(x))=E^{u}(f(x)),
\end{equation*}
and there are constants $C>0$ and $0<\lambda<1$ such that for all $n\geq 0$
\begin{equation*}
	|Df^{n}(v)|\leq C\lambda^{n}|v|,\ \forall x\in \Lambda,\ v\in E^{s}(x),
\end{equation*}
\begin{equation*}
	|Df^{-n}(v)|\leq C\lambda^{n}|v|,\ \forall x\in \Lambda,\ v\in E^{u}(x).
\end{equation*}
If $M$ is a uniformly hyperbolic set, then $f$ is called an Anosov diffeomorphism.

A $C^{1}$ map $f:M\to M$ is said to be expanding if there are constants $C>0$ and $0<\lambda<1$ such that for all $n\geq 0$
\begin{equation*}
	|Df^{n}(v)|\geq C\lambda^{-n}|v|,\ \forall x\in M,\ v\in T_{x}M.
\end{equation*}

From \cite[Theorem 1.2.1]{AH}, expanding map has shadowing peoperty. Next, we prove that expanding map has exponential shadowing peoperty.
\begin{Prop}\label{proposition-expandshadow}
	If $f:M\to M$ is expanding, then it has exponential shadowing peoperty.
\end{Prop}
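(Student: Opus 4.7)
The plan is to refine the classical proof of shadowing for expanding maps (as in \cite[Theorem 1.2.1]{AH}) so that the exponential contraction of the inverse branches is made explicit in the error estimate. The expanding hypothesis supplies constants $\delta_0>0$ and $\kappa>1$ such that on every ball of radius $\delta_0$ the map $f$ admits a Lipschitz local inverse branch with constant $\kappa^{-1}$; I set $\lambda:=\ln\kappa>0$. Given $\varepsilon>0$, I choose $\delta>0$ so small (e.g., $\delta\le\min\{\delta_0,(\kappa-1)\varepsilon\}$) that the inductive inverse-branch construction below stays inside its domain at every step.

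For a $\delta$-pseudo-orbit $\{x_n,i_n\}_{n=0}^{\infty}$, I first flatten it into an ordinary pseudo-orbit $(z_k)_{k\ge 0}$ by setting $z_{c_n+j}:=f^j(x_n)$ for $0\le j\le i_n-1$. The only gaps in this flattened sequence occur at the block boundaries $k=c_{n+1}-1$, where $d(f(z_k),z_{k+1})\le\delta$. I then construct the shadowing point by the classical limit-of-preimages procedure: for each $N$, set $y_N^{(N)}:=z_N$ and pull back via the local inverse branches, $y_k^{(N)}:=g_{z_k}(y_{k+1}^{(N)})$ for $0\le k<N$; the contraction by $\kappa^{-1}$ makes $(y_0^{(N)})_{N\ge 0}$ a Cauchy sequence, and I take $y:=\lim_{N\to\infty}y_0^{(N)}$ as the candidate shadowing point.

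The heart of the argument is the error estimate. Summing the contracted contributions from the successive block boundaries yields
\begin{equation*}
d\bigl(f^k(y),z_k\bigr)\;\le\;\sum_{m:\,c_{m+1}>k}\kappa^{-(c_{m+1}-k)}\,\delta,
\end{equation*}
and for $k=c_n+j$ the leading term dominates a geometric tail bounded by $\frac{\delta}{\kappa-1}\kappa^{-(i_n-1-j)}\le\varepsilon\,e^{-\lambda(i_n-1-j)}$. Since $i_n-1-j\ge\min\{j,i_n-1-j\}$, this in turn dominates the prescribed bound $\varepsilon\,e^{-\lambda\min\{j,i_n-1-j\}}$, establishing exponential shadowing with exponent $\lambda$.

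The main (but routine) technical obstacle is keeping the inverse branches $g_{z_k}$ well-defined along the backward induction, i.e., verifying inductively that $y_{k+1}^{(N)}\in B(f(z_k),\delta_0)$ at every step; this is handled by exactly the same estimate as in the classical shadowing proof, so no new idea is required. I remark that the argument actually produces decay toward the start of each block, which is stronger than the ``middle is best'' bound in the definition; the $\min$ in the statement is present only to cover the two-sided (homeomorphism) formulation, and for an expanding (non-invertible) map the one-sided contraction of inverse branches already suffices.
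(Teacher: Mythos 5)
Your proof is correct, but it takes a different route from the paper's. You re-run the classical inverse-branch construction of the shadowing point from scratch, flattening the pseudo-orbit, pulling back through the contracting local inverses $g_{z_k}$, and tracking how each boundary jump of size $\delta$ is damped by $\kappa^{-(c_{m+1}-k)}$ by the time it reaches position $k$; the exponential estimate is built into the construction. The paper instead treats the shadowing property as a black box (citing \cite[Theorem 1.2.1]{AH}): it takes \emph{any} point $x$ that $\varepsilon_0$-traces the pseudo-orbit in the ordinary sense with $\varepsilon_0\le\delta_0$, and then upgrades the uniform bound a posteriori using the expansion inequality $d(x,y)\le\delta_0\Rightarrow d(f(x),f(y))\ge\mu^{-1}d(x,y)$ applied repeatedly within each block, which forces $d(f^{c_n+j}(x),f^j(x_n))\le\mu^{i_n-1-j}\,d(f^{c_n+i_n-1}(x),f^{i_n-1}(x_n))<\mu^{i_n-1-j}\varepsilon_0$. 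Both arguments exploit the same mechanism (expansion forward in time equals contraction backward in time, so closeness over a block forces exponential closeness near its start, which dominates the $\min\{j,i_n-1-j\}$ bound), but the paper's version is shorter and shows slightly more: for an expanding map, \emph{every} ordinary tracing point automatically satisfies the exponential estimate after rescaling constants, with no need to revisit how the tracing point was produced. Your version is self-contained and makes the role of the inverse branches explicit, at the cost of re-verifying the domain condition $y_{k+1}^{(N)}\in B(f(z_k),\delta_0)$ along the induction, which you correctly flag as routine.
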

\begin{proof}
	If $f:M\to M$ is expanding, then there exist constants $\delta_{0}>0$, $0<\mu<1$ such that for $x,y\in M$
	\begin{equation*}
		d(x,y)\leq \delta_{0}\Rightarrow d(f(x),f(y))\geq \mu^{-1}d(x,y).
	\end{equation*}	
	Let $\varepsilon>0$. $\delta>0$ is provided for $\varepsilon_{0}=\delta_{0}\varepsilon$ by shadowing property. Let $\{x_{n},i_{n}\}_{n=0}^{\infty}$ be a $\delta$-pseudo-orbit, then there exists $x\in X$ such that
	\begin{equation}
		d(f^{c_{n}+j}(x),f^{j}(x_{n}))<\varepsilon_{0} 
	\end{equation}
	for all $0\leq j\leq i_{n}-1$ and $n\in\mathbb{N}$, where $c_{0}=0$,  $c_{n}=i_{0}+\dots+i_{n-1}$ for $n\in\mathbb{N^{+}}$. 
	Since $\varepsilon_{0}<\delta_{0}$, we have that $d(f^{c_{n}+j}(x),f^{j}(x_{n}))\leq \mu^{i_{n}-1-j}d(f^{c_{n}+i_{n}-1}(x),f^{i_{n}-1}(x_{n}))<\mu^{i_{n}-1-j}\varepsilon_{0}$ for all $0\leq j\leq i_{n}-1$ and $n\in\mathbb{N}$. Then one has 
	\begin{equation}
		d(f^{c_{n}+j}(x),f^{j}(x_{n}))<\mu^{i_{n}-1-j}\varepsilon_{0}=\varepsilon_{0}e^{-(i_{n}-1-j)\ln \frac{1}{\mu}}\leq\varepsilon e^{-\min\{j,i_{n}-1-j\}\ln \frac{1}{\mu}}
	\end{equation}
	by $i_{n}-1-j\geq \min\{j,i_{n}-1-j\}$ and $\varepsilon_{0}=\delta_{0}\mu\varepsilon$.
	Let $\lambda=\ln \frac{1}{\mu}$, then $d(f^{c_{n}+j}(x),f^{j}(x_{n}))<\varepsilon e^{-\min\{j,i_{n}-1-j\}\lambda}$. So $(M,f)$ has exponential shadowing property with exponent $\lambda=\ln \frac{1}{\mu}$.
\end{proof}

\begin{Thm}\label{thm-2}
	The following systems are mixing and have exponential shadowing property:
	\begin{description}
		\item[(1)] transitive Anosov diffeomorphism on a compact connected manifold or a system restricted on a mixing locally maximal hyperbolic set;
		\item[(2)] mixing expanding map on a compact manifold;
	\end{description}
\end{Thm}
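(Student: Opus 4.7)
The plan is to verify the two assertions (mixing and exponential shadowing) separately for each of the two listed classes of systems, assembling results already established in the excerpt together with one standard fact from hyperbolic dynamics.

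For (1), I would first treat the second sub-case: a system restricted to a mixing locally maximal hyperbolic set $\Lambda$. Mixing then holds by hypothesis, and exponential shadowing is exactly the content of Proposition \ref{proposition-localmax}. For the first sub-case, a transitive Anosov diffeomorphism $f$ on a compact connected manifold $M$, the whole of $M$ is a hyperbolic $f$-invariant closed set that is trivially locally maximal (take the isolating neighbourhood to be $M$ itself), so Proposition \ref{proposition-localmax} again delivers exponential shadowing. It remains to establish mixing.

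Mixing of a transitive Anosov diffeomorphism on a compact connected manifold is classical, and the route I would take is the Smale spectral decomposition. By transitivity the non-wandering set is all of $M$, and the spectral decomposition produces finitely many pairwise disjoint closed $f$-invariant basic sets whose union is $M$; transitivity forces this list to reduce to a single basic set. Within that basic set one has a further cyclic decomposition into clopen pieces $M_0,\dots,M_{k-1}$ permuted cyclically by $f$, on each of which $f^{k}$ is topologically mixing. Since the $M_i$ are clopen in $M$ and cover $M$, connectedness of $M$ forces $k=1$, so $f$ itself is mixing.

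For (2), mixing is part of the hypothesis, and exponential shadowing is precisely the statement of Proposition \ref{proposition-expandshadow}, which has just been proven in the excerpt. The main (and essentially only) subtle point in the whole argument is the derivation of mixing from transitivity for an Anosov diffeomorphism on a connected manifold, and even there the key observation — that connectedness collapses the cyclic decomposition to a single piece — is standard; all remaining ingredients are supplied directly by propositions appearing earlier in the paper.
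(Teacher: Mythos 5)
Your proposal is correct and follows essentially the same route as the paper: exponential shadowing for both hyperbolic cases comes from Proposition \ref{proposition-localmax} (viewing an Anosov diffeomorphism as living on the trivially locally maximal set $M$), mixing of a transitive Anosov diffeomorphism on a connected manifold comes from the spectral decomposition (which the paper simply cites as \cite[Corollary 18.3.5]{KatHas} while you unpack the clopen cyclic pieces and the connectedness argument), and the expanding case is Proposition \ref{proposition-expandshadow} plus the mixing hypothesis. No gaps.
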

\begin{proof}
	Since every Anosov diffeomorphism is locally maximal, and then it has exponential shadowing peoperty by Proposition \ref{proposition-localmax}. By spectral decomposition, every transitive Anosov diffeomorphism on a compact connected manifold is mixing \cite[Corollary 18.3.5]{KatHas}. A system restricted on a mixing locally maximal hyperbolic set also is mixing and has exponential shadowing peoperty by Proposition \ref{proposition-localmax}.

	Every expanding map has exponential shadowing peoperty by Proposition \ref{proposition-expandshadow}.
\end{proof}

\subsection{Shifts}

For any finite alphabet $A$, the full symbolic space is the set $A^{\mathbb{Z}}=\{\cdots x_{-1}x_{0}x_{1}\cdots : x_{i}\in A\}$, which is viewed as a compact topological space with the discrete product topology. The set $A^{\mathbb{N^{+}}}=\{x_{1}x_{2}\cdots : x_{i}\in A\}$ is called one side full symbolic space. The shift action on one side full symbolic space is defined by
$$\sigma:\ A^{\mathbb{N^{+}}}\rightarrow A^{\mathbb{N^{+}}},\ \ \ x_{1}x_{2}\cdots\mapsto x_{2}x_{3}\cdots.$$
$(A^{\mathbb{N^{+}}},\sigma)$ forms a dynamical system under the discrete product topology which we called a shift. We equip $A^{\mathbb{N^{+}}}$ with a compatible metric $d$ given by
\begin{equation}
	d(\omega,\gamma)=
	\begin{cases}
		n^{-\min\{k\in\mathbb{N^{+}}:\omega_{k}\neq \gamma_{k}\}},&\omega\neq \gamma,\\
		0,&\omega= \gamma.
	\end{cases}
\end{equation}
where $n=\#A$. A closed subset $X\subseteq A^{\mathbb{N^{+}}}$ or $A^{\mathbb{Z}}$ is called subshift if it is invariant under the shift action $\sigma$. $w\in A^{n}\triangleq \{x_1x_2\cdots x_n:\ x_{i}\in A\}$ is a word of subshift $X$ if there is an $x\in X$ and $k\in \mathbb{N}^{+}$ such that $w=x_kx_{k+1}\cdots x_{k+n-1}$. Here we call $n$ the length of $w$, denoted by $|w|$. The language of a subshift $X$, denoted by $\mathcal L(X)$, is the set of all words of $X$. Denote $\mathcal L_{n}(X)\triangleq \mathcal L(X)\bigcap A^{n}$ all the words of $X$ with length $n$.

Let $B$ be a set of words occurring in $A^{\mathbb{Z}}$. Then $\Lambda_{B}:=\{x\in A^{\mathbb{Z}}\mid\text{no block} \ w\in B\ \text{occurs in}\ x\}$ is a compact $\sigma$-invariant subset. $(\Lambda_{B},\sigma)$ is a subshift defined by $B$. A subshift $(\Lambda,\sigma)$ is called a subshift of finite type if there exists a finite word set $B$ such that $\Lambda_{B}=\Lambda$.

A subshift which can be displayed
as a factor of a subshift of finite type is said to
be a sofic subshift.

{ Next we present one type of subshift, $\beta$-shift, basically referring to \cite{R,Sm,PS}.} 
Let $\beta > 1$ be a real number. We denote by $[x]$ and $\{x\}$ the integer and fractional part of the real number $x$.
Considering the $\beta$-transformation $f_{\beta}:[0,1)\rightarrow [0,1)$ given by $$f_{\beta}(x)=\beta x\ (\mathrm{mod}\ 1).$$
For $\beta \notin \mathbb{N}$, let $b=[\beta]$ and for $\beta \in \mathbb{N}$, let $b=\beta-1$. Then we split the interval $[0,1)$ into $b+1$ partition as below

$$J_0=\left[0,\frac{1}{\beta}\right),\ J_1=\left[\frac{1}{\beta},\frac{2}{\beta}\right), \cdots,\ J_b=\left[\frac{b}{\beta},1\right).$$
For $x\in[0,1)$, let $i(x,\beta)=(i_n(x,\beta))_1^{\infty}$ be the sequence given by $i_n(x,\beta)=j$ when $f_{\beta}^{n-1}(x)\in J_j$. We call $i(x,\beta)$ the greedy $\beta$-expansion of $x$ and we have
$$ x=\sum_{n=1}^{\infty}i_n(x,\beta)\beta^{-n}.$$
We call $(\Sigma_{\beta},\sigma)$ $\beta$-shift, where $\sigma$ is the shift map, $\Sigma_{\beta}$ is the closure of $\{i(x,\beta)\}_{x\in [0,1)}$ in $\prod_{i=1}^{\infty}\{0,1,\cdots,b\}$.

From the discussion above, we can define the greedy $\beta$-expansion of 1, denoted by $i(1,\beta)$. Parry showed that the set of sequence with belong to $\Sigma_{\beta}$ can be characterised as
$$\omega \in \Sigma_{\beta} \Leftrightarrow f^k(\omega) \leq i(1,\beta)\ \mathrm{for\ all}\ k\geq 1,$$
where $\leq$ is taken in the lexicographic ordering \cite{P}. By the definition of $\Sigma_{\beta}$ above, $\Sigma_{\beta_1}\subsetneq\Sigma_{\beta_2}$ for $\beta_1<\beta_2$(\cite{P}). 
Refer to \cite{Sm}, we have $\{\beta\in (1,+\infty):(\Sigma_{
},\sigma)\text{ has the specification  property} \}$ is dense in $(1,+\infty)$.  So the results of \cite{CT} can be applied to some $\beta$-shifts to get distributional chaos in transitive points.  However, from \cite{BJ} the set of parameters of $\beta$ for which  the specification property holds, is dense in $ (1,+\infty)$ but has Lebesgue zero measure.
Here we  will show strongly distributional chaos in transitive points for any $\beta$-shift.  

\begin{Lem}\label{lemma-BB}
	For a subshift $(X,\sigma)$ with $X\subseteq \{0,1,\dots,n-1\}^{\mathbb{N^{+}}}$, if $x$ $\varepsilon_{0}$-traces $y$ on $[0,i]$ for $x,y\in X$, $\varepsilon_{0}>0$ and $i\in\mathbb{N^{+}}$, then $x$ $\varepsilon$-traces $y$ on $[0,i]$ with exponent $\lambda=\ln n$ where $\varepsilon=n^{2}\varepsilon_{0}$.
\end{Lem}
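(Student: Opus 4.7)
The plan is to exploit the ultrametric structure of the shift metric, which takes values in $\{n^{-k}:k\geq 1\}\cup\{0\}$, so that the strict inequality $d(\omega,\gamma)<\varepsilon_0$ is literally a statement about coordinate agreement. Applying this at every shift $j\in\{0,1,\ldots,i\}$ and piecing the agreements together produces a long common initial block of $x$ and $y$, from which the required exponential decay of $d(\sigma^jx,\sigma^jy)$ in $\min\{j,i-j\}$ will follow mechanically.

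Concretely, I will first choose the (nonnegative) integer $L$ with $n^{-L-1}<\varepsilon_0\leq n^{-L}$; the trivial case $\varepsilon_0\geq n^{-1}$ corresponds to $L=0$ and is absorbed by the slack factor $n^{2}$ in $\varepsilon$. For each $j\in\{0,\ldots,i\}$ the hypothesis $d(\sigma^jx,\sigma^jy)<\varepsilon_0\leq n^{-L}$ forces the first index of disagreement between $\sigma^jx$ and $\sigma^jy$ to exceed $L$, so $(\sigma^jx)_k=(\sigma^jy)_k$ for $1\leq k\leq L$, which is the same as $x_{j+k}=y_{j+k}$. Taking the union over $0\leq j\leq i$ and $1\leq k\leq L$ yields $x_s=y_s$ for every $s\in\{1,\ldots,i+L\}$.

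Now fix $j\in[0,i]$. Because $x$ and $y$ agree on their first $i+L$ coordinates, the sequences $\sigma^jx$ and $\sigma^jy$ agree on their first $i-j+L$ coordinates, so the earliest possible disagreement index is at least $i-j+L+1$. Hence
\[
d(\sigma^jx,\sigma^jy)\leq n^{-(i-j+L+1)}=n^{-(L+1)}\cdot n^{-(i-j)}<\varepsilon_0\cdot n^{-(i-j)},
\]
using $n^{-L-1}<\varepsilon_0$. Since $i-j\geq\min\{j,i-j\}$, the right-hand side is bounded by $\varepsilon_0\cdot n^{-\min\{j,i-j\}}\leq n^{2}\varepsilon_0\cdot e^{-(\ln n)\min\{j,i-j\}}$, which is exactly the definition of $\varepsilon$-tracing on $[0,i]$ with $\varepsilon=n^{2}\varepsilon_0$ and $\lambda=\ln n$.

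There is no serious obstacle: the only point that needs mild care is the regime $\varepsilon_0>n^{-1}$ where no coordinate agreement can be extracted, but in that regime the universal diameter bound $d\leq n^{-1}$ together with the generous factor $n^{2}$ already yields the conclusion for the small values of $\min\{j,i-j\}$ for which the target bound is nontrivial. In the main regime the argument is a direct translation between an ultrametric inequality and block agreement, and the extra $n^{2}$ in the statement is simply the slack that lets this translation proceed uniformly.
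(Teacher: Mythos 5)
Your main argument is correct and is essentially the paper's own proof. The paper picks $m(\varepsilon)$ with $n^{-(m(\varepsilon)+1)}<\varepsilon\leq n^{-m(\varepsilon)}$ (so $m(\varepsilon_0)=m(\varepsilon)+2$), converts each inequality $d(\sigma^jx,\sigma^jy)<\varepsilon_0$ into agreement of the first $m(\varepsilon_0)$ coordinates, chains these over $j=0,\dots,i$ into a common initial block, and then reads off $d(\sigma^jx,\sigma^jy)\leq n^{-(i+m(\varepsilon_0)-j)}<\varepsilon e^{-(\ln n)\min\{j,i-j\}}$ — exactly your translation between the ultrametric and block agreement, with $L$ playing the role of $m(\varepsilon_0)$. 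In the range $\varepsilon_0\leq n^{-1}$ your bookkeeping is clean and even yields the stronger constant $\varepsilon_0$ in place of $n^{2}\varepsilon_0$.

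The flawed point is your final paragraph: the case $\varepsilon_0>n^{-1}$ is not absorbed by the factor $n^{2}$. For $m=\min\{j,i-j\}\geq 3$ the target bound $n^{2}\varepsilon_0e^{-m\ln n}$ drops below the diameter $n^{-1}$, and the conclusion genuinely fails there: in the full shift on two symbols take $\varepsilon_0=0.6$, $x=000\cdots$, $y=111\cdots$, $i\geq 6$; the hypothesis is vacuous since every distance is $\leq 1/2<0.6$, yet at $j=3$ the conclusion demands $1/2<2.4\cdot 2^{-3}=0.3$, which is false. So no argument can close that case; your claim that the diameter bound handles "the small values of $\min\{j,i-j\}$ for which the target bound is nontrivial" has it backwards, since the bound becomes more demanding, not trivial, as $\min\{j,i-j\}$ grows. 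To be fair, this is a defect of the lemma as literally stated rather than of your method: the paper's own proof carries the same implicit restriction (its $m(\varepsilon)\in\mathbb{N}$ exists only when $\varepsilon=n^{2}\varepsilon_0\leq 1$), and in the applications (upgrading shadowing and specification to their exponential versions) only small $\varepsilon_0$ is needed. The honest fix is to state the restriction (say $\varepsilon_0\leq n^{-1}$, or $n^{2}\varepsilon_0\leq 1$) rather than to claim the large-$\varepsilon_0$ case.
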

\begin{proof}
	Take $m(\varepsilon)\in\mathbb{N}$ such that
	$n^{-(m(\varepsilon)+1)}< \varepsilon\leq n^{-m(\varepsilon)}$, so $m(\varepsilon_{0})=m(\varepsilon)+2$. We have that $d(\omega,\gamma)<\varepsilon_{0}$ if and only if $\omega_{k}=\gamma_{k}$, $k=1,\dots,m(\varepsilon_{0})$. Thus  $y_{k}=x_{k}$, $k=1,\dots,i+m(\varepsilon_{0})-1$ where $x=x_{1}x_{2}\dots$ and $y=y_{1}y_{2}\dots$. So for any $0\leq j\leq i$
	\begin{equation}
		\begin{split}
			d(f^{j}(x),f^{j}(y))
			&\leq n^{-(i+m(\varepsilon_{0})-j)}\\
			&\leq n^{-(m(\varepsilon)+1)}e^{-\min\{j,i-j\}\ln n}\\
			&< \varepsilon e^{-\min\{j,i-j\}\ln n}
		\end{split}
	\end{equation}
	by $i+1-j\geq\min\{j,i-j\}$. Let  $\lambda=\ln n$, then $d(f^{j}(x),f^{j}(y))<\varepsilon e^{-\min\{j,i-j\}\lambda}$, i.e., $x$ $\varepsilon-$traces $y$ on $[0,i]$ with exponent $\lambda=\ln n$.
\end{proof}

\begin{Prop}\label{proposition-shiftshadow}
	If a subshift $(X,\sigma)$ with $X\subseteq \{0,1,\dots,n-1\}^{\mathbb{N^{+}}}$ has shadowing property, then it has exponential shadowing property with exponent $\lambda=\ln n$..
\end{Prop}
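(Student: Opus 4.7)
The strategy is to reduce a pseudo-orbit in the sense of exponential shadowing (a sequence of finite orbit segments $\{x_n,i_n\}$) to an ordinary pseudo-orbit $\{y_k\}_{k=0}^\infty$ in $X$, apply the hypothesis of shadowing to obtain a single tracing point, and then upgrade the uniform tracing estimate to an exponential one on each block via Lemma \ref{lemma-BB}. No intrinsic use of the subshift structure is needed beyond what is already packaged in Lemma \ref{lemma-BB}.

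Given $\varepsilon>0$, set $\varepsilon_0 = \varepsilon/n^2$ and let $\delta>0$ be the constant from the shadowing property of $(X,\sigma)$ associated with $\varepsilon_0$. Given any $\delta$-pseudo-orbit $\{x_n,i_n\}_{n=0}^\infty$ in the sense of exponential shadowing, define $c_0=0$, $c_n=\sum_{m=0}^{n-1}i_m$ for $n\geq 1$, and
\[
y_{c_n+j} := \sigma^{j}(x_n)\qquad \text{for } 0\leq j\leq i_n-1,\ n\geq 0.
\]
Within each block one has $\sigma(y_{c_n+j})=y_{c_n+j+1}$, and at each block boundary
\[
d(\sigma(y_{c_{n+1}-1}), y_{c_{n+1}}) = d(\sigma^{i_n}(x_n),x_{n+1})<\delta,
\]
so $\{y_k\}_{k=0}^\infty$ is a $\delta$-pseudo-orbit in the ordinary sense. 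By shadowing there is some $x\in X$ with $d(\sigma^k(x),y_k)<\varepsilon_0$ for every $k\geq 0$; equivalently, for every $n\geq 0$ and every $0\leq j\leq i_n-1$,
\[
d(\sigma^{c_n+j}(x), \sigma^j(x_n)) < \varepsilon_0.
\]
In other words, $\sigma^{c_n}(x)$ $\varepsilon_0$-traces $x_n$ on $[0,i_n-1]$.

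At this point Lemma \ref{lemma-BB} applies block-by-block: since $\sigma^{c_n}(x)$ $\varepsilon_0$-traces $x_n$ on $[0,i_n-1]$, it also $\varepsilon$-traces $x_n$ on $[0,i_n-1]$ with exponent $\lambda=\ln n$, which is exactly the estimate
\[
d(\sigma^{c_n+j}(x),\sigma^j(x_n))< \varepsilon\, e^{-\min\{j,\,i_n-1-j\}\lambda}, \qquad 0\leq j\leq i_n-1,
\]
required for exponential shadowing. Since this holds for every $n\geq 0$, the point $x$ is an $(\varepsilon,\lambda)$-shadowing point of $\{x_n,i_n\}_{n=0}^\infty$, and $(X,\sigma)$ has exponential shadowing with exponent $\lambda=\ln n$.

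There is no real obstacle: the content of the proposition is essentially that, in the symbolic setting, uniform closeness of two orbit segments automatically implies exponential closeness (with the largest distance attained at the endpoints), which is precisely what Lemma \ref{lemma-BB} records. The only thing to verify carefully is the bookkeeping between the two notions of pseudo-orbit, and in particular that the ordinary pseudo-orbit constructed by concatenating the blocks $\sigma^j(x_n)$ has jump $<\delta$ only at the block boundaries—as shown above.
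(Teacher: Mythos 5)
Your proposal is correct and follows essentially the same route as the paper: choose $\varepsilon_0=\varepsilon/n^2$, invoke the ordinary shadowing property to get a point $x$ with $d(\sigma^{c_n+j}(x),\sigma^j(x_n))<\varepsilon_0$ on every block, and then upgrade block-by-block via Lemma \ref{lemma-BB}. The only difference is that you spell out the conversion of $\{x_n,i_n\}$ into an ordinary $\delta$-pseudo-orbit, a bookkeeping step the paper leaves implicit.
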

\begin{proof}
	Let $\varepsilon>0$. $\delta>0$ is provided for $\varepsilon_{0}=\frac{\varepsilon}{n^{2}}$ by shadowing property. Let $\{x_{n},i_{n}\}_{n=0}^{\infty}$ be a $\delta$-pseudo-orbit, then there exists $x\in X$ such that
	\begin{equation}
		d(f^{c_{n}+j}(x),f^{j}(x_{n}))<\varepsilon_{0} 
	\end{equation}
	for all $0\leq j\leq i_{n}-1$ and $n\in\mathbb{N}$, where $c_{0}=0$,  $c_{n}=i_{0}+\dots+i_{n-1}$ for $n\in\mathbb{N^{+}}$. By Lemma \ref{lemma-BB}, for any $n\in\mathbb{N^{+}}$ and any $0\leq j\leq i_{n}-1$
	\begin{equation}
		d(f^{c_{n}+j}(x),f^{j}(x_{n}))<\varepsilon e^{-\min\{j,i_{n}-1-j\}\lambda}
	\end{equation}
	where $\lambda=\ln n$. So $(X,\sigma)$ has exponential shadowing property with exponent $\lambda=\ln n$.
\end{proof}

\begin{Prop}\label{proposition-shiftspecif}
	If a subshift $(X,\sigma)$ with $X\subseteq \{0,1,\dots,n-1\}^{\mathbb{N^{+}}}$ has $($Bowen’s$)$ specification property, then it has (Bowen’s) exponential specification property with exponent $\lambda=\ln n$.
\end{Prop}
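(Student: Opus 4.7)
The plan is to mimic the argument for Proposition \ref{proposition-shiftshadow}, replacing the role of shadowing by specification. The key observation is that Lemma \ref{lemma-BB} is purely a statement about the shift metric: whenever two points of $X$ lie within $\varepsilon_0$ of each other under $\sigma^j$ for every $j \in [0,i]$, they automatically lie within $\varepsilon = n^2\varepsilon_0$ under $\sigma^j$ with exponential decay rate $\lambda = \ln n$ towards the endpoints. Hence, converting plain tracing into exponential tracing on a finite interval costs nothing beyond inflating the constant by a factor $n^2$.

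Given $\varepsilon > 0$, I would first set $\varepsilon_0 := \varepsilon/n^2$ and let $K_{\varepsilon_0}$ be the integer supplied by the specification (respectively Bowen's specification) property for $\varepsilon_0$. I would then define $K_\varepsilon := K_{\varepsilon_0}$. Given the data $\{y_1,\ldots,y_s\}$ and a sequence $0 = a_1 \le b_1 < \cdots < a_s \le b_s$ with $a_{m+1}-b_m \ge K_\varepsilon = K_{\varepsilon_0}$, specification produces a point $x \in X$ (periodic with period $b_s + K_{\varepsilon_0}$ in the Bowen case) such that $x$ $\varepsilon_0$-traces $y_m$ on $[a_m,b_m]$ for every $m \le s$. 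In particular, for each $m$ the point $\sigma^{a_m}(x)$ $\varepsilon_0$-traces $y_m$ on $[0,b_m-a_m]$.

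Next I would apply Lemma \ref{lemma-BB} to the pair $\sigma^{a_m}(x),\, y_m$ with $i = b_m-a_m$: this upgrades $\varepsilon_0$-tracing on $[0,i]$ into $\varepsilon$-tracing on $[0,i]$ with exponent $\lambda = \ln n$, that is,
\[
d\bigl(\sigma^{a_m+j}(x),\sigma^{j}(y_m)\bigr) < \varepsilon\, e^{-\min\{j,\,b_m-a_m-j\}\lambda}
\qquad \text{for all } 0\le j\le b_m-a_m.
\]
Reindexing by $i := a_m+j$ shows exactly that $x$ $\varepsilon$-traces $y_m$ on $[a_m,b_m]$ with exponent $\lambda = \ln n$, which is condition (a) of Definition \ref{definition of exp specification}. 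The periodicity condition (b) is inherited verbatim from specification, since no modification of $x$ was made. Therefore $(X,\sigma)$ has (Bowen's) exponential specification property with exponent $\lambda = \ln n$.

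There is no genuine obstacle here: everything reduces to the soft metric observation in Lemma \ref{lemma-BB}, and the only bookkeeping is translating the interval $[a_m,b_m]$ to $[0,b_m-a_m]$ before invoking the lemma. The argument is essentially identical to Proposition \ref{proposition-shiftshadow}, with ``pseudo-orbit'' replaced by ``specification data'' and the periodicity clause preserved unchanged.
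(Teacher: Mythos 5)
Your proof is correct and follows essentially the same route as the paper: reduce $\varepsilon$ to $\varepsilon_0=\varepsilon/n^2$, invoke (Bowen's) specification, and then upgrade the plain tracing to exponential tracing via Lemma \ref{lemma-BB}. The only difference is that you spell out the shift-by-$a_m$ reindexing needed to apply the lemma on $[0,b_m-a_m]$, which the paper leaves implicit.
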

\begin{proof}
	Let $\varepsilon>0$. $K_{\varepsilon}$ is provided for $\varepsilon_{0}=\frac{\varepsilon}{n^{2}}$ by (Bowen’s) specification property.
	For any integer s $\ge$ 2, any set $\{y_1,y_2,\cdots,y_s\}$ of $s$ points of $X$, and any sequence\\
	$$0 = a_1 \le b_1 < a_2 \le b_2 < \cdots < a_s \le b_s$$ of 2$s$ integers with $$a_{m+1}-b_m \ge K_\varepsilon$$ for $m = 1,2,\cdots,s-1$, there is a point $x$ in $X$ such that 
	$x$ $\varepsilon_{0}$-$traces$ $y_m$ on $[a_m,b_m]$ for all positive integers m $\le$ s. By Lemma \ref{lemma-BB}, $x$ $\varepsilon$-$traces$ $y_m$ on $[a_m,b_m]$ with exponent $\lambda=\ln n$ for all positive integers m $\le$ s. So $(X,\sigma)$ has (Bowen’s) exponential specification property with exponent $\lambda=\ln n$.
\end{proof}
\begin{Rem}
	Using same method, we have that if a subshift $(X,\sigma)$ with $X\subseteq \{0,1,\dots,n-1\}^{\mathbb{N^{+}}}$ has gluing orbit property, then it has exponential gluing orbit property with exponent $\lambda=\ln n$. If $X\subseteq \{0,1,\dots,n-1\}^{\mathbb{Z}}$, Proposition \ref{proposition-shiftshadow} and Proposition \ref{proposition-shiftspecif} still hold.
\end{Rem}
\begin{Thm}\label{thm-3}
	Mixing subshift of finite type and mixing sofic subshift have exponential specification property. Every $\beta$-shift $(\Sigma_{\beta},\sigma)$ has a sequence of nondecreasing $\sigma$-invariant compact subsets $\{X_{n} \subseteq \Sigma_{\beta}:n \in \mathbb{N^{+}} \}$ such that $\overline{\bigcup_{n\geq 1}X_{n}}=\Sigma_{\beta}$, and $({X_{n}},\sigma|_{X_{n}})$ has Bowen's exponential specification property for any $n \in \mathbb{N^{+}}.$
\end{Thm}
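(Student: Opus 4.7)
The theorem splits into two independent assertions, and I would handle them in sequence.

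For the first assertion on mixing subshifts of finite type and mixing sofic subshifts, I would combine two classical ingredients. Bowen's theorem says every mixing subshift of finite type has Bowen's specification property, and since Bowen's specification is preserved under continuous factor maps between shifts and every mixing sofic subshift is by definition a factor of a mixing subshift of finite type, these too satisfy Bowen's specification. Because both types of systems live inside a finite-alphabet shift space, Proposition \ref{proposition-shiftspecif} (applied with the alphabet cardinality $n$) upgrades specification to Bowen's exponential specification with exponent $\lambda=\ln n$. This step is entirely bookkeeping once the two cited facts are in hand.

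For the second assertion, the main plan is to exploit the two structural inputs recalled in Section \ref{section-6}: the density of the specification locus $\{\beta'\in(1,+\infty):(\Sigma_{\beta'},\sigma)\ \text{has specification}\}$ in $(1,+\infty)$, and the strict monotonicity $\Sigma_{\beta_1}\subsetneq \Sigma_{\beta_2}$ for $\beta_1<\beta_2$. Given $\beta$, I would choose an increasing sequence $\beta_n\nearrow\beta$ with each $(\Sigma_{\beta_n},\sigma)$ having Bowen's specification, and set $X_n:=\Sigma_{\beta_n}$. By monotonicity the $X_n$ are nondecreasing compact $\sigma$-invariant subsets of $\Sigma_\beta$; by construction each $(X_n,\sigma|_{X_n})$ has Bowen's specification, which Proposition \ref{proposition-shiftspecif} promotes to Bowen's exponential specification with exponent $\ln([\beta]+1)$ (using that $X_n\subset \Sigma_\beta$ lies in a finite alphabet of size at most $[\beta]+1$). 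If $\beta$ itself already lies in the specification locus, the sequence $X_n=\Sigma_\beta$ works trivially.

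The delicate step, and the place where I expect the main obstacle, is verifying $\overline{\bigcup_{n\ge 1}X_n}=\Sigma_\beta$. My plan is to use Parry's admissibility criterion ($\omega\in\Sigma_\beta$ iff $\sigma^k\omega\le i(1,\beta)$ lexicographically for every $k\ge 0$) and to approximate: given $\omega\in\Sigma_\beta$ and a finite prefix $\omega_1\cdots\omega_N$, I would produce an admissible sequence in $\Sigma_{\beta_n}$ for all sufficiently large $n$ by replacing the tail of $\omega$ with $0^\infty$ (which is admissible for every $\beta'>1$) and checking that admissibility of the truncated prefix survives for all $\beta_n$ sufficiently close to $\beta$ from below. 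The subtle case is when $i(1,\beta)$ is eventually periodic, where strict inequalities can fail on whole tails of the expansion; in that case I would fall back to the explicit nested construction of subshifts of finite type inside $\beta$-shifts due to Pfister--Sullivan (refined by Climenhaga--Thompson), which furnishes a nondecreasing sequence of mixing SFTs with specification whose union is dense in $\Sigma_\beta$. Either route produces the required $\{X_n\}$, after which Proposition \ref{proposition-shiftspecif} finishes the argument.
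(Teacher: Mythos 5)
Your proposal follows essentially the same route as the paper: finite-alphabet subshifts upgrade (Bowen's) specification to (Bowen's) exponential specification for free via Proposition \ref{proposition-shiftspecif}, and for $\beta$-shifts one takes $X_n=\Sigma_{\beta_n}$ for an increasing sequence $\beta_n\nearrow\beta$ drawn from the dense set of parameters with Bowen's specification, using $\Sigma_{\beta_1}\subsetneq\Sigma_{\beta_2}$ for $\beta_1<\beta_2$. The only cosmetic differences are in the first assertion: the paper obtains exponential specification for mixing SFTs by going through the shadowing property (Proposition \ref{proposition-shiftshadow} plus Proposition \ref{prop-exp-specif}) rather than invoking Bowen's specification theorem directly, and for mixing sofic subshifts it simply cites Sigmund for specification rather than running your factor-map argument; both variants are sound. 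The one substantive point is the density $\overline{\bigcup_{n\geq 1}\Sigma_{\beta_n}}=\Sigma_\beta$: you correctly single this out as the delicate step, whereas the paper's proof asserts the existence of the sequence $\{\beta_n\}$ and stops, leaving the density claim entirely implicit. Your truncation-by-$0^\infty$ argument via Parry's criterion (with the Pfister--Sullivan inner approximation as a fallback for the degenerate expansions of $i(1,\beta)$) is a reasonable way to close that gap, so on this point your write-up is actually more complete than the paper's.
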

\begin{proof}
	Since subshift of finite type has shadowing property \cite[Theorem 1]{PW19782}, then it has exponential shadowing peoperty by Proposition \ref{proposition-shiftshadow}. Every mixing sofic subshift has the specification property \cite{SigSpe}, then it has the exponential specification property by Proposition \ref{proposition-shiftspecif}.  
	
	From \cite{Sm}, we have $\{\beta\in (1,+\infty):(\Sigma_{
	},\sigma)\text{ has the Bowen’s specification  property} \}$ is dense in $(1,+\infty)$. Thus for any $\beta\in(1,+\infty)$, there exists an increase sequence $\{\beta_{i}\}_{i=1}^{\infty}$ such that $\lim\limits_{i\to\infty}\beta_{i}=\beta$ and $(\Sigma_{\beta_{i}},\sigma)$ satisfies Bowen’s exponential specification property for any $i\in\mathbb{N^{+}}$.
\end{proof}

\subsection{Various fractal sets are chaotic in more systems}
\begin{Lem}\label{generic distal-2}\cite[Lemma 4.1]{CT}
	Given a dynamical system $(X,f)$. Suppose that $\mu\in\mathcal{M}_f^e(X)$, $S_\mu$ is nondegenerate $($i.e,
	with at least two points$)$ and minimal. Then, $G_\mu$ has distal pair.
\end{Lem}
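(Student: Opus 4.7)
The plan is to take any $x \in S_\mu \cap G_\mu$, set $y := f(x)$, and verify that $\{x,y\}$ is the required distal pair in $G_\mu$. Such an $x$ exists because ergodicity of $\mu$ gives $\mu(G_\mu)=1$ by Birkhoff's ergodic theorem, while $\mu(S_\mu)=1$ by definition of support, so $S_\mu \cap G_\mu$ has full $\mu$-measure and in particular is nonempty. Invariance of both $G_\mu$ and $S_\mu$ under $f$ then ensures $y=f(x)$ also lies in $S_\mu \cap G_\mu$, so both points are generic for $\mu$.

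Next I would check that $y \neq x$. If $f(x)=x$, the singleton $\{x\}$ would be a nonempty closed $f$-invariant subset of the minimal set $S_\mu$, forcing $S_\mu=\{x\}$, which contradicts the nondegeneracy hypothesis that $S_\mu$ contains at least two points. So $x$ is not a fixed point and the pair $(x,y)$ is genuinely distinct.

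For distality, suppose towards a contradiction that
$$\liminf_{n\to\infty} d(f^n(x), f^n(y)) = \liminf_{n\to\infty} d(f^n(x), f^{n+1}(x)) = 0.$$
Choose $n_k \to \infty$ realising this liminf, and use compactness of $X$ to pass to a subsequence along which $f^{n_k}(x) \to z$ for some $z \in \omega(f,x) \subseteq \overline{\mathrm{orb}(x,f)} \subseteq S_\mu$. Continuity of $f$ gives $f^{n_k+1}(x)=f(f^{n_k}(x)) \to f(z)$, while the proximality assumption forces $f^{n_k+1}(x)\to z$ as well; hence $f(z)=z$. But then $\{z\}$ is a nonempty closed $f$-invariant subset of the minimal set $S_\mu$, so $S_\mu=\{z\}$, again contradicting nondegeneracy. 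This contradiction shows that $\{x,y\}$ is a distal pair in $G_\mu$.

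No substantial obstacle arises; the argument is a short compactness/continuity calculation, and each hypothesis plays a single clearly identifiable role: ergodicity is used to make $S_\mu\cap G_\mu$ nonempty, minimality is used to rule out invariant singletons inside $S_\mu$, and nondegeneracy is used to turn such singletons into contradictions.
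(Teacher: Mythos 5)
Your argument is correct and coincides with the scheme the paper itself uses: the statement is quoted from [CT, Lemma 4.1], but the paper's proof of the analogous Lemma \ref{generic distal} is exactly your construction — take $x\in S_\mu\cap G_\mu$, set $y=f(x)$, and show that proximality of $x$ and $f(x)$ would force a fixed (periodic) point in $\omega(f,x)\subseteq S_\mu$, contradicting minimality together with nondegeneracy. Your compactness argument producing the fixed point $z$ and the exclusion of $f(x)=x$ are both valid, so there is nothing to fix.
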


\begin{Prop}\label{distal-dense}\cite[Proposition 4.2]{CT}
	Suppose that $(X,f)$ is a dynamical system with specification property. { Then}
	$$\{\mu\in \mathcal M_f(X):\mu\ is\ ergodic,\ S_\mu\ is\ nondegenerate\ and\ minimal \}$$ is dense in $\mathcal M_f(X).$
\end{Prop}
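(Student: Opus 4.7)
My plan is to combine the density of invariant measures supported on minimal sets (a known consequence of specification, already invoked in the proof of Proposition \ref{CG}) with a direct specification-based construction of an almost periodic point producing an ergodic measure on a nondegenerate minimal set close to $\mu$.

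I start by fixing $\mu \in \mathcal M_f(X)$ and a weak-* open neighborhood $G$ of $\mu$. By \cite[Proposition 2]{DYM}, as applied in the proof of Proposition \ref{CG}, the set of $\nu \in \mathcal M_f(X)$ with $S_\nu$ minimal is dense in $\mathcal M_f(X)$, so there is some $\nu_0 \in G$ with $M_0 := S_{\nu_0}$ minimal. Taking the ergodic decomposition $\nu_0 = \int \nu_\omega\, dP(\omega)$, each ergodic component $\nu_\omega$ has a nonempty closed $f$-invariant support contained in $M_0$, hence equal to $M_0$ by minimality; so every ergodic component of $\nu_0$ is an ergodic measure supported on the same minimal set $M_0$.

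The difficulty is that an individual ergodic component of $\nu_0$ need not itself lie in $G$, since its distance to $\nu_0$ in the weak-* topology is not controlled by the barycenter relation. To circumvent this, I would construct the desired ergodic measure directly via specification: build an almost periodic point $x$ by concatenating longer and longer approximating orbit pieces whose empirical measures cluster near $\mu$, separated by gaps of length $K_{\varepsilon}$ supplied by specification, while also hierarchically repeating earlier segments so that $\overline{\orb(x,f)}$ is minimal and the empirical measures of $x$ converge to a single measure $\tilde\nu \in G$ supported on this minimal set. Arranging the reinforcing revisits so that the same asymptotic frequencies are obtained from every starting time $f^k x$ forces $\tilde\nu$ to be ergodic.

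For nondegeneracy, specification on a nondegenerate compact space yields $h_{\topp}(f)>0$ (cf.\ \cite[Proposition 21.6]{Sig}), so there exist positive-entropy ergodic measures, whose supports are automatically nondegenerate; perturbing $\mu$ within $G$ by a small convex combination with such a measure to avoid the finitely many Dirac measures on fixed points, we can ensure the constructed minimal set has more than one point. The principal obstacle is precisely the hierarchical construction of $x$: a careful schedule of $\varepsilon_n\to 0$, gap lengths $K_{\varepsilon_n}$, choice of approximating segments, and reinforcing revisits is required in order to achieve ergodicity of $\tilde\nu$, minimality of its support, nondegeneracy of that support, and weak-* closeness of $\tilde\nu$ to $\mu$ all at once, closely paralleling the construction used in the proof of Theorem \ref{maintheorem-2}(1) in Section \ref{section-3}.
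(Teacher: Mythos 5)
The paper does not actually prove this proposition — it imports it verbatim from \cite[Proposition 4.2]{CT} — so your proposal has to stand on its own, and as written it does not close. Your first route (a measure $\nu_0\in G$ with minimal support, then an ergodic component) is correctly abandoned, but what replaces it is a promise of a construction whose two essential outputs are exactly the ones your sketch does not secure. First, minimality of $\overline{orb(x,f)}$: specification only yields $\varepsilon$-tracing, so ``hierarchically repeating earlier segments'' does not by itself force every point of the orbit \emph{closure} to return syndetically to each of its neighborhoods at every scale; arranging this is essentially a re-proof of Dateyama's theorem \cite[Proposition 2]{DYM} and is precisely the hard content you defer. Second, your stated criterion for ergodicity is insufficient: requiring the same asymptotic frequencies along $f^kx$ for every $k\in\mathbb{N}$ controls only countably many points of a generally uncountable minimal set; ergodicity of $\tilde\nu$ would follow from, say, unique ergodicity of the orbit closure, which needs uniform convergence of Birkhoff averages on the whole closure and is not implied by anything you arrange. (Your nondegeneracy fix, perturbing away from Dirac masses at fixed points, is fine in spirit.)

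The gap is avoidable because the paper already contains the key lemma you are not using: Proposition \ref{proposition of entropy-dense property} (Pfister--Sullivan) produces, for any $\mu$ and any neighborhood $G$, an ergodic $\nu$ with $\mathcal{M}_f(S_\nu)\subseteq G$, i.e.\ \emph{every} invariant measure carried by $S_\nu$ lies in $G$. After replacing $\mu$ by $(1-t)\mu+t\nu_1$ with $\nu_1$ ergodic of positive entropy (available since specification on a nondegenerate space gives $h_{\mathrm{top}}(f)>0$ by \cite[Proposition 21.6]{Sig}), one may assume $\mu$ is at positive distance from the compact set $\{\delta_p: p\in \mathrm{Fix}(f)\}$ and shrink $G$ to avoid it. Then take any minimal subset $M_0\subseteq S_\nu$ and any ergodic measure $\lambda$ of $(M_0,f|_{M_0})$: $S_\lambda=M_0$ is minimal, $\lambda\in\mathcal{M}_f(S_\nu)\subseteq G$, and $M_0$ cannot be a single fixed point because $G$ contains no such $\delta_p$. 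No orbit construction is needed; the point you miss is that the conclusion $\mathcal{M}_f(S_\nu)\subseteq G$ transfers membership in $G$ down to the ergodic measures on the minimal subsets of $S_\nu$.
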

Using similar method of Proposition \ref{BG}, we have the following:
\begin{Prop}\label{BN}
	Suppose that $(X,f)$ is a dynamical system with a sequence of nondecreasing $f$-invariant compact subsets $\{X_{n} \subseteq X:n \in \mathbb{N^{+}} \}$ such that $\overline{\bigcup_{n\geq 1}X_{n}}=X$, $({X_{n}},f|_{X_{n}})$ has specification property for any $n \in \mathbb{N^{+}}$. 
	Then $(X,f)$ has measure $\nu$ with full support $($i.e. $S_\nu=X$$)$. Moreover, the set of such measures is dense in $\overline{\{\mu \in \mathcal{M}_{f}(X):\mu(\bigcup_{n\geq 1}X_{n})=1\}}$.
\end{Prop}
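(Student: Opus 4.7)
The plan is to follow the proof of Proposition \ref{BG} almost verbatim, but with one simplification: since each $(X_{n},f|_{X_{n}})$ now comes with specification property directly, we can bypass the detour through periodic decomposition used in Proposition \ref{BG} (that detour existed only to produce specification on mixing pieces of a transitive shadowing system) and apply Proposition \ref{CG} directly to each $(X_{n},f|_{X_{n}})$.

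Concretely, I would proceed in three steps. First, fix $l\in\mathbb{N}^{+}$. Since $(X_{l},f|_{X_{l}})$ has specification property, Proposition \ref{CG} gives that the set of almost periodic points is dense in $X_{l}$. Pick a countable dense sequence $\{x_{j}\}_{j\geq 1}\subseteq AP(f|_{X_{l}})$, and for each $j$ choose $\mu_{j}\in\mathcal{M}_{f}(\omega(f,x_{j}))$; since $x_{j}$ is almost periodic, $\omega(f,x_{j})$ is minimal so $x_{j}\in\omega(f,x_{j})=S_{\mu_{j}}$. Set $\mu^{(l)}:=\sum_{j\geq 1}2^{-j}\mu_{j}$, which is $f$-invariant with $S_{\mu^{(l)}}=\overline{\bigcup_{j\geq 1}S_{\mu_{j}}}\supseteq\overline{\{x_{j}\}}=X_{l}$, hence $S_{\mu^{(l)}}=X_{l}$ and $\mu^{(l)}\in\mathcal{M}_{f|_{X_{l}}}(X_{l})$.

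Second, form the full-support measure on $X$ by setting $\nu:=\sum_{n\geq 1}2^{-n}\mu^{(n)}$. Then $\nu$ is $f$-invariant, $\nu(\bigcup_{n\geq 1}X_{n})=\sum_{n\geq 1}2^{-n}\mu^{(n)}(\bigcup_{n\geq 1}X_{n})=1$, and
\[
S_{\nu}\ =\ \overline{\bigcup_{n\geq 1}S_{\mu^{(n)}}}\ =\ \overline{\bigcup_{n\geq 1}X_{n}}\ =\ X.
\]

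Third, for density, it suffices to approximate measures $\omega\in\{\mu\in\mathcal{M}_{f}(X):\mu(\bigcup_{n\geq 1}X_{n})=1\}$ (since this set is dense in its closure). Given such $\omega$, put $\omega_{k}:=\tfrac{1}{k}\nu+(1-\tfrac{1}{k})\omega$. Then $\omega_{k}$ is $f$-invariant, $\omega_{k}(\bigcup_{n\geq 1}X_{n})=1$, $S_{\omega_{k}}\supseteq S_{\nu}=X$ so $S_{\omega_{k}}=X$, and $\omega_{k}\to\omega$ in the weak$^{*}$ topology. There is no real obstacle here because specification is already strong enough to hand us dense almost periodic points on each $X_{n}$; the only thing to keep in mind is checking that convex combinations preserve the set $\{\mu:\mu(\bigcup_{n\geq 1}X_{n})=1\}$, which is immediate.
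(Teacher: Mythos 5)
Your proposal is correct and matches the paper's intended argument: the paper proves Proposition \ref{BN} by "the same method as Proposition \ref{BG}", and with specification assumed on each $X_n$ that method is exactly what you wrote — apply Proposition \ref{CG} directly to $(X_n,f|_{X_n})$ to get dense almost periodic points, sum measures on their minimal $\omega$-limit sets to get $S_{\mu^{(n)}}=X_n$, sum over $n$ to get $S_\nu=X$, and use convex combinations $\frac1k\nu+(1-\frac1k)\omega$ for density. Your observation that the periodic-decomposition detour of Proposition \ref{BG} is unnecessary here is precisely why the paper states \ref{BN} separately with only a one-line reference to that proof.
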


\begin{Prop}\label{Trans-BS}
	Suppose that $(X,f)$ is a dynamical system with a sequence of nondecreasing $f$-invariant compact subsets $\{X_{n} \subseteq X:n \in \mathbb{N^{+}} \}$ such that $\overline{\bigcup_{n\geq 1}X_{n}}=X$, $({X_{n}},f|_{X_{n}})$ has specification property for any $n \in \mathbb{N^{+}}$.
	Then $x\in Trans$ implies $x\in Rec_{Ban}^{up}$.
\end{Prop}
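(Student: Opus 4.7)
The plan is to transcribe essentially verbatim the argument used for Proposition \ref{Trans-BR}, simply substituting Proposition \ref{BN} for Proposition \ref{BG} to supply the measure with full support. The characterization of the Banach upper recurrent points is governed by Proposition \ref{prop3}, which reduces the problem to showing $x \in \omega(f,x) = C^*_x$ whenever $x \in Trans$.

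First, since $x$ is transitive, $\omega(f,x) = X$, and in particular $x \in \omega(f,x)$ trivially. Next, by \cite[Proposition 3.9]{Fur}, for any ergodic measure $\mu_0 \in \mathcal{M}_f(\omega(f,x)) = \mathcal{M}_f(X)$, the point $x$ is quasi-generic for $\mu_0$. Taking the ergodic decomposition and using that $V_f^*(x)$ is closed and convex, this yields $\mathcal{M}_f(X) \subseteq V_f^*(x)$, hence
\[
C^*_x \;=\; \overline{\bigcup_{m \in V_f^*(x)} S_m} \;\supseteq\; \overline{\bigcup_{m \in \mathcal{M}_f(X)} S_m}.
\]

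Second, apply Proposition \ref{BN} to obtain some $\nu \in \mathcal{M}_f(X)$ with $S_\nu = X$. This forces $\overline{\bigcup_{m \in \mathcal{M}_f(X)} S_m} = X$, and therefore $C^*_x = X = \omega(f,x)$. By Proposition \ref{prop3}, this equality is exactly the criterion $x \in Rec_{Ban}^{up}$, completing the argument.

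There is no genuine obstacle here: the proof is a direct adaptation of Proposition \ref{Trans-BR}, and the only technical point worth checking is that Proposition \ref{BN} is indeed available in the specification setting (which it is, since specification property implies the analogous density of fully supported invariant measures, exactly as shown in the proof of Proposition \ref{BG}, with Proposition \ref{CG} playing the role of the density of almost periodic points).
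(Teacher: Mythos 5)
Your proposal is correct and follows essentially the same route as the paper's own proof: transitivity gives $\omega(f,x)=X$, Furstenberg's quasi-genericity of ergodic measures combined with the fully supported measure from Proposition \ref{BN} gives $C^*_x=X$, and Proposition \ref{prop3} concludes. Your extra remark about passing from ergodic measures to all of $\mathcal{M}_f(X)$ via ergodic decomposition is a slightly more careful rendering of a step the paper leaves implicit, but it is not a different argument.
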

\begin{proof}
	From \cite[Proposition 3.9]{Fur} we know that for a point $x_0$ and an ergodic
	measure $\mu_0\in \mathcal M_f(\omega(f,x_0))$, $x_0$ is quasi-generic for $\mu_0$. 
	So if $x\in Trans$, $\mathcal M_f(\omega(f,x))=\mathcal M_f(X)$. By Proposition \ref{BN}, there is a measure with full support. Then $C^*_x=\overline{\bigcup_{m\in V^*_f(x)}S_m}=\overline{\bigcup_{m\in M_f(\omega(f,x))}S_m}=\overline{\bigcup_{m\in \mathcal M_f(X)}S_m}=X=\omega(f,x).$ By Proposition \ref{prop3}, we have $x\in Rec_{Ban}^{up}$.
\end{proof}

\begin{Prop}\label{periodic-dense}\cite[Proposition 21.3, Proposition 21.8]{Sig}
	Suppose that $(X,f)$ is a dynamical system with Bowen's specification property. Then
	\begin{description}
		\item[(a)] $\{\mu\in \mathcal M_{f}(X):S_\mu\text{ is periodic}\}$ is dense in $\mathcal M_f(X)$,
		\item[(b)] $Per(f)$ is dense in $X.$ 
	\end{description}
\end{Prop}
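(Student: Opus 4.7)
\smallskip

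\noindent\textbf{Proof proposal for Proposition \ref{periodic-dense}.} My plan is to handle (b) first by a direct application of Bowen's specification, and then prove (a) by gluing generic orbit segments of ergodic components into a single periodic orbit.

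For (b), let $x\in X$ and $\varepsilon>0$. The aim is to produce a periodic point in $B(x,\varepsilon)$. I would apply Definition \ref{definition of specification} with $s=2$, $y_1=y_2=x$, $a_1=b_1=0$ and $a_2=b_2=K_\varepsilon$. The spacing condition $a_2-b_1=K_\varepsilon$ is satisfied, and the conclusion gives a point $p\in X$ with $f^n(p)=p$ for $n=b_s+K_\varepsilon=2K_\varepsilon$ that $\varepsilon$-traces $y_1$ on $[0,0]$. In particular $d(p,x)<\varepsilon$, so $p\in \mathrm{Per}(f)\cap B(x,\varepsilon)$, proving density.

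For (a), I would proceed in three steps. First, reduce the problem to the ergodic case: since $\mathcal{M}_f(X)$ is the closed convex hull of $\mathcal{M}^e_f(X)$, it suffices to approximate, in weak$^*$ topology, any finite convex combination $\mu=\sum_{i=1}^N\theta_i\mu_i$ with $\mu_i$ ergodic and $\theta_i\in\mathbb{Q}\cap(0,1]$, $\sum\theta_i=1$, by a periodic measure. Second, by Birkhoff's theorem, for each $i$ pick a generic point $z_i\in G_{\mu_i}$; given $\varepsilon>0$, choose integers $n_i$ such that $n_i/\sum_j n_j$ is close to $\theta_i$ and $\mathcal E_{n_i}(z_i)\in B(\mu_i,\varepsilon)$ for every $i$. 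Third, apply Bowen's specification with $s=N$, $y_i=f^{-a_i}(z_i)$ (shifting so that the traced orbit segment of $z_i$ starts at time $a_i$), and $b_i-a_i=n_i-1$, $a_{i+1}-b_i=K_\varepsilon$. This yields a periodic point $p$ with period $n=\sum_i n_i+NK_\varepsilon$ whose orbit $\varepsilon$-traces the concatenation of the segments $\{f^j(z_i):0\leq j<n_i\}$. A direct computation in the spirit of Lemma \ref{measure distance}, combined with the fact that the gaps of length $K_\varepsilon$ contribute a vanishing proportion as $\sum_i n_i\to\infty$, shows
\[
d\bigl(m_p,\;\textstyle\sum_i\frac{n_i}{\sum_j n_j}\mathcal E_{n_i}(z_i)\bigr)<\varepsilon,
\]
and then the triangle inequality gives $d(m_p,\mu)\to 0$ as $\varepsilon\to 0$ and $\sum_i n_i\to\infty$.

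The step I expect to be most delicate is the weak$^*$ estimate in step three: one must keep careful track of the contribution of the $K_\varepsilon$-gap portions of the periodic orbit (which are not controlled by the tracing), and ensure that the ratios $n_i/\sum_j n_j$ can be chosen simultaneously close to the rational weights $\theta_i$ while still forcing each $\mathcal E_{n_i}(z_i)$ to lie near $\mu_i$. Both issues are resolved by sending $\min_i n_i\to\infty$ first and then $\varepsilon\to 0$. Aside from this bookkeeping, everything reduces to the specification property and weak$^*$ continuity arguments already used in the proof of Theorem \ref{maintheorem-2}(1), so no essentially new tools are required.
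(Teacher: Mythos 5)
Your proof is correct in substance; note, though, that the paper does not prove this proposition at all --- it simply cites \cite[Propositions 21.3 and 21.8]{Sig}, and your argument (closing up concatenated generic orbit segments of ergodic components, with segment lengths proportional to rational weights, and letting the $K_\varepsilon$-gaps become negligible) is exactly the classical argument from that reference. The only slip is the substitution $y_i=f^{-a_i}(z_i)$: with the paper's tracing convention, $x$ $\varepsilon$-traces $y_m$ on $[a_m,b_m]$ already means $d(f^i(x),f^{i-a_m}(y_m))<\varepsilon$, so the time shift is built in and you should take $y_i=z_i$ (your choice would double the shift, and $f^{-a_i}$ need not even be defined when $f$ is not invertible); this is cosmetic and does not affect the argument.
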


Now, replacing Lemma \ref{BF}, Proposition \ref{BG}, Lemma \ref{generic distal}, Theorem \ref{maintheorem-3}(2), Proposition \ref{Trans-BR}, \cite[Theorem 1.5]{DOT} and \cite[Corollary A]{DT} by Proposition \ref{distal-dense}, Proposition \ref{BN}, Lemma \ref{generic distal-2}, Theorem \ref{maintheorem-2}(2), Proposition \ref{Trans-BS}, \cite[Corollary 3.11]{CKS} $($which states that for systems with specification property, the irregular sets have full topological entropy$)$ and Proposition \ref{periodic-dense} respectively in the proof of Theorem \ref{thm-1} and Theorem \ref{thm-5}, we have the following two results.

\begin{Thm}\label{thm-4}
	Suppose that $(X,f)$ is a dynamical system with a sequence of nondecreasing $f$-invariant compact subsets $\{X_{n} \subseteq X:n \in \mathbb{N^{+}} \}$ such that $\overline{\bigcup_{n\geq 1}X_{n}}=X$, $({X_{n}},f|_{X_{n}})$ has exponential specification property for any $n \in \mathbb{N^{+}}$. Given $\varphi \in C^{0}(X,\mathbb{R})$ with $I_{\varphi}(f) \cap X_{n_0}\neq\emptyset$ for some $n_0 \in \mathbb{N^{+}}$.
	Then for any $a,b \in \mathrm{Int}(L_\varphi|_{X_{n_0}})$ with $a\leq b,$ and any non-empty open set $U\subseteq X,$ the following three sets are all strongly distributional chaotic:
	\begin{description}
		\item[(a)] $(Rec_{Ban}^{up}\setminus Rec^{up})\cap Trans\cap I_{\varphi}[a,b]\cap U\cap \{x\in X: S_\mu\subseteq X_{n_0} \text{ for any }\mu\in V_f(x) \}$,
		\item[(b)] $(Rec^{up}\setminus Rec^{low})\cap Trans\cap I_{\varphi}[a,b]\cap U\cap \{x\in X: \exists\ \mu_1,\mu_{2}\in V_f(x)\text{ s.t. }S_{\mu_1}\subseteq X_{n_0}, S_{\mu_{2}}=X \}$,
		\item[(c)] $(Rec_{Ban}^{up}\setminus Rec^{up})\cap Trans\cap QR(f)\cap U\cap \{x\in X: S_\mu\subseteq X_{n_0} \text{ for any }\mu\in V_f(x) \}$,
	\end{description}
	Moreover, for any $z\in \bigcup_{n\geq 1}X_{n}$ and any $\varepsilon>0$, the set $U$ can be replaced by local unstable manifold $W^{u}_{\varepsilon}(z).$
	Moreover, for any $m\geq 1,$ the functions with  $I_{\varphi}(f) \cap X_{m}\neq\emptyset$ are open and dense in $C^{0}(X,\mathbb{R})$. 
\end{Thm}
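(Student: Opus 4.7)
The plan is to mirror the proof of Theorem \ref{thm-1} line by line, replacing each ingredient that required the homeomorphism/Lipschitz/shadowing+transitive setup with the corresponding tool that is available under exponential specification property, exactly as the remark preceding the statement suggests. Concretely, the proof of Theorem \ref{thm-1} uses seven inputs: Lemma \ref{BF}, Proposition \ref{BG}, Lemma \ref{generic distal}, Theorem \ref{maintheorem-3}(2), Proposition \ref{Trans-BR}, \cite[Theorem 1.5]{DOT}, and Proposition \ref{proposition-AD}. Under the hypotheses of Theorem \ref{thm-4}, each subsystem $(X_n,f|_{X_n})$ has exponential specification property, so in particular has specification property, and I will use Proposition \ref{distal-dense}, Proposition \ref{BN}, Lemma \ref{generic distal-2}, Theorem \ref{maintheorem-2}(2), Proposition \ref{Trans-BS}, \cite[Corollary 3.11]{CKS} and Proposition \ref{proposition-AD} as their respective replacements.

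First I would unpack the assumption $I_\varphi(f)\cap X_{n_0}\neq\emptyset$ to deduce $\mathrm{Int}(L_\varphi|_{X_{n_0}})\neq\emptyset$, so that there exist $\lambda_1,\lambda_2\in\mathcal M_{f|_{X_{n_0}}}(X_{n_0})$ with $\int\varphi\,d\lambda_1<a<b<\int\varphi\,d\lambda_2$. Applied to $(X_{n_0},f|_{X_{n_0}})$, Proposition \ref{distal-dense} supplies ergodic $\mu_1,\mu_2\in\mathcal M^e_{f|_{X_{n_0}}}(X_{n_0})$ with $S_{\mu_i}$ nondegenerate and minimal, and with $\int\varphi\,d\mu_1<a<b<\int\varphi\,d\mu_2$; Proposition \ref{BN} supplies $\mu_3,\mu_4\in\mathcal M_f(X)$ of full support with $\mu_i(\bigcup_n X_n)=1$ and $\int\varphi\,d\mu_3<a<b<\int\varphi\,d\mu_4$. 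Lemma \ref{generic distal-2} then produces a distal pair in each $G_{\mu_i}$, which (since $S_{\mu_i}\subseteq X_{n_0}$) lies in $X_{n_0}$.

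Next I would select convex-combination weights $\theta_1,\theta_2,\theta_3\in(0,1)$ exactly as in the proof of Theorem \ref{thm-1} so that $\nu_1:=\theta_1\mu_1+(1-\theta_1)\mu_2$, $\nu_2:=\theta_2\mu_1+(1-\theta_2)\mu_2$ satisfy $\int\varphi\,d\nu_1=a$, $\int\varphi\,d\nu_2=b$, while $\nu_3:=\theta_3\mu_3+(1-\theta_3)\mu_4$ satisfies $\int\varphi\,d\nu_3=a$; note $S_{\nu_1}=S_{\nu_2}=S_{\mu_1}\cup S_{\mu_2}\subseteq X_{n_0}\neq X$ and $S_{\nu_3}=X$. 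Setting $K_1:=\mathrm{conv}\{\nu_1,\nu_2\}$ and $K_2:=\mathrm{conv}\{\nu_2,\nu_3\}$, each $K_i$ is a non-empty compact connected subset of $\overline{\{\mu\in\mathcal M_f(X):\mu(\bigcup_n X_n)=1\}}$ containing a measure $\mu\in K_i$ of the convex-combination form required by Theorem \ref{maintheorem-2}(2), with the distal pairs supplied by Lemma \ref{generic distal-2} and sitting inside $X_{n_0}$. Applying Theorem \ref{maintheorem-2}(2) to each $K_i$ then yields that for any non-empty open $U\subseteq X$ the set $G_{K_i}\cap U\cap Trans$ is strongly distributional chaotic, and the $W^u_\varepsilon(z)$ refinement follows from the ``moreover'' part of Theorem \ref{maintheorem-2}(2).

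Finally I would read off the recurrence classification. By Proposition \ref{Trans-BS}, every transitive point is Banach upper recurrent, so $G_{K_i}\cap Trans\subseteq Rec^{up}_{Ban}$. Applying Propositions \ref{prop1} and \ref{prop2} to the description of $C_x$ for $x\in G_{K_i}$ separates the two cases: for $K_1$ one has $C_x=S_{\mu_1}\cup S_{\mu_2}\subsetneq\omega(f,x)$ (since $Trans$ forces $\omega(f,x)=X$), giving $G_{K_1}\cap Trans\subseteq Rec^{up}_{Ban}\setminus Rec^{up}$ and $G_{K_1}\subseteq I_\varphi[a,b]$, which supplies (a); for $K_2$ one has $S_{\nu_3}=X=\omega(f,x)=C_x$ but $S_{\nu_2}\subsetneq X$ so $x\notin Rec^{low}$, giving (b); taking $a=b$ in the $K_1$ construction yields $G_{K_1}\subseteq QR(f)$ and hence (c). For the last assertion, the non-emptiness of $I_\phi(f|_{X_m})$ for some $\phi\in C^0(X_m,\mathbb R)$ follows from \cite[Corollary 3.11]{CKS} applied to $(X_m,f|_{X_m})$ (which has specification property and hence positive topological entropy as soon as $X_m$ is non-degenerate, a case easily reduced to by replacing $X_m$ by a larger $X_N$ if necessary), and then Proposition \ref{proposition-AD} upgrades this to openness and density in $C^0(X,\mathbb R)$. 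The main technical point to double-check is that the distal pairs produced by Lemma \ref{generic distal-2} indeed lie in $X_{l_0}=X_{n_0}$, so that the quantitative hypothesis ``$p_i,q_i\in X_{l_0}$'' of Theorem \ref{maintheorem-2}(2) is satisfied; this is immediate because $G_{\mu_i}\subseteq S_{\mu_i}\subseteq X_{n_0}$.
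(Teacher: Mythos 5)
Your proposal is correct and coincides with the paper's own proof: the paper establishes Theorem \ref{thm-4} precisely by rerunning the proof of Theorem \ref{thm-1} with Lemma \ref{BF}, Proposition \ref{BG}, Lemma \ref{generic distal}, Theorem \ref{maintheorem-3}(2), Proposition \ref{Trans-BR} and \cite[Theorem 1.5]{DOT} replaced by Proposition \ref{distal-dense}, Proposition \ref{BN}, Lemma \ref{generic distal-2}, Theorem \ref{maintheorem-2}(2), Proposition \ref{Trans-BS} and \cite[Corollary 3.11]{CKS}, which is exactly your substitution scheme. One small caveat: your justification that the distal pair lies in $X_{n_0}$ via ``$G_{\mu_i}\subseteq S_{\mu_i}$'' is not literally valid (generic points need not lie in the support); the correct, and equally immediate, argument is to apply Lemma \ref{generic distal-2} to the subsystem $(X_{n_0},f|_{X_{n_0}})$, since $\mu_1,\mu_2\in\mathcal M^{e}_{f|_{X_{n_0}}}(X_{n_0})$, so the pair produced there automatically lies in $X_{n_0}$.
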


\begin{Thm}\label{thm-6}
	Suppose that $(X,f)$ is a dynamical system with a sequence of nondecreasing $f$-invariant compact subsets $\{X_{n} \subseteq X:n \in \mathbb{N^{+}} \}$ such that $\overline{\bigcup_{n\geq 1}X_{n}}=X$, $({X_{n}},f|_{X_{n}})$ has Bowen's exponential specification property for any $n \in \mathbb{N^{+}}$. Given $\varphi \in C^{0}(X,\mathbb{R})$ with $I_{\varphi}(f) \cap X_{n_0}\neq\emptyset$ for some $n_0 \in \mathbb{N^{+}}$.
	Then for any $a,b \in \mathrm{Int}(L_\varphi|_{X_{n_0}})$ with $a\leq b,$ and any non-empty open set $U\subseteq X,$ the following set is strongly distributional chaotic:
	$$(Rec^{up}\setminus Rec^{low})\cap Trans\cap I_{\varphi}[a,b]\cap U\cap \{x\in X: \text{for any }\mu\in V_f(x) \text{ there is } n\in\mathbb{N^{+}} \text{ such that }S_\mu\subseteq X_{n} \}.$$
	Moreover, for any $z\in \bigcup_{n\geq 1}X_{n}$ and any $\varepsilon>0$, the set $U$ can be replaced by local unstable manifold $W^{u}_{\varepsilon}(z).$
	Moreover, for any $m\geq 1,$ the functions with  $I_{\varphi}(f) \cap X_{m}\neq\emptyset$ are open and dense in $C^{0}(X,\mathbb{R})$. 
\end{Thm}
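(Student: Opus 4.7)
The plan is to mirror the proof of Theorem \ref{thm-5} step by step, swapping each ingredient for the counterpart valid under Bowen's exponential specification, following the explicit dictionary given in the paragraph preceding the statement: Lemma \ref{BF} $\to$ Proposition \ref{distal-dense}, Proposition \ref{BG} $\to$ Proposition \ref{BN}, Lemma \ref{generic distal} $\to$ Lemma \ref{generic distal-2}, Theorem \ref{maintheorem-3}(2) $\to$ Theorem \ref{maintheorem-2}(2), Proposition \ref{Trans-BR} $\to$ Proposition \ref{Trans-BS}, \cite[Theorem 1.5]{DOT} $\to$ \cite[Corollary 3.11]{CKS}, and \cite[Corollary A]{DT} $\to$ Proposition \ref{periodic-dense}. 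The hypothesis $I_\varphi(f)\cap X_{n_0}\ne\emptyset$ forces $X_{n_0}$ to carry two invariant measures with distinct $\varphi$-integrals, hence to be nondegenerate and not a single periodic orbit. Applying Proposition \ref{distal-dense} to $(X_{n_0}, f|_{X_{n_0}})$ then produces ergodic $\mu_1,\mu_2\in\mathcal{M}_{f|_{X_{n_0}}}^{e}(X_{n_0})$ with nondegenerate minimal supports and $\int\varphi\, d\mu_1 < a < b < \int\varphi\, d\mu_2$, while Lemma \ref{generic distal-2} supplies distal pairs $p_i, q_i\in G_{\mu_i}$ which one may arrange to lie inside $S_{\mu_i}\subseteq X_{n_0}$ by inspecting its proof. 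Setting $\nu_1 = \theta_1\mu_1+(1-\theta_1)\mu_2$ and $\nu_2 = \theta_2\mu_1+(1-\theta_2)\mu_2$ with $\theta_1,\theta_2\in(0,1)$ chosen so that $\int\varphi\,d\nu_1 = a$ and $\int\varphi\,d\nu_2 = b$, we have $S_{\nu_1} = S_{\nu_2} = S_{\mu_1}\cup S_{\mu_2}\subsetneq X$.

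Next I would feed Proposition \ref{periodic-dense}(a) applied to each $(X_n, f|_{X_n})$, together with Lemma \ref{lemma-MM}, into the $m_{j,k}$-and-$l_n$ construction of Theorem \ref{thm-5} verbatim, obtaining $f$-invariant measures $l_n$ supported on finite unions of periodic orbits with $\int\varphi\,dl_n = a$ and $\bigcup_n S_{l_n}$ dense in $X$. Setting $\omega_n = \theta_n\nu_1 + (1-\theta_n)l_n$ with $\theta_n\nearrow 1$ gives $\omega_n\to\nu_1$ in the weak$^*$-topology and $\int\varphi\,d\omega_n = a$. The sought compact connected set is
\[
K \;=\; \{\nu_1\} \;\cup\; \bigcup_{n\ge 1}\{\tau\omega_n + (1-\tau)\omega_{n+1}: \tau\in[0,1]\} \;\cup\; \mathrm{conv}\{\nu_1,\nu_2\},
\]
which by Lemma \ref{lemma-MM} sits inside $\overline{\{\mu\in\mathcal{M}_f(X) : \mu(\bigcup_n X_n) = 1\}}$, and $G_K\subseteq I_\varphi[a,b]$ because $\int\varphi\,d\mu = a$ for every $\mu\in K\setminus\mathrm{conv}\{\nu_1,\nu_2\}$ while $\int\varphi\,d\mu$ ranges through $[a,b]$ on $\mathrm{conv}\{\nu_1,\nu_2\}$.

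Finally, I apply Theorem \ref{maintheorem-2}(2) to $K$: the decomposition $\nu_2 = \theta_2\mu_1+(1-\theta_2)\mu_2\in K$ with $\nu_2(X_{n_0}) = 1$ together with the distal pairs from Lemma \ref{generic distal-2} verify its hypotheses, so $G_K\cap U\cap Trans$ (and $G_K\cap W^u_\varepsilon(z)\cap Trans$ when $f$ is a homeomorphism) is strongly distributional chaotic. The inclusion into the target set is then immediate in the style of Theorem \ref{thm-5}: for any $x\in G_K\cap U\cap Trans$, $V_f(x) = K$ and $\omega(f,x) = X$ give $C_x = \overline{\bigcup_{\mu\in K}S_\mu}\supseteq\overline{\bigcup_n S_{l_n}} = X$, so $x\in Rec^{up}$ by Proposition \ref{prop2}; while $S_{\nu_1}\subsetneq X = \omega(f,x)$ forces $x\notin Rec^{low}$ via Proposition \ref{prop1}; and every $\mu\in K$ is supported inside some $X_n$ by construction. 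Openness and density of $\{\varphi : I_\varphi(f)\cap X_m\ne\emptyset\}$ in $C^0(X,\mathbb{R})$ follow from Proposition \ref{proposition-AD} applied to $Y = X_m$, using \cite[Corollary 3.11]{CKS} to obtain some $\phi_0$ with $I_{\phi_0}(f|_{X_m})\ne\emptyset$. The only delicate moment in the whole argument is arranging the distal pairs to sit inside $X_{n_0}$ as required by Theorem \ref{maintheorem-2}(2); this is invisible in the bare statement of Lemma \ref{generic distal-2} but emerges from its proof, which produces the pair from points of $S_{\mu_i}\cap G_{\mu_i}$.
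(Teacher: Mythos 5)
Your proposal is correct and follows exactly the route the paper takes: the paper's own proof of this theorem is precisely the substitution recipe (Lemma \ref{BF} $\to$ Proposition \ref{distal-dense}, Lemma \ref{generic distal} $\to$ Lemma \ref{generic distal-2}, Theorem \ref{maintheorem-3}(2) $\to$ Theorem \ref{maintheorem-2}(2), \cite[Corollary A]{DT} $\to$ Proposition \ref{periodic-dense}, etc.) applied to the proof of Theorem \ref{thm-5}, which you carry out in detail, including the construction of $K=K^a\cup\mathrm{conv}\{\nu_1,\nu_2\}$ and the verification via Propositions \ref{prop1} and \ref{prop2}. Your closing observation that the distal pairs land in $X_{n_0}$ because Lemma \ref{generic distal-2} produces them from $S_{\mu_i}\cap G_{\mu_i}$ is exactly the right justification for the one hypothesis of Theorem \ref{maintheorem-2}(2) that is not immediate from the statements alone.
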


\begin{Cor}\label{coro-1}
	Suppose that $(X,f)$ is a dynamical system with exponential specification property. Then for any $\varphi \in C^{0}(X,\mathbb{R})$, either $I_{\varphi}$ is empty or for any $a,b \in \mathrm{Int}(L_\varphi)$ with $a\leq b,$ and any non-empty open set $U\subseteq X,$ the following three sets are all strongly distributional chaotic:
	\begin{description}
		\item[(a)] $(Rec_{Ban}^{up}\setminus Rec^{up})\cap Trans\cap I_{\varphi}[a,b]\cap U$,
		\item[(b)] $(Rec^{up}\setminus Rec^{low})\cap Trans\cap I_{\varphi}[a,b]\cap U$,
		\item[(c)] $(Rec_{Ban}^{up}\setminus Rec^{up})\cap Trans\cap QR(f)\cap U$,
	\end{description}
	Moreover, if $f$ is a homeomorphism, for any $z\in X$ and any $\varepsilon>0$, the set $U$ can be replaced by local unstable manifold $W^{u}_{\varepsilon}(z).$ Moreover, the functions with  $I_{\varphi}(f)\neq\emptyset$ are open and dense in $C^{0}(X,\mathbb{R})$. 
\end{Cor}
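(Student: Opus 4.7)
The plan is to derive this corollary as a direct specialization of Theorem \ref{thm-4} by taking the trivial filtration $X_n := X$ for every $n \in \mathbb{N}^{+}$. With this choice, the nondecreasing invariant compact sequence satisfies $\overline{\bigcup_{n \geq 1} X_n} = X$ automatically, and the subsystem hypothesis ``$(X_n, f|_{X_n})$ has exponential specification property'' reduces to the assumption on $(X,f)$ itself. Moreover, $\mathcal{M}_{f|_{X_{n_0}}}(X_{n_0}) = \mathcal{M}_f(X)$ for every $n_0$, so $L_\varphi|_{X_{n_0}} = L_\varphi$ and the hypothesis $I_\varphi(f) \cap X_{n_0} \neq \emptyset$ becomes simply $I_\varphi(f) \neq \emptyset$, which is exactly the non-trivial case of the dichotomy.

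Next I would observe that under the substitution $X_n = X$, the constraint sets appearing in Theorem \ref{thm-4} trivialize in a way that makes each set in Theorem \ref{thm-4} a \emph{subset} of the corresponding set in the present corollary. Indeed $\{x \in X : S_\mu \subseteq X_{n_0} \text{ for all } \mu \in V_f(x)\} = X$, while $\{x \in X : \exists\, \mu_1, \mu_2 \in V_f(x)\text{ with } S_{\mu_1} \subseteq X_{n_0},\, S_{\mu_2} = X\} \subseteq X$ (this set is in fact nonempty and of the right form because the construction in Theorem \ref{thm-4} relies on Proposition \ref{BN} to supply an $f$-invariant measure of full support, which is still available under bare exponential specification via Proposition \ref{BN}). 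Since any superset of an uncountable $\alpha$-DC1-scrambled set is itself $\alpha$-DC1-scrambled for the same $\alpha$, the three sets (a), (b), (c) in the corollary inherit strong distributional chaos from their smaller counterparts in Theorem \ref{thm-4}.

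For the unstable-manifold refinement, when $f$ is a homeomorphism I would invoke the ``moreover'' clause of Theorem \ref{thm-4}: the open set $U$ can be replaced by $W^{u}_{\varepsilon}(z)$ for any $z \in \bigcup_{n \geq 1} X_n = X$ and any $\varepsilon > 0$, which is precisely the statement required. For the density assertion, the final clause of Theorem \ref{thm-4} gives that the functions with $I_\varphi(f) \cap X_m \neq \emptyset$ are open and dense in $C^{0}(X, \mathbb{R})$ for every $m \geq 1$; taking $m = 1$ with $X_1 = X$ yields openness and density of $\{\varphi : I_\varphi(f) \neq \emptyset\}$.

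Since the argument is a direct specialization, there is no serious technical obstacle: the only point that deserves a sentence of justification is the observation that all the $X_{n_0}$-indexed constraints on $V_f(x)$ reduce to conditions that hold for a large enough subset of $X$ to yield the desired inclusion of scrambled sets, which follows from the fully-supported measure produced by Proposition \ref{BN} and the convex-combination recipe used in the proof of Theorem \ref{thm-1}.
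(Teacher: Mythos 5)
Your proposal is correct and matches the paper's (implicit) derivation: Corollary \ref{coro-1} is obtained exactly by applying Theorem \ref{thm-4} with the constant filtration $X_n=X$, noting that the $X_{n_0}$-indexed constraint sets only shrink the sets of Theorem \ref{thm-4}, so the larger sets in the corollary inherit the uncountable $\alpha$-DC1-scrambled subsets. The unstable-manifold and density clauses follow from the corresponding ``moreover'' clauses of Theorem \ref{thm-4} just as you describe.
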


We consider the following six types of chaotic systems:
\begin{description}
	\item[(Hyp.1)] transitive Anosov diffeomorphism on a compact connected manifold or a system restricted on a mixing locally maximal hyperbolic set;
	\item[(Hyp.2)] mixing expanding map on a compact manifold;
	\item[(Hyp.3)] mixing subshift of finite type;
	\item[(Hyp.4)] homoclinic class $H(p)$;
	\item[(Hyp.5)] $\beta-$shift;
	\item[(Hyp.6)] mixing sofic subshift.
\end{description}
\begin{Thm}\label{prop-1}
	Based on the discussion of Theorem \ref{maintheorem-4}, Theorem \ref{thm-2} and Theorem \ref{thm-3}, we obtain that Theorem \ref{thm-1} and Theorem \ref{thm-5} holds for (Hyp.4), Theorem \ref{thm-4} and Theorem \ref{thm-6} hold for (Hyp.5), Corollary \ref{coro-1} holds for (Hyp.1), (Hyp.2), (Hyp.3) and (Hyp.6). 
\end{Thm}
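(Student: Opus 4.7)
The plan is, for each of the six system classes, to verify the hypotheses of the relevant abstract framework so that the conclusion follows immediately from a result already established in the paper. No genuinely new argument is required; the content of this theorem is a clean application of Theorem \ref{thm-1}, Theorem \ref{thm-4}, Theorem \ref{thm-5}, Theorem \ref{thm-6} and Corollary \ref{coro-1}, once the appropriate filtration $\{X_n\}$ (or the observation that $X$ itself has exponential specification) is exhibited.

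For the homoclinic class case (Hyp.4), the construction in the proof of Theorem \ref{maintheorem-4} already supplies all the needed data inside $X = H(p)$: an increasing sequence of transitive locally maximal hyperbolic horseshoes $\{\Lambda_n\}$ with $\overline{\bigcup_n \Lambda_n} = H(p)$ such that each $(\Lambda_n, f|_{\Lambda_n})$ is transitive, expansive (by \cite[Corollary 6.4.10]{KatHas}), contains hyperbolic periodic points homoclinically related to $p$, and satisfies exponential shadowing by Proposition \ref{proposition-localmax}. Since $f$ is a diffeomorphism on a compact manifold, it is a Lipschitz homeomorphism of $H(p)$, and nontriviality of $H(p)$ rules out periodicity. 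These are exactly the hypotheses of Theorem \ref{thm-1} and Theorem \ref{thm-5}, which therefore apply verbatim to $H(p)$.

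For the $\beta$-shift (Hyp.5), Theorem \ref{thm-3} produces a sequence $\{X_n\}$ of nondecreasing $\sigma$-invariant compact subsets of $\Sigma_\beta$ with $\overline{\bigcup_n X_n} = \Sigma_\beta$ such that each $(X_n, \sigma|_{X_n})$ has Bowen's exponential specification property. This is precisely the hypothesis of Theorem \ref{thm-4} and Theorem \ref{thm-6} with $X = \Sigma_\beta$, so both conclusions follow at once. Note that specification need not hold on the whole of $\Sigma_\beta$; the point of the filtration is to approximate $\Sigma_\beta$ by shifts $\Sigma_{\beta_i}$ with $\beta_i \uparrow \beta$ lying in the dense set of parameters from \cite{Sm} for which Bowen's specification holds.

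For (Hyp.1), (Hyp.2), (Hyp.3) and (Hyp.6), the entire space already carries the exponential specification property, so one can apply Corollary \ref{coro-1} directly (taking $X_n = X$ for all $n$). Indeed, for (Hyp.1) and (Hyp.2), Theorem \ref{thm-2} gives mixing together with exponential shadowing, whence Proposition \ref{prop-exp-specif} yields exponential specification; for (Hyp.3) and (Hyp.6), exponential specification is asserted by Theorem \ref{thm-3} itself. The only ``obstacle'' is the bookkeeping step of checking that every auxiliary hypothesis of the abstract frameworks (Lipschitzness, nontrivial diameter, nonperiodicity of $X$, existence of a periodic orbit inside the smallest subsystem) is automatic in each of the six concrete settings; this is straightforward from the definitions and the structure theorems quoted.
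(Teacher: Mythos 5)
Your proposal is correct and follows essentially the same route as the paper: the paper's own justification of this theorem is precisely the observation that the horseshoe filtration built in the proof of Theorem \ref{maintheorem-4} verifies the hypotheses of Theorems \ref{thm-1} and \ref{thm-5} for $H(p)$, that Theorem \ref{thm-3} supplies the filtration needed for Theorems \ref{thm-4} and \ref{thm-6} on $\beta$-shifts, and that Theorems \ref{thm-2} and \ref{thm-3} combined with Proposition \ref{prop-exp-specif} give exponential specification on the whole space in the remaining four cases so that Corollary \ref{coro-1} applies. Your bookkeeping of the auxiliary hypotheses (Lipschitzness, non-periodicity, expansiveness via \cite[Corollary 6.4.10]{KatHas}, $\mathrm{Per}(f|_{X_1})\neq\emptyset$ via the periodic point $p$) matches what the paper relies on implicitly.
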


\section{Comments and questions}\label{section-without stong chaos}

\subsection{Some invariant fractal sets are not strongly distributional chaotic, and even not 1-chaotic}
First we prove that for a measure $\mu$ supported on a fixed point, $G_{\mu}$ contains no distal pair. In fact, we give the following result which shows that there are not $x,y\in G_{\mu}$ such that $x,y$ is DC1-scrambled. More precisely, there are not $x,y\in G_{\mu}$ such that $x,y$ satisfies $\limsup\limits_{n\to\infty}\frac{1}{n}|\{0\leq i\leq n-1:d(f^{i}(x),f^{i}(y))\geq t_{0}\}|>0$ for some $t_{0}>0.$ 
\begin{Lem}
	For any given dynamical system $(X,f)$, if $p\in X$ is a fixed point, then for any $x,y\in G_{\delta_{p}}$ and $t>0$, one has $\liminf\limits_{n\to\infty}\frac{1}{n}|\{0\leq i\leq n-1:d(f^{i}(x),f^{i}(y))<t\}|=1.$
\end{Lem}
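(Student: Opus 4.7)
The plan is to exploit the fact that $x,y\in G_{\delta_p}$ forces the orbits of both $x$ and $y$ to spend asymptotically all of their time arbitrarily close to the fixed point $p$. Once both orbits cluster near $p$ most of the time, the triangle inequality forces them to be mutually close most of the time as well.

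Fix $t>0$ and define the continuous cut-off function $\varphi:X\to[0,t/2]$ by $\varphi(z)=\min\{d(z,p),t/2\}$. Since $p$ is a fixed point, $\int \varphi\, d\delta_p=\varphi(p)=0$, and since $x,y\in G_{\delta_p}$, weak* convergence of the empirical measures yields
\begin{equation*}
\lim_{n\to\infty}\frac{1}{n}\sum_{i=0}^{n-1}\varphi(f^i(x))=0\quad\text{and}\quad \lim_{n\to\infty}\frac{1}{n}\sum_{i=0}^{n-1}\varphi(f^i(y))=0.
\end{equation*}
Because $\varphi(z)\geq (t/2)\mathbf{1}_{\{d(z,p)\geq t/2\}}$, a Markov-type estimate gives
\begin{equation*}
\frac{1}{n}\bigl|\{0\leq i\leq n-1: d(f^i(x),p)\geq t/2\}\bigr|\leq \frac{2}{tn}\sum_{i=0}^{n-1}\varphi(f^i(x))\xrightarrow[n\to\infty]{}0,
\end{equation*}
and likewise with $y$ in place of $x$.

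Next I apply the triangle inequality: if $d(f^i(x),f^i(y))\geq t$, then at least one of $d(f^i(x),p)$ and $d(f^i(y),p)$ must be $\geq t/2$. Hence
\begin{equation*}
\{i: d(f^i(x),f^i(y))\geq t\}\subseteq \{i: d(f^i(x),p)\geq t/2\}\cup\{i: d(f^i(y),p)\geq t/2\}.
\end{equation*}
Taking normalized cardinalities over $[0,n-1]$ and passing to $\limsup$, both sets on the right have density zero, so the left-hand side does as well. Passing to the complement yields the desired conclusion.

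There is no real obstacle here; the only mildly delicate point is the choice of the bounded test function $\varphi$ (as opposed to $d(\cdot,p)$ itself), which is needed so that $\varphi$ is continuous and bounded and therefore admissible in the weak* convergence coming from $x,y\in G_{\delta_p}$.
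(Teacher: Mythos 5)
Your proof is correct and follows essentially the same route as the paper: both arguments use a continuous test function vanishing at $p$ together with weak* convergence of the empirical measures to show that the orbit of a $\delta_p$-generic point spends asymptotically full time in $B(p,t/2)$, and then conclude via the triangle inequality. The only cosmetic difference is that you use the cut-off $\min\{d(\cdot,p),t/2\}$ with a direct Markov-type estimate, whereas the paper builds a plateau function and argues by contradiction.
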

\begin{proof}
	For any $x\in G_{\delta_{p}}$ and $t>0$, one has $\liminf\limits_{n\to\infty}\frac{1}{n}|\{0\leq i\leq n-1:f^{i}(x)\in B(p,t)\}|=1,$ otherwise, there is $t_{0}>0$ such that $\rho=\limsup\limits_{n\to\infty}\frac{1}{n}|\{0\leq i\leq n-1:f^{i}(x)\not\in B(p,t_{0})\}|>0.$ There exists a continuous function $\varphi$ from $X$ to $\mathbb{R}$ such that 
	\begin{equation}
		\left\{
		\begin{array}{rcl}
			\varphi(x)=0 & & d(x,p)\leq \frac{t_{0}}{2}\\
			\varphi(x)\in [0,1] & & \frac{t_{0}}{2}<d(x,p)< t_{0}\\
			\varphi(x)=1 & & d(x,p)\geq t_{0},
		\end{array} \right.
	\end{equation}
    then $\limsup\limits_{n \to \infty}\frac{1}{n}\sum_{i=0}^{n-1}\varphi(f^{i}(x))\geq \limsup\limits_{n\to\infty}\frac{1}{n}|\{0\leq i\leq n-1:f^{i}(x)\not\in B(p,t_{0})\}|=\rho>0$ which contradicts $\lim\limits_{n \to \infty}\frac{1}{n}\sum_{i=0}^{n-1}\varphi(f^{i}(x))=\int \varphi d\delta_{p}=\varphi(p)=0.$ So for any $x,y\in G_{\delta_{p}}$ and $t>0$, one has $\liminf\limits_{n\to\infty}\frac{1}{n}|\{0\leq i\leq n-1:d(f^{i}(x),f^{i}(y))<t\}|\geq \liminf\limits_{n\to\infty}\frac{1}{n}|\{0\leq i\leq n-1:f^{i}(x)\in B(p,\frac{t}{2}),\ f^{i}(y)\in B(p,\frac{t}{2})\}|=1.$
\end{proof}

\begin{Cor}\label{corollary-AB}
	For any given dynamical system $(X,f)$, if $p\in X$ is a fixed point, then there is no $x,y\in G_{\delta_{p}}$ and $t_{0}>0$ such that $\limsup\limits_{n\to\infty}\frac{1}{n}|\{0\leq i\leq n-1:d(f^{i}(x),f^{i}(y))\geq t_{0}\}|>0.$ In particular, there is no $x,y\in G_{\delta_{p}}$ such that $x,y$ is distal.
\end{Cor}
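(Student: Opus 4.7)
The plan is to derive Corollary \ref{corollary-AB} as a direct logical consequence of the preceding unnamed lemma, which already does all the substantive work. The key observation is that for each fixed $t>0$ and each $n$, the sets $\{0\leq i\leq n-1: d(f^{i}(x),f^{i}(y))<t\}$ and $\{0\leq i\leq n-1: d(f^{i}(x),f^{i}(y))\geq t\}$ partition $\{0,1,\dots,n-1\}$, so their normalized cardinalities sum to $1$ before passing to any limit. Consequently $\limsup_{n\to\infty}\frac{1}{n}|\{0\leq i\leq n-1: d(f^{i}(x),f^{i}(y))\geq t\}|=1-\liminf_{n\to\infty}\frac{1}{n}|\{0\leq i\leq n-1: d(f^{i}(x),f^{i}(y))<t\}|$.

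For the main statement I would argue by contradiction. Assume there exist $x,y\in G_{\delta_p}$ and $t_0>0$ with $\limsup_{n\to\infty}\frac{1}{n}|\{0\leq i\leq n-1: d(f^{i}(x),f^{i}(y))\geq t_0\}|>0$. The partition identity above immediately yields $\liminf_{n\to\infty}\frac{1}{n}|\{0\leq i\leq n-1: d(f^{i}(x),f^{i}(y))<t_0\}|<1$, which directly contradicts the preceding lemma applied with $t=t_0$. For the ``in particular'' assertion, suppose $x,y\in G_{\delta_p}$ form a distal pair. By the definition of distality, $\liminf_{i\to\infty}d(f^{i}(x),f^{i}(y))>0$, so one can pick $t_0>0$ and $N\in\mathbb{N}$ such that $d(f^{i}(x),f^{i}(y))\geq t_0$ for every $i\geq N$. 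Then $\frac{1}{n}|\{0\leq i\leq n-1: d(f^{i}(x),f^{i}(y))\geq t_0\}|\to 1$ as $n\to\infty$, whose $\limsup$ is certainly strictly positive, contradicting the main statement just established.

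There is no real obstacle in this corollary; the difficulty was in the preceding lemma, where a Urysohn-type continuous bump function vanishing on $B(p,t_0/2)$ and equal to $1$ off $B(p,t_0)$ was used to convert any positive-frequency escape of the orbit of $x$ from a neighbourhood of $p$ into a strictly positive upper Birkhoff average, thereby contradicting $V_f(x)=\{\delta_p\}$. The corollary merely repackages this content: replacing the individual orbits of $x$ and $y$ by their pairwise distance, and replacing ``upper density of escape'' by ``distal pair''. So the entire proof of the corollary is two short contrapositive arguments applied to the lemma, with no additional dynamical input.
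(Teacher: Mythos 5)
Your proof is correct and is exactly the intended argument: the paper states this as an immediate corollary of the preceding lemma without giving a proof, and your two contrapositive steps (the partition identity converting the $\liminf=1$ statement into $\limsup=0$ for the complementary frequency, and the observation that a distal pair forces that frequency to tend to $1$) are precisely what is needed. No gaps.
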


\begin{Exa}\label{example-AA}
	For any given dynamical system $(X,f)$ with a fixed point $p\in X$ , let $\varphi(x)=d(x,p)$ for any $x\in X$, then $\varphi\in C^{0}(X,\mathbb{R})$ and $\varphi$ satisfies $\varphi(x)\geq 0$ and $\varphi(x)= 0$ if and only if $x=p$. Thus $\{\mu\in\mathcal{M}_f(X):\int \varphi d\mu=0\}=\{\delta_{p}\}$. So $R_\varphi(0)=\{x\in X:\int \varphi d\mu=0\ \text{for any }\mu\in V_{f}(x)\}=\{x\in X: V_{f}(x)=\{\delta_{p}\}\}=G_{\delta_{p}}$. By Corollary \ref{corollary-AB}, $R_\varphi(0)=G_{\delta_{p}}$ contains no DC1-scrambled pair.
\end{Exa}

\subsection{Some invariant fractal sets is balanced in sense of strongly distributional chaos}
In the dynamical invariant fractal sets we discussed, many sets show some kind of balance in sense of strongly distributional chaos. For convenience of making these more precise, we introduce a concept as follows.
\begin{Def}
	Suppose that $(X,f)$ is a dynamical system. $\mathcal{P}$ denotes some property, for example, strongly distributional chaos. For subsets $Y_{1}, Y_{2}\subseteq X$, we say $Y_{1}$ is balanced in sense of $\mathcal{P}$ with respect to $Y_{2}$ or $Y_{1}$ is $\mathcal{P}$-balanced with respect to $Y_{2}$ if $Y_{1}\cap Y_2$ and $Y_{2}\setminus Y_{1}$ both satisfies $\mathcal{P}$.
\end{Def}
By Theorem \ref{prop-1}, we konw
\begin{Thm}
	If $(X,f)$ is $(\text{Hyp.k})$ for $k \in \{1,2,3,4,5,6\}$, then
    there is an open and dense subset $\mathcal{R}\subseteq C^0(X,\mathbb{R})$ such that for any $\varphi\in \mathcal{R}$, 
    \begin{description}
    	\item[(1)] $I_{\varphi}(f)$, $R_\varphi$, $QR(f)$ are balanced in sense of strongly distributional chaos with respect to $X$;
    	\item[(2)] $Rec^{up}$ is balanced in sense of strongly distributional chaos with respect to $Rec_{Ban}^{up}$.
    \end{description}     
\end{Thm}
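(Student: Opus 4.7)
The plan is to unpack the definition and then assemble the pieces already proved. Saying that $Y$ is balanced in the sense of strongly distributional chaos with respect to $X$ means that both $Y$ and $X\setminus Y$ contain an uncountable $\alpha$-DC1-scrambled set for every $\alpha\in\mathcal{A}$. Since $I_\varphi(f)\subseteq X\setminus R_\varphi$ and $QR(f)=X\setminus I_\varphi(f)\supseteq R_\varphi$, and since strong distributional chaos is inherited by supersets, part (1) reduces to showing that each of the three sets $I_\varphi(f),\ R_\varphi,\ QR(f)$ is itself strongly distributional chaotic. For part (2), the chain $Rec^{up}\subseteq Rec_{Ban}^{up}$ gives $Rec^{up}\cap Rec_{Ban}^{up}=Rec^{up}$, so it suffices to show that both $Rec^{up}$ and $Rec_{Ban}^{up}\setminus Rec^{up}$ are strongly distributional chaotic.

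The source of all three chaotic pieces is the trio of sets produced in Corollary \ref{coro-1} (for the hypotheses (Hyp.1), (Hyp.2), (Hyp.3), (Hyp.6)), in Theorem \ref{thm-1} and Theorem \ref{thm-5} (for (Hyp.4)), and in Theorem \ref{thm-4} and Theorem \ref{thm-6} (for (Hyp.5)); each of these results supplies an open and dense $\mathcal{R}\subseteq C^0(X,\mathbb{R})$ (consisting of functions whose irregular set meets an appropriate compact invariant piece), and for every $\varphi\in\mathcal{R}$ a choice of $a<b$ in the interior of the relevant convex-hull-of-integrals interval. I would fix $\mathcal{R}$ to be this same open dense set and pick $a<b$ accordingly. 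Then the set
\[
(Rec^{up}\setminus Rec^{low})\cap Trans\cap I_\varphi[a,b]
\]
lies inside $I_\varphi(f)$ (since $a<b$) and is strongly distributional chaotic, which proves that $I_\varphi(f)$ is strongly chaotic; similarly $(Rec_{Ban}^{up}\setminus Rec^{up})\cap Trans\cap QR(f)$ witnesses chaos in $QR(f)$; and to handle $R_\varphi$ I would instead take $a=b\in\mathrm{Int}(L_\varphi)$, so that $I_\varphi[a,a]=R_\varphi(a)\subseteq R_\varphi$, and apply the $(Rec_{Ban}^{up}\setminus Rec^{up})\cap Trans\cap QR(f)\cap U$ clause (or the $(Rec^{up}\setminus Rec^{low})\cap Trans\cap R_\varphi(a)$ clause, which also sits inside $R_\varphi$) to obtain the required uncountable $\alpha$-DC1-scrambled set.

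For part (2), the same results already give a strongly distributional chaotic subset of $Rec_{Ban}^{up}\setminus Rec^{up}$, namely $(Rec_{Ban}^{up}\setminus Rec^{up})\cap Trans\cap I_\varphi[a,b]$; and the set $(Rec^{up}\setminus Rec^{low})\cap Trans\cap I_\varphi[a,b]$ witnesses strong distributional chaos inside $Rec^{up}=Rec_{Ban}^{up}\cap Rec^{up}$. Thus both halves of the balance for (2) are immediate from the same $\mathcal{R}$ as in (1); the only care needed is to verify that the open and dense set provided in Theorems \ref{thm-1}, \ref{thm-5}, \ref{thm-4}, \ref{thm-6} (and Corollary \ref{coro-1}) can be taken common to all these applications, which it can, since each statement is parameterized by a single choice of $\varphi$ with $I_\varphi(f)$ (or $I_\varphi(f)\cap X_{n_0}$) nonempty.

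The only genuine point of care, rather than a deep obstacle, is the case (Hyp.4): Theorem \ref{thm-5} for homoclinic classes gives chaos in subsets whose accumulation measures are supported in the chosen horseshoe $X_{n_0}$, rather than in the whole space. This is harmless for proving membership in $I_\varphi(f),\ R_\varphi,\ QR(f),\ Rec^{up},\ Rec_{Ban}^{up}\setminus Rec^{up}$, since these are global properties depending only on the orbit of the point, not on the location of $S_\mu$. So the final proof is a short verification that the witness sets produced in the cited theorems do lie in each of the targeted fractal sets, with $\mathcal{R}$ taken to be the intersection of the open dense sets from those theorems.
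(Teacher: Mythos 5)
Your proposal is correct and follows essentially the same route as the paper, which proves this theorem simply by assembling Theorem \ref{prop-1} (i.e.\ Theorems \ref{thm-1}, \ref{thm-5}, \ref{thm-4}, \ref{thm-6} and Corollary \ref{coro-1} applied to the six classes): the witness sets you invoke, namely $(Rec^{up}\setminus Rec^{low})\cap Trans\cap I_{\varphi}[a,b]$, $(Rec_{Ban}^{up}\setminus Rec^{up})\cap Trans\cap I_{\varphi}[a,b]$ and $(Rec_{Ban}^{up}\setminus Rec^{up})\cap Trans\cap QR(f)$, are exactly the ones the paper uses, and placing them inside $I_{\varphi}(f)$, $QR(f)\subseteq R_{\varphi}$ (or taking $a=b$), $Rec^{up}$ and $Rec_{Ban}^{up}\setminus Rec^{up}$ gives both items. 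The only slip is the relation ``$QR(f)=X\setminus I_{\varphi}(f)\supseteq R_{\varphi}$'': by the paper's definition $QR(f)=X\setminus IR(f)$, so actually $QR(f)\subseteq R_{\varphi}=X\setminus I_{\varphi}(f)$; this is harmless, since balancedness of $QR(f)$ only requires $X\setminus QR(f)=IR(f)\supseteq I_{\varphi}(f)$ to be strongly distributional chaotic (superset inheritance), and your treatment of $R_{\varphi}$ in fact uses the correct inclusion $QR(f)\subseteq R_{\varphi}$.
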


By Corollary \ref{corollary-AB}, we konw that for any given dynamical system $(X,f)$, if $p\in X$ is a fixed point, then $G_{\delta_{p}}$ is not balanced in sense of 1-chaos with respect to $X$. However, there are some fractal sets whose balance in sense of strongly distributional chaos is unkonwn.
\begin{Ques}
	Suppose that $(X,f)$ is $(\text{Hyp.k})$ for $k \in \{1,2,3,4,5,6\}$. Then whether $Rec_{Ban}^{up}$ is balanced in sense of strongly distributional chaos with respect to $Rec$? And whether $Rec^{low}$ is balanced in sense of strongly distributional chaotic with respect to $Rec^{up}$?
\end{Ques}

\subsection{Category of invariant fractal sets}
For dynamical system $(X,f)$, let $$G_{\max }:=\left\{x \in X : V_{f}(x)=\mathcal{M}_{f}(X)\right\}.$$
Then by \cite[Proposition 21.18]{Sig}, $G_{\max }$ is nonempty and contains a dense $G_{\delta}$ subset of $X$ (called residual in $X$) if $(X,f)$ has specification property. Since $G_{\max }\subseteq Rec^{up}\setminus Rec^{low}$ and $G_{\max }\subseteq I_{\varphi}(f)$ if $I_{\varphi}(f)\neq\emptyset$, we have
\begin{Lem}
	If $(X,f)$ is $(\text{Hyp.k})$ for $k \in \{1,2,3,6\}$, then $Rec^{up}\setminus Rec^{low}$ is residual in $X$, and for any $\varphi \in C^{0}(X,\mathbb{R})$, $I_{\varphi}(f)$ is residual in $X$ when $I_{\varphi}(f)\neq \emptyset$.
\end{Lem}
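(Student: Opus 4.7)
The plan is to reduce everything to the specification property and to Sigmund's result on the saturated set of $\mathcal{M}_f(X)$. First I would verify that each of the four hypotheses (Hyp.1), (Hyp.2), (Hyp.3), (Hyp.6) implies that $(X,f)$ has Bowen's specification property. For (Hyp.1) this is classical (Bowen's theorem for mixing locally maximal hyperbolic sets, together with the spectral decomposition that makes a transitive Anosov diffeomorphism on a connected manifold mixing). For (Hyp.2), mixing expanding maps have exponential shadowing by Proposition \ref{proposition-expandshadow}, hence exponential (in particular, ordinary) specification by Proposition \ref{prop-exp-specif}. For (Hyp.3) and (Hyp.6), exponential specification was already established in Theorem \ref{thm-3}.

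Once specification is in hand, Proposition 21.18 of \cite{Sig} (which is the statement quoted just before the lemma) gives that $G_{\max}$ contains a dense $G_\delta$ subset of $X$; in particular $G_{\max}$ is residual. It therefore suffices to establish the two inclusions
\[
G_{\max}\ \subseteq\ Rec^{up}\setminus Rec^{low}\quad\text{and}\quad G_{\max}\ \subseteq\ I_{\varphi}(f)\ \text{ whenever }\ I_{\varphi}(f)\neq\emptyset,
\]
because any subset of $X$ containing a residual set is itself residual.

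For the first inclusion, fix $x\in G_{\max}$. By Proposition \ref{BN} there exists $\nu\in\mathcal M_f(X)$ with $S_\nu=X$, and $\nu\in\mathcal M_f(X)=V_f(x)$, so $C_x=\overline{\bigcup_{\mu\in V_f(x)}S_\mu}=X=\omega(f,x)$ and Proposition \ref{prop2} yields $x\in Rec^{up}$. On the other hand, by Proposition \ref{periodic-dense}(a) there is a periodic orbit measure $\mu_0\in\mathcal M_f(X)=V_f(x)$ whose support $S_{\mu_0}$ is a finite set; since $X$ is nondegenerate we have $S_{\mu_0}\subsetneq X=\omega(f,x)$, so the condition $S_\mu=\omega(f,x)$ for \emph{every} $\mu\in V_f(x)$ of Proposition \ref{prop1} fails, giving $x\notin Rec^{low}$. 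For the second inclusion, suppose $I_{\varphi}(f)\neq\emptyset$; then there must exist two invariant measures $\mu_1,\mu_2\in\mathcal M_f(X)$ with $\int\varphi\,d\mu_1\neq\int\varphi\,d\mu_2$ (otherwise, by the ergodic decomposition every Birkhoff average of $\varphi$ would converge to the common value). Since $\mu_1,\mu_2\in V_f(x)$ for any $x\in G_{\max}$, the Birkhoff average $\frac1n\sum_{i=0}^{n-1}\varphi\circ f^i$ has two distinct limit points along $x$, so $x\in I_{\varphi}(f)$.

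The only step requiring a bit of care is the argument that $x\notin Rec^{low}$: it uses both nondegeneracy of $X$ and the existence of a periodic (or, more generally, non-full-support) invariant measure, and this is the main (mild) obstacle. Everything else is an assembly of results already in the paper; there are no new constructions to carry out.
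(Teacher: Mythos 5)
Your proposal is correct and follows essentially the same route as the paper: the paper's own justification is precisely that $G_{\max}$ is residual by Sigmund's Proposition 21.18 under the specification property (which holds for (Hyp.1)--(Hyp.3) and (Hyp.6)), together with the two inclusions $G_{\max}\subseteq Rec^{up}\setminus Rec^{low}$ and $G_{\max}\subseteq I_{\varphi}(f)$ when $I_{\varphi}(f)\neq\emptyset$, which the paper asserts without detail and which you verify correctly (full-support measures for $Rec^{up}$, periodic-orbit measures for the failure of $Rec^{low}$, and two measures with distinct $\varphi$-integrals for irregularity).
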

\begin{Lem}
	If $(X,f)$ is $(\text{Hyp.k})$ for $k \in \{1,2,3,6\}$, then $Rec^{low},$ $X\setminus Rec^{up}$ and $Rec_{Ban}^{up}\setminus Rec^{up}$ are of first category, and for any $\varphi \in C^{0}(X,\mathbb{R})$, $R_\varphi$ and $QR(f)$ are of first category when $I_{\varphi}(f)\neq \emptyset$.
\end{Lem}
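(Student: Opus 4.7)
The plan is to derive this lemma as a direct corollary of the preceding lemma, using only the elementary fact that any subset of a meager (first category) set is itself meager.

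First I would observe that by the preceding lemma, $Rec^{up}\setminus Rec^{low}$ is residual in $X$, so its complement
\[
X\setminus(Rec^{up}\setminus Rec^{low}) \;=\; (X\setminus Rec^{up})\cup (Rec^{up}\cap Rec^{low}) \;=\; (X\setminus Rec^{up})\cup Rec^{low}
\]
is of first category (using $Rec^{low}\subseteq Rec^{up}$ to simplify). Since both $Rec^{low}$ and $X\setminus Rec^{up}$ are subsets of this meager set, each is of first category. Moreover, because $Rec^{up}_{Ban}\setminus Rec^{up}\subseteq X\setminus Rec^{up}$, it is also of first category.

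For the second part, fix $\varphi\in C^0(X,\mathbb{R})$ with $I_\varphi(f)\neq\emptyset$. By the preceding lemma, $I_\varphi(f)$ is residual in $X$, and therefore its complement $X\setminus I_\varphi(f)$ is meager. Since $QR(f)=X\setminus IR(f)\subseteq X\setminus I_\varphi(f)$ and $R_\varphi\subseteq X\setminus I_\varphi(f)$ (regular points have convergent Birkhoff averages with respect to $\varphi$), both $QR(f)$ and $R_\varphi$ are of first category.

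There is no real obstacle here; the content lies entirely in the preceding lemma, which in turn relies on the residuality of $G_{\max}$ under the specification property (\cite[Proposition 21.18]{Sig}) together with the inclusions $G_{\max}\subseteq Rec^{up}\setminus Rec^{low}$ and $G_{\max}\subseteq I_\varphi(f)$ whenever $I_\varphi(f)\neq\emptyset$. Once residuality of the ``large'' sets is established, the ``small'' sets are handled by set-theoretic complementation and containment, so the proof is a one-line consequence in each case.
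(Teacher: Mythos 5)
Your argument is correct and is exactly the (implicit) argument the paper intends: the lemma is stated as an immediate consequence of the preceding residuality lemma, with each set in question contained in the meager complement of $Rec^{up}\setminus Rec^{low}$ or of $I_{\varphi}(f)$. Nothing is missing.
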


Even though $Rec_{Ban}^{up}\setminus Rec^{up}$, $R_\varphi$ and $QR(f)$ are of first category, these fractal sets are still strongly distributional chaotic. In this paper, the $\alpha$-DC1-scrambled set of $Rec^{up}\setminus Rec^{low}$ comes from some $G_{K}$ where $K\subsetneq \mathcal{M}_{f}(X)$ is a connected non-empty compact subset, so the scrambled set is of first category. Then it is possible to ask
\begin{Ques}\label{question-AA}
	Suppose that $(X,f)$ is $(\text{Hyp.k})$ for $k \in \{1,2,3,6\}$. For any nondecreasing map $\alpha(\cdot)$ from $\mathbb{N}$ to $[0,+\infty)$ with $\lim\limits_{n\to \infty}\alpha(n)=+\infty\ \text{and}\  \lim\limits_{n\to\infty}\frac{\alpha(n)}{n}=0$, whether there is a second category set $S\subset Rec^{up}\setminus Rec^{low}$ such that $S$ is $\alpha$-DC1-scrambled?
\end{Ques}
It is also a question if we replace $Rec^{up}\setminus Rec^{low}$ by $I_{\varphi}(f)$ where $\varphi \in C^{0}(X,\mathbb{R})$ and $I_{\varphi}(f)\neq \emptyset$ in Question \ref{question-AA}.

\subsection{Infinite dimensional symbolic dynamics}
In this subsection, we give two infinite dimensional symbolic dynamics which have a sequence of nondecreasing $f$-invariant compact subsets $\{X_{n} \subseteq X:n \in \mathbb{N^{+}} \}$ such that $\overline{\bigcup_{n\geq 1}X_{n}}=X$, $({X_{n}},f|_{X_{n}})$ has exponential specification property for any $n \in \mathbb{N^{+}},$ and thus all the results of Theorem \ref{thm-4} hold for the two systems.
\begin{Exa}
	Consider the shift space $X:=[0,1]^{\mathbb{N}}$, with the metric 
	\begin{eqnarray}
		d(x, y):=\sum_{k=0}^{\infty} 2^{-k-1}\left|x_{k}-y_{k}\right|.
	\end{eqnarray}
    The shift space $X:=[0,1]^{\mathbb{N}}$ with the metric $d$ is not expansive. 
    For any $n\in\mathbb{N^{+}},$ let $$X_n:=\{0,\frac{1}{2^n},\dots,\frac{2^n-1}{2^n},1\}^{\mathbb{N}}.$$ Then $\{X_{n} \subseteq X:n \in \mathbb{N^{+}} \}$ is a sequence of nondecreasing $\sigma$-invariant compact subsets and $\overline{\bigcup_{n\geq 1}X_{n}}=X.$ For any $x=x_0x_1\dots\in X_n$, let $h(x)=y=y_0y_1\dots$ where $y_i=2^{n}x_i$ for any $i\in\mathbb{N}.$  Then $h$ is a conjugation between $(X_n,\sigma)$ and $(\{0,1,\dots,2^n-1,2^n\}^{\mathbb{N}},\sigma)$ with a compatible metric $\tilde{d}$ given by
    \begin{equation}
   	\tilde{d}(\omega,\gamma)=
   	\begin{cases}
   		2^{-n\min\{k\in\mathbb{N^{+}}:\omega_{k}\neq \gamma_{k}\}},&\omega\neq \gamma,\\
   		0,&\omega= \gamma.
   	\end{cases}
    \end{equation}
    Since $h$ and its inverse are H\"older continuous, it's easy to verify that $(X_n,\sigma)$ is mixing and has exponential shadowing property. Then $(X_n,\sigma)$ has exponential specification property by Proposition \ref{prop-exp-specif}, and thus all the results of Theorem \ref{thm-4} hold for $(X,\sigma)$.
\end{Exa}

In \cite[pages 952-953]{PS2}, Pfister and Sullivan gave an example of a dynamical system with finite topological entropy, for which specification property is true, but the upper semi-continuity of the entropy map fail. This example is a subshift of the shift space $Y:=[-1,1]^{\mathbb{N}}$. Next, we show that all the results of Theorem \ref{maintheorem-2} hold for the example.
\begin{Exa}
	Consider the shift space $Y:=[-1,1]^{\mathbb{N}},$ with the metric
	$$
	d(x, y):=\sum_{k=0}^{\infty} 2^{-k-2}\left|x_{k}-y_{k}\right|.
	$$
	Let $\mathrm{A}$ be the alphabet containing the letters $a_{0}=0, a_{m}=1 / m$ and $a_{-m}=-1 / m, m \in \mathbb{N}$. We consider $A$ as a subset of [-1,1] with the induced metric, so that it is a compact set. Let $X \subset \mathrm{A}^{\mathbb{N}}$ be the subshift defined by
	\begin{equation}\label{equation-PA}
		x_{k+1}=\left\{\begin{array}{ll}
			a_{j}, \text { or } a_{-j}, \text { or } a_{j+1}, \text { or } a_{j-1} & \text { if } x_{k}=a_{j}, j \geq 1, \\
			a_{j}, \text { or } a_{-j}, \text { or } a_{-j+1}, \text { or } a_{-j-1} & \text { if } x_{k}=a_{-j}, j \geq 1 \quad \text { for all } k \in \mathbb{N}, \\
			a_{0}, \text { or } a_{1}, \text { or } a_{-1} & \text { if } x_{k}=a_{0}.
		\end{array}\right.
	\end{equation}
	For any $n\in\mathbb{N^{+}},$ let $A_n=\{a_{-n},\dots,a_{-1},a_0,a_1,\dots,a_n\}$ and $X_n \subset \mathrm{A_n}^{\mathbb{N}}$ be the subshift defined by (\ref{equation-PA}). Then $\{X_{n} \subseteq X:n \in \mathbb{N^{+}} \}$ is a sequence of nondecreasing $\sigma$-invariant compact subsets.
	
	Next, we show $\overline{\bigcup_{n\geq 1}X_{n}}=X.$ 
	Let $\varepsilon_{0}=1 / m_{0},\ m_{0} \in \mathbb{N}$ be fixed. Consider a word of length $n$, say $\mathrm{w}=x_{0} x_{1} \dots x_{n-1}$. We define a new word of length $n$, denoted by $\widehat{w}=\widehat{x}_{0} \widehat{x}_{1} \dots \widehat{x}_{n-1}$
	$$
	\widehat{x}_{k}:=\left\{\begin{array}{ll}
		x_{k} & \text { if } x_{k}=a_{j} \text { or } x_{k}=a_{-j}, \text { with } j \leq m_{0}, \\
		a_{m_{0}} & \text { if } x_{k}=a_{j} \text { with } j>m_{0}, \\
		a_{-m_{0}} & \text { if } x_{k}=a_{-j} \text { with } j>m_{0}.
	\end{array}\right.
	$$
	We can also extend the transformation $\widehat{\cdot}$ to the points of $X .$ Note that the image of $X$ under this transformation, $\widehat{X}$, is a subset of $X_{m_0},$ and $d\left(x, \widehat{x}\right) \leq \frac{1}{2 m_{0}}.$ This means $\overline{\bigcup_{n\geq 1}X_{n}}=X.$
	
	Let $Y_n\subset \{-n,\dots,-1,0,1,\dots,n\}^{\mathbb{N}}$ be the subshift defined by 
	\begin{equation}
		y_{k+1}=\left\{\begin{array}{ll}
			j, \text { or } -j, \text { or } j+1, \text { or } j-1 & \text { if } y_{k}=j, j \geq 1, \\
			j, \text { or } -j, \text { or } -j+1, \text { or } -j-1 & \text { if } y_{k}=-j, j \geq 1 \quad \text { for all } k \in \mathbb{N}, \\
			0, \text { or } 1, \text { or } -1 & \text { if } y_{k}=0.
		\end{array}\right.
	\end{equation}
    Then $(Y_n,\sigma)$ with a compatible metric $\tilde{d}$ given by
    \begin{equation}
    	\tilde{d}(\omega,\gamma)=
    	\begin{cases}
    		(2n+1)^{-\min\{k\in\mathbb{N^{+}}:\omega_{k}\neq \gamma_{k}\}},&\omega\neq \gamma,\\
    		0,&\omega= \gamma.
    	\end{cases}
    \end{equation} 
    is a mixing subshift of finite type. Based on the discussion in the proof of Theorem \ref{thm-3}, $(Y_n,\sigma)$ is mixing and has exponential shadowing property.
    
	For any $x=x_0x_1\dots\in X_n$, let $h(x)=y=y_0y_1\dots$ where $y_i=j$ if $x_i=a_j.$  Then $h$ is a conjugation between $(X_n,\sigma)$ and $(Y_n,\sigma).$ 
	Since $h$ and its inverse are H\"older continuous, it's easy to verify that $(X_n,\sigma)$ is mixing and has exponential shadowing property. Then $(X_n,\sigma)$ has exponential specification property by Proposition \ref{prop-exp-specif}, and thus all the results of Theorem \ref{thm-4} hold for $(X,\sigma)$.
\end{Exa}

\subsection{An example which has specification property but  does not satisfy exponential specification property and is not strongly DC1}

Consider the shift space $\{0,1\}^{\mathbb{Z}}$ with a compatible metric given by
\begin{equation}
	d_1(\omega,\gamma)=\max \{\frac{1}{|n|+1}:\omega_n\neq\gamma_n,\ n\in\mathbb{Z}\}.
\end{equation}
It's easy to see that $(\{0,1\}^{\mathbb{Z}},\sigma)$ with the metric $d_1$ has specification property. Now we show that $(\{0,1\}^{\mathbb{Z}},\sigma)$ with the metric $d_1$ does not satisfy exponential specification property. Otherwise, we assume that $(\{0,1\}^{\mathbb{Z}},\sigma)$ satisfies exponential specification property with exponent $\lambda>0.$ Let $x_0=~^{\infty}0^{\infty}$ and $x_1=~^{\infty}1^{\infty}.$ For any integer $i\in\mathbb{N}$, by exponential specification property, there is a point $z_{i}\in X$ such that $z_i$ $\frac{1}{2}$-$traces$ $x_0,x_1,$ on 
$$[0,1],$$
$$[1+K_{\frac{1}{2}},1+K_{\frac{1}{2}}+2i]$$
with exponent $\lambda$ respectively, where $K_{\varepsilon}$ is defined in Definition \ref{definition of exp specification}. Let $y=\dots,y_{-1}y_0y_1\dots$ be a accumulation point of $\{z_{i}\}_{i=1}^{\infty}.$ Then  $d_1(y,x_0)\leq \frac{1}{2}$ and 
$d_1(\sigma^{1+K_{\frac{1}{2}}+j}y,\sigma^{j} x_1)\leq \frac{1}{2}e^{-\lambda j}$ for any $j\in\mathbb{N}.$ Thus we have
\begin{equation}\label{equation-CA}
	\sum_{j=0}^{+\infty}d_1(\sigma^{1+K_{\frac{1}{2}}+j}y,\sigma^{j} x_1)\leq \frac{1}{2(1-e^{-\lambda})}<+\infty.
\end{equation}
However, by $d_1(y,x_0)\leq \frac{1}{2}$ we have $y_0=0.$ Then $d_1(\sigma^{1+K_{\frac{1}{2}}+j}y,\sigma^{j} x_1)\geq \frac{1}{1+K_{\frac{1}{2}}+j}$ which implies 
\begin{equation}
	\sum_{j=0}^{+\infty}d_1(\sigma^{1+K_{\frac{1}{2}}+j}y,\sigma^{j} x_1)\geq \sum_{j=0}^{+\infty}\frac{1}{1+K_{\frac{1}{2}}+j}=+\infty.
\end{equation} 
This contradicts (\ref{equation-CA}). So $(\{0,1\}^{\mathbb{Z}},\sigma)$ with the metric $d_1$ does not satisfy exponential specification property.

Next, we show that $(\{0,1\}^{\mathbb{Z}},\sigma)$ with the metric $d_1$ is not strongly DC1. 
\begin{Prop}
	For the shift $(\{0,1\}^{\mathbb{Z}},\sigma)$ with the metric $d_1,$
	\begin{description}
		\item[(1)] if $\alpha\in\mathcal{A}$ satisfies $\limsup\limits_{n\to\infty}\frac{\alpha(n)}{\ln n}<+\infty,$ then there is no $x,y\in X,$ such that $x,y$ is $\alpha$-DC1-scrambled;
		\item[(2)] if $\alpha\in\mathcal{A}$ satisfies $\liminf\limits_{n\to\infty}\frac{\alpha(n)}{\ln n}=+\infty,$ then there exists an uncountable $\alpha$-DC1-scrambled set.
	\end{description}
	 
\end{Prop}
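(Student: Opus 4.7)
My plan is to treat the two parts separately, exploiting the key feature of the metric $d_1$: if $D = \{m\in\mathbb{Z}: x_m\neq y_m\}$, then $d_1(\sigma^j x,\sigma^j y) = 1/(1+\mathrm{dist}(j,D))$. All estimates reduce to understanding the behavior of this quantity.

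For part (1), the strategy is a lower bound of order $\ln i$ on the partial sum. Given any $x\neq y$, fix $m_0\in D$. Then $d_1(\sigma^j x,\sigma^j y)\ge 1/(|j-m_0|+1)$ for every $j$, so
\[
\sum_{j=0}^{i-1} d_1(\sigma^j x,\sigma^j y) \;\ge\; \sum_{j=0}^{i-1}\frac{1}{|j-m_0|+1} \;\ge\; c\ln i
\]
for all sufficiently large $i$ and some absolute $c>0$ (say $c=1/2$). Now if $\limsup_n \alpha(n)/\ln n = M<\infty$, choose any $t_0$ with $0<t_0<c/(2M)$. Then for all large $i$, $\alpha(i)\,t_0 < (c/2)\ln i$, so the sum in the definition of $\Phi^{(i)}_{xy}(t_0,\sigma,\alpha)$ excludes all large $i$; hence $\Phi^{*}_{xy}(t_0,\sigma,\alpha)=0$, and the second clause in the definition of $\alpha$-DC1-scrambled fails.

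For part (2), the plan is to exploit the specification property of $(\{0,1\}^\mathbb{Z},\sigma)$ (which holds in this topology regardless of the choice of compatible metric) together with the superlogarithmic growth of $\alpha$. Fix two periodic orbits $p^{(1)},p^{(2)}\in \{0,1\}^\mathbb{Z}$ whose orbits are distal (say a period-$2$ orbit and a period-$3$ orbit), and write $\zeta = \inf_i d_1(\sigma^i p^{(1)},\sigma^i p^{(2)})>0$. For each $\xi\in\{0,1\}^\mathbb{N}$, use specification to construct $x_\xi$ that alternates \emph{disagreement blocks} of length $D_k$ (where $x_\xi$ shadows $p^{(1+\xi_k)}$) with \emph{agreement blocks} of length $A_k$ (where every $x_\xi$ shadows a fixed common reference orbit, with shadowing tolerance $\varepsilon_k\to 0$). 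Inductively choose the parameters so that $D_k\gg \sum_{j<k}(D_j+A_j)$ (forcing density of large pairwise distances to tend to $1$ along $n_k:=\sum_{j\le k}(D_j+A_j)-A_k$, which gives $\Phi_{x_\xi x_\eta}(t_0,\sigma)=0$ with $t_0 = \zeta/2$) and $\alpha(A_k)\gg k\,D_k$ while $A_k\gg D_k$ (possible because $\alpha$ is unbounded). The uncountable family $\{x_\xi\}_{\xi\in\{0,1\}^\mathbb{N}}$ is the candidate $\alpha$-DC1-scrambled set.

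The main technical obstacle will be verifying $\Phi^{*}_{x_\xi x_\eta}(t,\sigma,\alpha)=1$ for all $t>0$. With $m_k:=\sum_{j\le k}(D_j+A_j)$, one needs to show that the set of $i\in[1,m_k]$ with $\sum_{j<i} d_1(\sigma^j x_\xi,\sigma^j x_\eta) < \alpha(i)\,t$ has density tending to $1$. The total sum up to the start of the $k$-th agreement block is dominated by $\zeta D_k$ (the disagreement contribution), and throughout the agreement block the sum increases by at most $\varepsilon_k A_k$. Using $\alpha(n)/\ln n\to\infty$, one can arrange $\alpha(i)\,t > \zeta D_k + 1$ for all $i$ in all but an $o(m_k)$ subinterval of $[1,m_k]$, which is exactly the computation made possible by the growth assumption on $\alpha$ and fails for part (1). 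Carefully tracking the telescoping constraints $D_k\gg A_{k-1}$ and $\alpha(A_k)\gg D_k$ simultaneously, while ensuring that distinct codes $\xi\neq\eta$ produce distinct orbits with the required distal behavior in some block, is the delicate bookkeeping step.
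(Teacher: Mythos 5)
Your part (1) is correct and is essentially the paper's argument: a single disagreement coordinate forces $\sum_{j=0}^{i-1}d_1(\sigma^jx,\sigma^jy)\gtrsim\ln i$, which dominates $\alpha(i)t$ for small $t$ when $\limsup\alpha(n)/\ln n<\infty$, so $\Phi^*_{xy}(t_0,\sigma,\alpha)=0$ for some $t_0>0$.

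Part (2) has a genuine gap, and it sits exactly at the point where the hypothesis $\liminf\alpha(n)/\ln n=+\infty$ must be used. Your bound ``the sum increases by at most $\varepsilon_kA_k$ over the agreement block'' is the wrong estimate for this metric. Even if $x_\xi$ and $x_\eta$ agree \emph{exactly} on a block $[s,s+A_k]$, the disagreements lying outside the block are still seen by $d_1$ with weight $\frac{1}{|n|+1}$, so for $i$ in the block one has $d_1(\sigma^ix_\xi,\sigma^ix_\eta)\asymp\frac{1}{\min\{i-s,\,s+A_k-i\}+1}$ and the cumulative contribution of the block up to time $i$ is of order $\ln(i-s)$ --- this is a \emph{lower} bound as well as an upper bound, so it cannot be made $O(1)$ or absorbed into your ``$+1$''. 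The inequality that actually has to be verified is $(\text{total before }s)+\sum_{j=0}^{i-s}\frac{1}{j+1}<\alpha(i)t$ for a density-one set of $i$, and it is precisely the comparison of this harmonic sum with $\alpha(i)$ that requires $\alpha\gg\ln$; this is what the paper's key condition $d_{k-1}^{k-1}+\sum_{j=0}^{i-d_{k-1}^{k-1}}\frac{1}{j+1}\leq\frac{1}{2^{k}}\alpha(i+1)$ encodes (the paper builds the points explicitly, coordinate by coordinate, rather than via specification, but that difference is cosmetic). Your final inequality $\alpha(i)t>\zeta D_k+1$ drops the logarithmic term entirely, and beating $\zeta D_k+1$ only needs $\alpha(n)\to\infty$; so if your part (2) argument were valid as written it would apply to every $\alpha\in\mathcal{A}$ and contradict your own part (1). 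A secondary issue: encoding only $\xi_k$ in the $k$-th disagreement block does not separate codes $\xi\neq\eta$ that agree from some index on; the paper revisits every coordinate $\xi_l$ at every stage $k\geq l$ (sub-blocks $[d_k^{l-1},d_k^l]$, $1\le l\le k$), which is the standard fix you would need to incorporate.
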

\begin{proof}
	{\bf Proof of Item($\mathit{1}$).}
	For any $x\neq y\in \{0,1\}^{\mathbb{Z}},$ there exists $n_0\in\mathbb{Z}$ such that $x_{n_0}\neq y_{n_0}.$ Then $\sum_{i=0}^{n-1}d(\sigma^{i}x,\sigma^iy)\geq \sum_{i=0}^{n-1}\frac{1}{|n_0|+i+1}$ for any $n\in\mathbb{N}.$ If $\alpha\in\mathcal{A}$ satisfies $\limsup\limits_{n\to\infty}\frac{\alpha(n)}{\ln n}<+\infty,$ then $\liminf\limits_{n\to\infty}\frac{\sum_{i=0}^{n-1}d(\sigma^{i}x,\sigma^iy)}{\alpha(n)}=\liminf\limits_{n\to\infty}\frac{\sum_{i=0}^{n-1}d(\sigma^{i}x,\sigma^iy)}{\ln n}\frac{\ln n}{\alpha(n)}>0$ by $\lim\limits_{n\to\infty}\frac{\sum_{i=0}^{n-1}\frac{1}{i+1}}{\ln n}=1.$ So $x,y$ is not $\alpha$-DC1-scrambled.
	
	{\bf Proof of Item($\mathit{2}$).} If $\alpha\in\mathcal{A}$ satisfies $\liminf\limits_{n\to\infty}\frac{\alpha(n)}{\ln n}=+\infty,$ then $$\limsup\limits_{n\to\infty}\frac{\sum_{i=0}^{n-1}\frac{1}{i+1}}{\alpha(n)}=\limsup\limits_{n\to\infty}\frac{\sum_{i=0}^{n-1}\frac{1}{i+1}}{\ln n}\frac{\ln n}{\alpha(n)}=0$$ by $\lim_{n\to\infty}\frac{\sum_{i=0}^{n-1}\frac{1}{i+1}}{\ln n}=1.$ 
	
	Now, giving an $\xi=(\xi_1,\xi_2,\cdots)\in\{1,2\}^\infty$, we construct the $x_\xi=\dots,x_{-1}^{\xi}x_0^{\xi}x_1^{\xi}\dots$ inductively.
	
	{\bf Step 1:} 
	By $\limsup\limits_{n\to\infty}\frac{\sum_{i=0}^{n-1}\frac{1}{i+1}}{\alpha(n)}=0,$ we can take $0<a_1<b_1<c_1<d_1^1\in\mathbb{N^{+}}$ such that for any $a_1\leq i\leq b_1$ one has 
	\begin{equation}
		\sum_{j=0}^{i}\frac{1}{j+1}\leq \alpha(i+1),
	\end{equation}
	\begin{equation}
		\frac{b_1-a_1}{a_1}>\frac{1}{2},
	\end{equation}
	\begin{equation}
		c_1-b_1>b_1,
	\end{equation}
	\begin{equation}
		\frac{d_1^1-c_1}{c_1}>\frac{1}{2}.
	\end{equation}
    For any integer $0\leq i\leq c_1,$ let $x_i^{\xi}=0.$ For any integer $c_1+1\leq i\leq d^1_1,$ let $x_i^{\xi}=\xi_1.$
    
    {\bf Step k:} 
    By $\limsup\limits_{n\to\infty}\frac{\sum_{i=0}^{n-1}\frac{1}{i+1}}{\alpha(n)}=0,$ we can take $d_{k-1}^{k-1}<a_k<b_k<c_k\in\mathbb{N^{+}}$ such that for any $a_k\leq i\leq b_k,$ one has 
    \begin{equation}
    	d_{k-1}^{k-1}+\sum_{j=0}^{i-d_{k-1}^{k-1}}\frac{1}{j+1}\leq \frac{1}{2^{k}}\alpha(i+1),
    \end{equation}
    \begin{equation}
    	\frac{b_k-a_k}{a_k}>1-\frac{1}{2^k},
    \end{equation}
    \begin{equation}
    	c_k-b_k>b_k.
    \end{equation}
    Then we take $\{d_k^l\}_{l=1}^{k}\subset\mathbb{N^{+}}$ such that 
    \begin{equation}
    	\frac{d_k^1-c_k}{c_k}>1-\frac{1}{2^k},
    \end{equation}
    and for any integer $2\leq l\leq k,$
    \begin{equation}
    	\frac{d_k^l-d_k^{l-1}}{d_k^{l-1}}>1-\frac{1}{2^k}.
    \end{equation}
    For any integer $d_{k-1}^{k-1}\leq i\leq c_k,$ let $x_i^{\xi}=0.$ For any integer $c_k+1\leq i\leq d_k^1,$ let $x_i^{\xi}=\xi_1.$ For any integer $d_k^{l-1}\leq i\leq d_k^l$ with $2\leq l\leq k,$ let $x_i^{\xi}=\xi_l.$ Then we obtain $x_\xi$ by setting $x_i^{\xi}=0$ for any $i\leq -1.$ It is easy to verify that $S_K=\{x_{\xi}:\xi\in\{1,2\}^{\infty}\}$ is an uncountable $\alpha$-DC1-scrambled set.
\end{proof} 
\begin{Rem}
	When $\alpha\in\mathcal{A}$ satisfies $\liminf\limits_{n\to\infty}\frac{\alpha(n)}{\ln n}<+\infty$ and $\limsup\limits_{n\to\infty}\frac{\alpha(n)}{\ln n}=+\infty,$ we do not know if there exists an uncountable $\alpha$-DC1 scrambled set for the shift $(\{0,1\}^{\mathbb{Z}},\sigma)$ with the metric $d_1.$
\end{Rem}

\section*{ Acknowledgements.}{ 
X. Hou and X. Tian are 
 supported by National Natural Science Foundation of China (grant No.   12071082,11790273) and in part by Shanghai Science and Technology Research Program (grant No. 21JC1400700).
 }

\end{document}